\def\ps@pprintTitle{%
   \let\@oddhead\@empty
   \let\@evenhead\@empty
   \let\@oddfoot\@empty
   \let\@evenfoot\@oddfoot
}
\newcommand{\BLambda}{{B_\ell}}
\newcommand{\id}{\text{id}}
\newcommand{\ini}{\text{in}}
\titleformat{\subsection}[runin]
{\normalfont\bfseries}{\thesubsection}{.5em}{}
\def\NZQ{\mathbb}               
\def\RR{{\NZQ R}}
\def\frk{\mathfrak}               
\def\Phi{{\frk n}}
\def\Phi{{\frk N}}
\def\wb{{\mathbf w}}
\def\A{{\mathcal A}}
\def\opn#1#2{\def#1{\operatorname{#2}}} 
\opn\chara{char} \opn\length{\ell} \opn\pd{pd} \opn\rk{rk}
\opn\projdim{proj\,dim} \opn\injdim{inj\,dim} \opn\rank{rank}
\opn\depth{depth} \opn\grade{grade} \opn\height{height}
\opn\embdim{emb\,dim} \opn\codim{codim}
\opn\Tr{Tr} \opn\bigrank{big\,rank}
\opn\superheight{superheight}\opn\lcm{lcm}
\opn\trdeg{tr\,deg}
\opn\reg{reg} \opn\lreg{lreg} \opn\ini{in} \opn\lpd{lpd}
\opn\size{size} \opn\sdepth{sdepth}
\opn\link{link}\opn\fdepth{fdepth}\opn\lex{lex}
\opn\LM{LM}
\opn\LC{LC}
\opn\NF{NF}
\opn\Merge{Merge}
\opn\sgn{sgn}
\opn\type{type}
\opn\div{div} \opn\Div{Div} \opn\cl{cl} \opn\Pic{Pic}
\opn\Prin{Prin}
\opn\op{op}
\opn\indeg{indeg} \opn\outdeg{outdeg}
\opn\red{red}
\opn\Spec{Spec} \opn\Supp{Supp} \opn\supp{supp} \opn\Sing{Sing}
\opn\Ass{Ass} \opn\Min{Min}\opn\Mon{Mon} \opn\val{val}
\opn\Ann{Ann} \opn\Rad{Rad} \opn\Soc{Soc}
 \opn\Ker{Ker} \opn\Coker{Coker} \opn\Am{Am}
\opn\Hom{Hom} \opn\Tor{Tor} \opn\Ext{Ext} \opn\End{End}
\opn\Aut{Aut} \opn\id{id}
\opn\nat{nat}
\opn\pff{pf}
\opn\Pf{Pf} \opn\GL{GL} \opn\SL{SL} \opn\mod{mod} \opn\ord{ord}
\opn\Gin{Gin} \opn\Hilb{Hilb}\opn\sort{sort}
\opn\Image{Image}
\opn\vol{Vol}
\opn\aff{aff} \opn\con{conv} \opn\relint{relint} \opn\st{st}
\opn\lk{lk} \opn\cn{cn} \opn\core{core} \opn\vol{vol}
\opn\link{link} \opn\star{star}\opn\lex{lex}\opn\set{set}
\opn\dist{dist}
\opn\gr{gr}
\def\pot#1#2{#1[\kern-0.28ex[#2]\kern-0.28ex]}
\opn\dirlim{\underrightarrow{\lim}}
\opn\inivlim{\underleftarrow{\lim}}
\let\to=\rightarrow
\def\Implies{\ifmmode\Longrightarrow \else
        \unskip${}\Longrightarrow{}$\ignorespaces\fi}
\def\implies{\ifmmode\Rightarrow \else
        \unskip${}\Rightarrow{}$\ignorespaces\fi}
\def\iff{\ifmmode\Longleftrightarrow \else
        \unskip${}\Longleftrightarrow{}$\ignorespaces\fi}
\newtheorem{theorem}{Theorem}[section]
\newtheorem{lemma}[theorem]{Lemma}
\newtheorem{corollary}[theorem]{Corollary}
\newtheorem{proposition}[theorem]{Proposition}
\newtheorem{conjecture}[theorem]{Conjecture}
\theoremstyle{remark}
\newtheorem{remark}[theorem]{Remark}
\theoremstyle{definition}
\newtheorem{example}[theorem]{Example}
\newtheorem{definition}[theorem]{Definition}
\DeclareMathOperator{\Gr}{Gr}
\DeclareMathOperator{\Flag}{Fl}
\let\kappa=\varkappa
\def\qed{\ifhmode\textqed\fi
      \ifmmode\ifinner\quad\qedsymbol\else\dispqed\fi\fi}
\def\textqed{\unskip\nobreak\penalty50
       \hskip2em\hbox{}\nobreak\hfil\qedsymbol
       \parfillskip=0pt \finalhyphendemerits=0}
\def\dispqed{\rlap{\qquad\qedsymbol}}
\opn\dis{dis}
\def\pnt{{\raise0.5mm\hbox{\large\bf.}}}
\opn\Lex{Lex}
\opn\syz{{\rm syz}}
\opn\spoly{{\rm spoly}}
\opn\LM{{\rm LM}}
\opn\lm{{\rm lm}}
\opn\lcm{{\rm lcm}} \opn\A{\mathcal A}
\numberwithin{equation}{section}
\newcommand{\inwb}{{\rm in}_{{\bf w}_\ell}}
\newcommand{\ul}[1]{\underline{#1}}
\DeclareMathOperator{\init}{in}
\begin{document}

\begin{frontmatter}

\title{Standard monomial theory and toric degenerations of\\
Richardson varieties inside Grassmannians and flag varieties}
\author{
Narasimha Chary Bonala, Oliver Clarke and Fatemeh Mohammadi
}
\begin{abstract}
We study toric degenerations of opposite Schubert and Richardson varieties inside degenerations of Grassmannians and flag varieties. These degenerations are parametrized by matching fields in the sense of Sturmfels and Zelevinsky \cite{sturmfels1993maximal}. 
We construct so-called restricted matching field ideals whose generating sets are understood combinatorially through tableaux. We determine when these ideals are toric and coincide with Gr\"obner degenerations of Richardson varieties using the well established standard monomial theory for Grassmannians and flag varieties.
\end{abstract}
\begin{keyword}
Gr\"obner and toric degenerations  \sep 
Grassmannians \sep flag varieties \sep semi-standard Young tableaux \sep Schubert varieties \sep Richardson varieties \sep  standard monomial theory
\end{keyword}
\end{frontmatter}
{
  \hypersetup{linkcolor=black}
 \setcounter{tocdepth}{1}
\setlength\cftbeforesecskip{0.1pt}
{\small\tableofcontents}}

\section{Introduction}

In this note we offer a new family of toric degenerations of Richardson varieties 
arising from Gr\"obner monomial degenerations of the Pl\"ucker ideal. This includes the well-studied diagonal and antidiagonal monomial degenerations as particular examples. We use combinatorial techniques and study permutations, semi-standard Young tableaux and standard monomial bases for Pl\"ucker ideals in order to construct these toric degenerations.

\subsection{Background. }  \hspace{.05mm}
The geometry of flag varieties heavily depends on the study of its Schubert varieties. For example, they provide an excellent way of understanding the multiplicative structure of the cohomology ring of the flag variety. In this context, it is essential to understand how Schubert varieties intersect in a general position. 
A Richardson variety in a flag variety is the intersection of a Schubert variety and an opposite Schubert variety.  In \cite{deodhar1985some} and  \cite{richardson1992intersections}, the fundamental properties of these varieties are studied, including their irreducibility.  
On the other hand, degeneration techniques play a significant role in understanding a given algebraic variety in terms of well-studied algebraic varieties like toric varieties. It is desirable
to extend the powerful machinery of toric varieties to a larger class of varieties by studying degenerations of a general variety to a toric variety. A toric degeneration of a variety $X$ is a flat family $f: \mathcal X \to  \mathbb A^{\!{1}}$, where the special fiber (the fiber over zero) is a toric variety and all other fibers are isomorphic to $X$. In a toric degeneration, some of the algebraic invariants of $X$ will be the same for all the fibers. Hence, we can do the computations on the toric fiber.

The study of toric degenerations of flag varieties was started in \cite{gonciulea1996degenerations} by Gonciulea and Lakshmibai using
standard monomial theory. In \cite{KOGAN}, Kogan and Miller obtained toric degenerations of flag varieties using geometric methods. Moreover, in \cite{caldero2002toric} Caldero constructed such toric degenerations using tools from representation theory. The article \cite{fang2017toric} by Fang, Fourier and Littelmann
contains more details on recent developments and provides an excellent overview of toric degenerations of flag varieties.We note that for Flag~4 and 5, toric degenerations obtained from Gr\"obner degeneration have been explicitly computed, see \cite{bossinger2017computing}.
In \cite{kim2015richardson}, the author studied the Gr{\"o}bner degenerations of Richardson varieties inside the flag variety, where the special fiber is the toric variety of the Gelfand-Tsetlin polytope; this is a generalization of the results of \cite{KOGAN}. In \cite{morier2008geometric}, Morier-Genoud obtained semi-toric degenerations of Richardson varieties by methods of Caldero in \cite{caldero2002toric}, namely the Berenstein-Littelmann-Zelevinsky string parametrization of canonical basis \cite{littelmann1998cones,berenstein1999tensor}.


\subsection{Our contributions.}
{
We study toric degenerations 
of Grassmannians and flag varieties arising from combinatorial objects called block diagonal matching fields introduced in \cite{KristinFatemeh} and generalized in \cite{OllieFatemeh, OllieFatemeh2}. Associated to each matching field is a weight vector that produces a one-parameter family, by Gr\"obner degeneration, where the special fiber is the initial ideal of the Pl\"ucker ideal. For Grassmannians and flag varieties, these initial ideals are called matching field ideals, whose generators can be understood combinatorially by tableaux. We use matching field ideals to study toric degenerations of various subvarieties of Grassmannians and flag varieties including Schubert, opposite Schubert and Richardson varieties. The defining ideals of these subvarieties are obtained from Pl\"ucker ideals by setting some particular variables to zero. Similarly, we define \textit{restricted matching field ideals} associated to Schubert and Richardson varieties that are variants of matching field ideals obtained by setting the same collection of variables to zero.
We aim to classify the toric, i.e. binomial and prime, restricted matching field ideals, which in this case is equivalent to showing that these ideals are monomial-free. We use techniques from standard monomial theory to construct monomial bases for restricted matching field ideals and initial ideals of Richardson varieties. We show that if a restricted matching field ideal is monomial-free then it is toric and coincides with the initial ideal of a Richardson, Schubert or opposite Schubert variety inside the Grassmannian or flag variety.} 
Our results generalizes the previous results on Schubert varieties from \cite{OllieFatemeh,OllieFatemeh3} and include, as a special case, the Gelfand-Tsetlin degeneration, also known as diagonal or antidiagonal Gr\"obner degeneration.

\smallskip
\noindent A similar approach has been taken in \cite{kim2015richardson}, where the author describes the semi-toric degenerations of Richardson varieties in flag varieties, where each Richardson variety is degenerated to a union of toric varieties.  We notice that the corresponding ideals of many such degenerations are either zero or contain monomials, hence their corresponding varieties are not toric.
Hence, we aim to characterize nonzero monomial-free ideals. In particular, we explicitly describe degenerations of Schubert, opposite Schubert and Richardson varieties inside Grassmannians and flag varieties, and provide a complete characterization for permutations leading to zero or monomial-free ideals.
For example, if the size of the index set in  \cite[Corollary V.26]{kim2015richardson} is one, then we
obtain a toric degeneration corresponding to a unique face of the Gelfand-Tsetlin polytope.
If the size of the index set is greater than one, our calculations show that some of these semi-toric degenerations are in fact toric. These cases arise when either the corresponding Schubert or opposite Schubert variety does not degenerate to a toric variety.
We also remark that the degenerations in \cite{kim2015richardson} correspond to the antidiagonal Gr\"obner degenerations inside flag varieties, however we consider a one-parametric degenerations of Richardson varieties inside both Grassmannians and flag varieties which contain the diagonal case (which is isomorphic to the antidiagonal case) as a special case.


\subsection{Outline of the paper.}
In Section~\ref{sec:prim} we fix our notation, and we recall the definitions of the main objects under study such as Grassmannians, flag, Schubert, opposite Schubert and Richardson varieties.  Sections~\ref{sec:gr} and ~\ref{sec:flag} contain our main results 
characterizing non-zero monomial-free \textit{restricted matching field ideals}, see Definitions~\ref{def:matching_field_ideal_grassmannian} and \ref{def:matching_field_ideal_flag}. In Figure~\ref{figure:Grassmannian_Flowchart} 
we provide a pictorial summary of some of our main results in which we show our inductive process to obtain zero and monomial-free ideals. In Table~\ref{table:flag_calculation} we summarise our computational results, namely the number of zero, toric and non-toric ideals  arising from our constructions. 
In Section~\ref{sec:kim} we perform calculations for $\Flag_3$ and $\Flag_4$ and compare our results to those in \cite{kim2015richardson}. 
In \S\ref{sec:standard_monomial} we study monomial bases of Richardson varieties and prove that if the restricted matching field ideal is monomial-free then it coincides with the initial ideal of the Richardson variety, see Theorems~\ref{thm:toric_degen_std_monomial_ell_0_gr}, \ref{thm:toric_degen} and Corollary~\ref{cor:toric_degen_std_monomial_gr}.

\medskip
\noindent{\bf Acknowledgement.} NC is supported by the SFB/TRR 191 ``Symplectic structures in Geometry, Algebra and Dynamics".
He gratefully acknowledges support from the Max Planck Institute for Mathematics in Bonn, and the EPSRC Fellowship EP/R023379/1 who supported his multiple visits to Bristol. 
OC was supported by EPSRC Doctoral Training Partnership (DTP) award EP/N509619/1.
FM was partially supported by EPSRC Early Career Fellowship EP/R023379/1 and BOF Starting Grant from Ghent University. This project began during the ``Workshop for Young Researchers"
in Cologne. We would like to thank the organizers of the meeting, Lara Bossinger and Sara Lamboglia, and in particular Stephane Launois for supporting the authors visit via the EPSRC grant EP/R009279/1.


\section{Preliminaries}\label{sec:prim}

Throughout we fix a field $\mathbb{K}$ with char$(\mathbb{K})=0$. We are mainly interested in the case when 
$\mathbb{K}=\mathbb{C}$, {the field of complex numbers}. We let $[n]$ be the set $\{1, \dots, n \}$ and by $S_n$ we denote the symmetric group on $[n]$. A permutation $w \in S_n$ is written $w = (w_1, \dots, w_n)$ where $w_i = w(i)$ for each $1 \le i \le n$. 
We fix 
$w_0:=(n, n-1, \ldots, 2, 1)$ for the longest product of adjacent transpositions in $S_n$. The permutations of $S_n$ act naturally on the left of subsets of $[n]$. So, for each $I= \{i_1, \ldots, i_k\} \subset [n]$, we have $w_0I =\{n+1-i_1,\ldots, n+1-i_k\}$ which is obtained by applying the permutation $w_0$ element-wise to $I$.
We now recall the definitions of Grasmmannian and flag varieties along with their Schubert, opposite Schubert and Richardson varieties.

\subsection{Flag varieties.}\label{sec:flag_def}  
A full flag is a sequence of vector subspaces of $\mathbb{K}^n$: $$\{0\}= V_0\subset V_1\subset\cdots\subset V_{n-1}\subset V_n=\mathbb{K}^n$$ where ${\rm dim}_{\mathbb{K}}(V_i) = i$. The set of all full flags is called the flag variety denoted by $\Flag_n$, which is naturally embedded  in a product of Grassmannians using the Pl\"ucker variables.
Each point in the flag variety can be represented by an $n\times n$ matrix $X=(x_{i,j})$ whose first $k$ rows span $V_k$. Each $V_k$ corresponds to a point in the Grassmannian $\Gr(k,n)$. The ideal of $\Flag_n$, denoted by $I_n$ is the kernel of the polynomial map
\[
\varphi_n:\  \mathbb{K}[P_J:\ \varnothing\neq J\subsetneq \{1,\ldots,n\}]\rightarrow \mathbb{K}[x_{i,j}:\ 1\leq i\leq n-1,\ 1\leq j\leq n]
\]
sending each variable
$P_J$ to the determinant of the submatrix of $X$ with row indices $1,\ldots,|J|$ and column indices in $J$. We call the variables $P_J$ of the ring  Pl\"ucker variables and their images $\varphi_n(P_J)$ Pl\"ucker forms. 
For each $\alpha=(\alpha_J)_{J}$ in $\mathbb{Z}_{\geq 0}^{{n\choose k}}$ we fix the notation ${\bf P}^{{\bf \alpha}}$ denoting the monomial $\prod_{J}P_J^{\alpha_J}$.
Similarly, we define the {\em Pl\"ucker ideal} of $\Gr(k,n)$, denoted by $G_{k,n}$ as the kernel of the map $\varphi_n$ restricted to the ring with variables $P_J$ with $|J|=k$.

\subsection{Schubert varieties.} \label{sec:schubert_def}
Let SL$(n,\mathbb K)$ be the group of complex $n\times n$ matrices with determinant $1$,  and let $B$ be its subgroup consisting of upper triangular matrices.  There is a natural transitive action of SL$(n, \mathbb K)$ on the flag variety $\Flag_n$ which identifies $\Flag_n$ with the set of left cosets SL$(n, \mathbb K)/B$, since $B$ is the stabiliser of the standard flag
$0\subset \langle e_1\rangle \subset\cdots \subset \langle e_1, \ldots, e_n\rangle=\mathbb K^n $. Given a permutation $w\in S_n$, we denote by $\sigma_w$ the $n\times n$ permutation matrix with 1's in positions $(w(i),i)$ for all $i$.
By the Bruhat decomposition, we can write the aforementioned set of cosets as 
${\rm SL}(n,\mathbb K)/B= \coprod_{w\in S_n}B\sigma_wB/B.$
Given a permutation $w$, its Schubert variety is $$X_w=\overline{B\sigma_wB/B} \subseteq \Flag_n$$ which is the Zariski closure of the corresponding cell $B\sigma_wB/B$ in the Bruhat decomposition. 
The ideal of $X_w$ is obtained from $I_n$ by setting $P_I$ to zero for each $I \in S_w$
where
$$
S_w=\{I : I\subset[n] \ \text{with} \ I=(i_1,\ldots,i_{|I|})\not\leq (w_{\ell_1},w_{\ell_2},\ldots,w_{\ell_{|I|}})\}
$$
and $w_{\ell_1}<w_{\ell_2}<\cdots<w_{\ell_{|I|}}$ is obtained by ordering the first $\lvert I \rvert$ entries of $w$. Note that $\{a_1 < \dots < a_m \} \le \{b_1 < \dots < b_m \}$ means that $a_i \le b_i$ for each $1 \le i \le m$. See Example~\ref{def:sets}.

\smallskip

Similarly, we can study the Schubert varieties inside $\Gr(k,n)$. The permutations giving rise to distinct Schubert varieties in $\Gr(k,n)$ are of the form $w=(w_1,\ldots,w_n)$ 
where  $w_{1}<\cdots<w_k$, $w_{k+1}<\cdots<w_n$. Therefore, it is enough to record the permutations of $S_n$ as $w=(w_1,\ldots,w_k)$ which will be called a {\em Grassmannian permutation}. Suppose that $w$ is a Grassmannian permutation, we define $S_{w,k}=S_w\cap\{I:\ |I|=k\}$. The elements in $S_{w,k}$ correspond to the variables which vanish in the ideal of the Schubert variety of the Grassmannian, see Definition~\ref{def:S}.


\subsection{Opposite Schubert varieties.}
Similar to the Schubert case, $\Flag_n$ can be decomposed as $\Flag_n=\coprod_{v\in S_n}B^-\sigma_vB/B$ where $B^-$ is subgroup of ${\rm SL}(n, \mathbb K)$ of lower triangular matrices.
For $v\in S_n$, we define its opposite Schubert variety as $$X^v=\overline{B^-\sigma_vB/B}\subset \Flag_n.$$
Let $w_0$ be the permutation $(n, n-1, \ldots, 2, 1)$. As $B^-=\sigma_{w_0}B\sigma_{w_0}$, we can observe the opposite Schubert variety $X^v$ is a translate $w_0X_{w_0v}$ of the Schubert variety $X_{w_0v}$ in $\Flag_n$.

\subsection{Richardson varieties.}
Let $v, w\in S_n$. We denote the corresponding Schubert and opposite Schubert varieties by $X_w$ and $X^v$ respectively. Then the Richardson variety $X_w^v$ is defined as $X_w\cap X^v$.  We recall that $X_w^v\neq \emptyset$ if and only if $v\leq w$ (see for example \cite{brion2003geometric}).
We set 
$$
T_w^v=\{I : I\subset[n] \ \text{with} \ (v_{k_1},v_{k_2},\ldots,v_{k_{|I|}})\leq I=(i_1,\ldots,i_{|I|})\leq (w_{\ell_1},w_{\ell_2},\ldots,w_{\ell_{|I|}})\},
$$
where $w_{\ell_1}<w_{\ell_2}<\cdots<w_{\ell_{|I|}}$ and $v_{k_1}<v_{k_2}<\cdots<v_{k_{|I|}}$ are obtained by ordering the first $\lvert I \rvert$ entries of $v$ and $w$. 
The associated ideal of the Richardson variety $X_w^v$ is
{
$$I(X_w^v) = (I_n + \langle P_I: I\in S_w^v\rangle) \cap \mathbb K[P_I : I \in T_w^v] = (I(X_w)+I(X^v)) \cap \mathbb K[P_I : I \in T_w^v],$$
}
where $I_n$ is the Pl\"ucker ideal of $\Flag_n$ and $S_w^v=\{I:\ \emptyset\subset I\subset [n]\}\backslash T_{w}^v $. Similarly, the associated ideal of the Richardson variety $X_w^v$ inside Grassmannian $\Gr(k,n)$ is
{
$$(G_{k,n}+\langle P_I: I\in S_{w,k}^v\rangle) \cap \mathbb K[P_I : I \in T_{w,k}^v],$$ where $T_{w,k}^v = \{I \subseteq [n] : |I| = k \} \backslash S_{w,k}^v$.
}
We now give an example of variables and refer the reader to \cite[\S 3.4.]{kreiman2002richardson} for more details.

\begin{example}\label{def:sets}
Let $n = 4$. Consider the permutations $v = (2314)$ and $w = (4231)$. The subsets of $[n]$ in $T^v_w$ of size one are given by those entries that lie between $v_1 = 2$ and $w_1 = 4$, which are $2, 3$ and $4$. The subsets of size two are those that lie between $\{v_1, v_2 \} = 23$ and $ \{w_1, w_2 \} = 24$ which are $23$ and $24$. The subsets of size three are those which lie between $\{v_1, v_2, v_3 \} = 123$ and $\{ w_1, w_2, w_3\} = 234$ which are all possible three-subsets. So the sets $T^v_w$ and $S^v_w$ are,
\[
{T}^{(2314)}_{(4231)} = \{2,3,4, 23,24,123,124,134,234 \} \text{ and } S^{(2314)}_{(4231)} = \{1, 12, 13, 14, 34\}.
\]
Note that $S^{(2314)}_{(4231)} = S^{(2314)}\cup S_{(4231)}$ where 
\[
S^{(2314)} = \{1, 12, 13, 14\} \text{ and } S_{(4231)} = \{34\}.
\]
\end{example}

\subsection{Matching fields.}\hspace{0.3mm}\label{sec:prelim_matching_field}
{
A matching field is a combinatorial object that encodes a weight vector for the ring $\mathbb K[P_J]$. These weight vectors are induced from a weight on the ring $\mathbb K[x_{i,j}]$, see Definition~\ref{example:block}. The aim of this section is to define a particular family of matching fields called block diagonal matching fields. We will show that these matching fields give rise to toric degenerations of Richardson varieties inside  Grassmannians and flag varieties.
}

{
A matching field also admits the data of a monomial map $\mathbb K[P_J] \rightarrow \mathbb K[x_{i,j}]$ which takes each $P_J$ to a term of the Pl\"ucker form $\varphi_n (P_J) = \det(X_J) \in \mathbb K[x_{i,j}]$. The ideal of the matching field is the kernel of this monomial map, which is a toric (prime binomial) ideal. Whenever a block diagonal matching field gives rise to a toric initial ideal, the corresponding toric ideal equals to the matching field ideal.
}

\begin{definition}\label{example:block}
Given integers $k,n$ and 
$0\leq \ell\leq n$, we fix the $k \times n$ matrix $M_\ell$ with entries:
\[
M_\ell(i,j)=\begin{cases} 
      (i-1)(n-j+1) & \textrm{if } i\neq 2, \\
      \ell-j+1 & \textrm{if } i=2, 1\leq j\leq \ell,\\
      n-j+\ell+1 & \textrm{if } i=2, \ell< j\leq n.\\
   \end{cases}
\]
Let $X=(x_{i,j})$ be a $k \times n$ matrix of indeterminates. For each $k$-subset $J$ of $[n]$, the initial term of the Pl\"ucker form $\varphi_n(P_J) \in \mathbb{K}[x_{ij}]$ denoted by $\ini_{M_\ell}(P_J)$ is the sum of all terms in $\varphi_n(P_J)$ of the lowest weight, where the weight of a monomial $\bf m$ is the sum of entries in $M_\ell$ corresponding to the variables in $\bf m$. We denote $M_\ell(\textbf{m})$ for the weight of $\textbf{m}$. We prove below that $\ini_{M_\ell}(P_J)$ is a monomial for each subset $J \subseteq [n]$.
The weight of each variable $P_J \in \mathbb K[P_J]$ is defined as the weight of each term of $\textrm{in}_{M_\ell}(P_J)$ with respect to $M_\ell$, and it is called {\em the weight induced by $M_\ell$}. We denote $\wb_\ell$ for the weight vector induced by $M_\ell$.
\end{definition}

\begin{lemma}\label{lem:wt_add_const}
{
Let $M = (m_{i,j})$ and $M' = (m'_{i,j})$ be $k \times n$ weight matrices. Suppose there exists $p \in \{ 1, \dots, n\}$ such that $m_{i,j} = m'_{i,j}$ for all $i \in [k]$ and $j \in [n] \backslash p$. Suppose that there exists $c \in \RR$ such that $m'_{i,p} = m_{i,p} + c$ for all $i \in \{1, \dots, k \}$. Then the initial terms of the Pl\"ucker forms are the same with respect to $M$ and $M'$.}
\end{lemma}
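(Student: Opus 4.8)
The plan is to analyze the monomial expansion of the Pl\"ucker form $\varphi_n(P_J)=\det(X_J)$, where $X_J$ denotes the square submatrix of $X$ on rows $1,\dots,|J|$ and the columns indexed by $J$, and to observe that replacing $M$ by $M'$ either leaves the weight of every monomial of $\varphi_n(P_J)$ unchanged or shifts all of them by the same constant. Recall from the Leibniz expansion that each monomial occurring in $\det(X_J)$ is, up to sign, of the form $\prod_{i=1}^{|J|}x_{i,\tau(i)}$ for a bijection $\tau$ from the row set $\{1,\dots,|J|\}$ to the column set $J$; in particular every such monomial is squarefree and contains exactly one variable from each column of $X_J$. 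This last combinatorial fact is what makes the argument work.

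First I would dispose of the case $p\notin J$. Here column $p$ of $X$ does not occur in $X_J$, so no variable $x_{i,p}$ appears in any monomial of $\varphi_n(P_J)$. Since $M$ and $M'$ agree on all entries outside column $p$, we get $M(\mathbf m)=M'(\mathbf m)$ for every monomial $\mathbf m$ of $\varphi_n(P_J)$, and hence $\ini_M(P_J)=\ini_{M'}(P_J)$ immediately.

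Next I would treat the case $p\in J$. By the observation above, every monomial $\mathbf m=\prod_{i}x_{i,\tau(i)}$ of $\varphi_n(P_J)$ contains exactly one variable from column $p$, namely $x_{i_0,p}$ with $i_0=\tau^{-1}(p)$. Since $m'_{i,p}=m_{i,p}+c$ for all $i$ while all other entries coincide, this gives
\[
M'(\mathbf m)=M(\mathbf m)+c
\]
for every monomial $\mathbf m$ of $\varphi_n(P_J)$. Adding the same constant $c$ to the $M$-weight of every monomial does not change which monomials attain the minimum, so the set of lowest-weight terms, i.e.\ $\ini_M(P_J)$, is the same for $M$ and $M'$. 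Since $J$ was arbitrary, this proves the claim.

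The argument is short and there is no real obstacle; the only point requiring care is the uniformity of the weight shift, which rests on the elementary fact that a determinant monomial meets each column of the chosen submatrix in exactly one entry, so that when $p\in J$ the shift is the fixed constant $c$ rather than something depending on $\mathbf m$.
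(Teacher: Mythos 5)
Your proof is correct and follows essentially the same approach as the paper: split into the cases $p\notin J$ and $p\in J$, and in the latter case use that each monomial of $\det(X_J)$ is squarefree and meets column $p$ exactly once, so all weights shift by the same constant $c$.
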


\begin{proof}
{
Let $J$ be a $k$-subset of $[n]$. If $J$ does not contain $p$ then the submatrices of $M$ and $M'$ with columns indexed by $J$ coincide, hence the initial terms of the Pl\"ucker form $\varphi_n(P_J)$ with respect to $M$ and $M'$ are the same. On the other hand, if $J$ contains $p$ then consider each monomial $\textbf{x}$ in the Pl\"ucker form $\varphi_n(P_J)$. The monomial is squarefree and contains a unique variable of the form $x_{i,p}$ for some $i \in \{1, \dots, k \}$. Therefore $M'(\textbf{x}) = M(\textbf{x}) + c$. Therefore the initial term of $\varphi_n(P_J)$ is the same with respect to $M$ and $M'$.}
\end{proof}

{By the same method, one can prove an analogous result for weight matrices which differ by a constant in a particular row.}

\begin{proposition}\label{prop:unique}
{
Let $M = M_0$ be the $k \times n$ weight matrix and $J = \{j_1 < \dots < j_k \} \subset [n]$ be a $k$-subset. Then the initial term $\textrm{in}_M(P_J)$ of the Pl\"ucker form is the leading diagonal term, in particular it is a monomial.}
\end{proposition}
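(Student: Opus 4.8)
The plan is to analyze the weight $M_0(\mathbf{x})$ of a generic monomial $\mathbf{x} = x_{1,j_{\tau(1)}} x_{2,j_{\tau(2)}} \cdots x_{k,j_{\tau(k)}}$ appearing in the expansion of $\det(X_J)$, where $\tau$ ranges over the symmetric group on $\{1,\dots,k\}$ and $J = \{j_1 < \dots < j_k\}$. By Definition~\ref{example:block} with $\ell = 0$, the entry $M_0(i,j) = (i-1)(n-j+1)$ for $i \neq 2$, and for $i = 2$ the third case applies (since $\ell = 0 < j$ always), giving $M_0(2,j) = n - j + 1$; note this agrees with the formula $(i-1)(n-j+1)$ at $i = 2$ as well, so in fact $M_0(i,j) = (i-1)(n-j+1)$ for all $i \in [k]$, $j \in [n]$. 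Thus $M_0(\mathbf{x}) = \sum_{i=1}^{k} (i-1)(n - j_{\tau(i)} + 1)$.

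First I would rewrite this sum. Setting $c_m := n - j_m + 1$ for $m = 1, \dots, k$, we have $c_1 > c_2 > \dots > c_k$ since $j_1 < \dots < j_k$. The weight becomes $M_0(\mathbf{x}) = \sum_{i=1}^{k} (i-1)\, c_{\tau(i)}$. This is a scalar product of the fixed increasing vector $(0, 1, 2, \dots, k-1)$ with the permuted vector $(c_{\tau(1)}, \dots, c_{\tau(k)})$. By the rearrangement inequality, $\sum_i (i-1) c_{\tau(i)}$ is minimized precisely when the sequence $c_{\tau(1)}, \dots, c_{\tau(k)}$ is sorted in increasing order, i.e. when $c_{\tau(1)} < c_{\tau(2)} < \dots < c_{\tau(k)}$; since the $c_m$ are distinct this forces $\tau(i) = k + 1 - i$, a unique minimizer. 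Translating back: the minimizing monomial uses column $j_{k+1-i}$ in row $i$, i.e. row $1$ gets the largest column index $j_k$, row $2$ gets $j_{k-1}$, and so on — this is the leading diagonal term (the transversal that is "anti-sorted" relative to the usual order). Hence $\init_M(P_J)$ is a single monomial.

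The main point requiring care is confirming which transversal the phrase "leading diagonal term" designates and that the rearrangement-inequality minimizer matches it; this is bookkeeping about the chosen ordering of columns within $J$ rather than a genuine obstacle. One should also note the degenerate cases $k = 1$ (the Pl\"ucker form is a single variable, nothing to prove) and observe that strictness of the inequalities $c_1 > \dots > c_k$ — which is exactly what guarantees uniqueness of the minimizer — comes for free from $J$ being a set of distinct indices. Finally, although the statement is phrased for $M = M_0$, one could remark that combined with Lemma~\ref{lem:wt_add_const} (applied column-by-column or row-by-row) the same conclusion transfers to any weight matrix differing from $M_0$ by constants along rows and columns, though that extension is not needed here.
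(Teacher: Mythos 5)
Your rearrangement-inequality approach is a genuine alternative to the paper's inductive argument and, if carried out correctly, gives a shorter and cleaner proof. Unfortunately, you have applied the rearrangement inequality in the wrong direction. For the increasing sequence $a_i = i-1$, the sum $\sum_i a_i c_{\tau(i)}$ is minimized when $c_{\tau(1)}, \dots, c_{\tau(k)}$ is arranged in \emph{decreasing} order, not increasing order as you state: the minimum pairs small values of $a$ with large values of $c$. Since $c_m = n - j_m + 1$ is itself strictly decreasing in $m$, the unique minimizer is $\tau = \mathrm{id}$, which yields the leading diagonal term $x_{1,j_1} x_{2,j_2} \cdots x_{k,j_k}$ exactly as the paper proves. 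Your permutation $\tau(i) = k+1-i$ instead picks out the anti-diagonal $x_{1,j_k} \cdots x_{k,j_1}$, which in fact has the \emph{largest} weight among all transversals. A check at $k = 2$ makes the error concrete: with $c_1 > c_2$, the identity gives $0 \cdot c_1 + 1 \cdot c_2 = c_2$, while the transposition gives $0 \cdot c_2 + 1 \cdot c_1 = c_1 > c_2$.

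Your closing parenthetical, which re-labels the anti-sorted transversal as ``the leading diagonal term,'' is what lets the sign error pass unnoticed; it contradicts the paper's usage --- see Example~\ref{ex:diag}, where $\textrm{in}_{M_0}(P_I) = x_{1i}x_{2j}x_{3k}$ for $I = \{i < j < k\}$, and the induced weight $\mathbf{w}_0(P_{123}) = 10$ matches the diagonal $x_{11}x_{22}x_{33}$, not the anti-diagonal $x_{13}x_{22}x_{31}$ (whose weight is $14$). Once the direction of the rearrangement inequality is reversed, your argument is correct and is arguably more transparent than the paper's induction on $k$, which rules out $\sigma(k) < k$ by a direct weight comparison and then recurses on the top $k-1$ rows.
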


\begin{proof}
{
We show that the leading diagonal term of the Pl\"ucker form $\varphi_n(P_J)$ is the initial term $\textrm{in}_M(P_J)$. We proceed by induction on $k$. If $k = 1$ then the result holds trivially. So assume $k > 1$. We have
\[
\varphi_n(P_J) = \sum_{\sigma \in S_k} x_{1,j_{\sigma(1)}} \cdots x_{k,j_{\sigma(k)}}.
\]
For each $\sigma \in S_k$ such that $x_{1,j_{\sigma(1)}} \cdots x_{k,j_{\sigma(k)}}$ has minimum weight with respect to $M$, consider the value $\sigma(k) \in [k]$. 
Suppose $\sigma(k) = p$ for some $p \in [k]$. Then, by induction, we have that the leading term of the $\varphi_n(P_{J \backslash j_p})$ is the leading diagonal term. So $\sigma(1) = 1, \dots, \sigma(p-1) = {p-1}, \sigma(p) = {p+1}, \dots, \sigma(k-1) = k$ and $\sigma(k) = p$, therefore the weight of the monomial is
\begin{align*}
M(x_{1,j_{\sigma(1)}} \cdots x_{k,j_{\sigma(k)}}) &= 
\sum_{i = 1}^{p-1} (i-1)(n - j_i +1) +
\sum_{i = p+1}^k (i-2)(n - j_i + 1)
+ (k - 1)(n - j_p + 1) \\
&= \sum_{i = 1}^{k} (i-1)(n - j_i +1) - \sum_{i = p+1}^k (n-j_i + 1) + (k-p)(n - j_p +1) \\
&= M(x_{1,j_1} \cdots x_{k,j_k}) + \sum_{i = p+1}^k j_i - (k - p)j_p
\end{align*}
Note that $j_p < j_{p+1} < \dots < j_k$ and so $\sum_{i = p+1}^k j_i - (k - p)j_p > 0$. Therefore, if $\sigma(k) < k$ then the weight $M(x_{1,j_{\sigma(1)}} \cdots x_{k,j_{\sigma(k)}})$ is not minimum. Hence $\sigma(k) = k$ and we are done by induction.}
\end{proof}

\begin{proposition}\label{prop:unique_ell}
{
Let $\ell \in \{1, \dots, n-1 \}$, $M = M_\ell$, the $k \times n$ weight matrix, and $J = \{j_1 < \dots < j_k \} \subset [n]$ be a $k$-subset. Suppose $k \ge 2$, then the initial term of the Pl\"ucker form $\varphi_n(P_J)$ is given by
\[
\textrm{in}_M(P_J) = \left\{
\begin{tabular}{@{}ll}
     $x_{1, j_1}x_{2, j_2}x_{3, j_3}\dots x_{k, j_k}$ & if $j_1 > \ell$ or $j_2 \le \ell$, \\
     $x_{1, j_2}x_{2, j_1}x_{3, j_3} \dots x_{k, j_k}$ & otherwise. 
\end{tabular}
\right.
\]
In particular the leading term is a monomial.
}
\end{proposition}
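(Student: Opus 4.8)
The plan is to follow the same strategy as in the proof of Proposition~\ref{prop:unique}, namely to locate, among the $k!$ diagonal terms of $\varphi_n(P_J)$, the one of minimal weight with respect to $M_\ell$. First I would record how $M_\ell$ differs from $M_0$: the two matrices agree in all rows $i \neq 2$, and in row $2$ we have $M_\ell(2,j) = M_0(2,j) + \ell$ for $j \le \ell$ and $M_\ell(2,j) = M_0(2,j) + (\ell+1)$ for $j > \ell$ — more precisely, writing out $M_0(2,j) = n-j+1$, one checks $M_\ell(2,j) - M_0(2,j)$ equals $\ell - (n - j + 1) + (n-j+1)\cdot 0$... rather than belabour this, the clean statement I would isolate as a sub-claim is: for a monomial $\mathbf{x}$ using the variable $x_{2,c}$ in column $c$, we have $M_\ell(\mathbf{x}) = M_0(\mathbf{x}) + f(c)$ where $f(c)$ depends only on whether $c \le \ell$ or $c > \ell$, and $f(c) > f(c')$ whenever $c \le \ell < c'$. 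So \emph{moving the row-$2$ variable from a column $> \ell$ to a column $\le \ell$ strictly decreases the weight}; moving it between two columns on the same side of $\ell$ changes the weight by exactly the same amount it would under $M_0$.

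Next I would combine this with Proposition~\ref{prop:unique}. For a diagonal term $x_{1,j_{\sigma(1)}}\cdots x_{k,j_{\sigma(k)}}$, split off the row-$2$ contribution: the remaining $k-1$ variables lie in rows $\neq 2$, where $M_\ell$ and $M_0$ agree, and by the argument of Proposition~\ref{prop:unique} (applied to the $(k-1)$-subset obtained by deleting the column used in row $2$) the $M_0$-weight of those $k-1$ variables is minimised precisely when rows $1,3,4,\dots,k$ read off the remaining columns of $J$ in increasing order. Hence the $M_\ell$-optimal $\sigma$ must have this "sorted" shape, and the only freedom is the choice of which $j_a$ goes in row $2$ — call it $j_a$ — with rows $1,3,\dots,k$ then forced. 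So I reduce to minimising, over $a \in \{1,\dots,k\}$, a one-parameter expression: the weight is $M_0(\text{sorted term with } j_a \text{ in row } 2) + f(j_a)$.

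Then I would carry out this one-variable optimisation. The $M_0$-part is handled exactly as in Proposition~\ref{prop:unique}: among sorted terms, putting $j_a$ with $a \ge 2$ into row $2$ rather than $j_1$ costs $M_0$-weight (it is never beneficial for $M_0$), while the correction term $f$ can only help if $j_a \le \ell < j_1$, which is impossible since $j_1 < j_a$. Thus the only competition is between $a = 1$ (row $2$ gets $j_1$, row $1$ gets $j_2$: this is the term $x_{1,j_2}x_{2,j_1}x_{3,j_3}\cdots x_{k,j_k}$) and $a = 2$ (row $2$ gets $j_2$, row $1$ gets $j_1$: the "leading diagonal" term $x_{1,j_1}x_{2,j_2}x_{3,j_3}\cdots x_{k,j_k}$). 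Comparing these two: the $M_0$-weight difference between them is the quantity computed in Proposition~\ref{prop:unique} (a positive amount proportional to $j_2 - j_1$ — explicitly $M_0(\text{diag}) < M_0(\text{swap})$), while the $f$-difference is $f(j_1) - f(j_2)$, which is $0$ when $j_1,j_2$ are both $\le \ell$ or both $> \ell$, and is $> 0$ exactly when $j_1 \le \ell < j_2$. So the swap term $x_{1,j_2}x_{2,j_1}x_{3,j_3}\cdots$ wins if and only if $j_1 \le \ell$ and $j_2 > \ell$, i.e. exactly the "otherwise" case; in all other cases (that is, $j_1 > \ell$, or $j_2 \le \ell$) the leading diagonal term wins. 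Finally I would note there is a genuine tie to rule out when the $f$-gain exactly cancels the $M_0$-loss; a short check of the explicit values shows the gain $f(j_1)-f(j_2)$ is a single unit-type jump that strictly exceeds the $M_0$-cost precisely in the stated regime, so the minimiser is unique and the initial term is a monomial.

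The main obstacle I anticipate is purely bookkeeping: writing $f(c)$ cleanly and verifying the strict inequality in the tie-breaking comparison of the last paragraph, since one must plug in the piecewise definition of $M_\ell(2,j)$ and confirm that the "swap gain" from row $2$ (which is a bounded quantity, roughly $\ell - $ something versus $n-j+\ell+1$) dominates the "sort cost" $\sum_{i>1} j_i - (k-1)j_1$-type term from Proposition~\ref{prop:unique} exactly on $\{j_1 \le \ell < j_2\}$ and never strictly elsewhere — there is no conceptual difficulty, but it requires care with the three cases $j_1,j_2 \le \ell$; $j_1 \le \ell < j_2$; $\ell < j_1 < j_2$. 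Everything else is an immediate reduction to Proposition~\ref{prop:unique} via Lemma~\ref{lem:wt_add_const}.
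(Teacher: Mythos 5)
Your route is genuinely different from the paper's, and in outline it is sound and arguably cleaner. The paper handles $j_1 \le \ell$ by induction on $k$: it peels off $p = \sigma(k)$, runs a case split on the position of $\ell$ relative to the smallest two elements of $J' = J \setminus j_p$, and within the middle case has further sub-cases $p = 1$, $p = 2$, $3 \le p \le k-1$ -- a fairly heavy tree. You instead factor the weight as $M_\ell(\mathbf{x}) = M_0(\mathbf{x}) + f(c)$ where $c$ is the column used in row $2$ and $f$ is the step function $f(c) = \ell - n$ for $c \le \ell$, $f(c) = \ell$ for $c > \ell$. This reduces the whole problem to (a) for a fixed row-$2$ column $j_a$, a rearrangement-type argument showing the remaining columns must be sorted into rows $1, 3, \dots, k$, and (b) a one-parameter search over $a \in [k]$. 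That structural reduction buys you a conceptually flatter proof, and does not appear in the paper.

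Two concrete errors in the write-up would need to be repaired before it stands. First, the stated $f$-inequality is backwards: $f(c) = \ell - n < \ell = f(c')$ for $c \le \ell < c'$, so $f(c) < f(c')$; the conclusion you draw (shifting the row-$2$ variable to a column $\le \ell$ lowers the weight) is still correct. Second, and more seriously, the baseline in the step that is supposed to eliminate $a \ge 3$ is wrong. You assert that "putting $j_a$ with $a \ge 2$ into row $2$ rather than $j_1$ costs $M_0$-weight", but the $M_0$-minimizer among sorted assignments is $a = 2$ (this is exactly Proposition~\ref{prop:unique}), not $a = 1$, and the sign of $W_0(a) - W_0(1)$ for $a \ge 3$ is indeterminate (e.g.\ $j = (1,2,10)$ gives $W_0(3) > W_0(1)$, while $j = (1,10,11)$ gives $W_0(3) < W_0(1)$). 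Taken literally, together with $f(j_a) \ge f(j_1)$, your claim would make $a=1$ the unconditional minimizer, contradicting the proposition. The fix is to compare to $a = 2$ throughout: for $a \ge 3$ one has $W_0(a) - W_0(2) = \sum_{i=2}^{a-1}(j_a - j_i) > 0$ and $f(j_a) - f(j_2) \ge 0$ (since $j_2 < j_a$ rules out $j_a \le \ell < j_2$), so $a \ge 3$ never wins; then the only remaining comparison is $W(1) - W(2) = (j_2 - j_1) + (f(j_1) - f(j_2))$, which equals $j_2 - j_1 > 0$ unless $j_1 \le \ell < j_2$, where it equals $(j_2 - j_1) - n \le -1 < 0$. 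This also corrects the "unit-type jump" language: the $f$-gain is exactly $n$, which strictly dominates the $M_0$-cost $j_2 - j_1 \le n-1$, so there is never a tie. Finally, a small caveat on step (a): what you need is the analogue of Proposition~\ref{prop:unique} for the row-coefficient sequence $0, 2, 3, \dots, k-1$ (a gap at $1$), which is not literally that proposition but follows from the same rearrangement argument with strictly increasing coefficients; it is worth stating this as a separate one-line lemma.
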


\begin{proof}
{
Suppose that $j_1 > \ell$. By definition, the weight matrices $M_\ell$ and $M_0$ differ only in the second row. The entries of the second row are
\begin{align*}
(M_0):      & \quad [n \ n-1 \ \dots \ 1], \\
(M_\ell) :  & \quad [\ell \ \ell-1 \ \dots \ 1 \ n \ n-1 \ \dots \ \ell+1].
\end{align*}
Consider the submatrices of $M_0$ and $M_\ell$ consisting of the columns indexed by $J$. Since $j_1 > \ell$ the second row entries differ by exactly $\ell$ in each respective position. And so by the row-version of Lemma~\ref{lem:wt_add_const}, the leading term of the Pl\"ucker form $\varphi_n(P_J)$ is the same with respect to $M_0$ and $M_\ell$. By Proposition~\ref{prop:unique}, the initial term is $\textrm{in}_M(P_J) = x_{1, j_1}x_{2, j_2}x_{3, j_3}\dots x_{k, j_k}$.
}

{
Suppose that $j_1 \le \ell$. 
We will prove the result by induction on $k$. For the case $k = 2$ we have
\[
M_\ell = 
\begin{bmatrix}
0    & 0      & \dots & 0 & 0 & 0   & \dots & 0      \\
\ell & \ell-1 & \dots & 1 & n & n-1 & \dots & \ell+1 
\end{bmatrix}.
\]
If $j_1 > \ell$ or $j_2 \le \ell$ then the leading term of the Pl\"ucker form $\varphi_n(P_J)$ is the leading diagonal term, i.e. $\textrm{in}_M(P_J) = x_{1, j_1} x_{2, j_2}$. Otherwise we have $j_1 \le \ell$ and $j_2 > \ell$, and so the leading term of the Pl\"ucker form is the antidiagonal term, i.e. $\textrm{in}_M(P_J) = x_{1, j_2} x_{2, j_1}$.
}

{
Suppose $k > 2$. For each $\sigma \in S_k$ such that $x_{1, j_{\sigma(1)}} \dots x_{k, j_{\sigma(k)}}$ has minimum weight with respect to $M_\ell$, consider the value $p = \sigma(k) \in [k]$. Let $J' = J \backslash j_p = \{j_1' < j_2' < \dots < j_{k-1}' \}$. There are two cases for $J'$, either $j_2' \le \ell$ or $j_2' > \ell$.
}

\smallskip

{
\textbf{Case 1.} Assume $j_2' \le \ell$. By induction we have $\textrm{in}_M(P_{J'}) = x_{1, j_1'} x_{2, j_2'} \dots x_{k-1, j_{k-1}'}$. And so we have $\sigma(1) = 1, \dots, \sigma(p-1) = p-1, \sigma(p) = p+1, \dots, \sigma(k-1) = k, \sigma(k) = p$. Suppose by contradiction that $p \le k-1$, then we have
\[
    M_\ell(x_{1, j_{\sigma(1)}}  \dots x_{k, j_{\sigma(k)}}) - M_\ell(x_{1, j_1}  \dots x_{k, j_k}) 
    = \sum_{i = p}^k \left(M_\ell(x_{i, j_{\sigma(i)}}) - M_\ell(x_{i, j_i})\right)
    = \sum_{i = p}^k (i - 1)(j_i - j_{\sigma(i)})
\]
\[
    = \left(\sum_{i = p}^{k-1} (i-1)(j_i - j_{i+1})\right) + (k-1)(j_k - j_p)
    = \left(\sum_{i = p+1}^{k} j_i\right) + (p-k)j_p 
    = \sum_{i = p+1}^{k} (j_i - j_p) > 0.
\]
But by assumption $x_{1, j_{\sigma(1)}} \dots x_{k, j_{\sigma(k)}}$ has minimum weight, a contradiction. And so we have $p = k$ hence $\textrm{in}_M(P_J) = x_{1, j_1} x_{2, j_2} \dots x_{k, j_k}$.
}

\smallskip

{
\textbf{Case 2.} Assume $j_2' > \ell$. Either we have $j_1' \le \ell$ or $j_1' > \ell$. In this case assume further that $j_1' \le \ell$, we will show that $j_1' > \ell$ is impossible in Case 3. By induction we have $\textrm{in}_M(P_{J'}) = x_{1, j_2'} x_{2, j_1'} x_{3, j_3'} \dots x_{k-1, j_{k-1}'}$. Assume by contradiction that $k \neq p$. We proceed by taking cases on $p$, either $p = 1$, $p = 2$ or $3 \le p \le k - 1$.
}

{
\textbf{Case 2.1} Assume $p = 1$. So we have $\sigma(1) = 3, \sigma(2) = 2, \sigma(3) = 4, \dots, \sigma(k-1) = k, \sigma(k) = 1$. Since $j_p < j_1' \le \ell$, we have
\[
    M_\ell(x_{1, j_{\sigma(1)}}  \dots x_{k, j_{\sigma(k)}}) - M_\ell(x_{1, j_1} x_{2, j_2}  \dots x_{k, j_k}) 
    = \sum_{i = 1}^k \left(M_\ell(x_{i, j_{\sigma(i)}}) - M_\ell(x_{i, j_i})\right)
\]
\[
    = \sum_{i = 1}^k (i - 1)(j_i - j_{\sigma(i)})
    = \left(\sum_{i = 3}^{k-1} (i-1)(j_i - j_{i+1})\right) + (k-1)(j_k - j_1)
\]
\[
    = \left(\sum_{i = 4}^{k} j_i\right) + 2j_3 - (k-1)j_1
    = \left(\sum_{i = 4}^{k} (j_i - j_1)\right) + 2(j_3 - j_1) 
    > 0.
\]
But by assumption $x_{1, j_{\sigma(1)}} \dots x_{k, j_{\sigma(k)}}$ has minimum weight, a contradiction.
}

{
\textbf{Case 2.2} Assume $p = 2$. So we have $\sigma(1) = 3, \sigma(2) = 1, \sigma(3) = 4, \dots, \sigma(k-1) = k, \sigma(k) = 2$. Since $j_p < j_1' \le \ell$, we have
\[
    M_\ell(x_{1, j_{\sigma(1)}}  \dots x_{k, j_{\sigma(k)}}) - M_\ell(x_{1, j_1} x_{2, j_2}  \dots x_{k, j_k}) 
    = \sum_{i = 1}^k \left(M_\ell(x_{i, j_{\sigma(i)}}) - M_\ell(x_{i, j_i})\right)
\]
\[
    = \sum_{i = 1}^k (i - 1)(j_i - j_{\sigma(i)})
    = (2 - 1)(j_2 - j_1) + \left(\sum_{i = 3}^{k-1} (i-1)(j_i - j_{i+1})\right) + (k-1)(j_k - j_2)
\]
\[
    = \left(\sum_{i = 4}^{k} j_i\right) +2j_3 - j_1 - (k-2)j_2
    = \left(\sum_{i = 4}^{k} (j_i - j_2)\right) + (j_3 - j_2) + (j_3 - j_1)> 0.
\]
But by assumption $x_{1, j_{\sigma(1)}} \dots x_{k, j_{\sigma(k)}}$ has minimum weight, a contradiction.
}

{
\textbf{Case 2.3} Assume $3 \le p \le k-1$. And so we have $\sigma(1) = 2, \sigma(2) = 1, \sigma(3) = 3, \dots, \sigma(p-1) = p-1, \sigma(p) = p+1, \dots, \sigma(k-1) = k, \sigma(k) = p$. Therefore
\[
    M_\ell(x_{1, j_{\sigma(1)}}  \dots x_{k, j_{\sigma(k)}}) - M_\ell(x_{1, j_2} x_{2, j_1} x_{3, j_3} \dots x_{k, j_k}) 
    = \sum_{i = 3}^k \left(M_\ell(x_{i, j_{\sigma(i)}}) - M_\ell(x_{i, j_i})\right)
\]
\[
    = \sum_{i = p}^k (i - 1)(j_i - j_{\sigma(i)})
    = \left(\sum_{i = p}^{k-1} (i-1)(j_i - j_{i+1})\right) + (k-1)(j_k - j_p)
\]
\[
    = \left(\sum_{i = p+1}^{k} j_i\right) + (p-k)j_p
    = \sum_{i = p+1}^{k} (j_i - j_p) > 0.
\]
But by assumption $x_{1, j_{\sigma(1)}} \dots x_{k, j_{\sigma(k)}}$ has minimum weight, a contradiction.
}

{
\textbf{Case 3.} Assume $j_1', j_2' > \ell$. By induction, we have $\textrm{in}_M(P_{J'}) = x_{1, j_1'} x_{2, j_2'} \dots x_{k-1, j_{k-1}'}$. Since $j_1 \le \ell$ we must have $j_1' = j_2, \dots j_{k-1}' = j_k $ and so $\sigma(1) = 2, \sigma(2) = 3, \dots, \sigma(k-1) = k, \sigma(k) = 1$.
\[
    M_\ell(x_{1, j_{\sigma(1)}}  \dots x_{k, j_{\sigma(k)}}) 
    - M_\ell(x_{1, j_2} x_{2, j_1} x_{3, j_3} \dots x_{k, j_k}) 
    = \left(M_\ell(x_{2, j_{\sigma(2)}}) - M_\ell(x_{2, j_1})\right) + \sum_{i = 3}^k \left(M_\ell(x_{i, j_{\sigma(i)}}) - M_\ell(x_{i, j_i})\right)
\]
\[
    = \left((n - j_3 + \ell + 1) - (\ell - j_1 + 1)\right) + \sum_{i = 3}^k (i - 1)(j_i - j_{\sigma(i)})
    = (j_1 - j_3) + n + \left(\sum_{i = 3}^{k-1} (i-1)(j_i - j_{i+1})\right) + (k-1)(j_k - j_1)
\]
\[
    = \left(\sum_{i = 3}^{k} j_i\right) + j_3 - (k-2)j_1 + n
    = \left(\sum_{i = 3}^{k} (j_i - j_1)\right)  + j_3 + n > 0.
\]
But by assumption $x_{1, j_{\sigma(1)}} \dots x_{k, j_{\sigma(k)}}$ has minimum weight, a contradiction. And so if $j_1' > \ell$ then we must have $j_2' \le \ell$.
}
\end{proof}


\begin{definition}\label{def:matching_field_perm}
Given integers $k$, $n$ and $0\leq\ell\leq n$, $M_\ell$ leads to a permutation for each subset $I = \{i_1, \dots, i_k\} \subset [n]$. More precisely, we think of $I$ as being identified with the Pl\"ucker form $\varphi_n(P_I)$ and
we consider the set to be ordered by $I = \{i_1 < \dots < i_{k}\}$. Since $\ini_{M_\ell}(P_I)$ is a unique term in the corresponding minor of $X=(x_{i,j})$, we have $\ini_{M_\ell}(P_I)=x_{1,i_{\sigma(1)}}\cdots x_{k,i_{\sigma(k)}}$ for some $\sigma \in S_k$, which we call the permutation associated to $M_\ell$. We represent the variable $\ini_{M_\ell}(P_I)$ as a $k \times 1$ tableau where the entry of $(j, 1)$ is $i_{\sigma(j)}$ for each $j \in [k]$. And so we can think of $M_\ell$ as inducing a new ordering on the elements of $I$ which can be read from the tableau.
\end{definition}
\begin{remark}\label{def:block}
{By Propositions~\ref{prop:unique} and \ref{prop:unique_ell} the initial term $\ini_{M_\ell}(P_J)$ is a monomial for each Pl\"ucker form $\varphi_n(P_J)$ where $J = \{j_1 < \dots < j_k \} \subset [n]$. Furthermore, these propositions give a precise description of the initial terms and the induced weight on the Pl\"ucker variable $P_J$ as follows.
\[
{\bf w}_\ell(P_J) = \left\{
\begin{tabular}{@{}ll}
    $0$ & if $k = 1$, \\
    $(n+\ell + 1 - j_2) + \sum_{i = 3}^k (i - 1)(n+1-j_i)$ & if $k \ge 1$ and $|J \cap \{1, \ldots, \ell \}| = 0 $, \\
    $(\ell + 1 - j_1) + \sum_{i = 3}^k (i - 1)(n+1-j_i)$ & if $k \ge 1$ and $|J \cap \{1, \ldots, \ell \}| = 1$, \\
    $(\ell+1 - j_2) + \sum_{i = 3}^k (i - 1)(n+1-j_i)$ & if $|J \cap \{1, \ldots, \ell \}| \ge 2$.
\end{tabular}\right.
\]
}
{Propositions~\ref{prop:unique} and \ref{prop:unique_ell} show that the permutation given by $M_\ell$ and associated to $J$, which defines the matching field, is given by}:
\[
 \BLambda(J)= \left\{
     \begin{array}{@{}ll}
      id  & \text{if $k=1$ or $\lvert J \cap \{1,\ldots,\ell\}\rvert \neq 1$},\\
      (12)  & \text{otherwise}, \\
     \end{array}
   \right.
\]
where $(12)$ is the transposition interchanging $1$ and $2$. The functions $B_{\ell}$ are called $2$-block diagonal matching fields in \cite{KristinFatemeh}. Note that $\ell=0$ or $n$ gives rise to the choice of the diagonal terms in each submatrix as in Example~\ref{ex:diag}. Such matching fields are called {\em diagonal}. See, e.g. \cite[Example 1.3]{sturmfels1993maximal}.  Given a block diagonal matching field $\BLambda$ we define $B_{\ell,1} = \{1, \dots, \ell \}$ and $B_{\ell,2} = \{\ell + 1, \dots, n \}$.
\end{remark}

\begin{example}\label{ex:diag}
{Let $k=3$, $n=5$ and $\ell=0$, so the matching field $B_\ell$ is the diagonal matching field, with $B_{\ell,1}=\emptyset$ and $B_{\ell,2}=\{1,2,3,4,5\}$. We have}
\[
M_0=
\begin{bmatrix}
     0  & 0 & 0  & 0  & 0 \\
     5  & 4 & 3  & 2  & 1 \\
     10 & 8 & 6  & 4  & 2 \\
\end{bmatrix} \textrm{ a weight matrix for }
X =
\begin{bmatrix}
     x_{11}  & x_{12} & x_{13}  & x_{14}  & x_{15} \\
     x_{21}  & x_{22} & x_{23}  & x_{24}  & x_{25} \\
     x_{31}  & x_{32} & x_{33}  & x_{34}  & x_{35} \\
\end{bmatrix}\ .
\]
The corresponding weight vector on $P_{123}, P_{124},\ldots,P_{345}$ is
${\bf w}_0=(10, 8, 6, 7, 5, 4, 7, 5, 4, 4).$
Thus, for each $I=\{i < j < k\} \subseteq [5]$ we have that $\text{in}_{M_0} (P_I) = x_{1i}x_{2j}x_{3k}$. Therefore, the corresponding tableaux for $P_I$ are:
\[\begin{tabular}{|c|}
    \hline 1 \\ \hline 2 \\ \hline 3  \\ \hline
\end{tabular}\ ,\ 
\begin{tabular}{|c|}
    \hline
    1    \\
    \hline
    2  \\
    \hline
    4  \\
    \hline
\end{tabular}
\ ,\ \begin{tabular}{|c|}
    \hline
    1    \\
    \hline
    2  \\
    \hline
    5  \\
    \hline
\end{tabular}
\ ,\ \begin{tabular}{|c|}
    \hline
    1    \\
    \hline
    3  \\
    \hline
    4  \\
    \hline
\end{tabular}
\ ,\ \begin{tabular}{|c|}
    \hline
    1    \\
    \hline
    3  \\
    \hline
    5  \\
    \hline
\end{tabular}
\ ,\ \begin{tabular}{|c|}
    \hline
    1    \\
    \hline
    4  \\
    \hline
    5  \\
    \hline
\end{tabular}
\ ,\ \begin{tabular}{|c|}
    \hline
    2    \\
    \hline
    3  \\
    \hline
    4  \\
    \hline
\end{tabular}
\ ,\ \begin{tabular}{|c|}
    \hline
    2    \\
    \hline
    3  \\
    \hline
    5  \\
    \hline
\end{tabular}
\ ,\ \begin{tabular}{|c|}
    \hline
    2    \\
    \hline
    4  \\
    \hline
    5  \\
    \hline
\end{tabular}
\ ,\ \begin{tabular}{|c|}
    \hline
    3    \\
    \hline
    4  \\
    \hline
    5  \\
    \hline
\end{tabular}\ .
\]
\end{example}
{\noindent Note that each initial term $\text{in}_{M_0}(P_I)$ is the leading diagonal term of the Pl\"ucker form $\varphi_n(P_I)$. Let us consider a block diagonal matching field which is not diagonal.}

\begin{example}\label{ex:ell_3_mf}
{Let $k=3$, $n=5$ and $\ell=3$. Then $B_{\ell,1}=\{1,2,3\}$, $B_{\ell,2}=\{4,5\}$ and
\[
M_2=\begin{bmatrix}
     0  & 0 & 0  & 0  & 0 \\
     3  & 2 & 1  & 5  & 4 \\
     10 & 8 & 6  & 4  & 2 \\
\end{bmatrix}.
\]
Comparing this matrix with $M_0$, the weight matrix for the diagonal case, we see that the only differences are in the second row. The entries of the second row are obtained by permuting the entries in the second row of $M_0$. The corresponding weight vector on the Pl\"ucker variables $P_{123}, P_{124},\ldots,P_{345}$ is
\[{\bf w}_2=(8, 6, 4, 5, 3, 5, 5, 3, 4, 3).\]
For each $I=\{i,j,k\}$ we have the corresponding tableaux for $P_I$ which are
\[
\begin{tabular}{|c|}
    \hline
    1    \\
    \hline
    2  \\
    \hline
    3  \\
    \hline
\end{tabular}\ ,\ \begin{tabular}{|c|}
    \hline
    1    \\
    \hline
    2  \\
    \hline
    4  \\
    \hline
\end{tabular}
\ ,\ \begin{tabular}{|c|}
    \hline
    1    \\
    \hline
    2  \\
    \hline
    5  \\
    \hline
\end{tabular}
\ ,\ \begin{tabular}{|c|}
    \hline
    1    \\
    \hline
    3  \\
    \hline
    4  \\
    \hline
\end{tabular}
\ ,\ \begin{tabular}{|c|}
    \hline
    1    \\
    \hline
    3  \\
    \hline
    5  \\
    \hline
\end{tabular}
\ ,\ \begin{tabular}{|c|}
    \hline
    4    \\
    \hline
    1  \\
    \hline
    5  \\
    \hline
\end{tabular}
\ ,\ \begin{tabular}{|c|}
    \hline
    2    \\
    \hline
    3  \\
    \hline
    4  \\
    \hline
\end{tabular}
\ ,\ \begin{tabular}{|c|}
    \hline
    2    \\
    \hline
    3  \\
    \hline
    5  \\
    \hline
\end{tabular}
\ ,\ \begin{tabular}{|c|}
    \hline
    4    \\
    \hline
    2  \\
    \hline
    5  \\
    \hline
\end{tabular}
\ ,\ \begin{tabular}{|c|}
    \hline
    4    \\
    \hline
    3  \\
    \hline
    5  \\
    \hline
\end{tabular}\ .
\]
}
\end{example}

\section{Degenerations inside Grassmannians}\label{sec:gr}

\subsection{Gr\"obner degenerations induced by matching fields.}\hspace{0.3mm}\label{sec:degen_grassmannian_intro}
{
In this section we introduce the main tool for studying Gr\"obner degenerations. These objects are called \textit{restricted matching field ideals}, which are obtained from the toric initial ideals of the Grassmannian by setting some variables to zero. We begin by studying restricted matching field ideals and later, in Section~\ref{sec:standard_monomial}, we show that these ideals coincide with initial ideals of Richardson varieties inside the Grassmannian.
}

\medskip

\noindent{\bf Notation.} 
{Throughout this section we fix $k,n,\ell$ and the weight vector ${\bf w}_\ell$, see Definition~\ref{def:block}.
We denote the initial ideal of $G_{k,n}$ with respect to $\wb_\ell$ by $G_{k,n,\ell} := {\rm in}_{{\bf w}_\ell}(G_{k,n})$. This is defined as the ideal generated by polynomials $\inwb(f)$ for all polynomials $f \in G_{k,n}$}, where 
\[
\inwb(f)=\sum_{\alpha_j\cdot \wb_\ell=d}{c_{{\bf \alpha}_j}\bf P}^{{\bf \alpha}_j}\quad\text{for}\quad f=\sum_{i=1}^t c_{{\bf \alpha}_i}{\bf P}^{{\bf \alpha}_i}\quad\text{and}\quad d=\min\{\alpha_i\cdot \wb_\ell:\ i=1,\ldots,t\}.
\] 
The ideal $G_{k,n,\ell}$ is introduced in \cite{OllieFatemeh}. In the following theorem we summarize some of its important properties. We refer to Section 4 of \cite{OllieFatemeh} for detailed proofs. 
\begin{theorem}[Theorem 4.1, Theorem 4.3 and Corollary 4.7 in \cite{OllieFatemeh}]\label{thm:JAlebra}
\begin{itemize}
    \item[ ]
    \item[{\rm (i)}] The ideal $G_{k,n,\ell}$ is toric and it is equal to the kernel of the monomial map \begin{eqnarray}\label{eqn:monomialmap}
\phi_{\ell} \colon\  \mathbb{K}[P_I]  \rightarrow \mathbb{K}[x_{ij}]  
\quad\text{with}\quad
 P_{I}   \mapsto {\rm sgn}(B_\ell(I)) \ini_{M_\ell}(P_I).
\end{eqnarray}
Here $M_\ell$ is the matrix in Definition~\ref{example:block} and ${\rm sgn}(-)$ denotes the standard sign function for permutations.
    \item[{\rm (ii)}] The ideal $G_{k,n}$ has a quadratic Gr\"obner basis with respect to $\wb_\ell$. In particular, there exist quadratic polynomials $g_1,\ldots,g_s$ in $G_{k,n}$ such that
    \[
    G_{k,n,\ell}=\langle \init_{\wb_\ell}(g_1), \ldots,\init_{\wb_\ell}(g_s) \rangle,
    \]
    where $s$ is the size of a quadratic minimal generating set of $G_{k,n}$.
\end{itemize}
\end{theorem}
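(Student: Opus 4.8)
The plan is to establish both parts at once via the sandwich
\[
\langle \init_{\wb_\ell}(g_1),\dots,\init_{\wb_\ell}(g_s)\rangle \ \subseteq\ G_{k,n,\ell}\ \subseteq\ \ker\phi_\ell ,
\]
where $g_1,\dots,g_s$ is a fixed minimal set of quadratic Pl\"ucker relations generating $G_{k,n}$ (their existence being classical), and then to prove that the two outer ideals coincide. Granting this, part (i) follows because $\ker\phi_\ell$ is by definition the kernel of a monomial map, hence a toric (prime binomial) ideal; and the resulting equality $\langle\init_{\wb_\ell}(g_i)\rangle=G_{k,n,\ell}$ is precisely the assertion that $\{g_1,\dots,g_s\}$ is a Gr\"obner basis of $G_{k,n}$ with respect to $\wb_\ell$, which is (ii). So everything reduces to the two inclusions and the final equality.

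The left inclusion is immediate, since each $\init_{\wb_\ell}(g_i)\in G_{k,n,\ell}$ by definition of the initial ideal. For the middle inclusion $G_{k,n,\ell}\subseteq\ker\phi_\ell$ I would run the ``initial form of zero is zero'' argument. By Definition~\ref{example:block} and Remark~\ref{def:block} the weight $\wb_\ell(P_J)$ is exactly the $M_\ell$-weight of the unique lowest term of $\varphi_n(P_J)$, and up to the sign $\sgn(B_\ell(J))$ that term is $\phi_\ell(P_J)$. Since passing to $M_\ell$-initial forms is multiplicative, for $f=\sum_\alpha c_\alpha \mathbf{P}^{\alpha}\in G_{k,n}$ with $\wb_\ell$-minimal weight $d$ the degree-$d$ part of $\varphi_n(f)=\sum_\alpha c_\alpha\prod_J\varphi_n(P_J)^{\alpha_J}$ equals $\sum_{\wb_\ell\cdot\alpha=d}c_\alpha\prod_J\ini_{M_\ell}(P_J)^{\alpha_J}$; as $\varphi_n(f)=0$ this vanishes, which, after absorbing the sign factors $\sgn(B_\ell(J))$ (exactly the role they play in \eqref{eqn:monomialmap}), says $\phi_\ell(\init_{\wb_\ell}(f))=0$, i.e. $\init_{\wb_\ell}(f)\in\ker\phi_\ell$.

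To close the sandwich I would first upgrade $G_{k,n,\ell}\subseteq\ker\phi_\ell$ to an equality by a Hilbert function comparison in the standard grading $\deg P_J=1$. Passing to an initial ideal preserves Hilbert functions, so $\mathrm{HF}(\mathbb{K}[P]/G_{k,n,\ell})=\mathrm{HF}(\mathbb{K}[P]/G_{k,n})$, and the latter counts semistandard Young tableaux of rectangular shape with $k$ rows and entries in $[n]$ (Hodge's standard monomial theory). The degree-$d$ piece of $\mathbb{K}[P]/\ker\phi_\ell$ is spanned by the monomials $\ini_{M_\ell}(P_{I_1})\cdots\ini_{M_\ell}(P_{I_d})$ in $\mathbb{K}[x_{ij}]$; using the explicit shape of $\ini_{M_\ell}(P_J)$ from Propositions~\ref{prop:unique} and \ref{prop:unique_ell} (the permutation $B_\ell(J)$ is $\id$ or $(12)$ according to $|J\cap\{1,\dots,\ell\}|$) one produces a bijection between these monomials and the same tableaux, so the two Hilbert functions agree and the inclusion is an equality. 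Equivalently, one can phrase this as saying the Pl\"ucker forms are a SAGBI basis of the coordinate ring of the cone over $\Gr(k,n)$ with respect to $M_\ell$, the subduction being controlled by the fact that $M_\ell$ perturbs only the second row of the diagonal weight $M_0$. It then remains to prove $\ker\phi_\ell\subseteq\langle\init_{\wb_\ell}(g_i)\rangle$: one computes $\init_{\wb_\ell}$ of each quadratic Pl\"ucker relation --- by Proposition~\ref{prop:unique_ell} it is again either a tableau monomial or a difference of two tableau monomials, the outcome depending only on how the two index sets meet the block $\{1,\dots,\ell\}$ --- and shows that every binomial of the toric ideal $\ker\phi_\ell$ reduces to $0$ modulo this quadratic set, via a straightening/subduction algorithm on block diagonal matching field tableaux which terminates because it strictly decreases a tableau order.

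The main obstacle is this last step, together with the combinatorial half of the Hilbert function computation: this is exactly where the $2$-block structure is indispensable, since general matching fields give neither toric nor quadratically generated initial ideals, so the argument \emph{must} exploit that $B_\ell(J)\in\{\id,(12)\}$ and that $M_\ell$ differs from $M_0$ only in row~$2$. One must also carry the sign twists $\sgn(B_\ell(J))$ coherently through every quadratic relation --- they are consistent precisely because the parity of the number of ``swaps'' in a product of tableau monomials is determined by the underlying $x$-monomial --- and separately dispose of the degenerate cases $k=1$ and the three cases of Remark~\ref{def:block}. Once the straightening lemma for block diagonal matching field tableaux is in place, the remaining bookkeeping is routine.
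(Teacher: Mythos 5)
This theorem is not proved in the present paper: it is quoted as Theorems~4.1, 4.3 and Corollary~4.7 of \cite{OllieFatemeh}, and the paper explicitly writes ``We refer to Section 4 of \cite{OllieFatemeh} for detailed proofs.'' So there is no in-paper argument to compare against; I can only weigh your plan against what the cited result requires.

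Your skeleton is the correct one and is, up to presentation, the SAGBI/straightening route used in the cited source. A few corrections and observations. At the middle inclusion the signs need no ``absorption'': by Definition~\ref{example:block} and Propositions~\ref{prop:unique}, \ref{prop:unique_ell}, the unique lowest-weight term of $\varphi_n(P_J)$ is literally $\sgn(B_\ell(J))\,\ini_{M_\ell}(P_J)=\phi_\ell(P_J)$, so the lowest-weight part of $\varphi_n(f)$ equals $\phi_\ell(\init_{\wb_\ell}(f))$ on the nose and $G_{k,n,\ell}\subseteq\ker\phi_\ell$ follows directly. Sign coherence across different $\alpha$ is needed only later, to see that $\ker\phi_\ell$ is a genuine pure-difference toric ideal; your claim there is true but asserted rather than proved. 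For $2$-block fields the clean argument is: a column $I$ has $\sgn(B_\ell(I))=-1$ exactly when its row-$1$ entry lies in $B_{\ell,2}$ and its row-$2$ entry in $B_{\ell,1}$, the opposite pattern is impossible (it would force $|I\cap B_{\ell,1}|=1$, contradicting $B_\ell(I)=\id$), so the number of sign-$(-1)$ columns equals the count of row-$2$ entries in $B_{\ell,1}$ minus the count of row-$1$ entries in $B_{\ell,1}$, a quantity determined by the $x$-monomial alone.

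Second, your Hilbert-function step and your straightening step are not logically independent: the uniqueness of the standard matching-field tableau representing a given $x$-monomial is precisely the injectivity you need for the dimension count, and once you have it, the containment $\ker\phi_\ell\subseteq\langle\init_{\wb_\ell}(g_i)\rangle$ already closes the sandwich, so the Hilbert-function equality becomes a byproduct rather than an independent input. The remaining gap is the one you flag: the bijection between SSYT and $B_\ell$-matching-field tableaux respecting row content, and termination of the reduction. That is genuine work; in this paper the analogous machinery is the map $\Gamma_\ell$ of Definition~\ref{def:Gamma_ell_deg2} together with Lemmas~\ref{lem:SSYT_Gamma_injective} and \ref{lem:SSYT_Gamma_surj_any_tableau}, developed there for the restricted Richardson setting, and what your sketch needs is its unrestricted, all-degrees version. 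In short: right route, key combinatorial lemmas still to be proved.
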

The kernel of the monomial map (\ref{eqn:monomialmap}) is the \textit{matching field ideal} of the block diagonal matching field $B_\ell$ and, in this case, coincides with the initial ideal of the Grassmannian $G_{k,n,\ell}$. We recall that the ideal of the Richardson variety is $(G_{k,n} + \langle P_I : I \in S_{w,k}^v \rangle) \cap \mathbb K[P_I : I \in T_{w,k}^v]$. Replacing $G_{k,n}$ with a matching field ideal leads us to the construction of the restricted matching field ideal. Note that the ideal $G_{k,n,\ell}|_w^v$ is also quadratically generated since it is obtained from $G_{k,n,\ell}$ by setting some variables to zero. For an explicit construction of a quadratic generating set for $G_{k,n,\ell}|_w^v$ see Lemma~\ref{lem:elim_ideal_gen_set}.

\begin{definition}[Restricted matching field ideal]\label{def:S}\label{def:matching_field_ideal_grassmannian}

Given a collection $S$ of $k$-subsets of $[n]$, denote by $T$ the collection of $k$-subsets of $[n]$ that are not in $S$. We define $G_{k,n,\ell}|_{S} = (G_{k,n,\ell} + \langle P_I : I \in S \rangle) \cap \mathbb K[P_I : I \in T]$.

It is useful to think of $G_{k,n,\ell}|_S$ as the ideal obtained from $G_{k,n,\ell}$ by setting the variables $\{P_I : I \in S\}$ to be zero. And so we say that the variable $P_I$ \textit{vanishes} in $G_{k,n,\ell}|_S$ if $I \in S$. Similarly we say that a polynomial or monomial $f \in \mathbb K[P_I : I \subseteq [n], \ |I| = k]$ vanishes in $G_{k,n,\ell}|_S$ if $f \in \langle P_I : I \in S \rangle$. The ideal $G_{k,n,\ell}|_S$ can be computed in $\mathtt{Macaulay2}$~\cite{M2} as an elimination ideal using the command
$\mathtt{eliminate}(G_{k,n,\ell} + \langle P_I : I \in S \rangle, \{P_I : I \in S\})$.

Let $w=(w_1,\ldots,w_k)$ and $v=(v_1,\ldots,v_k)$ be two {Grassmannian permutations} with $v\leq w$.
To simplify our notation through this note, we define the following ideals: 
\begin{equation}
\label{eq:gr}
    G_{k,n,\ell}|_w:=G_{k,n,\ell}|_{S_{w,k}},
\quad G_{k,n,\ell}|^v:=G_{k,n,\ell}|_{S_{k}^v}\quad {\rm and}\quad
G_{k,n,\ell}|_w^v:=G_{k,n,\ell}|_{S_{w,k}^v}.
\end{equation}
\end{definition}
Note that ideals in \eqref{eq:gr}
are all generated in degree two, since they are obtained by setting some variables to zero in the ideal $G_{k,n,\ell}$ which is generated in degree two by Theorem~\ref{thm:JAlebra}. The proof of this claim is given in Lemma~\ref{lem:elim_ideal_gen_set} in which we give an explicit generating set.

\subsection{Richardson varieties.}
Here, we follow our notation in \eqref{eq:gr}.
First we show that the set of permutations leading to zero Richardson varieties is independent of $\ell$. Then we use this to characterize all zero ideals of the form $G_{k,n,\ell}|_w^v$.

Let us begin with an example which illustrates our techniques for manipulating tableaux in the following proofs.
\begin{example}
Following Example~\ref{ex:ell_3_mf}, let $k = 3$, $n = 5$ and $\ell = 3$. Let $v = (1,3,5,2,4)$ and $w = (2,4,5,1,3)$ be permutations and so we have
\[
S_{w,k}^v = \{123, 124, 125, 134, 234, 345\}, \quad
T_{w,k}^v = \{135, 145, 235, 245 \}.
\]
Consider the binomial relation $P_{135}P_{245} - P_{145}P_{235}$ in $G_{k,n,0}$. Since $135, 245, 145, 235 \in T_{w, k}^v$ we have that $P_{135}P_{245} - P_{145}P_{235} \in G_{k,n,0}|_w^v$. In tableau form, the relation is
\[
P_{135}P_{245} - P_{145}P_{235} =
\begin{tabular}{|c|c|}
    \hline
    1 & 2  \\
    \hline
    3 & 4  \\
    \hline
    5 & 5 \\
    \hline
\end{tabular} -
\begin{tabular}{|c|c|}
    \hline
    1 & 2  \\
    \hline
    4 & 3  \\
    \hline
    5 & 5 \\
    \hline
\end{tabular} \ .
\]
However the tableaux above are not matching field tableaux for the matching field $B_3$, moreover the binomial $P_{135}P_{245} - P_{145}P_{235}$ does not lie in the matching field ideal $G_{k,n,\ell}$. This is most easily seen shown by observing that the matching field tableaux representing $P_{135}P_{235}$ and $P_{145}P_{235}$ are not row-wise equal. Explicitly
\[
P_{135}P_{245} = 
\begin{tabular}{|c|c|}
    \hline
    1 & 4  \\
    \hline
    3 & 2  \\
    \hline
    5 & 5 \\
    \hline
\end{tabular} \ , \quad 
P_{145}P_{235} = 
\begin{tabular}{|c|c|}
    \hline
    4 & 2  \\
    \hline
    1 & 3  \\
    \hline
    5 & 5 \\
    \hline
\end{tabular} \ .
\]
However, by permuting the entries in these tableaux we can obtain a binomial inside $G_{k,n,\ell}$ for which at least one of the terms does not vanish. In this case we have $P_{125}P_{345} - P_{135}P_{245} \in G_{k,n,\ell}$ and $P_{135}P_{245}$ does not vanish in $G_{k,n,\ell}|_w^v$. In tableau form we have
\[
P_{125}P_{345} - P_{135}P_{245} = 
\begin{tabular}{|c|c|}
    \hline
    1 & 4  \\
    \hline
    2 & 3  \\
    \hline
    5 & 5 \\
    \hline
\end{tabular} - 
\begin{tabular}{|c|c|}
    \hline
    1 & 4  \\
    \hline
    3 & 2  \\
    \hline
    5 & 5 \\
    \hline
\end{tabular} \ .
\]
So $G_{k,n,\ell}|_w^v$ is non-zero. In this case we have that $P_{125}P_{345}$ vanishes in $G_{k,n,\ell}|_w^v$ and so $G_{k,n,\ell}|_w^v$ contains a monomial while it can be shown that $G_{k,n,0}|_w^v$ does not contain any monomials.
\end{example}

\begin{proposition}
\label{prop:zero_rich_equiv_diag}
The ideal $G_{k,n,\ell}|_w^v$ is zero  if and only if $G_{k,n,0}|_w^v$ is zero.
\end{proposition}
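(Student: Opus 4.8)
The plan is to reduce the statement about vanishing of the restricted matching field ideal to a combinatorial statement about monomials, and then show that the relevant monomial conditions are insensitive to $\ell$. Recall that $G_{k,n,\ell}|_w^v$ is generated in degree two (by the remark after Definition~\ref{def:S}, proved in Lemma~\ref{lem:elim_ideal_gen_set}), and that $G_{k,n,\ell}$ is the kernel of the monomial map $\phi_\ell$ of Theorem~\ref{thm:JAlebra}(i). Since $G_{k,n,\ell}$ is toric, it is spanned by binomials $P^\alpha - P^\beta$ with $\phi_\ell(P^\alpha) = \phi_\ell(P^\beta)$; passing to the restriction, a binomial survives in $G_{k,n,\ell}|_w^v$ precisely when \emph{both} of its monomials are supported on $T_{w,k}^v$, and the ideal $G_{k,n,\ell}|_w^v$ is nonzero if and only if there exists at least one such surviving binomial $P^\alpha - P^\beta$ with $\alpha \ne \beta$. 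Because the ideal is quadratically generated, it suffices to look at quadratic binomials: $G_{k,n,\ell}|_w^v$ is nonzero iff there exist $k$-subsets $A, B, C, D \in T_{w,k}^v$ with $\{A,B\} \ne \{C,D\}$ and $\phi_\ell(P_A P_B) = \phi_\ell(P_C P_D)$.

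\textbf{Reducing to a count on multisets of columns.} The key observation is that $\phi_\ell(P_A P_B)$ is, up to sign, the monomial $\init_{M_\ell}(P_A)\init_{M_\ell}(P_B)$ in $\mathbb{K}[x_{ij}]$, which by Definition~\ref{def:matching_field_perm} is a product of $2k$ variables $x_{r,j}$ read off from the two $k \times 1$ tableaux of $A$ and $B$. Two quadratic monomials $P_A P_B$ and $P_C P_D$ have $\phi_\ell(P_A P_B) = \pm \phi_\ell(P_C P_D)$ iff the corresponding pairs of tableaux agree \emph{row by row} as multisets: for each row $r \in [k]$, the multiset of entries appearing in row $r$ of the tableaux of $A,B$ equals that of $C,D$. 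Now by Propositions~\ref{prop:unique} and~\ref{prop:unique_ell}, for any $\ell$, rows $3, \dots, k$ of the tableau of a subset $J = \{j_1 < \dots < j_k\}$ always contain $j_3, \dots, j_k$ in order, independently of $\ell$; only the first two rows can be permuted, and only the pair $\{j_1, j_2\}$ is involved, again independently of $\ell$ (whether we get $(j_1, j_2)$ or $(j_2, j_1)$ in the first two rows depends on $\ell$, but the \emph{multiset} $\{j_1, j_2\}$ does not, and neither does the multiset $\{j_3, \dots, j_k\}$). Therefore the row-multiset data of any tableau, and hence the condition ``$\phi_\ell(P_A P_B) = \pm\phi_\ell(P_C P_D)$'', depends only on the underlying subsets $A,B,C,D$ and \emph{not on $\ell$}. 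This is precisely the statement that the support of $G_{k,n,\ell}$ on quadrics — the set of pairs $(\{A,B\},\{C,D\})$ giving a relation — is independent of $\ell$.

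\textbf{Concluding.} Given this $\ell$-independence of the quadratic relations of $G_{k,n,\ell}$, the restriction step is identical for all $\ell$: $G_{k,n,\ell}|_w^v$ is nonzero iff there is a quadratic relation $(\{A,B\},\{C,D\})$ of $G_{k,n,\ell}$ with all four subsets in $T_{w,k}^v$; since the set of such relations does not depend on $\ell$, this condition holds for some $\ell$ iff it holds for $\ell = 0$. Hence $G_{k,n,\ell}|_w^v = 0 \iff G_{k,n,0}|_w^v = 0$. \textbf{The main obstacle} I anticipate is making the ``quadratic relations are $\ell$-independent'' claim airtight: one must verify that \emph{every} quadratic generator of $G_{k,n,\ell}$ (not merely the obvious Pl\"ucker-type ones) has both monomials with the same row-multiset data, i.e. that the monomial map $\phi_\ell$ identifies two quadratic monomials iff their row-multisets coincide — this is where Theorem~\ref{thm:JAlebra}(ii) (existence of a quadratic Gr\"obner basis) and the explicit tableau description from Propositions~\ref{prop:unique} and~\ref{prop:unique_ell} must be combined carefully, ideally by exhibiting an explicit bijection between the quadratic generators of $G_{k,n,\ell}$ and those of $G_{k,n,0}$ that fixes the underlying supports.
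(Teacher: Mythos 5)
There is a genuine gap in your argument, and it is precisely at the point you flagged as your ``main obstacle'': the claim that the set of quadratic binomial relations of $G_{k,n,\ell}$, recorded only via the underlying $k$-subsets, is independent of $\ell$. This claim is \emph{false}, and there is no bijection between the generating binomials of $G_{k,n,\ell}$ and those of $G_{k,n,0}$ that preserves supports, so the plan cannot be salvaged by a more careful verification. The simplest counterexample is $\Gr(2,4)$. Taking $\ell = 0$, the weight matrix $M_0$ makes the unique Pl\"ucker relation degenerate to $P_{13}P_{24} - P_{14}P_{23}$, so $G_{2,4,0} = \langle P_{13}P_{24} - P_{14}P_{23}\rangle$; taking $\ell = 1$ (so $B_{1,1} = \{1\}$), the tableaux of $P_{12}, P_{13}, P_{14}$ are $(2,1), (3,1), (4,1)$ and the same Pl\"ucker relation degenerates to $P_{13}P_{24} - P_{12}P_{34}$, so $G_{2,4,1} = \langle P_{13}P_{24} - P_{12}P_{34}\rangle$. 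The two ideals are supported on \emph{different} sets of Pl\"ucker variables.

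The reasoning behind your $\ell$-independence claim confuses two different invariants. You correctly observe that for a single subset $J = \{j_1 < \dots < j_k\}$, the entries in rows $3, \dots, k$ of its tableau are $j_3, \dots, j_k$ for every $\ell$, and the combined multiset of entries in rows~1 \emph{and}~2 is always $\{j_1, j_2\}$. But the condition $\phi_\ell(P_A P_B) = \pm\phi_\ell(P_C P_D)$ is row-wise equality of multisets, which requires the row-1 multiset of the pair $\{A,B\}$ to equal that of $\{C,D\}$, and separately the row-2 multisets to agree. For a single tableau the row-1 entry is either $j_1$ or $j_2$ \emph{depending on whether} $|J \cap \{1,\dots,\ell\}| = 1$; thus the row-1 and row-2 multisets of a pair of tableaux certainly depend on $\ell$. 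In the example above, with $A = \{1,3\}$ and $B = \{2,4\}$, the row-1/row-2 multisets are $(\{1,2\},\{3,4\})$ for $\ell = 0$ but $(\{3,2\},\{1,4\})$ for $\ell = 1$, which is why the relation changes.

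There is a secondary issue with the initial reduction: you characterize $G_{k,n,\ell}|_w^v \neq 0$ by the existence of a ``surviving binomial'' (both monomials supported on $T_{w,k}^v$). This is too strong --- the restricted ideal can be nonzero because a single monomial survives. In the example above with $v = (1,3)$, $w = (2,4)$ and $\ell = 1$, the relation $P_{13}P_{24} - P_{12}P_{34}$ has $P_{12}, P_{34} \in S_{w,k}^v$, so $G_{2,4,1}|_w^v = \langle P_{13}P_{24}\rangle$ is nonzero even though there is no surviving binomial. The correct characterization, implicit in Lemma~\ref{lem:elim_ideal_gen_set}, is that $G_{k,n,\ell}|_w^v \neq 0$ iff there is a quadratic binomial $P_A P_B - P_C P_D \in G_{k,n,\ell}$ with \emph{at least one} of the two monomials supported on $T_{w,k}^v$.

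Because the supports of the relations genuinely change with $\ell$, the proposition cannot be proved by transport of a fixed relation. The paper's proof works differently: given a binomial $P_I P_J - P_{I'} P_{J'} \in G_{k,n,0}$ with $P_I P_J$ nonvanishing, if it fails to be a relation of $G_{k,n,\ell}$ one performs a case analysis on whether $i_1, i_2$ lie in $B_{\ell,1}$ or $B_{\ell,2}$ and explicitly builds a \emph{new} pair of column sets $\tilde I, \tilde J, \tilde I', \tilde J'$ such that $P_{\tilde I} P_{\tilde J} - P_{\tilde I'} P_{\tilde J'}$ is a relation of $G_{k,n,\ell}$ and at least one of its monomials is nonvanishing; and symmetrically for the reverse direction. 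That construction is the essential content that your proposal is missing.
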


\begin{proof}
We begin by showing that if $G_{k,n,0}|_w^v$ is nonzero then so is $G_{k,n,\ell}|_w^v$. Suppose that $P_I P_J - P_{I'} P_{J'}$ is a binomial in $G_{k,n,0}$
such that $P_I P_J$ does not vanish in $G_{k,n,0}|_w^v$. Now if we have that $P_I P_J - P_{I'} P_{J'}$ is a binomial in $G_{k,n,\ell}$ then we are done. So let us assume that $P_I P_J - P_{I'} P_{J'}$ is not a binomial in $G_{k,n,\ell}$. In particular, if $B_\ell(I) = B_\ell(J)$ then it easily follows that $B_\ell(I) = B_\ell(J) = B_\ell(I') = B_\ell(J')$ and so $P_IP_J - P_{I'}P_{J'} \in G_{k,n,\ell}$. So we must have that $B_\ell(I) \neq B_\ell(J)$.
So without loss of generality let us assume that $B_\ell(I) = id$ and $B_\ell(J) = (12)$. Since $P_I P_J - P_{I'} P_{J'}$ is not a binomial in $G_{k,n,\ell}$, when written in tableau form, $P_I P_J$ and $P_{I'} P_{J'}$ are not row-wise equal. Since $P_I P_J - P_{I'} P_{J'} \in G_{k,n,0}$, all but the first two rows of $P_IP_J$ and $P_{I'}P_{J'}$ are row-wise equal. Therefore the first two rows of $P_IP_J$ and $P_{I'}P_{J'}$ different (even after permutation of the columns):
\[
\begin{tabular}{|c|c|}
    \multicolumn{1}{c}{$I$} & \multicolumn{1}{c}{$J$} \\
    \hline
    $i_1$ & $j_1$  \\
    \hline
    $i_2$ & $j_2$  \\
    \hline
    $X$ & $Y$ \\
    \hline
\end{tabular}
\, - \,
\begin{tabular}{|c|c|}
    \multicolumn{1}{c}{$I'$} & \multicolumn{1}{c}{$J'$} \\
    \hline
    $i_1$ & $j_1$  \\
    \hline
    $j_2$ & $i_2$  \\
    \hline
    $X'$ & $Y'$ \\
    \hline
\end{tabular} \, .
\]
Here $X,Y,X',Y'$ represent the remaining parts of the tableaux. Since the first two rows are different, we deduce that the entries $i_1, i_2, j_1, j_2$ are distinct. There are two cases, either $i_1, i_2 \in B_{\ell,1}$ or $i_1, i_2 \in B_{\ell,2}$.

\textbf{Case 1.} Assume $i_1, i_2 \in B_{\ell,1}$. Let $\alpha < \beta < \gamma$ be the values $i_1, i_2, j_1$ written in ascending order. We define the sets $\tilde I = \{\alpha, \beta \} \cup Y$, $\tilde J = \{\gamma, j_2\} \cup Y$, $\tilde I' = \{\alpha, \gamma \} \cup Y$ and $\tilde J' = \{\beta, j_2 \} \cup Y$. Consider the binomial $P_{\tilde I}P_{\tilde J} - P_{\tilde I'} P_{\tilde J'}$. In tableau form this binomial is
\[
\begin{tabular}{|c|c|}
    \multicolumn{1}{c}{$\tilde I$} & \multicolumn{1}{c}{$\tilde J$} \\
    \hline
    $\alpha$ & $j_2$  \\
    \hline
    $\beta$ & $\gamma$  \\
    \hline
    $Y$ & $Y$ \\
    \hline
\end{tabular}
\, - \,
\begin{tabular}{|c|c|}
    \multicolumn{1}{c}{$\tilde I'$} & \multicolumn{1}{c}{$\tilde J'$} \\
    \hline
    $\alpha$ & $j_2$  \\
    \hline
    $\gamma$ & $\beta$  \\
    \hline
    $Y$ & $Y$ \\
    \hline
\end{tabular} \, .
\]
Note that $\tilde I,\tilde J,\tilde I'$ and $\tilde J'$ are well-defined as $i_1,i_2\not \in Y$. Since the two tableau are row-wise equal we have that
$P_{\tilde I} P_{\tilde J} - P_{\tilde I'} P_{\tilde J'}$ is a binomial in $G_{k,n,\ell}$.
Since $i_1 < i_2$ and $j_1 < i_2$ we have that $i_2 = \gamma$ so $P_{\tilde I'} P_{\tilde J'}$ does not vanish in $G_{k,n,\ell}|_w^v$. So we have shown that  $G_{k,n,\ell}|_w^v$ is nonzero. 

\textbf{Case 2.} Assume $i_1, i_2 \in B_{\ell,2}$. 
{
Let $\alpha < \beta < \gamma$ be the values $i_1, i_2, j_2$ written in ascending order. Let $Z = X$ if $i_2 \ge j_2$, otherwise let $Z = Y$. We define the sets $\tilde I = \{\alpha, \gamma \} \cup Z$, $\tilde J = \{\beta, j_1\} \cup Z$, $\tilde I' = \{\beta, \gamma \} \cup Z$ and $\tilde J' = \{\alpha, j_1\} \cup Z$. The binomial $P_{\tilde I}P_{\tilde J} - P_{\tilde I'} P_{\tilde J'}$ is given by the tableaux
\[
\begin{tabular}{|c|c|}
    \multicolumn{1}{c}{$\tilde I$} & \multicolumn{1}{c}{$\tilde J$} \\
    \hline
    $\alpha$ & $\beta$  \\
    \hline
    $\gamma$ & $j_1$  \\
    \hline
    $Z$ & $Z$ \\
    \hline
\end{tabular}
\, - \,
\begin{tabular}{|c|c|}
    \multicolumn{1}{c}{$\tilde I'$} & \multicolumn{1}{c}{$\tilde J'$} \\
    \hline
    $\beta$ & $\alpha$  \\
    \hline
    $\gamma$ & $j_1$  \\
    \hline
    $Z$ & $Z$ \\
    \hline
\end{tabular} \, .
\]
Since the tableaux are row-wise equal we have $P_{\tilde I} P_{\tilde J} - P_{\tilde I'} P_{\tilde J'}$ is a binomial in $G_{k,n,\ell}$.} 
If $i_1 = \alpha$ then $P_{\tilde I} P_{\tilde J}$ does not vanish in $G_{k,n,\ell}|_w^v$. Otherwise if $i_1 = \beta$ then $P_{\tilde I'} P_{\tilde J'}$ does not vanish in $G_{k,n,\ell}|_w^v$. Note that we cannot have $i_1 = \gamma$ because $i_1 < i_2$. So we have shown that  $G_{k,n,\ell}|_w^v$ is nonzero. 

For the converse we show that if $G_{k,n,\ell}|_w^v$ is nonzero then $G_{k,n,0}|_w^v$ is nonzero. Let $P_I P_J - P_{I'} P_{J'}$ be a binomial in $G_{k,n,\ell}$ such that $P_I P_J$ does not vanish in $G_{k,n,\ell}|_w^v$. If $P_I P_J - P_{I'} P_{J'}$ is a binomial in $G_{k,n,0}$ then we are done. So we assume that $P_I P_J - P_{I'} P_{J'}$ is not a binomial in $G_{k,n,0}$. Therefore we have that $B_\ell(I) \neq B_\ell(J)$ and we may assume without loss of generality that $B_\ell(I) = id$ and $B_\ell(J) = (12)$. We also deduce that, written in tableau form, the first two rows of $P_I P_J$ and $P_{I'} P_{J'}$ are different (even after permutation of the columns)
\[
\begin{tabular}{|c|c|}
    \multicolumn{1}{c}{$I$} & \multicolumn{1}{c}{$J$} \\
    \hline
    $i_1$ & $j_2$  \\
    \hline
    $i_2$ & $j_1$  \\
    \hline
    $X$ & $Y$ \\
    \hline
\end{tabular}
\, - \,
\begin{tabular}{|c|c|}
    \multicolumn{1}{c}{$I'$} & \multicolumn{1}{c}{$J'$} \\
    \hline
    $i_1$ & $j_2$  \\
    \hline
    $j_1$ & $i_2$  \\
    \hline
    $X'$ & $Y'$ \\
    \hline
\end{tabular} \, .
\]
Where $X,Y,X',Y'$ represent the remaining parts of the tableaux. Since the first two rows are different, we deduce that the entries $i_1, i_2, j_1, j_2$ are distinct. There are two cases, either $i_1, i_2 \in B_{\ell,1}$ or $i_1, i_2 \in B_{\ell,2}$.

\textbf{Case 1.} Assume $i_1, i_2 \in B_{\ell,1}$. 
Let $\alpha < \beta < \gamma$ be the values $i_1, i_2, j_1$ written in ascending order. Let $Z = X$ if $i_2 \ge j_1$, otherwise let $Z = Y$. We define the sets $\tilde I = \{\alpha, \gamma \} \cup Z$, $\tilde J = \{\beta, j_2 \} \cup Z$, $\tilde I' = \{\beta, \gamma \} \cup Z$ and $\tilde J' = \{\alpha, j_2\} \cup Z$. The binomial $P_{\tilde I}P_{\tilde J} - P_{\tilde I'} P_{\tilde J'}$ is given by the tableaux
\[
\begin{tabular}{|c|c|}
    \multicolumn{1}{c}{$\tilde I$} & \multicolumn{1}{c}{$\tilde J$} \\
    \hline
    $\alpha$ & $\beta$  \\
    \hline
    $\gamma$ & $j_2$  \\
    \hline
    $Z$ & $Z$ \\
    \hline
\end{tabular}
\, - \,
\begin{tabular}{|c|c|}
    \multicolumn{1}{c}{$\tilde I'$} & \multicolumn{1}{c}{$\tilde J'$} \\
    \hline
    $\beta$ & $\alpha$  \\
    \hline
    $\gamma$ & $j_2$  \\
    \hline
    $Z$ & $Z$ \\
    \hline
\end{tabular} \, .
\]
Since the tableaux are row-wise equal, we have $P_{\tilde I} P_{\tilde J} - P_{\tilde I'} P_{\tilde J'}$ is a binomial in $G_{k,n,0}$.
Since $i_1 < i_2$ and $i_1 < j_1$ we have that $i_1 = \alpha$ so $P_{\tilde I'} P_{\tilde J'}$ does not vanish in $G_{k,n,0}|_w^v$. So we have shown that  $G_{k,n,0}|_w^v$ is nonzero. 

\textbf{Case 2.} Assume $i_1, i_2 \in B_{\ell,2}$. 
Let $\alpha < \beta < \gamma$ be the values $i_1, i_2, j_2$ written in ascending order. Define the sets $\tilde I = \{\alpha, \beta \} \cup X$, $\tilde J = \{j_1, \gamma \} \cup X$, $\tilde I' = \{\alpha, \gamma \} \cup X$ and $\tilde J' = \{j_1, \beta\} \cup X$. The binomial $P_{\tilde I}P_{\tilde J} - P_{\tilde I'} P_{\tilde J'}$ is given by the tableaux
\[
\begin{tabular}{|c|c|}
    \multicolumn{1}{c}{$\tilde I$} & \multicolumn{1}{c}{$\tilde J$} \\
    \hline
    $\alpha$ & $j_1$  \\
    \hline
    $\beta$ & $\gamma$  \\
    \hline
    $X$ & $X$ \\
    \hline
\end{tabular}
\, - \,
\begin{tabular}{|c|c|}
    \multicolumn{1}{c}{$\tilde I'$} & \multicolumn{1}{c}{$\tilde J'$} \\
    \hline
    $\alpha$ & $j_1$  \\
    \hline
    $\gamma$ & $\beta$  \\
    \hline
    $X$ & $X$ \\
    \hline
\end{tabular} \, .
\]
Since the tableaux are row-wise equal we have that $P_{\tilde I}P_{\tilde J} - P_{\tilde I'} P_{\tilde J'}$ is a binomial in $G_{k,n,0}$.
Note that $B_{\ell}(J') = id$ so $j_2 < i_2$. Since $i_1 < i_2$ we have that $i_2 = \gamma$ so $P_{\tilde I} P_{\tilde J}$ does not vanish in $G_{k,n,0}|_w^v$. So we have shown that $G_{k,n,0}|_w^v$ is nonzero. 
\end{proof}

We now prove our main result in this section which give a complete characterization of nonzero monomial-free ideals of Richardson varieties inside Grassmannians.
\begin{theorem}\label{thm:Rich}
\hspace{-3mm}\begin{itemize}
    \item[{\rm (i)}] $G_{k,n,\ell}|_w^v$ is monomial-free
    if and only if $G_{k,n,\ell}|_w$ and $G_{k,n,\ell}|^v$ are both monomial-free.
    \item[{\rm (ii)}] $G_{k,n,0}|_w^v$ is zero if and only if at least one of the following holds:
 \begin{itemize}
  \item There exists some $i$ such that $v_i=w_i$ and $G_{k-1, n,0}|_{w'}^{v'}$ 
  is zero where we define the permutations $v',w'$ as
  $v' = (v_1, \dots, v_{i-1}, v_{i+1}, \dots, v_k)$ and $w' = (w_1, \dots, w_{i-1}, w_{i+1}, \dots, w_k)$.
 \item There exists some $i$ such that $v=(i, i+1, \ldots, i+k-1)$ and $w=(i+1, \ldots, i+k)$.
 \end{itemize}
\end{itemize}

\end{theorem}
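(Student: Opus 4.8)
The plan is to reduce both parts to the quadratic generators of the matching field ideal. By Theorem~\ref{thm:JAlebra}(ii), $G_{k,n,\ell}$ is generated by quadratic binomials $P_AP_B-P_CP_D$ (no $\init_{\wb_\ell}(g_i)$ can be a monomial, as $G_{k,n,\ell}$ is toric); hence, by Lemma~\ref{lem:elim_ideal_gen_set}, $G_{k,n,\ell}|_S$ is generated by the surviving binomials $P_AP_B-P_CP_D\in G_{k,n,\ell}$ with $\{A,B\},\{C,D\}\subseteq T$ together with the monomials $P_AP_B$ coming from binomials with $\{A,B\}\subseteq T$ and $\{C,D\}\cap S\neq\varnothing$. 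Since the ideal of the surviving binomials is a sub--toric ideal (hence monomial--free), this gives the criteria: $G_{k,n,\ell}|_S$ has a monomial iff there is a quadratic binomial $P_AP_B-P_CP_D\in G_{k,n,\ell}$ with $\{A,B\}\subseteq T$ and $\{C,D\}\cap S\neq\varnothing$; and $G_{k,n,\ell}|_S=0$ iff every nontrivial quadratic binomial has both monomials meeting $S$. I will also use that elimination is transitive, so $G_{k,n,\ell}|_w^v=(G_{k,n,\ell}|_w)|_{S^v\cap T_{w,k}}=(G_{k,n,\ell}|^v)|_{S_{w,k}}$, together with the fact that (as $v,w$ are Grassmannian permutations) $T_{w,k}^v$ is the interval $[v,w]=\{\{i_1<\dots<i_k\}:v_j\le i_j\le w_j\}$ in the componentwise order, with $[v,w]\subseteq T_{w,k}$ and $[v,w]\subseteq\{I:I\ge v\}$, and $S_{w,k}^v=S_{w,k}\cup(S^v\cap T_{w,k})$.

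\textbf{Part (i).} ``$\Leftarrow$'' is immediate from the criterion: a quadratic witness $P_AP_B-P_CP_D$ for $G_{k,n,\ell}|_w^v$ has $\{A,B\}\subseteq[v,w]$ and $\{C,D\}$ meeting $S_{w,k}\cup S^v$; if it meets $S_{w,k}$ then $P_AP_B\in G_{k,n,\ell}|_w$ (using $[v,w]\subseteq T_{w,k}$), and if it meets $S^v$ then $P_AP_B\in G_{k,n,\ell}|^v$. For ``$\Rightarrow$'', argue contrapositively: suppose $G_{k,n,\ell}|_w$ is not monomial--free (the $G_{k,n,\ell}|^v$ case is symmetric), with a quadratic binomial $P_AP_B-P_CP_D\in G_{k,n,\ell}$, $\{A,B\}\subseteq T_{w,k}$, and $C\not\le w$. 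If $A,B\ge v$ we are done, since then $P_AP_B$ is a monomial of $G_{k,n,\ell}|_w^v$. Otherwise, \emph{push the pair up}: replace $A,B$ by $A\vee v,\,B\vee v$ (componentwise maxima with the tuple $v$), which are again strictly increasing, lie in $[v,w]$, and -- using the tableau description of the Pl\"ucker forms (Propositions~\ref{prop:unique} and \ref{prop:unique_ell}) -- still occur in a quadratic binomial of $G_{k,n,\ell}$ whose other monomial has a factor $P_{C'}$ with $C'\ge C$, hence $C'\not\le w$. So $P_{A\vee v}P_{B\vee v}$ is a monomial of $G_{k,n,\ell}|_w^v$. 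This ``push'' is the technical core: for $\ell=0$ it is just the monotonicity of $\vee$ (one checks $A\vee v,B\vee v$ stay incomparable, and their join is $(A\vee B)\vee v\not\le w$), while for general $\ell$ one must track the $(12)$--swaps on the first two rows determined by the block $B_{\ell,1}=\{1,\dots,\ell\}$; this is done by the tableau manipulations illustrated before Proposition~\ref{prop:zero_rich_equiv_diag}.

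\textbf{Part (ii).} By Proposition~\ref{prop:zero_rich_equiv_diag} we may assume $\ell=0$. For the diagonal matching field the nontrivial quadratic binomials of $G_{k,n,0}$ are exactly $P_AP_B-P_{A\wedge B}P_{A\vee B}$ with $A,B$ incomparable, $\wedge,\vee$ the componentwise minimum and maximum of the increasing tuples. Now $[v,w]=T_{w,k}^v$ is a sublattice of the $k$--subset lattice ($A,B\in[v,w]\Rightarrow v\le A\wedge B\le A\vee B\le w$), and if $A,B$ are incomparable with $A\wedge B,A\vee B\in[v,w]$ then $A,B\in[A\wedge B,A\vee B]\subseteq[v,w]$. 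Hence a quadratic binomial with one monomial in $[v,w]$ has its other monomial in $[v,w]$ too, and such a binomial exists precisely when $[v,w]$ contains an incomparable pair; by the criterion above,
\[
G_{k,n,0}|_w^v=0\quad\Longleftrightarrow\quad [v,w]\text{ is a chain in the }k\text{--subset lattice.}
\]

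It remains to prove: $[v,w]$ is a chain $\iff$ one of the two bullets holds; we induct on $k$ (small $k$ being immediate). For ``$\Leftarrow$'': if $v=(s,\dots,s+k-1),\,w=(s+1,\dots,s+k)$ then $[v,w]$ is the explicit chain $I_0\subsetneq\dots\subsetneq I_k$ with $I_m=\{s,\dots,s+k-m-1\}\cup\{s+k-m+1,\dots,s+k\}$; and if $v_i=w_i$, deleting the (constant) $i$--th coordinate is an order isomorphism $[v,w]\cong[v',w']$, so by induction ($G_{k-1,n,0}|_{w'}^{v'}=0\iff[v',w']$ a chain) $[v,w]$ is a chain. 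For ``$\Rightarrow$'': assume $[v,w]$ is a chain. If some $v_i=w_i$, the isomorphism makes $[v',w']$ a chain, so by induction $G_{k-1,n,0}|_{w'}^{v'}=0$ and the first bullet holds. Otherwise $v_i<w_i$ for all $i$; for any $i<k$ with $v_{i+1}>v_i+1$, the tuple $v$ with $v_i$ raised by $1$ lies in $[v,w]$ and is incomparable with $v$ with $v_k$ raised by $1$ (always in $[v,w]$), contradicting the chain property, so $v_{i+1}=v_i+1$ for all $i$, i.e.\ $v=(s,\dots,s+k-1)$; dually $w=(t,\dots,t+k-1)$ with $t>s$. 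Finally, if $t\ge s+2$ then $(s,s+1,\dots,s+k-2,t+k-1)$ and $(s+1,s+2,\dots,s+k)$ both lie in $[v,w]$ and are incomparable, a contradiction; so $t=s+1$ and the second bullet holds. The only real bookkeeping here is Lemma~\ref{lem:elim_ideal_gen_set}; the rest is the elementary lattice combinatorics above, and the principal difficulty of the whole theorem is the ``push'' step of part~(i).
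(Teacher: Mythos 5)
For part~(ii) you take a genuinely different route from the paper. You first reduce to the lattice--theoretic statement ``$G_{k,n,0}|_w^v=0$ iff $[v,w]$ is a chain'', using that $[v,w]$ is a sublattice of the $k$-subset lattice and that the Hibi relation $P_AP_B-P_{A\wedge B}P_{A\vee B}$ is available for every incomparable pair $A,B$; the bullet characterization then becomes a short combinatorial argument about chains (delete constant coordinates, or exhibit two incomparable elements when $v$ or $w$ is not consecutive). The paper instead builds the witnessing quadratic relations directly from the permutations and manipulates tableaux case by case. Your framing is cleaner and makes the sublattice structure do the work. The one imprecision is harmless: the nontrivial quadratic binomials of $G_{k,n,0}$ are not \emph{exactly} $P_AP_B-P_{A\wedge B}P_{A\vee B}$ (an incomparable pair can admit several valid row rearrangements), but every $P_CP_D$ that can appear opposite $P_AP_B$ has $A\wedge B\le C,D\le A\vee B$, which is all you use.

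For part~(i), your ``$\Leftarrow$'' matches the paper. Your ``$\Rightarrow$'' --- the push --- has a genuine gap. After replacing $A,B$ by $A\vee v,\,B\vee v$ you need these to remain incomparable so that a nontrivial binomial survives, and this can fail: in the extreme case $v=w$ one has $A\vee v=B\vee v=w$ and the pushed relation is $0$. Concretely, take $k=2$, $n=4$, $\ell=1$, $v=w=(2,4)$. Then $G_{2,4,1}|_{(2,4)}=\langle P_{13}P_{24}\rangle$ contains a monomial (as Lemma~\ref{lem:G_knlw_classification} predicts), yet $G_{2,4,1}|_w^v=0$ because $T_{w,k}^v$ is a singleton; one can arrange similar examples with $v<w$ whenever $[v,w]$ is a short chain. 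So the push is not a technicality that can be deferred to ``tableau manipulations'' --- it is exactly where the argument breaks. To your credit, you single out this step as the principal difficulty; but the proposed remedy does not close it. (The paper's own treatment of this direction simply asserts that a monomial of $G_{k,n,\ell}|_w$ ``appears in $G_{k,n,\ell}|_w^v$'' without addressing that one of its Pl\"ucker factors may lie outside $[v,w]$, so the difficulty is present in both accounts.)
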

\begin{proof}
{\bf (i).} By definition we have 
\begin{align*}
    G_{k,n,\ell}|_w^v &= (G_{k,n,\ell} + \langle P_I : I \in S_{w,k}^v \rangle) \cap \mathbb K[P_I : I \notin S_{w,k}^v] \\
    &= (G_{k,n,\ell} + \langle P_I : I \in S_{w,k} \rangle + \langle P_I : I \in S_{k}^v \rangle) \cap \mathbb K[P_I : I \notin S_{w,k}^v] \\
    &= \left((G_{k,n,\ell} + \langle P_I : I \in S_{w,k} \rangle) \cap \mathbb K[P_I : I \notin S_{w,k}^v]\right) \\
    &\quad + \left((G_{k,n,\ell} + \langle P_I : I \in S_{k}^v \rangle) \cap \mathbb K[P_I : I \notin S_{w,k}^v]\right)\\
    &= G_{k,n,\ell}|_w + G_{k,n,\ell}|^v \subseteq \mathbb K[P_I : I \notin S_{w,k}^v].
\end{align*}
In the above we consider $G_{k,n,\ell}|_w$ and $G_{k,n,\ell}|^v$ as ideals of the ring $\mathbb K[P_I : I \notin S_{w,k}^v]$ by inclusion of their generators.
On the one hand if $G_{k,n,\ell}|^v$ or $G_{k,n,\ell}|_w$ contain a monomial then the same monomial appears in $G_{k,n,\ell}|_w^v$.
On the other hand suppose that $G_{k,n,\ell}|_w^v$ contains a monomial $P_I P_J$ then $v \le I, J \le w$. Also there exists $I', J' $ such that $P_I P_J - P_{I'} P_{J'}$ is a binomial in $G_{k,n,\ell}$ and either $I', J' \not\ge v$ or $I', J' \not\le w$. If $I', J' \not\ge v$ then $P_I P_J$ is a monomial in $G_{k,n,\ell}|^v$. If $I', J' \not\le w$ then $P_I P_J$ is a monomial in $G_{k,n,\ell}|_w$.

\medskip

{\bf (ii).} First suppose that there exists some $i$ such that $v_i = w_i$ and $G_{k-1,n,0}|_{w'}^{v'} = 0$. Suppose by contradiction that $P_I P_J$ does not vanish in $G_{k,n,0}|_w^v$, where $P_I P_J - P_{I'} P_{J'}$ is a binomial in $G_{k,n,0}$. By removing the $i^{\rm th}$ entry of $I, J, I'$ and $J'$ we obtain sets $\tilde{I}, \tilde{J}, \tilde{I'}$ and $\tilde{J'}$. Note that $P_{\tilde{I}}P_{\tilde{J}} - P_{\tilde{I'}} P_{\tilde{J'}}$ is a binomial in $G_{k-1,n,0}$. Also by assumption $P_{\tilde{I}} P_{\tilde{J}}$ does not vanish in $G_{k-1,n,0}|_{w'}^{v'}$ which is a contradiction. So $G_{k,n,0}|_w^v = 0$.

Suppose that,
$v = (v_1, v_1 + 1, \dots, v_1 + k - 1)$ and $w = (v_1 + 1, v_1 + 2, \dots, v_1 + k)$. Suppose by contradiction that there is a binomial $P_I P_J - P_{I'} P_{J'}$ in $G_{k,n,0}$ such that $P_I P_J$ does not vanish in $G_{k,n,0}|_w^v$. Then we have that $i_1, j_1 \in \{v_1, v_1 + 1 \}, \dots, i_k, j_k \in \{v_1 + k - 1, v_1 + k \}$. Since $I \neq J$ we may assume without loss of generality that there exists $s$ such that $i_s < j_s$. Therefore we have $i_s = v_1 + s - 1$ and $j_s = v_1 + s$. This determines the following values of $I$: $i_1 = v_1, i_2 = v_1 + 1, \dots, i_s = v_s + s - 1$ and also determines the following values for $J$: $j_s = v_1 + s, j_{s+1} = v_1 + s + 1, \dots, j_k = v_1 + k$. We now focus on $I'$ and $J'$. Without loss of generality we may assume that $i'_s = i_s = v_s + s - 1$ and $j'_s = j_s = v_1 + s$. Similarly these values determine the same values of $I'$ and $J'$. Therefore $I'$ and $I$ are identical on the first $s + 1$ values and so $J$ and $J'$ are also identical on these values. Also $J$ and $J'$ are identical on the last $n-s$ values because $j'_s = v_1 + s$. Hence $J$ and $J'$ are identical and the binomial $P_I P_J - P_{I'} P_{J'}$ is trivial.

For the converse take a pair of Grasmmannian permutations $v, w$ such that $G_{k,n,0}|_w^v = 0$. First let us assume there exists $t$ such that $v_t = w_t$. We show that $G_{k-1,n,0}|_{w'}^{v'} = 0$ by contradiction. Let $P_I P_J - P_{I'} P_{J'}$ be a binomial in $G_{k-1,n,0}$ such that $P_I P_J$ does not vanish in $G_{k-1,n,0}|_{w'}^{v'}$. We have that $v_{t-1} < v_t < v_{t+1}$ and $w_{t-1} < w_t = v_t < w_{t+1}$. Since $P_I P_J$ does not vanish we have $i_{t-1}, j_{t-1} \le w_{t-1} < w_t = v_t < v_{t+1} < i_{t}, j_{t}$. Therefore we may add $v_t$ to each $I, J, I'$ and $ J'$ to obtain $\tilde{I}, \tilde{J}, \tilde{I'}$ and $\tilde{J'}$. By construction $P_{\tilde{I}} P_{\tilde{J}} - P_{\tilde{I'}} P_{\tilde{J'}}$ a binomial for $G_{k,n,0}|_w^v$ for which $P_{\tilde{I}} P_{\tilde{J}}$ does not vanish, a contradiction.

Now let us assume that for each $t$, we have $v_t < w_t$. Now we will show that $v$ and $w$ are of the desired form by contradiction. That is, we assume that there exists $s$ such that either $ v_{s+1} - v_s  \ge 2$ or $w_{s+1} - w_s \ge 2$. We treat these as two separate cases.

\textbf{Case 1.} Let $v_{s+1} - v_s \ge 2$ for some $s$. Then let
\[
I = \{v_1, \dots, v_k \}\, , \quad 
J = \{v_1 + 1, \dots, v_k + 1 \},
\]
\[
I' = \{v_1, \dots, v_s, v_{s+1} + 1, \dots, v_k + 1 \} \text{ and } 
J' = \{v_1 + 1, \dots, v_s + 1, v_{s+1}, \dots, v_k \}.
\]
Note that $v_s + 1 < v_{s+1}$ because $v_{s+1} - v_s \ge 2$. Since $v_t < w_t$ for each $t$ therefore none of $P_I, P_J, P_{I'}$ and $P_{J'}$ vanish in $G_{k,n,0}|_w^v$. By construction we have $P_I P_J - P_{I'} P_{J'}$ is a binomial in $G_{k,n,0}$ and so $G_{k,n,0}|_w^v$ is nonzero, a contradiction.

\textbf{Case 2.} Let $w_{s+1} - w_s  \ge 2$ for some $s$. Then let
\[
I = \{w_1, \dots, w_k \}\, , \quad 
J = \{w_1 - 1, \dots, w_k - 1 \},
\]
\[
I' = \{w_1, \dots, w_s, w_{s+1} - 1, \dots, w_k - 1 \} \text{ and } 
J' = \{w_1 - 1, \dots, w_s - 1, w_{s+1}, \dots, v_k \}.
\]
Note that $w_s < w_{s+1} - 1$ because $w_{s+1} - w_s \ge 2$. Since $v_t < w_t$ for each $t$ therefore none of $P_I, P_J, P_{I'}$ and $P_{J'}$ vanish in $G_{k,n,0}|_w^v$. By construction we have $P_I P_J - P_{I'} P_{J'}$ is a binomial in $G_{k,n,0}$ and so $G_{k,n,0}|_w^v$ is nonzero, a contradiction.
\end{proof}

\subsection{Schubert and opposite Schubert varieties.}
We first recall the characterization of nonzero monomial-free Schubert ideals from \cite{OllieFatemeh}.
\begin{lemma}[Theorem 5.7 in \cite{OllieFatemeh}] \label{lem:G_knlw_classification}
\hspace{2mm}
\begin{itemize}
    \item[{\rm (i)}] The ideal 
$G_{k,n,\ell}|_w$ contains a monomial
if and only if:
\begin{itemize}
\item $\ell\neq 0$
\item $w_1\in\{2,\ldots,n-k\}\backslash\{\ell\}$, and
\item $\{w_2,\ldots,w_k\}\subseteq \{\ell+1,\ldots,n\}\backslash\{w_1+1\}$
\end{itemize}
\item[{\rm (ii)}] The ideal 
$G_{k,n,\ell}|_w$ is zero if and only if $w$ belongs to the following set:
\[
Z_{k,n} = \{(1,2, \dots, k-1, i) : k \le i \le n \} \cup \{ (1, \dots, \hat{i}, \dots, k, k+1) : 1 \le i \le k-1 \}.
\]
\end{itemize}
\end{lemma}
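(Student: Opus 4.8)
The plan is to translate everything into the combinatorics of the matching field tableaux and the monomial map $\phi_\ell$ of Theorem~\ref{thm:JAlebra}. Since $G_{k,n,\ell}=\ker\phi_\ell$, the quotient $\mathbb K[P_I]/(G_{k,n,\ell}+\langle P_H:H\in S_{w,k}\rangle)$ is the monomial algebra $A_\ell=\mathbb K[\ini_{M_\ell}(P_I)]$ modulo the monomial ideal generated by $\{\ini_{M_\ell}(P_H):H\not\le w\}$; as each generator of $A_\ell$ has $x$-degree $k$, a degree-$k$ element of $A_\ell$ is a single $\ini_{M_\ell}(P_{I'})$. Hence a quadratic monomial $P_IP_J$ (so $I,J\le w$) lies in $G_{k,n,\ell}|_w$ iff $P_IP_J\pm P_HP_{I'}\in G_{k,n,\ell}$ for some $k$-subsets with $H\not\le w$; and by the quadratic generating set of Lemma~\ref{lem:elim_ideal_gen_set}, $G_{k,n,\ell}|_w$ has a monomial iff it has such a quadratic one — otherwise every quadratic generator is a binomial with all four terms $\le w$, so $G_{k,n,\ell}|_w=\ker(\phi_\ell|_{\mathbb K[P_I:I\le w]})$ is prime, being nonzero exactly when this restricted map fails to be injective. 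Finally, $P_IP_J\pm P_{I'}P_{J'}\in G_{k,n,\ell}$ iff the $k\times 2$ matching field tableaux of $\{I,J\}$ and $\{I',J'\}$ have equal rows as multisets; since $B_\ell$ permutes only rows $1$ and $2$ (Propositions~\ref{prop:unique} and~\ref{prop:unique_ell}), rows $r\ge3$ force $\{i_r,j_r\}=\{i'_r,j'_r\}$, an $\id$-column of $S$ puts the two smallest elements of $S$ into rows $1,2$ in increasing order, and a $(12)$-column puts them in decreasing order with the smaller $\le\ell<$ the larger.

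A sign computation shows that when $B_\ell(I)=B_\ell(J)$ one has $\phi_\ell(P_IP_J)=\phi_\ell(P_{I\wedge J}P_{I\vee J})$ (componentwise meet and join), and $I\wedge J,I\vee J\le w$ whenever $I,J\le w$; so a ``boundary'' monomial can only come from a \emph{mixed} binomial with (say) $B_\ell(I)=\id$, $B_\ell(J)=(12)$ — impossible when no set is $(12)$-type, i.e.\ when $\ell\in\{0,n\}$, which gives the necessity of $\ell\ne0$. Granting the four conditions (the third bullet being equivalent to $w_j>\ell$ for all $j\ge2$ together with $w_1{+}1\notin\{w_2,\dots,w_k\}$), I would exhibit a mixed boundary move explicitly, splitting on $w_1<\ell$ versus $w_1>\ell$ (exhaustive as $w_1\ne\ell$). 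For $w_1>\ell$ take $J=w$, $I=\{1,w_1{+}1,w_2,\dots,w_{k-1}\}$, $I'=\{1,w_1,w_2,\dots,w_{k-1}\}$, $H=\{w_1{+}1,w_2,\dots,w_k\}$: the hypotheses $w_1\ge2$, $w_1{+}1\notin\{w_2,\dots,w_k\}$, $w_1\le n-k$ make these legitimate $k$-subsets, $I,I',J\le w$ while $h_1=w_1{+}1>w_1$, and the tableaux of $\{I,J\}$ and $\{H,I'\}$ are row-wise equal, so $P_IP_J$ is a monomial of $G_{k,n,\ell}|_w$. For $w_1<\ell$ I would use the mirrored construction with row-$1$ values $\{1,w_2\}$, row-$2$ values $\{w_1,w_1{+}1\}$ and common rows $\{w_3,\dots,w_k\}$: its two re-pairings give $I_A=\{1,w_1,w_3,\dots,w_k\}$, $J_A=\{w_1{+}1,w_2,\dots,w_k\}\not\le w$ and $I_B=\{1,w_1{+}1,w_3,\dots,w_k\}$, $J_B=w$, so $P_{I_B}P_{J_B}$ is a boundary monomial.

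For the converse of (i), suppose $P_IP_J\pm P_HP_{I'}\in G_{k,n,\ell}$ is mixed with $I,J\le w$, $H\not\le w$. Since rows $r\ge3$ give $h_r\in\{i_r,j_r\}\le w_r$ and the first two tableau rows of $H$ lie in $\{i_1,i_2,j_1,j_2\}\le\max(w_1,w_2)$, the failure must be $h_1>w_1$, and tracing $h_1=\min H$ through the tableaux forces one of $i_2,j_2$ to exceed $w_1$. Each remaining condition then follows by contradiction upon assuming its failure: $w_1=1$ would force both $H$ and $I'$ to contain $1$ in any valid re-pairing (otherwise a tableau entry repeats), so $\min H=1=w_1$; some $w_m\le\ell$ with $m\ge2$ makes $i_1,\dots,i_m\le\ell$ for every $I\le w$, so no surviving set is $(12)$-type and no mixed binomial exists; $w_1=\ell$ forces $i_2\le\ell$ (as $|I\cap\{1,\dots,\ell\}|\ge2$) against $i_2>w_1$; $w_2=w_1{+}1$ makes the tableau entry of $H$ adjacent to its minimum simultaneously $\le w_2$ (being some $i_r$ or $j_r$ with $I,J\le w$) and $>w_2$ (being the second-smallest element of $H$, whose minimum is $w_2$); and $w_1=n-k+1$ gives $w=(n{-}k{+}1,\dots,n)$, whence $G_{k,n,\ell}|_w=G_{k,n,\ell}$ is toric, hence monomial-free. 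For part (ii), Proposition~\ref{prop:zero_rich_equiv_diag} with $v=(1,2,\dots,k)$ (so $S^v_{w,k}=S_{w,k}$) reduces to $\ell=0$; then $G_{k,n,0}=\langle P_IP_J-P_{I\wedge J}P_{I\vee J}\rangle$ has all straightening relations with both terms $\le w$ when $I,J\le w$, so (by the reduction) $G_{k,n,0}|_w$ is monomial-free and equals $\ker(\phi_0|_{\mathbb K[P_I:I\le w]})$, which vanishes iff any two $I,J\le w$ are comparable in the componentwise order, i.e.\ iff the order ideal $\{I:I\le w\}$ (with bottom $(1,\dots,k)$ and top $w$) is a chain; an elementary check identifies these $w$ as exactly $Z_{k,n}$, since outside $Z_{k,n}$ one finds an index at which a coordinate can be lowered while the next is raised inside $\{I\le w\}$, producing incomparable elements.

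The hardest part is the necessity direction of (i): extracting all four conditions on $w$ from the mere existence of one bad quadratic move requires an exhaustive analysis of how the entries of $H$ and $I'$ are distributed across the tableau rows and which of $I,J$ is $(12)$-type, and the conditions $w_1\ne\ell$ and $w_1{+}1\notin\{w_2,\dots,w_k\}$ emerge only after eliminating several near-miss configurations. Verifying that the two explicit moves in (i) are genuine matching field relations (and that the auxiliary sets are legitimate $k$-subsets that are $\le w$ or not, as claimed), together with the chain characterization in (ii), are the remaining — routine — pieces of the argument.
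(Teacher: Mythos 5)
This lemma is a recalled result in the paper: it is stated with the bracketed attribution ``Theorem 5.7 in \cite{OllieFatemeh}'' and no proof is supplied, so there is nothing internal to compare your argument against. Your reconstruction does capture the right framework --- passing to the toric algebra $A_\ell=\mathbb K[\ini_{M_\ell}(P_I)]$, reducing monomial membership to the existence of a quadratic binomial $P_IP_J\pm P_HP_{I'}$ in $G_{k,n,\ell}$ with $H\not\le w$, and working with row-wise equal matching field tableaux. I checked your two explicit moves in the sufficiency direction (for $w_1>\ell$ and $w_1<\ell$): the sets, their $B_\ell$-types, the tableau rows, and the $\le w$ comparisons all come out as you claim, so that half is sound. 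The reduction of part (ii) to $\ell=0$ via Proposition~\ref{prop:zero_rich_equiv_diag} with $v=(1,\dots,k)$ (so that $S_{w,k}^v=S_{w,k}$) is also a nice use of a result already in this paper.

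There is, however, a genuine gap in the necessity direction, at the point where you invoke the meet/join identity to conclude that ``a boundary monomial can only come from a mixed binomial.'' The equation $\phi_\ell(P_IP_J)=\phi_\ell(P_{I\wedge J}P_{I\vee J})$ with $I\wedge J,I\vee J\le w$ only exhibits one re-pairing of the tableau of $\{I,J\}$ that remains inside $\{K:K\le w\}$; it does not by itself exclude the existence of some other valid re-pairing $\{I',H\}$ with $H\not\le w$. To rule this out when $B_\ell(I)=B_\ell(J)$ you have to run a case analysis on the types of the columns of the re-pairing: a same-type re-pairing automatically has $h_r\le\max(i_r,j_r)\le w_r$ for all $r$ (this is what actually saves you, not the meet/join), while a mixed re-pairing --- where one column must be $(12)$-type and the other $\id$-type --- forces the two small tableau entries into incompatible positions relative to the threshold $\ell$ and to the sorted orders, and one has to check in each sub-case ($B_\ell(I)=B_\ell(J)=\id$, and $=(12)$) that this is impossible. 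That verification is not routine and is precisely what makes the ``only if'' direction non-trivial; as written, your argument asserts the conclusion without doing it. The remaining elimination steps for $w_1\ne\ell$, $w_2\ne w_1+1$, $w_1\ge 2$, $w_1\le n-k$, and the chain characterization of $Z_{k,n}$ in (ii) are plausible but similarly declarative --- they would need to be expanded to constitute a proof.
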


\begin{remark}
{
As a special example of pairs of permutations which satisfy the conditions of Theorem~\ref{thm:Rich}(ii), we have that if $w \in Z_{k,n}$ or $w_0 v \in Z_{k,n}$ then $G_{k,n,\ell}|_w^v$ is zero.
}\end{remark}

The proofs of many of the statements here
rely on the following key but straightforward observation.
\begin{lemma}[Key Lemma] \label{lem:key_lemma}
Fix $n > k$. Let $I, J \in \binom{[n]}{k}$. Then $I \le J$ if and only if $w_0 I \ge w_0 J$.
\end{lemma}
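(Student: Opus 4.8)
The plan is to unwind the definition of the partial order $\le$ on $\binom{[n]}{k}$ and apply the order-reversing bijection $I \mapsto w_0 I$ directly. Recall that for $I = \{i_1 < \dots < i_k\}$ and $J = \{j_1 < \dots < j_k\}$, we have $I \le J$ if and only if $i_m \le j_m$ for every $m \in \{1, \dots, k\}$. The map $x \mapsto n+1-x$ is an order-reversing bijection on $[n]$, so applying it to $I$ and sorting, the sorted sequence of $w_0 I$ is $(n+1-i_k, n+1-i_{k-1}, \dots, n+1-i_1)$; that is, if we write $w_0 I = \{a_1 < \dots < a_k\}$ then $a_m = n+1-i_{k+1-m}$. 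The same formula holds for $w_0 J$.

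First I would record this reindexing formula explicitly: the $m$-th smallest element of $w_0 I$ equals $n+1$ minus the $(k+1-m)$-th smallest element of $I$. Then the chain of equivalences is immediate: $w_0 I \ge w_0 J$ means $n+1-i_{k+1-m} \ge n+1-j_{k+1-m}$ for all $m$, which is equivalent to $i_{k+1-m} \le j_{k+1-m}$ for all $m$, and as $m$ ranges over $\{1,\dots,k\}$ so does $k+1-m$, so this is precisely $i_r \le j_r$ for all $r \in \{1, \dots, k\}$, i.e. $I \le J$. This gives both implications at once since every step is an equivalence.

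There is essentially no obstacle here — the only thing to be careful about is the bookkeeping of the index reversal, making sure that taking $w_0$ reverses the sorted order of the tuple, so that comparing the $m$-th entries of $w_0 I$ and $w_0 J$ corresponds to comparing the $(k+1-m)$-th entries of $I$ and $J$. I would present the one-line reindexing identity, then the short string of equivalences, and conclude. One could alternatively phrase it via the complement description of the order (noting $w_0$ also interacts with complementation), but the direct computation above is the cleanest and I would go with that.
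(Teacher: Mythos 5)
Your proof is correct. The paper does not actually supply a proof of this lemma — it is introduced as a ``key but straightforward observation'' and stated without justification — so there is nothing to compare against; your argument via the reindexing identity $a_m = n+1-i_{k+1-m}$ is exactly the natural verification the authors leave implicit, and it is complete and accurate.
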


Before stating our main result, we first characterize which ideals are zero and which ideals contain monomials, which arise from the diagonal case when $\ell=0$. We then extend this result to arbitrary $\ell$ in Theorem~\ref{main:zero Gr}.

\begin{lemma} \label{lem:gr_zero_opposite}
The ideal $G_{k,n,0}|^v$ is zero if and only if $w_0v \in Z_{k,n}$. Moreover, if $\ell=0$ or $\ell>n-k+1$, then the ideal $G_{k,n,\ell}|^v$ is either zero or it does not contain any monomial.
\end{lemma}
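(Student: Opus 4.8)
The plan is to deduce the statement about opposite Schubert varieties from the already-established results about Schubert varieties via the translation $X^v = w_0 X_{w_0 v}$, using the Key Lemma (Lemma~\ref{lem:key_lemma}) as the dictionary between the partial order on $k$-subsets and its reverse. First I would record the combinatorial fact that $S_k^v$, the set of vanishing variables for $G_{k,n,0}|^v$, is exactly $w_0 S_{w_0 v, k}$: indeed $I \in S_k^v$ means $I \not\ge (v_{k_1} < \cdots < v_{k_k})$, which by Lemma~\ref{lem:key_lemma} is equivalent to $w_0 I \not\le w_0\{v_{k_1},\dots,v_{k_k}\}$, i.e.\ $w_0 I \in S_{w_0 v, k}$. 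So the substitution $P_I \mapsto P_{w_0 I}$ identifies the set of variables set to zero in $G_{k,n,0}|^v$ with the set set to zero in $G_{k,n,0}|_{w_0 v}$.

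Next I would check that this relabelling of variables is in fact an isomorphism of the relevant ideals after the appropriate change of weight. The diagonal matching field ideal $G_{k,n,0}$ is the kernel of $P_I \mapsto \mathrm{sgn}(\mathrm{id})\,\mathrm{in}_{M_0}(P_I) = x_{1,i_1}\cdots x_{k,i_k}$ for $I = \{i_1 < \cdots < i_k\}$. Applying the column-reversal $x_{i,j}\mapsto x_{i,n+1-j}$ sends the diagonal term of $P_I$ to the \emph{antidiagonal} term of $P_{w_0 I}$; equivalently, $G_{k,n,0}$ is carried isomorphically to the antidiagonal matching field ideal under $P_I\mapsto P_{w_0I}$, and the antidiagonal ideal coincides with the diagonal ideal $G_{k,n,0}$ after a further harmless relabelling (the diagonal and antidiagonal degenerations of the Grassmannian are isomorphic, as noted in the introduction). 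Combining these, $P_I \mapsto P_{w_0 I}$ is a ring isomorphism carrying $G_{k,n,0}$ to itself and $S_k^v$ to $S_{w_0 v, k}$, hence carrying $G_{k,n,0}|^v$ isomorphically onto $G_{k,n,0}|_{w_0 v}$. In particular one is zero (resp.\ monomial-free) if and only if the other is, and Lemma~\ref{lem:G_knlw_classification}(ii) gives: $G_{k,n,0}|^v$ is zero iff $w_0 v \in Z_{k,n}$.

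For the second assertion I would argue that for $\ell = 0$, or more precisely for the ``extreme'' values of $\ell$, the restricted ideal behaves like the diagonal one. When $\ell = 0$ the claim is exactly the $\ell=0$ case: $G_{k,n,0}|^v$ is either zero or monomial-free — this needs the analogue of the Schubert classification, namely that $G_{k,n,0}|_w$ contains no monomial for any $w$ (which is the $\ell=0$ case of Lemma~\ref{lem:G_knlw_classification}(i), where the condition forces $\ell\neq 0$), transported through the isomorphism above. For $\ell > n-k+1$ I would show $\mathrm{in}_{\wb_\ell}(P_J) = \mathrm{in}_{\wb_0}(P_J)$ for every relevant $J$: by Proposition~\ref{prop:unique_ell} the matching field permutation $B_\ell(J)$ is the transposition $(12)$ only when $|J\cap\{1,\dots,\ell\}| = 1$, i.e.\ $j_1 \le \ell < j_2$; but if $\ell > n-k+1$ then any $k$-subset $J$ with $j_1 \le \ell$ already has $j_2 \le \ell$ (since $j_2 \le n-k+2 \le \ell$... here I need the precise bound, so I would recompute it carefully), so $B_\ell(J) = \mathrm{id}$ for all $J$ and $G_{k,n,\ell} = G_{k,n,0}$ as ideals; hence $G_{k,n,\ell}|^v = G_{k,n,0}|^v$ and the first part applies.

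The main obstacle I anticipate is pinning down the exact threshold on $\ell$ in the last step: one must verify that $\ell > n-k+1$ really does force $j_2 \le \ell$ for every $k$-subset $J$ with $j_1 \le \ell$ (the worst case being $J = \{1, j_2, j_2+1, \dots\}$ with the remaining entries as large as possible, giving $j_2 = n-k+2$), so that the matching field collapses to the diagonal one; if the inequality is off by one the reduction fails and one would instead need to track which individual Plücker variables change, which is more delicate. The rest is bookkeeping with the $w_0$-symmetry and citing Lemmas~\ref{lem:key_lemma} and~\ref{lem:G_knlw_classification}.
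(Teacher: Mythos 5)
Your proposal is correct and follows essentially the same route as the paper: translate the vanishing data for $G_{k,n,0}|^v$ to that for $G_{k,n,0}|_{w_0 v}$ via Lemma~\ref{lem:key_lemma}, observe that $P_I\mapsto P_{w_0 I}$ carries binomials of $G_{k,n,0}$ to binomials of $G_{k,n,0}$, and then invoke Lemma~\ref{lem:G_knlw_classification}. Your detour through the antidiagonal matching field is slightly more elaborate than the paper's direct remark that row-wise equality of diagonal tableaux is preserved under $w_0$, but it reaches the same conclusion. One thing you handle more carefully than the paper does: the paper's proof of the second statement silently uses that $G_{k,n,\ell}=G_{k,n,0}$ when $\ell=0$ or $\ell>n-k+1$ (it jumps straight to ``a binomial in $G_{k,n,0}$''), whereas you state this identification explicitly and verify the threshold. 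Your bound $j_2\le n-k+2\le\ell$ is exactly right, so the reduction is sound.
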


\begin{proof}
{
For the first statement, $G_{k,n,0}|^v$ is nonzero if and only if there exists a binomial $P_I P_J - P_{I'} P_{J'}$ in $G_{k,n,0}$ such that $I, J \ge v$. By Lemma~\ref{lem:key_lemma} $I, J \ge v $ if and only if $w_0I, w_0J \le w_0v$. Observe that $P_{w_0I} P_{w_0J} - P_{w_0I'} P_{w_0J'}$ is also a binomial in $G_{k,n,0}$ and all binomials can be written in this form since $w_0^2 = id$. Therefore $G_{k,n,0}|^v$ is non-zero if and only if $G_{k,n,0}|_{w_0v}$ is non-zero. 
}

{
The second statement is a consequence of Lemma~\ref{lem:G_knlw_classification} and the bijection between binomials described above. If $G_{k,n,\ell}|^v$ contains a monomial $P_I P_J$ then there exists a binomial $P_I P_J - P_{I'} P_{J'}$ in $G_{k,n,0}$ such that $I, J \ge v$ and $I', J' < v$. By Lemma~\ref{lem:key_lemma}, $w_0I, w_0J \le w_0v$ and $w_0I', w_0J' > w_0v$. Since $P_{w_0I} P_{w_0J} - P_{w_0I'} P_{w_0J'}$ is also a binomial in $G_{k,n,0}$, therefore $P_{w_0I} P_{w_0J}$ is a monomial in $G_{k,n,\ell}|_{w_0v}$, which contradicts Lemma~\ref{lem:G_knlw_classification}.
}
\end{proof}

In the above proof of Lemma~\ref{lem:gr_zero_opposite}, if $B_\ell$ is the non-diagonal block diagonal matching field and $P_I P_J - P_{I'} P_{J'}$ is a binomial in its corresponding initial ideal $G_{k,n,\ell}$ then it does not follow that $P_{w_0 I} P_{w_0 J} - P_{w_0 I'} P_{w_0 J'}$ is also a binomial. We illustrate this point with the following example.

\begin{example}
Consider $\Gr(3,6)$ with block diagonal matching field $B_1$ where $B_{1,1} = \{1 \}$ and $B_{1,2} = \{2, 3, 4, 5, 6 \}$. We describe the binomial through the following tableaux:
\[
\begin{tabular}{|c|c|}
    \multicolumn{1}{c}{$I$} & \multicolumn{1}{c}{$J$} \\
    \hline
    2 & 4 \\
    \hline
    1 & 5 \\
    \hline
    5 & 6 \\
    \hline
\end{tabular} \, - \,
\begin{tabular}{|c|c|}
    \multicolumn{1}{c}{$I'$} & \multicolumn{1}{c}{$J'$} \\
    \hline
    4 & 2 \\
    \hline
    1 & 5 \\
    \hline
    5 & 6 \\
    \hline
\end{tabular} \, .
\]
We apply $w_0$ to each set above to obtain the sets,
\[
w_0 I = \{2, 5, 6 \} \, , \,
w_0 J = \{1, 2, 3 \} \, , \,
w_0 I' = \{2, 3, 6 \} \, , \,
w_0 J' = \{1, 2, 5 \} \, .
\]
We show that these sets do not give a binomial with respect to the matching field $B_1$. Consider the position of $3$ which appears only in $w_0 J$ and $w_0 I'$. The true order of $w_0 J$ is $(2,1,3)$ and the true order of $w_0 I'$ is $(2,3,6)$. We see that the value $3$ does not lie in the same position, hence the sets do not give rise to a binomial. 

{We note that it is also enough to show that ${\bf w}_{\ell}(w_0I)+{\bf w}_{\ell}(w_0J)\neq {\bf w}_{\ell}(w_0I')+{\bf w}_{\ell}(w_0J')$, however the above method considers only the true order of sets $w_0J$ and $w_0I'$. In general, we will use the true orders of sets and their tableaux in our proofs because there may be many different weight vectors that induce the same matching field.} 
\end{example}

We are now able to prove our main result about opposite Schubert varieties in the Grassmannian which characterizes all permutations $v$ for which $G_{k,n,\ell}|^v$ contains a monomial or is zero.

\begin{theorem}
\label{main:zero Gr}
\hspace{1mm}
\begin{itemize}
    \item[{\rm (i)}] The ideal $G_{k,n,\ell}|^v$ contains a monomial
    if and only if:
\begin{itemize}
    \item $1\leq\ell\leq n-k+1$
    \item $v_1\in \{1, \dots, \ell \}$
    \item $v_2\in \{v_1+2, \ldots, n\}\setminus \{\ell+1\}$ 
\end{itemize}
\item[{\rm (ii)}] The ideal $G_{k,n,\ell}|^v$ is zero if and only if $w_0 v \in Z_{k,n}$.
    
\end{itemize}
\end{theorem}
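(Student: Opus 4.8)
The plan is to reduce both statements to the Schubert case (Lemma~\ref{lem:G_knlw_classification}) via the $w_0$-duality, being careful that this duality does \emph{not} intertwine the matching field $B_\ell$ with itself, but instead with a different block structure. The key geometric fact is that the opposite Schubert variety $X^v$ is the translate $w_0 X_{w_0 v}$, so at the level of Pl\"ucker coordinates, passing from $v$ to $w_0 v$ is exactly relabelling $P_I \mapsto P_{w_0 I}$, which by Lemma~\ref{lem:key_lemma} sends the order interval $\{I : I \ge v\}$ to $\{w_0 I : w_0 I \le w_0 v\}$. So $G_{k,n,\ell}|^v$ contains a monomial (resp.\ is zero) if and only if the ideal obtained from $G_{k,n,\ell}$ by applying $w_0$ to all variable indices and then restricting to $\{J : J \le w_0 v\}$ does. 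The first task is therefore to identify that $w_0$-relabelled ideal: since $w_0$ reverses the order of $[n]$, it sends the block partition $B_{\ell,1} = \{1, \dots, \ell\}$, $B_{\ell,2} = \{\ell+1,\dots,n\}$ to $\{n-\ell+1, \dots, n\}$, $\{1, \dots, n-\ell\}$, i.e.\ it conjugates the matching field $B_\ell$ to a matching field with the two blocks swapped; one checks (using the explicit permutations $B_\ell(J)$ from Remark~\ref{def:block}) that this is precisely the matching field $B_{n-\ell}$ \emph{after also reversing the role of diagonal vs.\ antidiagonal}. I would make this identification precise as a preliminary lemma: $w_0$ carries the monomial map $\phi_\ell$ to (a sign twist of) the monomial map for $B_{n-\ell}$, hence $G_{k,n,\ell}|^v$ is isomorphic, as a graded ring, to $G_{k,n,n-\ell}|_{w_0 v}$ — equivalently, $G_{k,n,\ell}|^v$ has a monomial iff $G_{k,n,n-\ell}|_{w_0v}$ has one, and likewise for being zero.

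Granting that dictionary, part (ii) is immediate: $G_{k,n,n-\ell}|_{w_0 v}$ is zero iff $w_0 v \in Z_{k,n}$ by Lemma~\ref{lem:G_knlw_classification}(ii), and this condition is independent of $n-\ell$, hence of $\ell$. (The $\ell = 0$ instance is already Lemma~\ref{lem:gr_zero_opposite}; the general case follows from exactly the same duality argument applied with $B_{n-\ell}$ in place of $B_0$.) For part (i), I would feed $w = w_0 v$ into Lemma~\ref{lem:G_knlw_classification}(i) with parameter $n-\ell$ in place of $\ell$, obtaining: $G_{k,n,\ell}|^v$ contains a monomial iff $n - \ell \ne 0$, $w_1 = (w_0 v)_1 \in \{2, \dots, n-k\} \setminus \{n-\ell\}$, and $\{(w_0 v)_2, \dots, (w_0 v)_k\} \subseteq \{n-\ell+1, \dots, n\} \setminus \{(w_0v)_1 + 1\}$. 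Then it is a matter of rewriting each of these three conditions in terms of $v$ itself. Since $v = (v_1, \dots, v_k)$ is a Grassmannian permutation with $v_1 < \dots < v_k$, and $w_0 v = (n+1-v_k, n+1-v_{k-1}, \dots, n+1-v_1)$ reordered increasingly, we have $(w_0 v)_1 = n + 1 - v_k$ and $\{(w_0 v)_2, \dots, (w_0 v)_k\} = \{n+1-v_1, \dots, n+1-v_{k-1}\}$. Translating: $n - \ell \ne 0$ becomes $\ell \le n-1$ (combined with $\ell \ge 1$ from $\ell \le n-k+1$ being needed below, this gives $1 \le \ell \le n-k+1$); $(w_0 v)_1 \in \{2,\dots,n-k\}$ becomes $v_k \in \{k+1, \dots, n-1\}$ which, for a Grassmannian permutation, forces $v_1 \ge 1$ and controls the top entry; $(w_0v)_1 \ne n - \ell$ becomes $v_k \ne \ell + 1$; and $\{n+1-v_1,\dots,n+1-v_{k-1}\} \subseteq \{n-\ell+1,\dots,n\}\setminus\{n+2-v_k\}$ becomes $\{v_1, \dots, v_{k-1}\} \subseteq \{1, \dots, \ell\} \setminus \{v_k - 1\}$. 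The last inclusion, together with $v_1 < \dots < v_{k-1} \le \ell$ and $v_{k-1} \ne v_k - 1$, is exactly the assertion that $v_1 \in \{1, \dots, \ell\}$ and $v_2 \in \{v_1 + 2, \dots, n\} \setminus \{\ell + 1\}$ once one unwinds that in a Grassmannian permutation all of $v_2, \dots, v_{k-1}$ being squeezed into $\{v_1+1, \dots, \ell\}$ with the single gap $v_k - 1$ forbidden amounts precisely to the stated two-line condition; here I would spell out the short combinatorial check that the multi-entry Schubert condition collapses to a condition on $v_1$ and $v_2$ alone, exactly as happens in Lemma~\ref{lem:G_knlw_classification}(i).

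The main obstacle I anticipate is the bookkeeping in the preliminary duality lemma, specifically pinning down that conjugating $B_\ell$ by $w_0$ yields $B_{n-\ell}$ (not some other matching field) \emph{up to the diagonal/antidiagonal swap}, and that this swap does not affect whether the restricted ideal is monomial-free or zero — both of which are invariant under $P_I \mapsto \pm P_I$ and under simultaneously transposing all tableaux. One has to verify that $w_0$ sends a binomial $P_I P_J - P_{I'} P_{J'}$ of $G_{k,n,\ell}$ to a binomial of $G_{k,n,n-\ell}$ (with appropriate signs), which is false for a \emph{fixed} $\ell$ as the example preceding Theorem~\ref{main:zero Gr} shows, but becomes true once the parameter is correctly shifted to $n-\ell$; getting this shift and the block-swap exactly right, and checking it is compatible with the true-order description of tableaux used throughout, is where the care is needed. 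Everything after that is a translation exercise using Lemma~\ref{lem:key_lemma} and Lemma~\ref{lem:G_knlw_classification}.
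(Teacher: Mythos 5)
Your reduction rests on the preliminary lemma that $P_I \mapsto P_{w_0 I}$ carries $G_{k,n,\ell}$ onto $G_{k,n,n-\ell}$ (up to signs and row-reversal), and this is false for $k \geq 3$. The matching field $B_m$ decides whether to transpose the \emph{top two} rows of a column tableau according to how the two \emph{smallest} entries sit relative to $m$; conjugating $B_\ell$ by $w_0$ and then reversing the row order produces a matching field that transposes the \emph{bottom two} rows, according to how the two \emph{largest} entries sit relative to $n-\ell$. For $k=2$ these coincide, but for $k \geq 3$ they are different matching fields with different ideals. The example preceding the theorem exhibits this concretely: $P_{125}P_{456} - P_{145}P_{256} \in G_{3,6,1}$, yet $P_{w_0 I}P_{w_0 J} - P_{w_0 I'}P_{w_0 J'} = P_{256}P_{123} - P_{236}P_{125}$ is not in $G_{3,6,5}$ --- the $B_5$-tableaux of the two products have different second rows, $\{5,2\}$ versus $\{3,2\}$. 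So Lemma~\ref{lem:G_knlw_classification} cannot be applied to the $w_0$-relabelled ideal.

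The same mismatch surfaces in your translation step: feeding $w = w_0 v$ into Lemma~\ref{lem:G_knlw_classification}(i) with parameter $n-\ell$ forces $v_1, \dots, v_{k-1} \leq \ell$ and $v_k \leq n-1$, which for $k \geq 3$ is strictly stronger than the theorem's condition on $v_1,v_2$ alone. For instance, with $k=3$, $n=6$, $\ell=3$ and $v=(1,5,6)$, the theorem says $G_{3,6,3}|^v$ contains a monomial, but your translated conditions fail ($v_2 = 5 > \ell$ and $v_3 = 6 > n-1$); so even granting the duality, the unwinding would not collapse to the stated two-line condition. The paper proceeds differently: for (ii) it combines Proposition~\ref{prop:zero_rich_equiv_diag} (being zero is independent of $\ell$) with the genuine $\ell=0$ duality of Lemma~\ref{lem:gr_zero_opposite}, where your $w_0$-argument does work because the diagonal matching field is $w_0$-equivariant; for (i) it constructs explicit monomials by hand under the stated conditions, and rules out monomials in the remaining cases by analysing the first two rows of the tableaux directly, invoking the $\ell = 0$ duality only on the sub-case where all four index sets miss the first block.
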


\begin{proof}
{\bf (i)}
Suppose that $\ell\geq 1$ and $G_{k,n,\ell}|^v$ is nonzero.
First assume that $\ell \le n-k+1$ and take $v = (v_1, \dots, v_k)$ such that $v_1 \in B_{\ell,1}$ and $v_2 \in \{v_1 + 2, \dots, n \} \backslash \{\ell + 1 \}$. We will show that $G_{k,n,\ell}|^v$ contains a monomial by taking cases on $v_2$. Note that $v_2 \neq \ell + 1$.

\textbf{Case 1.} Let $v_2 \le \ell$. Consider the following sets which we write in the true order according to the matching field. Let
\[
I = \{\ell + 1, v_2 - 1, n - k + 3, n-k+4, \dots, n - 1, n\}, \quad
J = \{v_1, v_2, n - k + 3, n-k+4, \dots, n - 1, n\},
\]
\[
I' = \{v_1, v_2 - 1, n - k + 3, n-k+4, \dots, n - 1, n\} \text{ and }
J' = \{\ell+1, v_2, n - k + 3, n-k+4, \dots, n - 1, n\}.
\]
By construction we have that
\[
P_IP_J - P_{I'}P_{J'}
\]
is a binomial in $G_{k,n,\ell}$. We have that $I' \not\ge v$ hence $P_{I'}$ vanishes in $G_{k,n,\ell}|^v$. However $I \ge v$ and $J \ge v$ hence $P_IP_J$ appears as a monomial in $G_{k,n,\ell}|^v$.

\textbf{Case 2.} Let $v_2 \ge \ell + 2$. We now prove that $v_2 + k - 1 \le n$. Suppose by contradiction that $v_2 + k - 1 > n$ then it follows that $v_3 = v_2 + 1$, $v_4 = v_3 + 1$, \dots, $v_k = n$. Now we have that $w_0v = (1,2, \dots, k-1, n - v_1 + 1) \in Z_{k,n}$. By Lemma~\ref{lem:gr_zero_opposite}, $G_{k,n,\ell}|^v$ is zero, a contradiction. Therefore $v_2 + k - 1 \le n$. It follows that there exists $j \in \{2, \dots, k\}$ such that $v_j + 1 \le n$ and $v_j + 1 \not\in\{v_{j+1}, v_{j+2}, \dots, v_k\}$.

Consider the following sets which we write in the true order according to the matching field. Let
\[ 
I = \{v_2, v_1, v_3, \dots, v_k \}, \quad
J = \{\ell + 1, v_2 + 1, v_3 + 1, \dots, v_{j-1} + 1, v_{j} + 1, v_{j+1}, v_{j+2}, \dots, v_k\},
\]
\[ 
I' = \{\ell+1, v_1, v_3, \dots, v_k \} \text{ and }
J' = \{v_2, v_2 + 1, v_3 + 1, \dots, v_{j-1} + 1, v_{j} + 1, v_{j+1}, v_{j+2}, \dots, v_k \}.
\]
By construction we have that
\[
P_IP_J - P_{I'}P_{J'}
\]
is a binomial in $G_{k,n,\ell}$. Since $v_2 \ge \ell + 2$, we have that $I' < v$ hence $P_{I'}$ vanishes in $G_{k,n,\ell}|^v$. However $I \ge v$ and $J \ge v$ hence $P_IP_J$ appears as a monomial in $G_{k,n,\ell}|^v$.

For the converse we assume that $G_{k,n,\ell}|^v$ is contains a monomial. If $\ell > n - k +1$ or $\ell=0$ then by Lemma~\ref{lem:gr_zero_opposite} we have that $G_{k,n,\ell}|^v$ is monomial-free, a contradiction. So we may assume that $\ell \le n-k+1$. Suppose by contradiction that $v_1 \not\in B_{\ell,1}$ then $v_1 \ge \ell+1$. Suppose $P_I P_J$ is a monomial appearing in $G_{k,n,\ell}|^v$. In particular $P_I$ and $P_J$ do not vanish so we have that $I, J \ge v$. We deduce that $I \cap B_{\ell,1} = \emptyset$ and $J \cap B_{\ell,1}= \emptyset$. Suppose that the monomial $P_IP_J$ is obtained from the binomial $P_IP_J - P_{I'}P_{J'}$ in $\inwb(G_{k,n})$. Then we have that $I' \cap B_{\ell,1} = \emptyset$ and $J' \cap B_{\ell,1} = \emptyset$. Therefore the true ordering on all indices $I, I', J, J'$ is the diagonal order. It follows that the same monomial must appear in the ideal $G_{k,n,0}|^v$. However by Lemma~\ref{lem:gr_zero_opposite}, $G_{k,n,0}|^v$ is monomial-free, a contradiction. So we may assume that $v_1 \in B_{\ell,1}$.

It remains to show that if $G_{k,n,\ell}|^v$ contains a monomial then $v_2 \in \{v_1 + 2, \dots, n \} \backslash \{\ell + 1 \}$. By the above argument, we may assume that $1\le \ell \le n - k +1$ and $v_1 \in B_{\ell,1}$.

Assume by contradiction that $v_2 \not \in \{v_1 + 2, \dots, n \} \backslash \{\ell + 1\}$. Then there are two cases, either $v_2 = v_1 + 1$ or $v_2 = \ell + 1$.

\textbf{Case 1.} Let $v_2 = v_1+1$.  Let $P_I P_J$ be a monomial in $G_{k,n,\ell}|^v$ arising from a binomial $P_I P_J - P_{I'} P_{J'}$ in $G_{k,n,\ell}$ and write
\[
I = \{i_1 < \dots < i_k \} \quad \text{and} \quad
J = \{j_1 < \dots < j_k \} \, .
\]
By assumption we have $I, J \ge v$ so in particular $i_1, j_1 \ge v_1$ and $i_2, j_2 \ge v_2$. It is easy to see that $B_{\ell}(I) \neq B_{\ell}(J)$ otherwise it follows that $P_{I'}P_{J'}$ does not vanish in $G_{k,n,\ell}|^v$.
So without loss of generality, assume that $B_{\ell}(I) = id$ and $B_{\ell}(J) = (12)$. So, in tableau form, the binomial $P_I P_J - P_{I'} P_{J'}$ is given by
\[
\begin{tabular}{|c|c|}
    \multicolumn{1}{c}{$I$} & \multicolumn{1}{c}{$J$} \\
    \hline
    $i_1$ & $j_2$  \\
    \hline
    $i_2$ & $j_1$  \\
    \hline
    $\vdots$ & $\vdots$ \\
    \hline
\end{tabular}
\, - \,
\begin{tabular}{|c|c|}
    \multicolumn{1}{c}{$I'$} & \multicolumn{1}{c}{$J'$} \\
    \hline
    $i_1$ & $j_2$  \\
    \hline
    $j_1$ & $i_2$  \\
    \hline
    $\vdots$ & $\vdots$ \\
    \hline
\end{tabular} \, .
\]
Note, we must have the first two rows of these two tableaux are different, otherwise $P_{I'}P_{J'}$ does not vanish in $G_{k,n,\ell}|^v$. 

By assumption we have $i_2, j_2 \ge v_2$ hence $P_{J'}$ does not vanish in $G_{k,n,\ell}|^v$. Hence $P_{I'}$ must vanish. We take cases on $B_{\ell}(I')$.

\textbf{Case 1.a.} Let $B_{\ell}(I') = id$. Then we have $i_1 < j_1$. Since $P_{I'}$ vanishes, we must have $j_1 < v_2 = v_1 + 1$. Therefore $i_1 < v_1$, a contradiction. 

\textbf{Case 1.b.} Let $B_{\ell}(I') = (12)$. Then we have $j_1 < i_1$. 
Since $P_{I'}$ vanishes we must 
have $i_1 < v_2 = v_1 + 1$, and so $j_1 < v_1$ which is a contradiction.

\textbf{Case 2.} Let $v_2 = \ell +1$. Let $P_I P_J$ be a monomial in $G_{k,n,\ell}|^v$ arising from a binomial $P_I P_J - P_{I'} P_{J'}$ in $\inwb(G_{k,n})$ and write
\[
I = \{i_1 < \dots < i_k \} \quad \text{and} \quad
J = \{j_1 < \dots < j_k \} \, .
\]
By assumption we have $I, J \ge v$ so in particular $i_1, j_1 \ge v_1$ and $i_2, j_2 \ge v_2$. It is easy to see that $B_{\ell}(I) \neq B_{\ell}(J)$ otherwise it follows that $P_{I'}P_{J'}$ does not vanish in $G_{k,n,\ell}|^v$. So without loss of generality, assume that $B_{\ell}(I) = id$ and $B_{\ell}(J) = (12)$. So, in tableau form, the binomial $P_I P_J - P_{I'} P_{J'}$ is given by
\[
\begin{tabular}{|c|c|}
    \multicolumn{1}{c}{$I$} & \multicolumn{1}{c}{$J$} \\
    \hline
    $i_1$ & $j_2$  \\
    \hline
    $i_2$ & $j_1$  \\
    \hline
    $\vdots$ & $\vdots$ \\
    \hline
\end{tabular}
\, - \,
\begin{tabular}{|c|c|}
    \multicolumn{1}{c}{$I'$} & \multicolumn{1}{c}{$J'$} \\
    \hline
    $i_1$ & $j_2$  \\
    \hline
    $j_1$ & $i_2$  \\
    \hline
    $\vdots$ & $\vdots$ \\
    \hline
\end{tabular} \, .
\]
Note, we must have the first two rows of these two tableaux are different, otherwise $P_{I'}P_{J'}$ does not vanish in $G_{k,n,\ell}|^v$. By assumption we have $i_2, j_2 \ge v_2 = \ell + 1$ hence $P_{J'}$ does not vanish in $G_{k,n,\ell}|^v$. Hence $P_{I'}$ must vanish. Since $j_1 \in B_{\ell,1}$, we must have $i_1 < v_2$. Since $B_{\ell}(I) = id$ and $i_2 \ge v_2 = \ell + 1 \in B_{\ell,2}$, we must have $i_1 \in B_{\ell,2}$. So $i_1 \ge \ell + 1$, a contradiction.
And so we have shown that $v_2 \in \{v_1 + 2, \dots, n \} \backslash \{\ell + 1 \}$. Therefore $v$ satisfies all desired conditions.


{\bf (ii)}
Since $G_{k,n,\ell}|^v = G_{k,n,\ell}|_{w_0}^v$ the statement follows from Proposition~\ref{prop:zero_rich_equiv_diag} and Lemma~\ref{lem:gr_zero_opposite}.
\end{proof}

\begin{figure}
    \centering
    \includegraphics[scale=0.6]{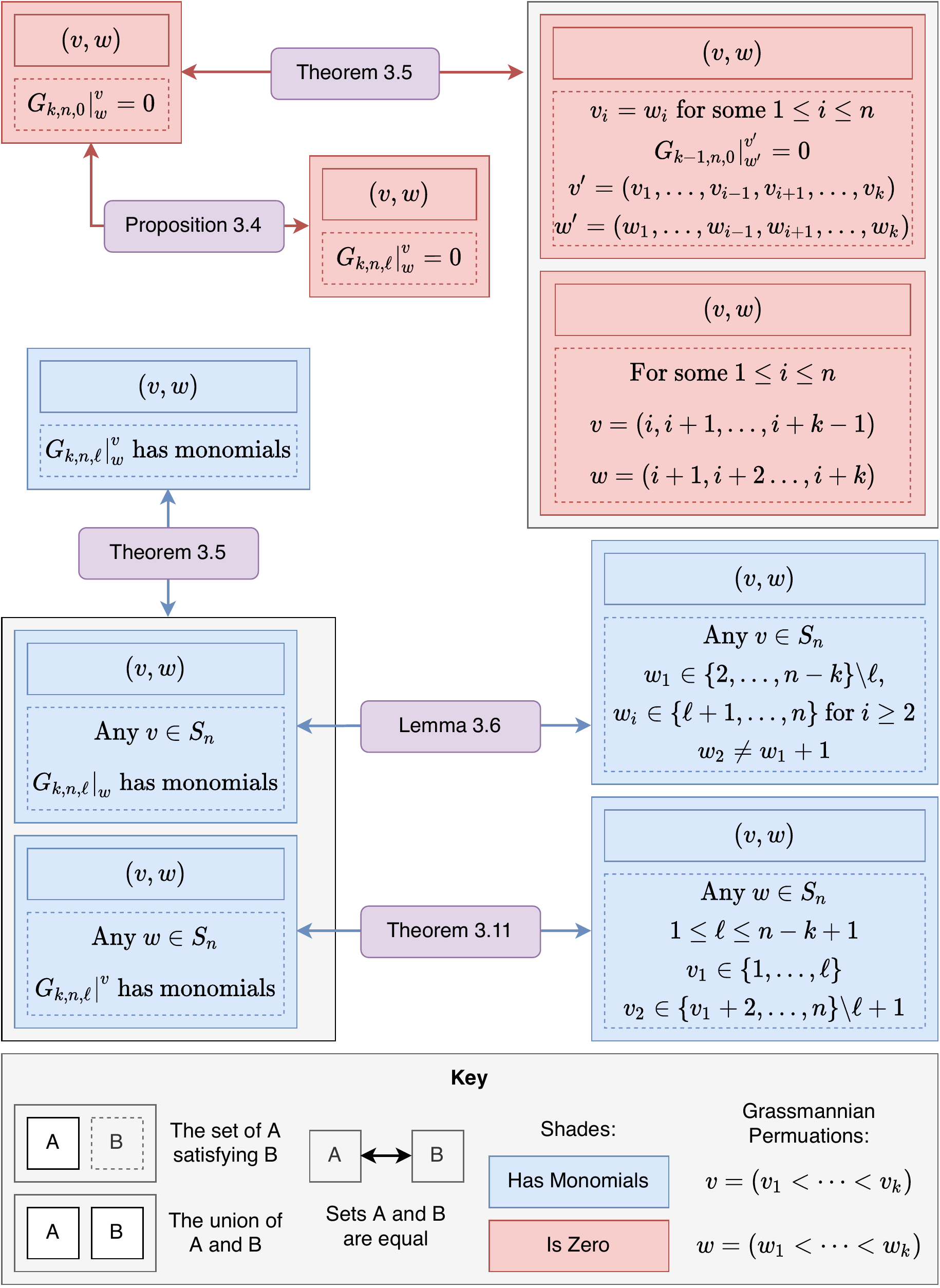}
    \caption{{Depiction of the main results in Section~\ref{sec:gr}. In particular we see the characterization of all pairs of permutations $(v,w)$ for which $G_{k,n,\ell}|_w^v$ is zero, shown in red, and $G_{k,n,\ell}|_w^v$ contains monomials, shown in blue. }}
    \label{figure:Grassmannian_Flowchart}
\end{figure}

\begin{remark}
For general Grassmannians, there are other combinatorial constructions leading to toric degenerations such as
plabic graphs, Newton–Okounkov bodies, 
and cluster algebras \cite{rietsch2017newton,bossinger2020families}. We remark that all degenerations arising in this work can be realized as Gr\"obner degenerations, nevertheless, this is not true in general; See e.g.
\cite{kateri2015family} for an example of a toric degneration which cannot be identified as a Gr\"obner degeneration.
\end{remark}
\begin{remark}
We summarise the results of this section in Figure~\ref{figure:Grassmannian_Flowchart}, which depicts the process by which one obtains pairs of permutations $(v,w)$ whose corresponding ideal $G_{k,n,\ell}|_w^v$ is either zero or contains monomials.
\end{remark}

\newpage\section{Degenerations inside flag varieties}\label{sec:flag}
\subsection{Gr\"obner degenerations.}\label{subsec:flag}\hspace{0.3mm}
We first fix our notation throughout this section. Even though similar results hold for arbitrary $\ell$, we only include results for the diagonal case as the general case requires enormous case studies and various technical arguments.

\smallskip

Fix $n$. We recall the definition of the matrix $M_0$ from Section~\ref{sec:prelim_matching_field} taking $k = n-1$, that is $M_0(i,j)=(i-1)(n-j+1)$ for each $1 \le i \le n-1$ and $1 \le j \le n$. We denote ${\bf w}$ for the \emph{weight vector} induced by $M_0$ on the Pl\"ucker variables and $F_n:={\rm in}_{\bf w}(I_n)$ for its initial ideal. 

\begin{definition}\label{def:matching_field_ideal_flag}
Given a collection $S$ of subsets of $[n]$ we denote $F_n|_{S}$ for the ideal obtained from $F_n$ by setting the variables $\{P_I : I \in S\}$ to be zero.
We say that the variable $P_I$ vanishes in $F_n|_S$ if $I \in S$. 
In particular, to simplify our notation through this note, we define the following ideals for every pair of permutations $v$ and $w$ in $S_n$:
{
\[
F_n|_w:=F_n|_{S_{w}},
\quad F_n|^v:=F_n|_{S^v}\quad {\rm and}\quad
F_n|_w^v:=F_n|_{S_{w}^v}.
\]
}
\end{definition}

\begin{theorem}[Theorem 3.3 and Corollary 4.13 in \cite{OllieFatemeh2}]\label{thm:Pure}    
The ideal $F_n$ is toric and it is equal to the ideal of the diagonal matching field, which is the kernel of the monomial map:
\begin{eqnarray}\label{eqn:monomialmapflags}
\phi_{0} \colon\  \mathbb{K}[P_J]  \rightarrow \mathbb{K}[x_{ij}]  
\quad\text{with}\quad
 P_{J}   \mapsto \ini_{M_0}(P_J).
\end{eqnarray}
Moreover, $F_n$ is generated by quadratic binomials.
\end{theorem}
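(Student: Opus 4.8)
The plan is to prove $F_n = \ker\phi_0$ by two inclusions: the inclusion $F_n \subseteq \ker\phi_0$ is formal and comes from the compatibility of $\varphi_n$ with the weights, while the reverse inclusion, the quadratic generation, and the toricity all follow from standard monomial theory for $\Flag_n$ together with a Hilbert function count.

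For $F_n \subseteq \ker\phi_0$, recall from Proposition~\ref{prop:unique} (applied with $k = n-1$, i.e. $\ell = 0$) that for every $J \subseteq [n]$ the lowest $M_0$-weight part of the Pl\"ucker form $\varphi_n(P_J)$ is the single monomial $\ini_{M_0}(P_J) = \phi_0(P_J)$, of weight ${\bf w}(P_J)$. Since $\mathbb{K}[x_{ij}]$ is a domain, for any monomial ${\bf P}^\alpha$ the lowest $M_0$-weight part of $\varphi_n({\bf P}^\alpha) = \prod_J \varphi_n(P_J)^{\alpha_J}$ equals $\prod_J \phi_0(P_J)^{\alpha_J} = \phi_0({\bf P}^\alpha)$, of weight $\alpha\cdot{\bf w}$. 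Hence, for $f = \sum_i c_i {\bf P}^{\alpha_i} \in I_n$ with $d = \min_i \alpha_i\cdot{\bf w}$, extracting the $M_0$-weight-$d$ part of the identity $0 = \varphi_n(f)$ yields $\phi_0(\init_{\bf w}(f)) = 0$; as $f$ was arbitrary, $F_n = \init_{\bf w}(I_n) \subseteq \ker\phi_0$.

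For the remaining claims I would use that the quadratic Pl\"ucker and incidence relations $g_1, \dots, g_s$ generate $I_n$ and form a Gr\"obner basis with respect to a diagonal term order $<$ refining $\bf w$, and that the associated standard monomials (indexed by semistandard Young tableaux) are precisely the monomials not lying in $\init_{<}(I_n)$ and form a $\mathbb{K}$-basis of $\mathbb{K}[P_J]/I_n$; this is the classical standard monomial theory of the multihomogeneous coordinate ring of $\Flag_n$, see \cite{gonciulea1996degenerations}. Since $<$ refines $\bf w$ we have $\init_{<}(I_n) = \init_{<}(F_n)$, so $\{\init_{\bf w}(g_i)\}$ is a Gr\"obner basis of $F_n$ for $<$; a direct computation with the weight formula in Remark~\ref{def:block} shows each $\init_{\bf w}(g_i)$ is a degree-two binomial, whence $F_n = \langle \init_{\bf w}(g_1), \dots, \init_{\bf w}(g_s) \rangle$ is generated by quadratic binomials and $\mathbb{K}[P_J]/F_n$ again has the standard monomials as a $\mathbb{K}$-basis; in particular $F_n$ and $I_n$ have the same Hilbert function. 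Both $F_n$ and $\ker\phi_0$ are homogeneous with $F_n \subseteq \ker\phi_0$, and $\mathbb{K}[P_J]/\ker\phi_0 \cong \mathbb{K}[\phi_0(P_J)]$ is spanned by the images $\phi_0({\bf P}^\alpha)$ of the standard monomials; if these images are pairwise distinct monomials of $\mathbb{K}[x_{ij}]$ then the Hilbert function of $\mathbb{K}[P_J]/\ker\phi_0$ is at least that of $\mathbb{K}[P_J]/F_n$, forcing equality and hence $F_n = \ker\phi_0$. Finally $\ker\phi_0 = F_n$, being the kernel of a monomial map, is a prime binomial ideal, so $F_n$ is toric.

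The crux, and the place I expect the real work, is the combinatorial input underlying the last step: that the monomials $\phi_0({\bf P}^\alpha)$ attached to distinct semistandard tableaux are distinct --- equivalently, that the Pl\"ucker forms constitute a SAGBI basis with respect to $M_0$, so that the diagonal Gr\"obner degeneration of $\Flag_n$ is the Gelfand--Tsetlin toric variety. In the matching-field language one checks this by hand: the tableau of $\ini_{M_0}(P_J)$ lists $J$ in increasing order; a monomial ${\bf P}^\alpha$ avoids every leading term $\init_{<}(g_i)$ exactly when the columns of its tableau assemble into a semistandard Young tableau; and from the row contents of the monomial $\prod_J \ini_{M_0}(P_J)^{\alpha_J}$ one reconstructs that semistandard tableau uniquely. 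Carrying out this bijection, together with the verification that $\{\init_{\bf w}(g_i)\}$ really is a Gr\"obner basis of $F_n$, is the technical heart; the Hilbert-function bookkeeping around it is routine.
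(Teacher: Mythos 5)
The paper does not prove Theorem~\ref{thm:Pure}: it is quoted as Theorem~3.3 and Corollary~4.13 of \cite{OllieFatemeh2}, so there is no in-paper argument to compare your proposal against. Taken on its own terms, your route is sound. The inclusion $F_n \subseteq \ker\phi_0$ by extracting the lowest $M_0$-weight graded piece of $\varphi_n(f)=0$ (using Proposition~\ref{prop:unique} for the monomiality of $\ini_{M_0}(P_J)$ and the fact that $\mathbb{K}[x_{ij}]$ is a domain) is exactly the formal half of a SAGBI argument and is correct. The Hilbert-function comparison giving equality is valid once you have (i) the classical Gr\"obner basis / standard monomial statement for $I_n$ with a diagonal term order refining $\wb$, and (ii) injectivity of $\phi_0$ on semistandard monomials, which you rightly flag as the combinatorial crux. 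Your sketch of (ii) is the right one: an SSYT is recovered from the multiset content of each row (rows are weakly increasing, so each row is determined by its multiset) together with the shape, which is fixed by the multidegree; and the row contents are exactly the exponent vector of $\phi_0({\bf P}^\alpha)$.

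One structural caution. You assert that each $\init_{\wb}(g_i)$ is a degree-two \emph{binomial} ``by direct computation,'' and use this to conclude quadratic binomial generation. That direct computation is real work, and for an arbitrary choice of quadratic generators it is not automatic that $\wb$ breaks ties down to two terms. But it is also unnecessary to carry the argument in that order: once you have established that the $\init_{\wb}(g_i)$ generate $F_n$ (they are quadratic, so $F_n$ is generated in degree $\le 2$) and that $F_n = \ker\phi_0$ is a prime binomial ideal, it follows formally that $F_n$ is generated by quadratic binomials, since each graded piece of a binomial ideal admits a basis of binomials, and an ideal generated in degree $\le 2$ is generated by any spanning set of its degree-$\le 2$ part. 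Reordering those two steps closes the only real gap in the proposal without changing its substance.
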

Similarly to the Grassmannian case, we note that since $F_n$ is quadratically generated, we have that $F_n|_w^v$ is also quadratically generated. In Lemma~\ref{lem:elim_ideal_gen_set}, we give an explicit description of a canonical quadratic generating set.

\begin{definition}\label{def:notation}
We write $Z^{op}_n$ and $T^{op}_n$ for the set of permutations $v$ in $S_n$ for which $F_n|^v$ is zero and monomial-free respectively. We denote by $N^{op}_n$ the collection of permutations $v \in S_n$ for which the ideal $F_n|^v$ contains at least one monomial. Note that such ideals are not toric. Similarly, we denote $Z_n$ and $T_n$ for the set of permutations $w$ in $S_n$ for which the ideal $F_n|_w$ is zero and monomial-free, respectively. For Richardson varieties, we define analogous sets:
\begin{itemize}
    \item $
T^R_{n} = \{(v, w) \in S_n \times S_n : F_n|_w^v \text{ has no monomial} \} \text{ and } Z^R_{n} = \{ (v, w) \in S_n \times S_n : F_n|_w^v \text{ is zero}\}.
$
\end{itemize}

\end{definition}

\subsection{Ascending, descending and compatible permutations.}

\smallskip

In order to classify the permutations which lie in $T^R_n$ and $Z^R_n$ it is important to define the following combinatorial properties and sets of permutations. 

\begin{definition}\label{def:W_v} Let $w = (w_1, \dots, w_n)$ be a permutation in $S_n$ with $w_t=n$.
\begin{itemize}
\item We say $w$ has the \emph{ascending property} if $w_1 < w_2 < \dots < w_t = n$. We denote $S_n^<$ for the set of permutations with ascending property.

\item We say $w$ has the \emph{descending property} if $n = w_t > w_{t+1} > \dots > w_n$. We denote $S_n^>$ for the set of permutations in $S_n$ with descending property.

\item Given $k \le n$, we denote by $(w_1, \dots, w_k)\!\!\uparrow$ the ordered list whose elements are $\{w_1, \dots, w_k \}$ 
taken in increasing order.

\item 
We define $
W_v:=\{vs_{i_1}\cdots s_{i_p}: \{i_1, \dots, i_p\} \subseteq I_v  ~\mbox{and}~ |i_a-i_b|>1~\mbox{for all}~ a, b \in \{i_1,\ldots, i_p\}\}, 
$ where $I_v:=\{i\in [n-1]: v_{i+1}=v_i+1\}$ and $s_i \in S_n$ is the transposition which swaps $i$ and $i+1$.
\end{itemize}
{
Suppose that $w = (w_1, \dots, w_{n+1}) \in S_{n+1}$ is a permutation where $w_t = n+1$ for some $t \in [n+1]$. We denote $\ul{w} = (w_1, \dots, w_{t-1}, w_{t+1}, \dots, w_n) \in S_n$. 
}
\end{definition}

We use the following simple but important observation in the proof of Theorem~\ref{thm:toric_op_schu_diag}. We include it here as it follows easily from the definition and highlights an important relationship between $S^v$ and $S^{\underline v}$ where $v$ is a permutation with the ascending property.

\begin{lemma}\label{lem:asc_prop}
{
Suppose $v \in S_{n+1}$ has the ascending property and let $I \subset [n+1]$ be any subset with $n+1 \in I$. Then $I \ge v$ if and only if $I \backslash \{ n+1 \} \ge \underline{v}$.
}
\end{lemma}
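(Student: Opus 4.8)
The statement is an ``if and only if'' comparing the Bruhat-type domination condition for the $(n+1)$-subset containment $I \ge v$ with the $n$-subset containment $I \backslash \{n+1\} \ge \underline v$. The plan is to unwind both inequalities to the coordinatewise definition $\{a_1 < \dots < a_m\} \le \{b_1 < \dots < b_m\}$ iff $a_i \le b_i$ for all $i$, and to track carefully the effect of deleting the entry $n+1$ from $v$ and from $I$.

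\medskip

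\textbf{Setup.} Write $v = (v_1, \dots, v_{n+1})$ with the ascending property, so that $v_1 < v_2 < \dots < v_t = n+1$ where $v_t = n+1$; in particular the first $t$ entries of $v$, taken in increasing order, are exactly $(v_1, \dots, v_t)$, and $\underline v = (v_1, \dots, v_{t-1}, v_{t+1}, \dots, v_{n+1})$. Let $I \subset [n+1]$ with $n+1 \in I$ and $|I| = k$; write $I = \{i_1 < i_2 < \dots < i_k = n+1\}$. By definition $I \ge v$ means $I \ge (v_{\ell_1}, \dots, v_{\ell_k})\!\!\uparrow$, the increasing rearrangement of the first $k$ entries of $v$. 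I would split into the two cases $k \le t$ and $k > t$: in the first case the relevant entries of $v$ and of $\underline v$ are literally the same set $\{v_1, \dots, v_k\}$ (since deleting $v_t = n+1$ does not touch the first $t-1 \ge k-1$... one has to be slightly careful here, actually when $k \le t-1$ the first $k$ entries of $\underline v$ equal the first $k$ entries of $v$, and when $k = t$ one loses the top entry $n+1$ but gains $v_{t+1}$), and in the second case $k > t$ the first $k$ entries of $\underline v$ are $\{v_1, \dots, v_{t-1}, v_{t+1}, \dots, v_{k+1}\}$ while the first $k$ of $v$ are $\{v_1, \dots, v_t\} = \{v_1, \dots, v_{t-1}, n+1\}$.

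\medskip

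\textbf{Key observation.} The crucial point is that in $I$ the largest entry $i_k = n+1$ is automatically $\le$ the largest relevant entry of $v$ whenever that largest entry is $n+1$ (which happens precisely when $k \ge t$, since $v_t = n+1$ is among the first $k$ entries), so the last coordinate comparison $i_k \le (\text{top entry})$ is free; and the remaining $k-1$ coordinate comparisons involve exactly $i_1 < \dots < i_{k-1}$ against the appropriate first entries, which is precisely the condition $I \backslash \{n+1\} \ge \underline v$ after one checks that the first $k-1$ relevant entries of $\underline v$ coincide with the first $k-1$ relevant entries of $v$ (again using the ascending property to locate where $n+1$ sits). When $k < t$ the entry $n+1$ of $v$ is not among the first $k$ entries at all, $I$ cannot contain $n+1$ under the hypothesis... wait, $I$ does contain $n+1$, so in that regime $I \not\ge v$ is impossible to fail only if the top relevant entry of $v$ is $\ge n+1$, forcing $k \ge t$; so one should observe first that $n+1 \in I$ together with $I \ge v$ already forces $k \ge t$, which simplifies the bookkeeping.

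\medskip

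\textbf{Main obstacle.} I expect the only real subtlety to be the careful index bookkeeping when passing from the first $k$ entries of $v$ to the first $k-1$ entries of $\underline v$ — specifically verifying that deleting $v_t = n+1$ from $v$ and deleting $n+1$ from $I$ shifts the comparison uniformly, so that the surviving $k-1$ inequalities match up exactly. This is elementary once one writes $I = \{i_1 < \dots < i_{k-1} < n+1\}$ and uses that the $j$-th smallest among the first $k$ entries of $v$ equals the $j$-th smallest among the first $k-1$ entries of $\underline v$ for all $j \le k-1$ (valid because the only entry removed, $n+1$, is the largest and $t \le k$). No deep input is needed; the lemma follows directly from the definitions and the ascending property, which is why the authors flag it as ``simple but important.''
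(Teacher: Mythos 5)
Your overall strategy — unwind the coordinatewise definition, split on $k = \lvert I \rvert$ versus $t$ (the position with $v_t = n+1$), observe that the trailing comparison is automatic, and match the remaining $k-1$ coordinates with those of $\underline v$ — is exactly what the paper does. However, the ``Key observation'' contains a genuine error that, if followed, would break the proof.

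You write that the last coordinate comparison is $i_k \le (\text{top entry})$ and that this is ``free'' only when $k \ge t$ (so that the top entry is $n+1$), and you then conclude that ``$n+1 \in I$ together with $I \ge v$ already forces $k \ge t$.'' Both claims have the inequality backwards. The condition $I \ge v$ requires $i_j \ge v_j$, not $i_j \le v_j$; you appear to have conflated the opposite-Schubert condition $I \ge v$ with the Schubert condition $I \le w$. Since $i_k = n+1$ is the maximum possible value, the comparison $i_k \ge (\text{top relevant entry of }v)$ is \emph{always} automatic, whether $k \ge t$ or $k < t$ — it does not impose any constraint relating $k$ and $t$. In particular, the asserted simplification is false: take $v = (1,2,3,5,4) \in S_5$, which has the ascending property with $t = 4$, and $I = \{3,5\}$; then $n+1 = 5 \in I$ and $I \ge v$ (since $(3,5) \ge (1,2)$ coordinatewise), yet $\lvert I \rvert = 2 < t = 4$. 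So you cannot dismiss the case $k < t$, and an argument built on that shortcut would be incomplete.

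The good news is that your originally stated plan — treating $k \le t$ and $k > t$ as separate cases — does work and is what the paper does. When $k \le t$, the first $k$ entries of $v$ are already $v_1 < \dots < v_k$ and the first $k-1$ entries of $\underline v$ are $v_1 < \dots < v_{k-1}$, so both sides reduce to $i_j \ge v_j$ for $j \le k-1$. When $k > t$, sorting the first $k$ entries of $v$ and deleting $n+1$ gives the same multiset as sorting the first $k-1$ entries of $\underline v$, and again the trailing comparison $i_k = n+1 \ge n+1$ is free. Drop the erroneous ``forces $k \ge t$'' step and carry both cases through (in both directions of the equivalence), and the proof is correct.
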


\begin{proof}
{
Since $v$ has the ascending property we write $v_1 < v_2 < \dots < v_r = n+1$ for some $r$. If $|I| \le r$ then we have $i_1 \ge v_1, \dots, i_{|I| - 1} \ge v_{|I| - 1}$. Hence $I \backslash \{ n+1 \} \ge \underline{v}$. If $|I| > r$ then ordering the first $|I|$ elements of $v$ and removing $n+1$ is the same as ordering the first $|I| - 1 $ elements of $\underline{v}$. It follows that $I \backslash \{n+1 \} \ge \underline{v}$. The converse follows from a similar argument.
}
\end{proof}

\begin{definition}\label{def:compatible}
Let $v,w\in S_n$ with $n = v_t = w_{t'}$ and $n-1 = v_s = w_{s'}$. We say that $v, w$ are \emph{compatible} if $v \le w$ and in the case that $t\neq t'$ the following conditions hold:
\[
s'\le t, \quad t'\le s, \quad n=w_{t'}>w_{t'+1}>\cdots>w_t,\quad\text{and}\quad n=v_{t}>v_{t-1}>\cdots>v_{t'}.
\]
Given a subset $Z\subseteq S_n\times S_n$ we define its \emph{extension} as follows:
 \begin{align*}
 \widehat{Z} := & \{(v,w) \in S_{n+1} \times S_{n+1} : (\underline{v}, \underline{w}) \in Z\text{ and } v_t = w_t = n \text{ for some } t\} \\
    \cup & \{(v,w) \in S_{n+1} \times S_{n+1} : (\underline{v}, \underline{w}) \in Z  \text{ and }  v, w  \text{ are compatible}\}.
 \end{align*}\end{definition}

Compatibility of a pair of permutations is important for characterizing $T^R_{n+1} \cup Z^R_{n+1}$ in terms of $T^R_n \cup Z^R_n$. 

\subsection{Richardson varieties.}
\begin{theorem}
\label{thm:toric_fl_rich_diag}
With the notation above, we have:
\begin{itemize}
   \item[{\rm (i)}] $Z_n^R=\{(v,w):\ w\in W_v\}$, 
   \item[{\rm (ii)}] $T^R_{n+1} \cup Z^R_{n+1} = \widehat{T^R_{n}} \cup  \widehat{Z^R_{n}}.$
\end{itemize}
\end{theorem}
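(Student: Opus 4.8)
The plan is to prove the two parts separately, exploiting the fact — already used throughout the Grassmannian sections and in Theorem~\ref{thm:Rich}(i) — that the flag matching field ideal $F_n$ is a toric ideal generated by quadratic binomials, so that $F_n|_w^v$ is generated by the binomials $P_IP_J - P_{I'}P_{J'}$ of $F_n$ together with the vanishing variables $\{P_I : I \in S_w^v\}$. As in the Grassmannian case, the key translation is: $F_n|_w^v$ contains a monomial if and only if there is a quadratic binomial $P_IP_J - P_{I'}P_{J'} \in F_n$ with $v \le I, J \le w$ but $P_{I'}P_{J'}$ vanishing; and $F_n|_w^v$ is zero if and only if no such binomial has $P_IP_J$ non-vanishing. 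Throughout, one works with the tableaux description of $F_n$ from Theorem~\ref{thm:Pure}: since $\ell = 0$ here, every $P_I$ is the leading diagonal term, so a binomial of $F_n$ is just a pair of two-column tableaux that are row-wise equal after reordering, and the subsets $I,J$ differ only in two consecutive rows.

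For part (i), the equality $Z_n^R = \{(v,w) : w \in W_v\}$, I would first recall from Definition~\ref{def:W_v} that $w \in W_v$ precisely when $w$ is obtained from $v$ by applying a subset of pairwise non-adjacent simple transpositions $s_i$ with $i \in I_v$, i.e. at positions where $v_{i+1} = v_i + 1$. For the ``if'' direction, assume $w \in W_v$ and argue exactly as in the second half of the proof of Theorem~\ref{thm:Rich}(ii) (the case $v = (i,i+1,\dots)$, $w = (i+1,\dots)$, applied levelwise): for each size $m$, the interval $[(v_{k_1},\dots,v_{k_m}), (w_{\ell_1},\dots,w_{\ell_m})]$ of sets $T_w^v$ of size $m$ is so thin that if $P_IP_J - P_{I'}P_{J'} \in F_n$ with $I,J$ in the interval and $I \ne J$, then the two differing consecutive rows are forced to be $\{a, a+1\}$ versus $\{a+1, a\}$ in a way that makes $P_{I'}P_{J'}$ a relabelling of $P_IP_J$, so the binomial is trivial; hence $F_n|_w^v = 0$. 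For the ``only if'' direction, given $F_n|_w^v = 0$, one shows $w$ differs from $v$ only by non-adjacent simple transpositions at positions of $I_v$: if some $v_{s+1} - v_s \ge 2$ or $w$ had a ``gap'' relative to $v$ at consecutive positions, or if $v$ and $w$ differed by two adjacent transpositions, one constructs an explicit non-trivial binomial with non-vanishing leading term, mimicking Cases 1 and 2 of the converse in Theorem~\ref{thm:Rich}(ii); the non-adjacency constraint $|i_a - i_b| > 1$ is exactly what prevents building such a binomial.

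For part (ii), $T^R_{n+1} \cup Z^R_{n+1} = \widehat{T^R_n} \cup \widehat{Z^R_n}$, the strategy is an induction on $n$ using the position of the value $n+1$. Write $v, w \in S_{n+1}$ with $v_t = w_{t'} = n+1$. The ``$\supseteq$'' direction: given $(\underline v, \underline w) \in T^R_n \cup Z^R_n$ and either $t = t'$ or $v,w$ compatible, one lifts a hypothetical monomial-producing binomial of $F_{n+1}|_w^v$ to one of $F_n|_{\underline w}^{\underline v}$ by deleting the row containing $n+1$. Here Lemma~\ref{lem:asc_prop} and its evident ``descending'' analogue are the crucial tools: because $n+1$ is the largest value, the subsets $I$ with $v \le I \le w$ either all contain $n+1$ or the containment is governed exactly by the interval for $\underline v, \underline w$; compatibility guarantees that the row containing $n+1$ is ``pinned'' (it sits in a prescribed position and cannot participate in the two rows where a binomial's sets differ), so deletion gives a genuine binomial of $F_n$ with the monomial property preserved — contradiction, so $(v,w) \in T^R_{n+1} \cup Z^R_{n+1}$. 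The ``$\subseteq$'' direction: given $(v,w) \in T^R_{n+1} \cup Z^R_{n+1}$, first show that $v,w$ must be compatible (or have $n+1$ in the same slot): if not, one produces an explicit binomial of $F_{n+1}$ with non-vanishing leading term and a vanishing trailing term — this is where the four conditions $s' \le t$, $t' \le s$, and the two descending chains in Definition~\ref{def:compatible} get forced; then show $(\underline v, \underline w) \in T^R_n \cup Z^R_n$ by pushing any bad binomial of $F_n|_{\underline w}^{\underline v}$ up to $F_{n+1}|_w^v$ by inserting $n+1$ in its pinned position, again using the ascending/descending lemma to check the inserted sets still lie in $T^R_{n+1}$'s interval.

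The main obstacle I anticipate is the ``$\subseteq$'' direction of part (ii), specifically proving that membership in $T^R_{n+1} \cup Z^R_{n+1}$ forces compatibility of $v$ and $w$. When $t \ne t'$ — i.e. $n+1$ sits in different slots for $v$ and $w$ — one must rule out every configuration except the rigid descending-staircase pattern encoded by Definition~\ref{def:compatible}, and each excluded configuration requires its own explicitly constructed quadratic binomial exhibiting a monomial; organizing these constructions uniformly (rather than as a long case analysis on the relative positions of $n+1$ and $n$ in $v$ and $w$) is the delicate part. A secondary difficulty is bookkeeping the interaction between the two ``anchor'' values $n+1$ and $n$: the conditions involve $n-1 = v_s = w_{s'}$ as well, so the induction hypothesis must be applied carefully after deleting $n+1$, checking that the deleted permutations still have $n$ in a controlled position relative to the $W_{\underline v}$-type structure from part (i).
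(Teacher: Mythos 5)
Your proposal correctly identifies the overall architecture — quadratic binomial generators, the forward direction of (i) via thinness of the $T_w^v$ intervals, and the $n\to n+1$ reduction via insertion/deletion of $n+1$ for (ii) — and this does match the paper's strategy. But two concrete places in the sketch are vaguer than they should be and would not close without further ideas.

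First, for the ``only if'' direction of (i) you propose to construct explicit binomials ``mimicking Cases 1 and 2 of the converse in Theorem~\ref{thm:Rich}(ii)''. That is a Grassmannian argument for a single fixed size $k$, and it handles the two failure modes $v_{s+1}-v_s\ge 2$ or $w_{s+1}-w_s\ge 2$. In the flag setting, however, $w\notin W_v$ can fail in subtler ways: $w$ may differ from $v$ by transpositions at positions outside $I_v$, or by two \emph{adjacent} transpositions $s_i,s_{i+1}$, and the binomial you exhibit must involve Pl\"ucker variables of two consecutive sizes. The paper handles this by an induction on $n$: first reduce to whether $\underline w\in W_{\underline v}$ or not, then do a genuine case analysis on the positions of $n$ in $v,w$. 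Your sketch omits the inductive reduction altogether, and without it the catalogue of constructions you would need is open-ended; there is no single ``gap'' condition that captures $w\notin W_v$ and hands you a binomial directly.

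Second, in the $\supseteq$ direction of (ii) you say that when $v,w$ are compatible (with $t\neq t'$) you ``lift a hypothetical monomial-producing binomial of $F_{n+1}|_w^v$ to one of $F_n|_{\underline w}^{\underline v}$ by deleting the row containing $n+1$, because compatibility pins the $n+1$ row.'' This is not right as stated: when $t\neq t'$, a subset $I$ with $v\le I\le w$ contains $n+1$ exactly when $|I|\ge t'$, but whether it is \emph{forced} to contain $n+1$ depends on whether $|I|\ge t$, and in the window $t'\le |I|<t$ both options occur. So ``the row containing $n+1$'' is not a single row of the two-column tableau; $n+1$ may appear in one column and not the other, and deleting it does not give a binomial of $F_n$ between subsets of matching sizes. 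The paper handles the $t=t'$ case exactly by your deletion argument (Lemma~\ref{nontoric2}), but for $t\neq t'$ it instead establishes the compatibility dichotomy via Lemma~\ref{comp}, whose proof is a separate explicit case construction; your proposal conflates the two. To make your sketch rigorous you would need either a careful case split on where $n+1$ sits in each of $I,J,I',J'$, or the separate compatibility lemma that the paper uses.

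In short: same approach, but part (i) converse needs the induction on $n$ as scaffolding, and part (ii) $\supseteq$ with $t\neq t'$ needs more than ``delete the $n+1$ row.''
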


\begin{proof}
To prove {\bf(i)} suppose that $w \in W_v$ and assume by contradiction that $F_n|_w^v$ is nonzero. So there exists a binomial $P_I P_J - P_{I'} P_{J'}$ in $F_n$ such that $P_I P_J$ is nonzero in $F_n|_w^v$.

For each $1 \le m \le n$, let us consider for which sets $I$ we have $P_I$ is zero and $|I| = m$. If $m \in I_v$ then $\{v_1, \dots, v_{m - 1} \} = \{w_1, \dots, w_{m - 1} \}$ and $w_{m} \in \{v_{m}, v_{m+1}  \}$. So it follows that if $|I| = m$ and $P_I$ is nonzero then $I$ is either $\{v_1, \dots, v_{m} \}$ or $\{v_1, \dots, v_{m-1}, v_{m+1} \}$. On the other hand if $m \not\in I_v$ then it follows that $\{v_1, \dots, v_m \} = \{w_1, \dots, w_m \}$. So if $P_I$ is nonzero and $|I| = m$ then $I = \{v_1, \dots, v_m \}$.

Suppose $|I| = |J| = m$. By assumption the binomial is not trivial and so $I \neq J$. By the above argument, there are at most two different sets of size $m$ whose corresponding variables do not vanish. So we must have $I = \{v_1, \dots, v_m \}$ and $J = \{w_1, \dots, w_m \}$ and $\{v_1, \dots, v_{m-1} \} = \{w_1, \dots, w_{m-1} \}$. Since $P_I P_J - P_{I'} P_{J'}$ is a binomial, it easily follows that either $I = I'$ and $J = J'$ or $I = J'$ and $J = I'$. In each case we have that $P_I P_J - P_{I'} P_{J'}$ is zero, a contradiction.

If $m = |I| \neq |J|$ then without loss of generality assume that $|I| < |J|$. Suppose that $m \in I_v$ then since $P_I$ is nonzero, we have that $I = \{v_1, \dots, v_m \}$ or $I = \{v_1, \dots, v_{m-1}, v_{m+1} \}$. Since $P_J$ is nonzero and $m+1 \not\in I$, so we have that $J \supseteq  \{v_1, \dots, v_{m+1} \} \supset I$. And so the binomial $P_I P_J - P_{I'} P_{J'}$ is trivial, a contradiction. On the other hand, if $m \not\in I_v$ then by a similar argument we have that $I = \{v_1, \dots, v_m \}$. And so $J \supset \{v_1, \dots, v_{|J| - 1} \} \supseteq \{v_1, \dots, v_m \} = I$. Hence the binomial $P_I P_J - P_{I'} P_{J'}$ is trivial, a contradiction.
 
For the converse, assume that $w\notin W_v$. By induction on $n$, we prove that $F_n|_w^v$ is nonzero. Note that for $n = 2$ the result holds trivially since all ideals $F_2|_w^v$ are zero. For the induction step we assume that $F_{n-1}|_{\underline w}^{\underline v}$ is zero if and only if $\underline w \in W_{\underline v}$. 
Let $w_{t'}=n=v_t$. Note that since $v\leq w$, we have $t'\leq t$. In the next part of the proof we perform the following slight abuse of notation. Given a permutation $\alpha$ and a permutation $\beta \in W_{\alpha}$, we write $I_{\alpha}$ for the subset $\{i_1, \dots, i_r \}$ of $[n-1]$ such that $\alpha = \beta s_{i_1} \dots s_{i_r}$.

{\bf Case 1.} Let $\underline w \in W_{\underline v}$. 

{\textbf{Case 1.1.}} Let $t=t'$. Since $w\notin W_v$ and $\underline w \in W_{\underline v}$ it follows that $t-1 \in I_{\underline v}$ so $v=(v_1,\dots, v_{t-1}, n, v_{t-1}+1, \dots, v_n)$ and $w=(w_1, \ldots, w_{t-2}, v_{t-1}+1, n, v_{t+1}, w_{t+2}\ldots w_n)$. We also have that $\{v_1, \dots, v_{t-2} \} = \{w_1, \dots, w_{t-2} \}$. Now we define
\[
I=\{v_1, \dots v_{t-2}, v_{t+1}, n \}, \quad
J= (v_1, \dots v_{t-2}, v_{t-1}\},\]
\[
I'= \{v_1, \dots v_{t-2}, v_{t-1}, n \} \text{ and }
J'= \{v_1, \dots v_{t-2}, v_{t+1} \} .
\]
Then $P_IP_J-P_{I'}P_{J'}$ is a binomial in $F_n$. Since $v\leq I, J\leq w$, $P_IP_J$ is does not vanish in $F_n|_w^v$ therefore $F_n|_w^v$ is a nonzero ideal.

\smallskip

\textbf{Case 1.2.} Let $t'<t$.

\textbf{Case 1.2.1.} Assume that $t-1 \in I_{\underline v}$ then we have
\[
v=(v_1,\dots, v_{t-1}, n, v_{t-1}+1, \dots, v_n) \textrm{ and } 
w=(w_1, \dots, w_{t-1}, v_{t-1}+1, v_{t-1}, w_{t+2}\dots w_n).
\]
Note that in this case we have $\{v_1, \dots, v_{t-2} \} = \{w_1, \dots, w_{t-1} \}\backslash \{ w_{t'}\}$. So in this case we define
$$
I=\{v_1, \dots, v_{t-2}, v_{t-1}, n \}, \quad
J=\{v_1, \dots, v_{t-2}, v_{t+1})\},
$$ 
$$
I'=\{v_1, \dots, v_{t-2}, v_{t+1}, n \} \text{ and }
J'=\{v_1, \dots, v_{t-2}, v_{t-1}\} .
$$
We have that $P_IP_J-P_{I'}P_{J'}$ is a binomial in $F_n$. As $v\leq I, J\leq w$, $P_IP_J$ does not vanish in $F_n|_w^v$ and so $F_n|_w^v$ is a nonzero ideal.

\smallskip

\textbf{Case 1.2.2.} Assume that $t'-1 \in I_{\underline v}$ then we have
\[
v=(v_1,\dots, v_{t'-1}, v_{t'-1}+1, \dots, v_n) \textrm{ and } 
w=(w_1, \dots, w_{t'-2}, v_{t'-1}+1,n, v_{t'-1}, w_{t'+2}\dots w_n).
\]
Note that we have $\{v_1, \dots, v_{t'-2} \} = \{w_1, \dots, w_{t'-2} \}$. So in this case we define
$$
I= \{v_1, \dots, v_{t'-2}, v_{t'-1}, n \}, \quad
J= \{v_1, \dots, v_{t'-2}, v_{t'-1}+1 \},
$$ 
$$
I'= \{v_1, \dots, v_{t'-2}, v_{t'-1}+1, n \}, \text{ and }
J'= \{v_1, \dots, v_{t'-2}, v_{t'-1}  \} .
$$
We have that $P_IP_J-P_{I'}P_{J'}$ is a binomial in $F_n$. As $v\leq I, J\leq w$, $P_IP_J$ does not vanish in $F_n|_w^v$ and hence $F_n|_w^v$ is a nonzero ideal.

\smallskip

\textbf{Case 1.2.3.} Assume that $t-1 \not\in I_{\underline v}$ and $t'-1 \not\in I_{\underline v}$. It follows that
\[
\{v_1, \dots, v_{t'-1} \} = \{w_1, \dots, w_{t'-1} \}, \quad
\{v_{t'}, \dots, v_{t-1} \} = \{w_{t'+1}, \dots, w_{t} \} 
\]
\[
\textrm{ and } \{v_{t+1}, \dots, v_{n} \} = \{w_{t+1}, \dots, w_{n} \}.
\]
By assumption we have that $w \not\in W_v$ so there exists $i \in \{t', t'+1, \dots, t-1 \}$ such that $v_i + 1 < n-1$. If $v_{t'} = n-1$ then we note that $v_{t'+1} \le n-2$ and we define
\[
I = \{v_1, \dots, v_{t'-1}, n-1, n \}, \quad  
J = \{v_1, \dots, v_{t'-1}, n-2 \},
\]
\[
I' = \{v_1, \dots, v_{t'-1}, n-2, n \}, \textrm{ and } 
J' = \{v_1, \dots, v_{t'-1}, n-1 \}.
\]
Otherwise if $v_{t'} < n-1$ we define
\[
{I = \{v_1, \dots, v_{t'-1}, v_{t'}, n \}, \quad  
J = \{v_1, \dots, v_{t'-1}, n-1 \},
}
\]
\[
I' = \{v_1, \dots, v_{t'-1}, n-1, n \}, \textrm{ and } 
J' = \{v_1, \dots, v_{t'-1}, v_{t'} \}.
\]
In both of the cases above we have that $P_I P_J - P_{I'} P_{J'}$ is a binomial in $F_n$ and also $P_I P_J$ is nonzero in $F_n|_w^v$. So we have shown that $F_n|_w^v$ is a nonzero ideal.

\smallskip

{\bf Case 2.} Assume that $\underline w \notin W_{\underline v}$. Then by induction we have $F_{n-1}|_{\underline w}^{\underline v}$ is nonzero ideal. So, there is a binomial $P_IP_J-P_{I'}P_{J'}$ in $F_{n-1}$  such that $P_IP_J$ does not vanish in $F_n|_{\underline w}^{\underline v}$. Without loss of generality we assume that $|I|=|I'|$ and $|J|=|J'|$. For each $L \in \{I, I', J, J' \}$ we now define $\tilde{L} \in \{ \tilde I, \tilde J, \tilde{I'}, \tilde{J'} \}$:
$$
\tilde L= \begin{cases} L & \text{if} ~ |L|< t',\\
   L\cup \{n\} & \text{if } ~|L|\geq t'.
\end{cases}
$$ 

Then $P_{\tilde{I}}P_{\tilde{J}}-P_{\tilde{I'}}P_{\tilde{J'}}$ is a binomial in $F_n$. Since $P_IP_J$ does not vanish in $F_n|_{\underline w}^{\underline v}$, we have $\underline v\leq I, J \leq \underline w$. Then $v\leq \tilde I, \tilde J\leq w$ and so $P_{\tilde{I}}P_{\tilde{J}}$ does not vanish in $F_n|_w^v$. Hence $F_n|_w^v$ is a nonzero ideal. This completes the proof of (i).

\medskip

{\bf (ii)} The proof of the second statement follows from a series of lemmas. Lemma~\ref{non toric} and Lemma~\ref{comp} imply that ${T_{n+1}^R}\cup Z_{n+1}^R \subseteq \widehat {T_{n}^R}\cup \widehat {Z_{n}^R}$.
 To prove the reverse, let $(v, w)\in \widehat {T_{n}^R}\cup \widehat {Z_{n}^R}.$ If $v_t=w_t=n+1$ for some $t$, by Lemma~\ref{nontoric2}, we see that $(v, w)\in {T_{n+1}^R}\cup Z_{n+1}^R$.
  Therefore assume that there is no $t$ such that $v_t=w_t=n+1$.
  If $(v, w)\not\in T_{n+1}^R \cup Z_{n+1}^R$ then by Lemma~\ref{comp}, the pair $(v, w)$ is not compatible, which is a contradiction. Hence, we conclude that $(v, w)\in {T_{n+1}^R}\cup Z_{n+1}^R$.
\end{proof}

\begin{remark}\hspace{1mm}
Note that Theorem~\ref{thm:toric_fl_rich_diag} provides a complete characterization for the permutations which lead to zero or monomial-free Schubert and opposite Schubert ideals, as we have: 
\begin{itemize}
    \item $Z_n=\{w\in S_n:\ (id,w)\in Z_n^R\}$ and $T_n=\{w\in S_n:\ (id,w)\in T_n^R\}$,
    \item $Z_n^{op}=\{w\in S_n:\ (v,w_0)\in Z_n^R\}$ and $T_n^{op}=\{v\in S_n:\ (v,w_0)\in T_n^R\}$.
\end{itemize}
\end{remark}

\begin{example}[Example of Theorem~\ref{thm:toric_fl_rich_diag}]
Consider the pair $((1,3,2),(2,3,1)) \in T^R_{3}$ which gives rise to a toric ideal. We find all pairs $(v,w) \in T^R_{4}$ such that $\underline{v} = (1,3,2)$ and $\underline{w} = (2,3,1)$. Firstly, if we have $4 = v_t = w_t$ for some $t$ then we have the following pairs:
\begin{itemize}
    \item $((1,3,2,\textbf{4}),(2,3,1,\textbf{4}))$,
    \item $((1,3,\textbf{4},2),(2,3,\textbf{4},1))$,
    \item $((1,\textbf{4},3,2),(2,\textbf{4},3,1))$,
    \item $((\textbf{4},1,3,2),(\textbf{4},2,3,1))$.
\end{itemize}
Secondly, for a compatible pair $(v,w)$ such that $v_t = 4$, $w_{t'} = 4$, assume that $t \neq t'$. Since $v \le w$ therefore $t > t'$. So we get the pair:
\begin{itemize}
    \item $((1,3,\textbf{4},2),(2,\textbf{4},3,1))$.
\end{itemize}

\end{example}
We now proceed to prove the lemmas used in the proof of Theorem~\ref{thm:toric_fl_rich_diag}(ii).

\begin{lemma}\label{lem:fl_toric_1}
Suppose $v \in T^{\text{op}}_{n}$, $w \in T_{n}$ and $v \le w$ then $F_n|_w^v$ does not contain a monomial.
\end{lemma}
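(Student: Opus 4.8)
The plan is to establish the contrapositive: if $F_n|_w^v$ contains a monomial, then $F_n|_w$ or $F_n|^v$ contains a monomial. Since the hypotheses $v\in T^{\text{op}}_n$ and $w\in T_n$ say precisely that $F_n|^v$ and $F_n|_w$ are monomial-free, the lemma follows at once. The combinatorial input I need is that $S_w^v=S_w\cup S^v$, equivalently $T_w^v=T_w\cap T^v$, which is immediate from the definitions of these sets (this is exactly the identity exhibited in Example~\ref{def:sets}). In particular $T_w^v\subseteq T_w$ and $T_w^v\subseteq T^v$, so every monomial in the Pl\"ucker variables indexed by $T_w^v$ already lies in each of the polynomial subrings $\mathbb{K}[P_I:I\in T_w]$ and $\mathbb{K}[P_I:I\in T^v]$.

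The engine of the argument is the same toric reduction used in the proof of Theorem~\ref{thm:Rich}(i), transported to the flag setting via Theorem~\ref{thm:Pure}. Since $F_n=\ker\phi_0$ is toric, $\mathbb{K}[P_I]/F_n$ is the monomial subalgebra $\operatorname{im}(\phi_0)\subseteq\mathbb{K}[x_{ij}]$, whose $\mathbb{K}$-basis consists of the distinct monomials $\phi_0(\mathbf P^\alpha)$; under the quotient map the ideal $F_n+\langle P_L:L\in S_w^v\rangle$ becomes the monomial ideal of that algebra generated by $\{\phi_0(P_L):L\in S_w^v\}$. Consequently, if $\mathbf m=\prod_i P_{I_i}$ with all $I_i\in T_w^v$ is a monomial of $F_n|_w^v$, then $\phi_0(\mathbf m)$ is a basis monomial lying in that monomial ideal, so $\phi_0(\mathbf m)=\phi_0(P_L)\cdot\phi_0(\mathbf n)$ for some $L\in S_w^v$ and some Pl\"ucker monomial $\mathbf n$; setting $\mathbf m':=P_L\mathbf n$ we obtain a monomial with $\mathbf m-\mathbf m'\in F_n$ that is divisible by some $P_L$ with $L\in S_w^v$. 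For $\mathbf m$ a product of two Pl\"ucker variables this is precisely the binomial statement invoked in the proof of Theorem~\ref{thm:Rich}(i).

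To finish, I split on the membership of $L$, using $S_w^v=S_w\cup S^v$. If $L\in S^v$, then $\mathbf m'\in\langle P_K:K\in S^v\rangle$, hence $\mathbf m\in F_n+\langle P_K:K\in S^v\rangle$; since $\mathbf m\in\mathbb{K}[P_I:I\in T^v]$ by the first paragraph, $\mathbf m$ is a monomial of $F_n|^v$, contradicting $v\in T^{\text{op}}_n$. The case $L\in S_w$ is symmetric and contradicts $w\in T_n$. (The assumption $v\le w$ is needed only so that $T_w^v\neq\varnothing$ and the statement is non-vacuous.) I expect the only subtle point to be the claim in the second paragraph that the elimination ideal $F_n|_w^v$, being obtained from a toric ideal by setting some variables to zero, corresponds to a monomial ideal in the associated semigroup algebra — but this is exactly the device already used for Theorem~\ref{thm:Rich}(i), and here it is legitimate by Theorem~\ref{thm:Pure}; everything else is bookkeeping with the componentwise partial order on subsets.
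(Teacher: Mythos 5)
Your proof is correct and takes essentially the same approach as the paper's: both trace an offending monomial of $F_n|_w^v$ back through the toric structure of $F_n$ to the vanishing set $S_w^v = S_w \cup S^v$, and conclude using monomial-freeness of $F_n|_w$ and $F_n|^v$. The only cosmetic difference is that you work intrinsically in the monomial subalgebra $\operatorname{im}(\phi_0)$ and handle monomials of arbitrary degree at once, whereas the paper reduces (harmlessly, by the quadratic generation of $F_n|_w^v$) to a degree-two monomial $P_IP_J$ and shows directly that the other half $P_{I'}P_{J'}$ of the generating binomial cannot actually vanish.
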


\begin{proof} 
Suppose by contradiction that there is a monomial $P_IP_J$ in $F_n|_w^v$. Let $P_IP_J-P_{I'}P_{J'}$ be any binomial in $F_n$ such that $P_{I'}P_{J'}$ vanishes in $F_n|_w^v$. Then either $P_{I'}$ or $P_{J'}$ vanishes in $F_n|_w^v$. Also observe that since $w\in T_{n}$ and $P_IP_J$ does not vanish, we get that $P_{I'}$ and $P_{J'}$ do not vanish in $F_n|_w$. Hence we have $I'\le w$ and $J'\le w$. Similarly, as $v\in T_{n}^{op}$, we can see that $v\le I'$ and $v\le J'$. Thus we have $v\le I'\le w$ and $v\le J'\le w$. Hence $P_{I'}$ and $P_{J'}$ do not vanish in $F_n|_w^v$, which is a contradiction. Therefore, we conclude that $F_n|_w^v$ does not contain a monomial. 
\end{proof}

\begin{remark}
Note that many pairs $(v,w) \in T^R_{n}$ do not arise as in Lemma~\ref{lem:fl_toric_1}. That is, we may have $v \not\in T^{\text{op}}_{n}$ or $w \not\in T_{n}$.  
\end{remark}

\begin{lemma}\label{non toric}
If $(v, w)\in T_{n+1}^R \cup Z_{n+1}^R$ and $\underline v\leq  \underline w$, then $(\underline v, \underline w)\in T_{n}^R \cup Z_{n}^R.$ 
\end{lemma}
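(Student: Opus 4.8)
The plan is to prove the contrapositive: assuming $(\underline v, \underline w) \notin T_n^R \cup Z_n^R$, that is, $F_n|_{\underline w}^{\underline v}$ is nonzero and contains a monomial, I will show that $F_{n+1}|_w^v$ also contains a monomial, hence $(v,w) \notin T_{n+1}^R \cup Z_{n+1}^R$. So fix a monomial $P_I P_J$ in $F_n|_{\underline w}^{\underline v}$, arising from a binomial $P_I P_J - P_{I'} P_{J'}$ in $F_n$ in which (say) $P_{I'}$ vanishes in $F_n|_{\underline w}^{\underline v}$, i.e. $I' \not\geq \underline v$ or $I' \not\leq \underline w$. By assumption $\underline v \leq I, J \leq \underline w$. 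Let $t, t'$ be the positions with $v_t = w_{t'} = n+1$; since $\underline v \leq \underline w$ we may, after the usual reductions, work with these positions.

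The key step is to lift each of the four subsets $I, J, I', J'$ of $[n]$ to subsets of $[n+1]$ by inserting the value $n+1$ exactly when the cardinality is large enough, mirroring the construction $\tilde L$ used in Case 2 of the proof of Theorem~\ref{thm:toric_fl_rich_diag}: roughly, $\tilde L = L$ if $|L|$ is small and $\tilde L = L \cup \{n+1\}$ if $|L|$ is at least the relevant threshold. First I would check that with the correct threshold (governed by $t'$, the position of $n+1$ in $w$) the lifted quadratic $P_{\tilde I} P_{\tilde J} - P_{\tilde I'} P_{\tilde J'}$ is again a binomial in $F_{n+1}$: this is because $F_n$ and $F_{n+1}$ are the diagonal matching field ideals (Theorem~\ref{thm:Pure}), their quadratic binomial relations are described combinatorially by row-wise equality of tableaux, and inserting the largest symbol $n+1$ in the bottom rows of the relevant columns preserves that row-wise equality. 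Next I would verify the comparison statements: $\underline v \leq I \leq \underline w$ implies $v \leq \tilde I \leq w$ and likewise for $J$, so $P_{\tilde I} P_{\tilde J}$ does not vanish in $F_{n+1}|_w^v$; and $I' \not\geq \underline v$ (resp. $I' \not\leq \underline w$) implies $\tilde I' \not\geq v$ (resp. $\tilde I' \not\leq w$), so $P_{\tilde I'}$ still vanishes. Hence $P_{\tilde I} P_{\tilde J}$ is a monomial in $F_{n+1}|_w^v$, and also $F_{n+1}|_w^v$ is nonzero, which gives $(v,w) \notin T_{n+1}^R \cup Z_{n+1}^R$, the desired contradiction.

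The main obstacle I anticipate is pinning down the correct insertion threshold and showing that the comparison inequalities are genuinely preserved under the lift. The subtlety is that inserting $n+1$ into a set $L$ changes the ordering used to test $L \leq w$ or $L \geq v$: when $|L|$ crosses the threshold $t'$, the ordered prefix of $w$ against which $L$ is compared changes, and one must check that $n+1$, being the maximal element, lands in a slot where it is automatically $\geq v_{\bullet}$ and $\leq w_{\bullet}$ (which is $n+1$ itself on the $w$ side). This is exactly the kind of bookkeeping carried out in Case 2 of Theorem~\ref{thm:toric_fl_rich_diag}(i) and in Lemma~\ref{lem:asc_prop}, so I would reuse those arguments: the hypothesis $\underline v \leq \underline w$ ensures $t' \leq t$, and the monotonicity of the Bruhat-type order under appending the maximal symbol does the rest. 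A minor point to dispatch is the degenerate case where the vanishing of $P_{I'}P_{J'}$ comes from $P_{J'}$ rather than $P_{I'}$, which is handled symmetrically, and the case $|I| \neq |J|$, where the binomial $P_I P_J - P_{I'}P_{J'}$ in $F_n$ forces one set to contain the other and the relation is trivial — so that situation does not actually arise for a genuine monomial.
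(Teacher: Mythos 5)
Your strategy coincides with the paper's proof of the contrapositive: lift each of $I, J, I', J'$ from $[n]$ to $[n+1]$ by adjoining $n+1$ to exactly those sets of size at least $t'$ (where $w_{t'}=n+1$), observe that this preserves row-wise equality of the two tableaux so the lift of $P_IP_J - P_{I'}P_{J'}$ is again a binomial in $F_{n+1}$, and check that the Bruhat comparisons propagate. Your uniform threshold rule is exactly the paper's three cases ($t'>|I|$; $|J|<t'\le|I|$; $t'\le|J|$, assuming $|I|\ge|J|$), so the core of the argument matches.

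One remark in your proposal is false and worth flagging, because the paper's Case~2 exists precisely to handle it: the case $|I|\neq|J|$ is not degenerate and does not force one index set to contain the other. The ideal $F_n$ contains genuine quadratic binomials between Pl\"ucker variables of different degrees; already in $\Flag_3$ the Pl\"ucker relation $P_1P_{23}-P_2P_{13}+P_3P_{12}$ degenerates under the diagonal weight to the nontrivial binomial $P_1P_{23}-P_2P_{13}\in F_3$, where $|\{1\}|=1\neq 2=|\{2,3\}|$ and $\{1\}\not\subseteq\{2,3\}$, so neither set contains the other and the relation is not trivial. Your threshold lift already handles sets of unequal size correctly, so the proof goes through once the erroneous dismissal is deleted; but as written that sentence would mislead a reader into thinking a nonempty case requires no treatment.
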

\begin{proof} Suppose that $(\underline v, \underline w)  \not\in T_{n}^R \cup Z_{n}^R$. Then there is monomial $P_IP_J$ in $F_n|_{\underline w}^{\underline v}$ which arises from the binomial $P_IP_J-P_{I'}P_{J'}$ in $F_n$. 
Now we construct a monomial in $F_{n+1}|_w^v.$  
Assume that  $|I|=|I'|\geq |J|=|J'|$.
Let $1\leq t'\leq t\leq n+1$ such that $v_t=w_{t'}=n+1$. We take cases on $t$ and $t'$. 
\medskip

{\bf Case 1.} Assume that $t'>|I|$. Then $P_IP_J-P_{I'}P_{J'}$ is a binomial in  $F_{n+1}$. It is clear that $P_{I}P_{J}$ does not vanish in $F_n|_w^v$ and $P_{I'}P_{J'}$ vanish in $F_n|_w^v$. Hence $P_IP_J$ is a monomial in $F_{n+1}|_w^v.$  

\medskip

{\bf Case 2.} If $|J|<t'\leq |I|$. Define 
\[
\tilde I=I\cup \{n+1\}, \ 
\tilde J=J, \ 
\tilde I'=I'\cup \{n+1\} \text{ and }
\tilde J'=J'.
\]
Then  $P_{\tilde I}P_{\tilde J}-P_{\tilde I'}P_{\tilde J'}$ is a binomial in  $F_{n+1}.$ Since $\underline v \leq I, J\leq \underline w$, then $v\leq \tilde I, \tilde J\leq w$. Since $P_{I'}P_{J'}$ vanishes in $F_n|_w^v$, then one of the following holds:  \begin{itemize}
    \item $\underline v\nleq I'$ or
$\underline v\nleq J'$ 
\item  $\underline w \ngeq I'$ or
$\underline w\ngeq J'$ 
\end{itemize}
Then by definition  $\tilde I'$ and $\tilde J'$ we can easily see one of the following holds: \begin{itemize}
    \item $v\nleq \tilde I'$ or
$ v\nleq \tilde J'$ 
\item  $ w \ngeq \tilde I'$ or
$w\ngeq \tilde  J'$ 
\end{itemize}
Hence $P_{\tilde I'}P_{\tilde J'}$ vanish in $F_n|_w^v$ and so $P_{\tilde I}P_{\tilde J}$ is a monomial in $F_{n+1}|_w^v.$  

\medskip

{\bf Case 3.} If $t'\leq |J|$. Define
\[
\tilde I=I\cup \{n+1\}, \  
\tilde J=J\cup \{n+1\}\, \ 
\tilde I'=I'\cup \{n+1\} \text{ and }
\tilde J'=J'\cup \{n+1\}.
\]
Then $P_{\tilde I}P_{\tilde J}-P_{\tilde I'}P_{\tilde J'}$ is a binomial in  $F_{n+1}.$ Since $\underline v \leq I, J\leq \underline w$, then $v\leq \tilde I, \tilde J\leq w$. By similar arguments as in Case 2, we can see that $P_{\tilde I}P_{\tilde J}$ is a monomial in $F_{n+1}|_w^v.$  
\end{proof}

\begin{lemma}\label{nontoric2}
Let $(v, w) \not\in  T^R_{n+1} \cup Z^R_{n+1}$ such that  $v_t=n+1=w_t$ for some $t$ and $\underline v\leq \underline w$,   then $(\underline v, \underline w) \not\in T^R_{n} \cup Z^R_n$.
\end{lemma}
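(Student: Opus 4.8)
The plan is to run the descent of Lemma~\ref{non toric} in reverse: starting from a monomial of $F_{n+1}|_w^v$, delete the letter $n+1$ from the relevant Pl\"ucker indices to produce a monomial of $F_n|_{\underline{w}}^{\underline{v}}$ (this ideal is defined since $\underline{v}\le\underline{w}$). Since $(v,w)\notin T^R_{n+1}\cup Z^R_{n+1}$ and $F_{n+1}|_w^v$ is quadratically generated (Lemma~\ref{lem:elim_ideal_gen_set}), it contains a quadratic monomial $P_IP_J$ arising from a binomial $P_IP_J-P_{I'}P_{J'}\in F_{n+1}$ in which $P_{I'}P_{J'}$ vanishes, exactly as in the proof of Theorem~\ref{thm:Rich}(i). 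Because $F_{n+1}=\ker\phi_0$ (Theorem~\ref{thm:Pure}) and $\phi_0(P_A)=\prod_{r=1}^{|A|}x_{r,a_r}$ for $A=\{a_1<\dots<a_{|A|}\}$, counting for each row $r$ the variables $x_{r,\cdot}$ occurring in $\phi_0(P_IP_J)=\phi_0(P_{I'}P_{J'})$ forces $\{|I|,|J|\}=\{|I'|,|J'|\}$; after relabelling assume $|I|=|I'|\ge|J|=|J'|$. Throughout, for $A\subseteq[n+1]$ write $\tilde A:=A\setminus\{n+1\}$.

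The first key step is the \emph{dichotomy}: for every $A\in\{I,J,I',J'\}$ one has $n+1\in A\iff|A|\ge t$. For $A\in\{I,J\}$ this uses $v_t=w_t=n+1$ and the nonvanishing of $P_A$: if $|A|\ge t$ then $n+1=v_t$ is among the ordered first $|A|$ entries of $v$, so $A\ge v$ forces $\max A=n+1$; if $|A|<t$ then the ordered first $|A|$ entries of $w$ are all $\le n$, so $A\le w$ forbids $n+1\in A$. For $A\in\{I',J'\}$ one instead uses $\phi_0$: the variable $x_{m,n+1}$ appears in $\phi_0(P_A)$ precisely when $n+1\in A$ and $|A|=m$, so equating the $x_{\cdot,n+1}$-parts of $\phi_0(P_IP_J)$ and $\phi_0(P_{I'}P_{J'})$, together with $|I|=|I'|$, $|J|=|J'|$ and the dichotomy already proved for $I,J$, pins down membership of $n+1$ in $I'$ and in $J'$ in the same way (the case $|I|=|J|$, where both or neither of $I,J$ contain $n+1$ and the same must then hold for $I',J'$, is the only one needing care).

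The second key step is the \emph{descent}. Since $\phi_0(P_A)=\phi_0(P_{\tilde A})\cdot x_{|A|,n+1}$ when $n+1\in A$ and $\phi_0(P_A)=\phi_0(P_{\tilde A})$ otherwise, the matching of $x_{\cdot,n+1}$-parts above lets us cancel those variables from $\phi_0(P_IP_J)=\phi_0(P_{I'}P_{J'})$ to get $\phi_0(P_{\tilde I}P_{\tilde J})=\phi_0(P_{\tilde I'}P_{\tilde J'})$, i.e.\ $P_{\tilde I}P_{\tilde J}-P_{\tilde I'}P_{\tilde J'}\in F_n$; this binomial is nontrivial because $P_IP_J\neq P_{I'}P_{J'}$ and deleting the common possible letter $n+1$ cannot identify the two monomials. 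It remains to transfer the (non)vanishing conditions, using the following comparison of initial segments: if $|A|\ge t$ and $n+1\in A$, then the ordered first $|A|$ entries of $v$ (resp.\ $w$) are the ordered first $|A|-1$ entries of $\underline{v}$ (resp.\ $\underline{w}$) together with the maximum $n+1=v_t$ (resp.\ $w_t$), so $A\ge v\iff\tilde A\ge\underline{v}$ and $A\le w\iff\tilde A\le\underline{w}$; while if $|A|<t$ then $\tilde A=A\subseteq[n]$ and the ordered length-$|A|$ initial segments of $v$ and $\underline{v}$ (resp.\ $w$ and $\underline{w}$) coincide, giving the same equivalences. Applying this to $I,J$ shows $P_{\tilde I}P_{\tilde J}$ does not vanish in $F_n|_{\underline{w}}^{\underline{v}}$, and applying it to whichever of $I',J'$ vanishes in $F_{n+1}|_w^v$ shows the corresponding $P_{\tilde I'}$ or $P_{\tilde J'}$ vanishes in $F_n|_{\underline{w}}^{\underline{v}}$. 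Hence $P_{\tilde I}P_{\tilde J}$ is a monomial of $F_n|_{\underline{w}}^{\underline{v}}$, so $(\underline{v},\underline{w})\notin T^R_n\cup Z^R_n$.

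I expect the main obstacle to be the bookkeeping of the first key step — establishing the dichotomy $n+1\in A\iff|A|\ge t$ for $A\in\{I',J'\}$, which must go through the monomial map rather than the partial order and requires separating the cases $|I|>|J|$ and $|I|=|J|$ — together with the (easy but necessary) check that the descended binomial is nontrivial. Everything else reduces to routine comparisons of ordered initial segments of permutations.
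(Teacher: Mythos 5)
Your proof is correct and follows essentially the same route as the paper: descend the binomial by deleting $n+1$ from the index sets and transfer the (non)vanishing conditions via the comparison of ordered initial segments of $v,w$ with those of $\underline v,\underline w$. The only organizational difference is that you isolate the dichotomy $n+1\in A\iff|A|\ge t$ for all four indices (using the monomial map $\phi_0$ to handle $I',J'$) as an explicit preliminary step, whereas the paper reaches the same conclusion through case-splitting on $n+1\in I\cup J$ and $n+1\in I\cap J$ and a separate remark ruling out $J=\{n+1\}$ — an edge case that in your formulation is absorbed into the nontriviality of the descended binomial.
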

\begin{proof}
Since $(v, w)\not\in  T^R_{n+1} \cup Z^R_{n+1}$, then there is a binomial $P_IP_J-P_{I'}P_{J'}$ in $F_{n+1}$ such that $P_IP_J$ does not vanish in $F_n|_w^v$ and $P_{I'}P_{J'}$ vanishes in $F_n|_w^v$. Assume that $|I|=|I'|\geq |J|=|J'|$.

\medskip

{\bf Case 1.} If $n+1\not\in I \cup J$, then $P_IP_J-P_{I'}P_{J'}$ is a binomial in $F_n$.  
We prove that $P_IP_J$ does not vanish in $F_n|_{\underline{w}}^{\underline{v}}$. Since $v\leq I$, we note that $n+1\notin \{v_1, \ldots, v_k\}$.
As $v_t=w_t$ for some $t$, $n+1\notin \{w_1, \ldots, w_k\}$. Then $\underline v\leq I$ and $I\leq \underline w$ and so $P_I$ does not vanish in $F_n|_{\ul w}^{\ul v}$. Similarly we can see that $P_J$ does not vanish in $F_n|_{\ul w}^{\ul v}$. Hence $P_IP_J$ does not vanish in $F_n|_{\underline{w}}^{\underline{v}}$. 

Now we prove that $P_{I'}P_{J'}$  vanishes in $F_n|_{\ul w}^{\ul v}$. Without loss of generality assume that $P_{I'}$ vanishes in $F_n|_w^v$. Then we have either $v\nleq I'$ or $I'\nleq w$. By above arguments, we have $n+1\notin \{v_1, \ldots, v_k, w_1, \ldots, w_k\}$. Then we can see that $\underline v\nleq I'$ or $I'\nleq \underline w$.

\medskip

{\bf Case 2.}
Assume that $n+1\in I \cup J$. Since $P_IP_J-P_{I'}P_{J'}$ is a binomial in $F_{n+1}$, if $|J|=1$, then $J\neq \{n+1\}$. For $L\in\{I, J, I', J'\}$, we define 
\[
\tilde L=
\begin{cases}
L\setminus \{n+1\} & \text{if} \  n+1 \in L, \\
L & \text{otherwise}.
\end{cases}
\]
Then it is easy to see that $P_{\tilde I}P_{\tilde J}-P_{\tilde I'}P_{\tilde J'}$ is a binomial in $F_n$. 
First note that for any $u\in S_{n+1}$, we have that $\underline u\leq \tilde L$ if and only if $u\leq L$. Hence if $n+1\in I\cap J$, then $P_{\tilde I}P_{\tilde J}$ is a monomial in $F_n|_{\underline w}^{\underline v}$. Therefore we assume that $n+1\in I, I'$ and $n+1\notin J, J'$.

Since $P_IP_J$ does not vanish in $F_n|_w^v$ and $w_t=n+1=v_t$ for some $t$, by similar arguments as above we can see that $P_{\tilde I}P_{\tilde J}$ does not vanish in $F_n|_{\underline w}^{\underline v}$. Now we prove that $P_{\tilde I'}P_{\tilde J'}$ vanishes in $F_n|_{\underline w}^{\underline v}$.
If $P_{I'}$ vanishes in $F_n|_w^v$, since $n+1\in I'$ then $P_{\tilde I'}$ vanishes in $F_n|_{\underline w}^{\underline v}$. Therefore assume that  $P_{I'}$ does not vanish in $F_n|_w^v$ and $P_{J'}$ vanishes in $F_n|_w^v$. Then we have either $v\nleq J'$ or $J'\nleq w$. 

If $w_t=n+1=v_t$ with $t>|J'|$, we are done. 
Assume that $t\leq |J'|$. If $J'\nleq w$, then $(j'_1<\cdots <j'_{|J'|})\nleq (w_1, \ldots, w_t=n+1, \ldots, w_{|J'|})\!\!\uparrow$. Since $n+1\notin J'$, we have $$(j'_1<\cdots <j'_{|J'|-1})\nleq (w_1, \ldots, w_{t-1}, w_{t+1}, \ldots, w_{|J'|})\!\!\uparrow .$$ Hence we have $\tilde J'=J' \nleq \underline w$. 

Finally assume that $v\nleq J'$. Since $t\leq |J|=|J'|$ and $n+1\notin J$, we observe that $v\nleq J$. Hence $P_J$ vanishes in $F_n|_w^v$, which is a contradiction. Hence if $t\leq |J|=|J'|$, then $v\leq J'$.  Therefore $P_{\tilde I'}P_{\tilde J'}$ is a monomial in $F_n|_{\underline w}^{\underline v}$.
\end{proof}

For the following lemma, recall that for any pair of permutations $(v,w) \in S_{n+1} \times S_{n+1}$, we denote $v_t = w_{t'} = n+1$ and $v_s = w_{s'} = n$.

\begin{lemma}\label{comp}
Let $(v, w)\in T_{n+1}^R\cup Z_{n+1}^R$ such that $t'<t$ and $\underline v\leq  \underline w$. Then  $v, w$ are compatible.
\end{lemma}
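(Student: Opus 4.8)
The statement to prove (Lemma~\ref{comp}) is: if $(v,w)\in T_{n+1}^R\cup Z_{n+1}^R$ with $t'<t$ (where $v_t = w_{t'} = n+1$) and $\underline v\le\underline w$, then $v,w$ are compatible. Since $v\le w$ is part of the hypothesis $(v,w)\in T^R_{n+1}\cup Z^R_{n+1}$ (these ideals are defined only when $v\le w$), and $t'<t$ is assumed, the content is to verify the four remaining compatibility conditions of Definition~\ref{def:compatible}: with $n = v_t = w_{t'}$, $n-1 = v_s = w_{s'}$, we need $s'\le t$, $t'\le s$, the descending chain $n = w_{t'} > w_{t'+1} > \dots > w_t$ in $w$, and the descending chain $n = v_t > v_{t-1} > \dots > v_{t'}$ in $v$. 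The strategy is the standard contrapositive used throughout this section: assume one of these four conditions fails, and construct an explicit quadratic binomial $P_IP_J - P_{I'}P_{J'}\in F_{n+1}$ such that $P_IP_J$ does not vanish in $F_{n+1}|_w^v$ but $P_{I'}P_{J'}$ does — this exhibits a monomial in $F_{n+1}|_w^v$, contradicting $(v,w)\in T_{n+1}^R\cup Z_{n+1}^R$ (the $Z$ case being subsumed since a zero ideal has no monomials either).

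First I would record what the failure of each condition means concretely in terms of the one-line notation of $v$ and $w$ and the positions $t,t',s,s'$. For instance, if the chain $n = v_t > v_{t-1} > \dots > v_{t'}$ fails, there is an index $i$ with $t'< i\le t$ and $v_{i-1} < v_i$ (an ascent inside the window $[t',t]$ of $v$), which combined with $v\le w$ and $t'<t$ puts enough room to build sets $I,J$ with $v\le I,J\le w$ and a partner $I',J'$ violating one of the bounds. For the chain $n=w_{t'}>w_{t'+1}>\dots>w_t$ failing, one gets symmetrically an ascent in the window $[t',t]$ of $w$. The conditions $s'\le t$ and $t'\le s$ place the position of $n-1$ relative to the window containing $n$; their failure can be translated into an incompatibility between where $v$ and $w$ place $n-1$, again yielding room for an offending binomial. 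In each case the sets $I,I',J,J'$ should be taken of a uniform size $m$ (so that the binomial lies in $F_{n+1} = \init_{\mathbf w}(I_{n+1})$, which is quadratic and multihomogeneous by Theorem~\ref{thm:Pure}), built from the common "small'' initial segments of $v$ and $w$ together with a pair of swapped entries drawn from the relevant window — exactly the pattern used in the proof of Theorem~\ref{thm:toric_fl_rich_diag}(i) and in Lemma~\ref{nontoric2}. The verification that $P_IP_J-P_{I'}P_{J'}$ is genuinely a binomial of $F_{n+1}$ (i.e.\ that $I,J$ and $I',J'$ have the same multiset union and the diagonal matching-field tableaux are row-wise equal) is then a routine check, as is the verification $v\le I,J\le w$ via the componentwise order, using $\underline v\le\underline w$ and the explicit descriptions obtained above.

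I would organise the proof as a case analysis on which of the four compatibility conditions is the first to fail, in the order: (1) $n=v_t>\dots>v_{t'}$ fails; (2) $n=w_{t'}>\dots>w_t$ fails; (3) $t'\le s$ fails; (4) $s'\le t$ fails — noting that, having assumed (1) and (2) hold, the positions $s,s'$ of $n-1$ are already constrained to lie outside the open window, which makes cases (3) and (4) cleaner. In each case I produce the binomial and conclude a monomial in $F_{n+1}|_w^v$, contradicting the hypothesis. Conversely no converse is needed here, since the lemma is only the one implication; compatibility's sufficiency is handled elsewhere (in the proof of Theorem~\ref{thm:toric_fl_rich_diag}(ii) via the reverse inclusion argument).

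**Main obstacle.** The delicate point is case (3)/(4) — the interplay of the positions of $n$ and $n-1$. When the position $s'$ of $n-1$ in $w$ sits to the \emph{right} of $t$ (so $s'>t$, violating $s'\le t$), or $t'> s$ (violating $t'\le s$), one must build a binomial whose non-vanishing term $P_IP_J$ uses the value $n-1$ placed compatibly with both $v$ and $w$ while the vanishing partner is forced out of the interval $[v,w]$; the subtlety is that $I$ and $J$ must simultaneously dominate $v$ and be dominated by $w$ in \emph{every} truncation, and with two "large'' values $n$ and $n-1$ in play the componentwise inequalities must be checked at the two critical positions at once. Getting the sets exactly right — in particular choosing which of the two terms vanishes, and ensuring one does while the other does not — is where care is required; I expect to need the hypothesis $\underline v\le\underline w$ together with $t'<t$ in an essential way precisely here, to guarantee that the "small'' common segment of $v$ and $w$ is long enough to host the construction.
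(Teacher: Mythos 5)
Your plan is the paper's plan — contraposition followed by a case analysis on which of the four compatibility conditions fails, with an explicit quadratic binomial of $F_{n+1}$ exhibiting a monomial in $F_{n+1}|_w^v$ in each case — so the architecture is sound. The problem is that the proposal stops exactly where the work begins: not one of the binomials $P_IP_J-P_{I'}P_{J'}$ is actually written down, and the four cases are described only at the level of "in each case I produce the binomial and conclude a monomial." The entire content of this lemma is the choice of $I,J,I',J'$ so that $v\le I,J\le w$ holds at every truncation while one of $I',J'$ is forced outside the interval; you acknowledge this is "where care is required" but never supply the care. As written this is a roadmap, not a proof.

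Beyond the missing constructions, the roadmap itself contains a step that would fail if followed literally. You insist that $I,I',J,J'$ be taken of "a uniform size $m$ (so that the binomial lies in $F_{n+1}$)". That is not what membership in $F_{n+1}$ requires: multihomogeneity only forces $\lvert I\rvert=\lvert I'\rvert$ and $\lvert J\rvert=\lvert J'\rvert$, and in general one takes $\lvert I\rvert\neq\lvert J\rvert$. The paper's constructions for this lemma systematically exploit this — the two factors have different lengths chosen according to the positions $t,t',s,s'$ — and if you restrict to a uniform length you are confined to a single Grassmannian slice of the flag ideal, where the needed monomial need not be available. Two smaller points also need attention: for $v,w\in S_{n+1}$ the compatibility data are $n+1=v_t=w_{t'}$ and $n=v_s=w_{s'}$, not $n$ and $n-1$ as you write; and $v\le w$ is not "part of the hypothesis $(v,w)\in T^R_{n+1}\cup Z^R_{n+1}$" (those sets range over all of $S_{n+1}\times S_{n+1}$), so you must derive it — which in fact follows from $\underline v\le\underline w$ together with $t'<t$ — before the case analysis can even begin, and your claim that "$s,s'$ lie outside the open window" once the two chain conditions hold is also not quite correct (they can sit at $t-1$ or $t'+1$, which lie strictly between $t'$ and $t$ when $t-t'\ge2$).
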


\begin{proof}
Since $(v, w)\in T_{n+1}^R \cup Z_{n+1}^R$, by Lemma \ref{non toric}, then $(\underline v, \underline w)\in T_{n}^R\cup Z_{n}^R$. We prove that if $(v, w) \in S_{n+1} \times S_{n+1}$ is not compatible and $v < w$ then $(v, w)\not\in T_{n+1}^R \cup Z^R_{n+1}$. 
Let $v_\ell=n-1=w_{\ell'}$.

\medskip

{\textbf{Case 1.} Assume that $t<s'$. Then we have $t'\leq t<s'\leq s$. Suppose that $s' = s$ and $\ell' > s$. Let
\[
I = \{w_1, \dots, w_{s'} \} = \{i_1 < i_2 < \dots < i_{s'-2} < n < n+1\}, \quad
J = \{i_1 < \dots < i_{s'-3} < n-1 \}.
\]
Note that $\ul v \le \ul w$ and $s = s' < \ell'$ therefore $i_{s'-2} < n-1$. And so we have $v \le I, J \le w$. Let
\[
I' = \{i_1 < \dots <i_{s'-3} < n-1 < n < n+1 \}, \quad
J' = \{i_1 < \dots < i_{s'-2}\}.
\]
By construction it is clear that $P_IP_J - P_{I'}P_{J'}$ is a binomial in $F_{n+1}$. Since $n-1 \notin \{w_1, \dots, w_{s'} \}$, it follows that $I' \not\le w$. And so $P_IP_J$ is a monomial in $F_{n+1}|_w^v$. So we may now assume that either $\ell'\leq s$ or $s' < s$. However if $s' < s$ then, by induction, we have $\ell'\leq s$. So we assume $\ell' \leq s$.
}

\medskip

{\bf Case 1.1.} Let $\ell'>s'$. Since $t'\leq t<s'$ and $t'\neq t$, we have $t'<s'-1$. Take 
\[
I=(w_1, \ldots, w_{s'})\!\!\uparrow, \quad 
J=\begin{cases} (w_1, \ldots, w_{t'-1}, w_{\ell'})\!\!\uparrow & \text {if}~ t'>1, \\ w_{\ell'} & \text {if}~ t'=1, \end{cases}
\]
\[
I'=\begin{cases}(w_1, \ldots, w_{t'-1}, w_{\ell'}, w_{t'}, w_{t'+2}, \ldots, w_{s'})\!\!\uparrow & \text {if}~ t'>1, \\  (w_{\ell'}, w_{t'}, w_{t'+2}, \ldots, w_{s'})\!\!\uparrow & \text {if}~ t'=1 \end{cases} \text{ and }
\]
\[
J'=\begin{cases}(w_1, \ldots, w_{t'-1}, w_{t'+1})\!\!\uparrow & \text {if}~ t'>1, \\ w_{t'+1} & \text {if}~ t'=1.  \end{cases}
\]
It is easy to see that $P_IP_J-P_{I'}P_{J'}$ is a binomial in $F_{n+1}$. It is clear from the construction, we have $v\leq I, J\leq w$. Then  $P_IP_J$ does not vanish in $F_{n+1}|_w^v$. Since $I'\nleq w$, then $P_{I'}$ vanishes in $F_{n+1}|_w^v$. Then $P_IP_J$ is a monomial in $F_{n+1}|_w^v$.
 
\medskip

\textbf{Case 1.2.}
Let $\ell'<s'$. Let $k=\max\{\ell', t\}$ and $v_{r}=max\{v_i: 1\leq i\leq k, i\neq t\}$. Now define 
\[
I=(v_1, \ldots, v_{k})\!\!\uparrow, \quad 
J=(v_1, \dots, v_{r-1}, v_{r+1}, \dots, v_{k-1}, n-1)\!\!\uparrow,
\]
\[
I'=(v_1, \ldots, v_{r-1}, v_{r+1}, \ldots, v_{k},  n-1,)\!\!\uparrow
\text{ and } 
J'= (v_1, \dots, v_{k-1})\!\!\uparrow
\]
Consider the tableau for $P_IP_J$ and $P_{I'}P_{J'}$. Note that all rows are the same except for the $(k-1)^{th}$ row, and in this row we interchange $n-1$ and $v_r$. Since $k<s'\leq s$, $n-1\notin I$ and $P_IP_J-P_{I'}P_{J'}$ is a binomial in $F_{n+1}$,
Clearly, $v\leq I, J\leq w$ and so $P_IP_J$ does not vanish in $F_{n+1}|_w^v$.
 Since $k<s'$ and $n-1\in I'$, we can see that $I'\nleq w$. So $P_{I'}$ vanishes in $F_{n+1}|_w^v$.  

\medskip

{\bf Case 2.} Assume that $t'>s$. Then we have $t\geq t'>s\geq s'$. Let $v_r=max\{v_i: 1\leq i\leq t', i\neq s\}$. We define 
\[
I=\begin{cases}(v_1, \ldots, v_{s-1}, n, v_{s+1}, \ldots, v_{t'})\!\!\uparrow & \text {if}~ s>1, \\    (n, v_{s+1}, \ldots, v_{t'})\!\!\uparrow & \text {if}~ s=1,  \end{cases} \quad J=(v_1, \ldots, v_{t'-1})\!\!\uparrow,
\]
\[
I'=(v_1, \ldots,v_{r-1}, v_{r+1}, \ldots, v_{t'}, n)\!\!\uparrow \text{ and }
J'=\begin{cases}(v_1, \ldots, v_{s-1}, v_{s+1}, \ldots, v_{t'-1}, v_r) \!\!\uparrow  & \text {if}~ s>1, \\ (v_{s+1}, \ldots, v_{t'-1}, v_r) \!\!\uparrow & \text {if}~ s=1. \end{cases}
\]
 Then $P_IP_J-P_{I'}P_{J'}$ is a binomial in $F_{n+1}$. Since $n-1, n \notin J'$ and $t'>s$, we have $v\nleq J'$ and $P_IP_J$ is a monomial in $F_{n+1}|_w^v$.

\medskip

{\bf Case 3.} Assume that there exists $t'\leq k < t$ such that $v_k>v_{k+1}$.
We take
\[
I=\begin{cases}(v_1, \ldots, v_{k-1}, v_{k+1}, n)\!\!\uparrow & \text {if}~ k>1, \\   (v_{k+1}, n)\!\!\uparrow & \text {if}~ k=1,  \end{cases} \quad J=(v_1, \ldots, v_{k})\!\!\uparrow,
\]
\[
I'=\begin{cases}(v_1, \ldots,v_{k-1}, v_{k}, n)\!\!\uparrow & \text {if}~ k>1, \\ (v_{k}, n)\!\!\uparrow & \text {if}~ k=1, \end{cases} \text{ and } J'=(v_1, \ldots, v_{k-1}, v_{k+1}) \!\!\uparrow.
\]
Then $P_IP_J-P_{I'}P_{J'}$ is a binomial in $F_{n+1}$. Since $v_{k+1}>v_k$, we have $v\leq I, J$ and $v\nleq J'$. Now we show that $I\leq w$. Since $\underline v \leq \underline w$ and $v_{k+1}>v_k$, we have 
$$(v_1, v_2, \ldots, v_{k-1}, v_{k+1})\!\!\uparrow \leq (w_1, \ldots, w_{t'-1}, w_{t'+1}, \ldots, w_k, w_{k+1})\!\!\uparrow .$$
Then $(v_1, v_2, \ldots, v_{k-1}, v_{k+1}, n)\!\!\uparrow \leq (w_1, \ldots, w_k, w_{k+1})\!\!\uparrow$.   Then we see that $P_IP_J$ is nonzero and $P_{J'}$ vanishes in $F_n|_w^v$. Hence $P_IP_J$ is a monomial in $F_{n+1}|_w^v$. 
 
\medskip

{\bf Case 4.} Assume that there exists $t'< k\leq t$ such that $w_k<w_{k+1}$.  In this case, we choose:
\[
I=(w_1, \ldots, w_{k})\!\!\uparrow, \quad
J=\begin{cases}(w_1, \ldots, w_{t'-1}, w_{t'+1}, \ldots, w_{k-1}, w_{k+1})\!\!\uparrow & \text{if}~ t'>1, \\
(w_2, \ldots, w_{k-1}, w_{k+1})\!\!\uparrow & \text{if}~ t'=1,
\end{cases} 
\]
\[
I'=(w_1, \ldots,w_{k-1}, w_{k+1})\!\!\uparrow \text{ and } J'=\begin{cases}
(w_1, \!\!\ldots, w_{t'-1}, w_{t'+1}, \ldots, w_{k-1}, w_{k})\!\!\uparrow & \text{if}~ t'> 1, \\
(w_2, \ldots, w_{k-1}, w_{k}) & \text{if}~ t'=1. \end{cases}
\]
 
It is easy to see that $P_IP_J-P_{I'}P_{J'}$ is a binomial in $F_{n+1}$.
Since $w_{k}<w_{k+1}$, we have $v\leq I\leq w$ and $J\leq w$. Now we show that $v\leq J$. Since $\underline v \leq \underline w$ and $w_{k+1}>w_k$, we have $(v_1, v_2, \ldots, v_{k-1})\!\!\uparrow \leq (w_1, \ldots, w_{t'-1}, w_{t'+1}, \ldots, w_{k-1}, w_{k+1})\!\!\uparrow$. Note that $P_IP_J$ is nonzero and $P_{I'}$ vanishes in $F_n|_w^v$. Then $P_IP_J$ is a monomial in $F_{n+1}|_w^v$. 
 
Hence we conclude that if $(v, w)$ is not compatible, then $(v, w) \not\in T_{n+1}^R \cup Z_{n+1}^R$. This completes the proof of the lemma. 
\end{proof}

\begin{remark}
For each $3 \le n \le 6$ and $0 \le \ell \le n-1$ we calculate the number of pairs of permutations $(v,w)$ such that $v < w$ and $F_{n,\ell}|_w^v$ is monomial-free, zero or contains a monomial. The results of these calculations are collected in Table~\ref{table:flag_calculation}. Here, $F_{n,\ell}|_w^v:={\rm in}_{{\bf w}_\ell}(I_{n})|_{P_I=0\atop  I\in S_{w}^v}$ is the ideal of the matching field $B_{\ell}$ whose corresponding weight vector is ${\bf w}_\ell$ and $I_n$ is the Pl\"ucker ideal of the flag variety. Also for the Grassmannian case we calculate, for a given pair $(n,k)$, the number of triples $(\ell, v, w)$ for which $G_{k,n,\ell}|_w^v$ is a toric ideal. These calculations are collected in Table~\ref{table:gr_calculations}.
\begin{table}[]
    \centering
    \resizebox{0.33\textwidth}{!}{
    \begin{tabular}{|c|c|c|c|c|}
        \hline
        $n$     & $\ell$    & Toric     & Zero      & Non-toric \\
        \hline
        3       & 0         & 4         & 4         & 5         \\
                & 1         & 4         & 4         & 5         \\
                & 2         & 1         & 6         & 6         \\
        \hline
        4       & 0         & 39        & 20        & 130       \\
                & 1         & 38        & 20        & 131       \\
                & 2         & 28        & 23        & 138       \\
                & 3         & 22        & 24        & 143       \\
        \hline
        5       & 0         & 343       & 114       & 3204      \\
                & 1         & 329       & 114       & 3218      \\
                & 2         & 269       & 125       & 3267      \\
                & 3         & 228       & 125       & 3308      \\
                & 4         & 255       & 133       & 3274      \\
        \hline
        6       & 0         & 3066      & 750       & 93871     \\
                & 1         & 2907      & 750       & 94030     \\
                & 2         & 2490      & 796       & 94401     \\
                & 3         & 2180      & 803       & 94704     \\
                & 4         & 2318      & 818       & 94551     \\
                & 5         & 2598      & 851       & 94238     \\
        \hline
    \end{tabular}
    }
    \caption{For each $3 \le n \le 6$ and $0 \le \ell \le n-1$, we give the number of pairs of permutations $(v,w)$ for which $F_{n,\ell}|_w^v$ is either a toric ideal, a zero ideal or non-toric ideal.}
    \label{table:flag_calculation}
\end{table}

\begin{table}
    \centering
    \resizebox{0.45\textwidth}{!}{
    \begin{tabular}{ccc}
      \begin{tabular}{|c|c|c|}
        \hline
        $G_{k,n,\ell}|_w^v$ & \multicolumn{2}{c|}{$k$}\\
        \hline 
        $n$ & 2 & 3 \\
        \hline
        4 & 10 & \\
        5 & 49 & 71 \\
        6 & 151 & 902 \\
        \hline
    \end{tabular}
    &&
    \begin{tabular}{|c|c|c|}
        \hline
        $G_{k,n,\ell}|^v$ & \multicolumn{2}{c|}{$k$}\\
        \hline 
        $n$ & 2 & 3 \\
        \hline
        4 & 6  &        \\
        5 & 17 & 23     \\
        6 & 34 & 74     \\
        \hline
    \end{tabular}
    \end{tabular}
    }
    \caption{For each $(k,n) \in \{(2,4), (2,5), (2,6), (3,5), (3,6)\}$ we calculate the number of triples $(\ell, v, w)$ such that $G_{k,n,\ell}|_w^v$ is toric. Similarly for the opposite Schubert varieties we calculate the number of pairs $(v,\ell)$ for which $G_{k,n,\ell}|^v$ is toric.
    }
    \label{table:gr_calculations}
\end{table}
\end{remark}


\subsection{Schubert and opposite Schubert varieties.}
We now focus on the Schubert and opposite Schubert varieties inside the flag variety. In each case, our goal is to give an explicit description of the permutations $v$ and $w$ for which $F_n|^v$ and $F_n|_w$ are monomial-free or zero. And for particular cases we note which of these ideals are principal.
{
First we recall that $F_n$ is a toric ideal by \cite[Corollary~4.13]{OllieFatemeh2}, in particular it is monomial free. We also recall following results from \cite{OllieFatemeh3} characterizing the permutations for which $F_n|_w$ is zero and contains no monomial respectively.}
\begin{theorem}[Schubert varieties, Theorem~4.1 and Theorem~5.5 in \cite{OllieFatemeh3}]
The ideal $F_n$ is monomial-free. Moreover, 
\begin{itemize}
    \item[{\rm (i)}] $F_n|_w=0$ if and only if $w\in Z_n$, where
\begin{itemize}
\item $Z_{n} = \{ s_{i_1} \dots s_{i_p} \in S_n : \lvert i_k - i_{\ell} \rvert \ge 2, \text{ for all } k, \ell\}.$
\end{itemize}
\item[{\rm (ii)}] 
$T_n=A_1 \cup A_2$, where 
\begin{itemize}
    \item $A_1 =\{w\in S_n:\ F_{n-1}|_{\ul w}=0,\  w_n=n-2\ \text{and}\ \{w_{n-2},w_{n-1}\}=\{n-1, n\}\}$,
    \item $A_2 = \{w \in S_n:\ F_{n-1}|_{\ul w}\ {\rm is\ monomial-free},\ \ul{w} \in S_{n-1}^> \text{ and if } w_s = n-1, w_t = n \text{ then } t \ge s-1\}$.
\end{itemize}
\end{itemize}
\end{theorem}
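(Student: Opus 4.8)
The plan is to deduce all three assertions from results already in hand, by specializing the Richardson statements to $v=\id$.

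\textbf{Monomial-freeness of $F_n$ and part (i).} By Theorem~\ref{thm:Pure}, $F_n$ is the kernel of the monomial map $\phi_0$; since $\phi_0$ sends every monomial $\mathbf P^{\alpha}$ to a nonzero monomial in the integral domain $\mathbb K[x_{ij}]$, no monomial lies in $F_n$, so $F_n$ is monomial-free. For part (i) I would first note that with $v=\id$ one has $T_w^{\id}=\{I:I\le w\}$: the ordered first $|I|$ entries of $\id$ are $1<2<\dots<|I|$, which is the smallest possible $|I|$-subset, so the lower-bound constraint $v\le I$ is vacuous. Hence $S_w^{\id}=S_w$ and $F_n|_w=F_n|_w^{\id}$. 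Moreover $I_{\id}=\{i\in[n-1]:\id_{i+1}=\id_i+1\}=[n-1]$, so $W_{\id}=\{s_{i_1}\cdots s_{i_p}:\{i_1,\dots,i_p\}\subseteq[n-1],\ |i_a-i_b|>1\}=Z_n$. Thus (i) is precisely the case $v=\id$ of Theorem~\ref{thm:toric_fl_rich_diag}(i).

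\textbf{Part (ii).} Here I would again set $v=\id$ in Theorem~\ref{thm:toric_fl_rich_diag}(ii). Since the zero ideal is monomial-free, $Z^R_m\subseteq T^R_m$ for all $m$, and the extension operator $\widehat{(\cdot)}$ is monotone, so $\widehat{Z^R_{n-1}}\subseteq\widehat{T^R_{n-1}}$; hence that theorem collapses to $T^R_n=\widehat{T^R_{n-1}}$. Unwinding Definition~\ref{def:compatible}, a pair $(\id,w)\in S_n\times S_n$ lies in $\widehat{T^R_{n-1}}$ if and only if $(\ul{\id},\ul w)=(\id,\ul w)\in T^R_{n-1}$ — equivalently $\ul w\in T_{n-1}$ — and, writing $w_t=n$ and $w_s=n-1$, either $w_n=n$ or $(\id,w)$ is compatible. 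All the compatibility conditions for $(\id,w)$ hold automatically except ``$n=w_t>w_{t+1}>\dots>w_n$'', which is exactly the descending property of $w$; since $w_n=n$ is a trivial instance of this, I conclude
\[
w\in T_n \iff \ul w\in T_{n-1}\ \text{ and }\ w\in S_n^{>}.
\]

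\textbf{Identifying the two descriptions, and the main obstacle.} It then remains to verify the combinatorial identity $\{w\in S_n^{>}:\ul w\in T_{n-1}\}=A_1\cup A_2$. For ``$\supseteq$'' I would check each clause directly: both $A_1$ and $A_2$ force $\ul w\in T_{n-1}$ (note $F_{n-1}|_{\ul w}=0$ is in particular monomial-free), and the tail pattern defining $A_1$, respectively the pair of conditions defining $A_2$ (descending property of $\ul{\ul w}$ together with the position constraint relating the slots of $n-1$ and $n$), each imply $w\in S_n^{>}$. For ``$\subseteq$'' I would use two auxiliary observations: every element of $Z_{n-1}$ has $n-1$ either in the last slot or swapped with $n-2$, so $Z_{n-1}\subseteq S_{n-1}^{>}$; and if $w\in S_n^{>}$ then the entry $n-1$ in $w$ either precedes $n$ or sits immediately after it, so the position constraint of $A_2$ is automatic. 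Feeding the inductive description of $T_{n-1}$ into these, one places $w$ into $A_2$, with $A_1$ recording the sub-case where $F_{n-1}|_{\ul w}$ is the zero ideal. The hard part is precisely this last bookkeeping: tracking how the positions of $n$ and $n-1$ in $w$ behave under deletion of $n$, and how this interacts with whether $F_{n-1}|_{\ul w}$ is zero or merely monomial-free, so as to land in the correct one of $A_1,A_2$ — the same delicate position-chasing that drives Lemmas~\ref{comp} and~\ref{nontoric2}.
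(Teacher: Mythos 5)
The statement you are trying to prove is not proved in the paper at all — it is cited from \cite{OllieFatemeh3} (Theorems~4.1 and~5.5 there), so there is no ``paper proof'' to compare against. Your plan, however, aligns exactly with the paper's own Remark following Theorem~\ref{thm:toric_fl_rich_diag}, where the authors observe that $Z_n=\{w:(\id,w)\in Z_n^R\}$ and $T_n=\{w:(\id,w)\in T_n^R\}$. Specializing the Richardson theorem to $v=\id$ is therefore a legitimate and non-circular route: the proofs of Theorem~\ref{thm:toric_fl_rich_diag} and of Lemmas~\ref{non toric}, \ref{comp}, \ref{nontoric2} are self-contained combinatorial arguments about binomials in $F_n$ and nowhere invoke the cited Schubert result. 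Your reduction of part~(i) to Theorem~\ref{thm:toric_fl_rich_diag}(i) via $W_{\id}=Z_n$ is correct, as is your reduction of part~(ii) to $T_n=\{w\in S_n^{>}:\ul w\in T_{n-1}\}$; in particular your observation that for $v=\id$ the compatibility conditions collapse to $w\in S_n^{>}$ checks out after unwinding Definition~\ref{def:compatible}.

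There are two concrete issues in the final identification $\{w\in S_n^{>}:\ul w\in T_{n-1}\}=A_1\cup A_2$. First, you write ``descending property of $\ul{\ul w}$'' when glossing the $A_2$ condition, but $A_2$ requires $\ul w\in S_{n-1}^{>}$, not $\ul{\ul w}\in S_{n-2}^{>}$ — a level-shift that, if not just a typo, would derail the $\supseteq$ check. Second, and more substantively, for the $\subseteq$ direction your two ``auxiliary observations'' only establish $Z_{n-1}\subseteq S_{n-1}^{>}$ and that the position constraint is automatic for $w\in S_n^{>}$. Neither one, on its own, gives you the claim you actually need, namely that $\ul w\in T_{n-1}$ forces $\ul w\in S_{n-1}^{>}$. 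That implication is true, but it requires the inductive hypothesis $T_{n-1}=A_1^{(n-1)}\cup A_2^{(n-1)}$ together with the separate verification that membership in either $A_1^{(n-1)}$ or $A_2^{(n-1)}$ entails $\ul w\in S_{n-1}^{>}$ (the $A_2^{(n-1)}$ case being exactly the ``insertion into a descending tail preserves descending'' argument you sketched for the $\supseteq$ direction one level up). You gesture at this with ``feeding the inductive description of $T_{n-1}$ into these,'' but the induction itself is the missing step, not merely delicate bookkeeping. Finally, note that once these verifications are in place, one finds $A_1\subseteq A_2$ under the paper's Definition~\ref{def:notation} of $T_n$ (monomial-free, zero allowed), so the claim ``$A_1$ records the sub-case where $F_{n-1}|_{\ul w}$ is zero'' is not quite the right picture of how the decomposition partitions $T_n$.
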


We now state and prove the analogous result for opposite Schubert varieties.

\begin{theorem}[Opposite Schubert varieties]\label{thm:toric_op_schu_diag}
Following our notation in Definition~\ref{def:notation}: 
\begin{itemize}\item[{\rm (i)}]  $v\in Z_n^{\text{op}}$ 
if and only if $w_0v\in Z_{n}$.
\item[{\rm (ii)}]
$T^{\text{op}}_{n}= A_1^{\text{op}} \cup A_2^{\text{op}}$, where
\begin{itemize}
    \item $A_1^{\text{op}} = \{v \in S_n:\ F_n|^{v}\neq 0,\ F_{n-1}|^{\underline{v}}=0,\ {\rm and}\ v \text{ has the ascending property}\}$
    \item $A_2^{\text{op}} = \{v \in S_n:\  F_{n-1}|^{\underline{v}}\ \text{ has no monomial and}\  v  \text{ has the ascending property}\}.$
\end{itemize}
\end{itemize}
\end{theorem}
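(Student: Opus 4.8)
The plan is to deduce both statements from the Richardson results already established, using that the opposite Schubert variety is the Richardson variety $X^v = X^v_{w_0}$ — so $F_n|^v = F_n|^v_{w_0}$ — together with the tautological identities recorded in Definition~\ref{def:notation}, namely $Z_n^{\text{op}} = \{v\in S_n : (v,w_0)\in Z_n^R\}$ and $T_n^{\text{op}} = \{v\in S_n : (v,w_0)\in T_n^R\}$ (here $w_0$ always denotes the longest element of the relevant symmetric group). For (i), Theorem~\ref{thm:toric_fl_rich_diag}(i) says $(v,w_0)\in Z_n^R$ exactly when $w_0\in W_v$, so it suffices to prove the group-theoretic equivalence $w_0\in W_v \iff w_0 v\in Z_n$, where $Z_n = \{s_{i_1}\cdots s_{i_p} : |i_k - i_\ell|\ge 2 \text{ for all } k,\ell\}$ by the cited theorem on Schubert varieties. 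Writing $w_0 = v s_{i_1}\cdots s_{i_p}$ with $\{i_1,\dots,i_p\}\subseteq I_v$ pairwise non-adjacent, and using that such transpositions commute, we get $v = w_0 s_{i_1}\cdots s_{i_p}$ and hence $w_0 v = s_{i_1}\cdots s_{i_p}\in Z_n$. Conversely, if $w_0 v = s_{i_1}\cdots s_{i_p}\in Z_n$ then $v = w_0 s_{i_1}\cdots s_{i_p}$; right-multiplying $w_0$ by $s_{i_j}$ interchanges positions $i_j,i_j+1$, so by non-adjacency $v_{i_j} = (w_0)_{i_j+1} = n-i_j$ and $v_{i_j+1} = (w_0)_{i_j} = n+1-i_j = v_{i_j}+1$, giving $\{i_1,\dots,i_p\}\subseteq I_v$; together with $v\le w_0$ this shows $w_0\in W_v$. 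This proves (i).

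For (ii), I would read off Theorem~\ref{thm:toric_fl_rich_diag}(ii), applied with $n-1$ in place of $n$, at the pair $(v,w_0)\in S_n\times S_n$: combined with the identity $v\in T_n^{\text{op}}\cup Z_n^{\text{op}} \iff (v,w_0)\in T_n^R\cup Z_n^R$ it gives $v\in T_n^{\text{op}}\cup Z_n^{\text{op}} \iff (v,w_0)\in\widehat{T_{n-1}^R}\cup\widehat{Z_{n-1}^R}$. Since $\ul{w_0}$ is the longest element of $S_{n-1}$, the right-hand side requires $(\ul v,\ul{w_0})\in T_{n-1}^R\cup Z_{n-1}^R$, i.e. $\ul v\in T_{n-1}^{\text{op}}\cup Z_{n-1}^{\text{op}}$, together with the extra condition from the definition of $\widehat{\,\cdot\,}$: either $n$ occupies the same position ($1$) in $v$ and in $w_0$, or $(v,w_0)$ is compatible. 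Unwinding Definition~\ref{def:compatible} with $n$ in position $1$ and $n-1$ in position $2$ of $w_0$, one checks that the inequalities $s'\le t$, $t'\le s$ and the required descending chain in $w_0$ hold automatically, so compatibility collapses to the single requirement $n = v_t > v_{t-1} > \dots > v_1$, that is, $v$ has the ascending property — and this subsumes the case $v_1=n$. Therefore
\[
v\in T_n^{\text{op}}\cup Z_n^{\text{op}} \iff \ul v\in T_{n-1}^{\text{op}}\cup Z_{n-1}^{\text{op}} \text{ and } v \text{ has the ascending property}.
\]
Because the zero ideal is monomial-free, $Z_m^{\text{op}}\subseteq T_m^{\text{op}}$ for all $m$, so the left-hand side is just $T_n^{\text{op}}$ and the hypothesis $\ul v\in T_{n-1}^{\text{op}}\cup Z_{n-1}^{\text{op}}$ is exactly "$F_{n-1}|^{\ul v}$ has no monomial"; hence the displayed equivalence reads $T_n^{\text{op}} = A_2^{\text{op}}$, and since $A_1^{\text{op}}\subseteq A_2^{\text{op}}$ (a zero ideal being in particular monomial-free) we conclude $T_n^{\text{op}} = A_1^{\text{op}}\cup A_2^{\text{op}}$. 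If one instead reads "monomial-free" as "nonzero with no monomial", the same equivalence together with part (i) — which yields the implication $v\in Z_n^{\text{op}}\Rightarrow \ul v\in Z_{n-1}^{\text{op}}$ — separates the zero locus, and $A_1^{\text{op}}$ then accounts precisely for the pairs with $\ul v\in Z_{n-1}^{\text{op}}$ but $v\notin Z_n^{\text{op}}$.

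I expect the main obstacle to be the careful unwinding of the extension operator $\widehat{\,\cdot\,}$ and of compatibility in the boundary case $t=1$ versus $t>1$, and more generally the bookkeeping for subsets whose cardinality straddles the position $t$ of $n$ in $v$ — this is exactly what Lemma~\ref{lem:asc_prop} is for ($I\ge v \iff I\setminus\{n\}\ge\ul v$ when $n\in I$, together with the fact that a subset of size at least $t$ lying above $v$ is forced to contain $n$). The same input would drive a self-contained inductive proof of (ii): lift and project the defining quadratic binomials of $F_n$ along $L\mapsto L\cup\{n\}$ on sets of size at least $t$, transferring monomials between $F_n|^v$ and $F_{n-1}|^{\ul v}$, and handle the failure of the ascending property by the explicit single-row tableau interchanges used in the proof of Lemma~\ref{comp}. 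One subtlety worth emphasising is that, in contrast to the Grassmannian case, applying $w_0$ entrywise to a flag binomial $P_IP_J - P_{I'}P_{J'}$ with $|I|\ne|J|$ need not produce a binomial, which is precisely why part (i) is routed through $W_v$ rather than through a direct $w_0$-symmetry of $F_n$.
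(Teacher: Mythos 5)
Your proof is correct, and it differs from the paper's on both parts. For (i), the paper argues by applying $w_0$ entrywise to the indices of a binomial witness: it asserts that $P_IP_J-P_{I'}P_{J'}\in F_n$ implies $P_{w_0 I}P_{w_0 J}-P_{w_0 I'}P_{w_0 J'}\in F_n$. As you correctly flag, this transfer fails for the flag variety when the columns have unequal length, because the diagonal-matching-field tableau for $P_{w_0 L}$ is the reverse of the column for $P_L$, and columns of different lengths reverse out of alignment; concretely $P_1P_{23}-P_2P_{13}\in F_3$ while $P_3P_{12}-P_2P_{13}\notin F_3$. Your route, via $Z_n^R=\{(v,w):w\in W_v\}$ from Theorem~\ref{thm:toric_fl_rich_diag}(i) together with the elementary group-theoretic equivalence $w_0\in W_v\iff w_0v\in Z_n$, avoids this issue entirely, and your verification that $\{i_1,\dots,i_p\}\subseteq I_v$ whenever $w_0v=s_{i_1}\cdots s_{i_p}\in Z_n$ is correct. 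For (ii), the paper does not specialize Theorem~\ref{thm:toric_fl_rich_diag}(ii); it gives a self-contained proof through Lemmas~\ref{lem:toric_op_schu_diag_asc_prop}, \ref{lem:a1b_op_schu}, \ref{lem:a1a_op_schu}, \ref{lem:n_op_schu_diag} and \ref{lem:a2_op_schu_diag}, which build binomial witnesses directly. Your derivation by setting $w=w_0$ in the Richardson recursion and unwinding the extension operator and compatibility (which, as you compute, collapses to the ascending property when $w=w_0$) is a cleaner, more structural deduction; the one step you should actually write out is the implication $v\in Z_n^{\text{op}}\Rightarrow \ul v\in Z_{n-1}^{\text{op}}$ needed to separate $A_1^{\text{op}}$ from $A_2^{\text{op}}$ --- it is true, and follows from (i) together with a short check that $w_0v=s_{i_1}\cdots s_{i_p}$ with pairwise non-adjacent indices forces $\ul{w_0}\,\ul{v}$ to have the same form in $S_{n-1}$, but the paper never isolates this fact. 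What your approach buys is a transparent logical dependence on the Richardson theorem and no new tableau manipulations; what the paper's direct lemma-chain buys is independence from the longer proof of Theorem~\ref{thm:toric_fl_rich_diag}(ii) and the additional information, proved in Lemmas~\ref{lem:a1b_op_schu} and \ref{lem:a1a_op_schu}, that $F_n|^v$ is a \emph{principal} ideal when $v\in A_1^{\text{op}}$.
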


\begin{proof}
To prove {\bf (i)} first note that as the initial term of each Pl\"ucker variable is the diagonal term in its corresponding submatrix, every binomial $P_IP_J-P_{I'}P_{J'}$ in $F_n$ leads to the binomial $P_{w_0I}P_{w_0J}-P_{w_0I'}P_{w_0J'}$ in $F_n$.
Now, note that if $F_n|^v$ is nonzero, then there is a binomial $P_IP_J-P_{I'}P_{J'}$ in $F_n$ such that $P_IP_J$ does not vanish in $F_n|^v$. 
Since $v \le I, J$, we have that $w_0I, w_0J \le w_0v$. Hence $P_{w_0I}P_{w_0J}$ does not vanish in $F_n|_{w_0v}$, and so $F_n|_{w_0v}$ is nonzero.

Conversely, if $w_0 v \not \in Z_n$ then $F_n|_{w_0v}$ is nonzero. So there is a binomial $P_I P_J - P_{I'} P_{J'}$ in $F_n$ such that $P_I P_J $ does not vanish in $F_n|_{w_0v}$.  
Since $I, J \le w_0 v$, we have that $w_0 (w_0 v) = v \le w_0I, w_0J$. Hence $P_{w_0I}P_{w_0J}$ does not vanish in $F_n|^v$, and so $F_n|^v$ is nonzero.
\medskip

(ii) First note that the classification of zero ideals in Theorem~\ref{thm:toric_op_schu_diag}(i) shows that every permutation $v$ in $A_1^{\text{op}}$ is either of the form  $v = (n-2, n, n-1, \dots)$ or 
   $v = (n-2, n-1, n, \dots)$.
We partition $A_1^{\text{op}}$ into \[
A_{1A}^{\text{op}} = \{v \in A_1^{\text{op}} : v_2 = n \} \text{ and }A_{1B}^{\text{op}} = \{ v \in A_1^{\text{op}} : v_3 = n \}.
\]
The proof of {\bf (ii)} follows from a series of lemmas which we prove later. By Lemma~\ref{lem:a1b_op_schu} and Lemma~\ref{lem:a1a_op_schu} we have that $A_1^{\text{op}} \subset T_{n}^{\text{op}}$. By Lemma~\ref{lem:a2_op_schu_diag} we have $A_2^{\text{op}} \subset T_{n}^{\text{op}}$. 

For the converse let $v = (v_1, \dots, v_n) \in T_{n}^{\text{op}}$. By Lemma~\ref{lem:toric_op_schu_diag_asc_prop} we have that $v$ has the ascending property. Then by Lemma~\ref{lem:n_op_schu_diag} we have that $\underline{v} \in T_{n-1}^{\text{op}}$ or $\underline{v} \in Z_{n-1}^{\text{op}}$. So $v \in A_2^{\text{op}}$ or $v \in A_1^{\text{op}}$ respectively by definition of $A_1^{\text{op}}$ and $A_2^{\text{op}}$. 
\end{proof}

We now prove the lemmas used in the proof of Theorem~\ref{thm:toric_op_schu_diag}(ii). Here we assume that $F_n^v$ is nonzero.

\begin{lemma}\label{lem:toric_op_schu_diag_asc_prop}
If $F_n|^v$ has no monomial, then $v$ has the ascending property.
\end{lemma}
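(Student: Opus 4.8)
The plan is to argue by contrapositive: I will assume that $v$ does \emph{not} have the ascending property and exhibit an explicit binomial $P_IP_J - P_{I'}P_{J'}$ in $F_n$ for which $P_IP_J$ does not vanish in $F_n|^v$ while (say) $P_{I'}$ does vanish, thereby producing a monomial. Write $v_t = n$. Failure of the ascending property means that among $v_1, \dots, v_{t-1}$ the sequence is not increasing up to $n$, i.e. there exists an index $k < t$ with $v_k > v_{k+1}$ (recall $v_t = n$, so the relevant "descent" occurs strictly before position $t$, and $v_{k+1} \le n-1$). I would pick the smallest such $k$, or alternatively choose $k$ so that $v_k$ is as large as possible among the "out of order" entries — the precise choice will be dictated by the need to make the sets below comparable to $v$.

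The main construction mirrors the tableau manipulations used throughout Section~\ref{sec:flag} (cf. the proof of Theorem~\ref{thm:toric_fl_rich_diag}(i), Case~1.1, and Lemma~\ref{comp}, Case~3). With $k$ chosen so that $v_k > v_{k+1}$, set
\[
I = (v_1, \dots, v_{k-1}, v_{k+1}, n)\!\!\uparrow, \quad
J = (v_1, \dots, v_k)\!\!\uparrow,
\]
\[
I' = (v_1, \dots, v_{k-1}, v_k, n)\!\!\uparrow, \quad
J' = (v_1, \dots, v_{k-1}, v_{k+1})\!\!\uparrow,
\]
with the obvious modifications when $k = 1$. Since the diagonal matching field selects the diagonal term in each minor and the two tableaux $P_IP_J$ and $P_{I'}P_{J'}$ agree in all rows except one (where $v_k$ and $v_{k+1}$ are interchanged, together with the row containing $n$), $P_IP_J - P_{I'}P_{J'}$ is a binomial in $F_n$ by Theorem~\ref{thm:Pure}. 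The point of $v_k > v_{k+1}$ is that $J' = (v_1, \dots, v_{k-1}, v_{k+1})\!\!\uparrow$ then fails $J' \ge v$ at position $k$ (since its $k$-th smallest element is $v_{k+1} < v_k$), so $P_{J'}$ vanishes in $F_n|^v$; whereas $I \ge v$, $J \ge v$ hold because replacing $v_k$ by the larger value $n$ and keeping the remaining entries only increases the ordered tuple. Hence $P_IP_J$ is a monomial in $F_n|^v$.

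The main obstacle is bookkeeping: verifying that $I, J, I', J'$ are genuinely $k$- or $(k+1)$-subsets of $[n]$ (no repeated entries — this needs $n \notin \{v_1,\dots,v_k\}$, i.e. $k < t$, which is exactly our hypothesis) and that the comparability claims $I, J \ge v$ and $J' \not\ge v$ hold after reordering. One has to be a little careful about whether $n$ already appears among $v_1, \dots, v_{k-1}$ — it does not, since $v_t = n$ with $t > k$ — and about the edge case $k = t-1$ versus $k < t-1$, and the case where the descent sits right next to the position of $n$. I expect all of these to reduce to routine checks using the elementwise definition of $\le$ on subsets, exactly as in the analogous arguments already carried out in the paper; the only genuinely content-bearing input is that $F_n$ consists precisely of diagonal binomials, so a binomial is detected by rows of tableaux being equal as multisets.
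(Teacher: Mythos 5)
Your construction is correct, and it is in fact slightly cleaner than the paper's. The paper chooses the minimal descent position $r$ (so that $v_1<\cdots<v_r$) and then splits into two cases depending on whether $v_{r+1}<v_1$ or $v_m<v_{r+1}<v_{m+1}$ for some $1\le m\le r-1$, producing two different binomials with columns of sizes $(r+1,1)$ and $(r+1,m+1)$ respectively. Your single construction with columns of sizes $(k+1,k)$, namely $I=(v_1,\dots,v_{k-1},v_{k+1},n)\!\!\uparrow$, $J=(v_1,\dots,v_k)\!\!\uparrow$, $I'=(v_1,\dots,v_k,n)\!\!\uparrow$, $J'=(v_1,\dots,v_{k-1},v_{k+1})\!\!\uparrow$, handles both of these cases uniformly: regardless of where $v_{k+1}$ falls among $v_1<\cdots<v_{k-1}$, the sorted $I$ agrees with $(v_1,\dots,v_{k+1})\!\!\uparrow$ in the first $k$ positions and beats it (with $n>v_k$) in position $k+1$, so $I\ge v$; $J$ equals the first $k$ entries of $v$ sorted; and $\max(J')<v_k$ forces $J'\not\ge v$. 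The two tableaux differ only by swapping $v_k$ and $v_{k+1}$ between columns in a single row, so the expression is a genuine binomial in $F_n$. You should commit to taking $k$ to be the \emph{smallest} descent index rather than leave the choice open: minimality is precisely what guarantees $v_1<\cdots<v_k$, and that ordering is used in every comparison you make (it gives $J\ge v$ on the nose and makes the position of $v_{k+1}$ in the sorted $I$ unambiguous). With that commitment made, the argument is complete, and it achieves what the paper does in two cases in a single stroke.
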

\begin{proof}
Suppose that $v$ does not have the ascending property. Then we choose the minimum index $r$ such that $1\le r \le t-2$ and $v_r>v_{r+1}$. 
To obtain a contradiction, we consider the following two cases on $v$ in which we find a monomial $P_IP_J$ in $F_n|^v$ arising from a binomial $P_IP_J-P_{I'}P_{J'}$ in $F_n$.

\textbf{Case 1.} Let $v_{r+1} < v_1$. We choose \[ I=\{v_2,v_3, \ldots, v_r, v_{r+1}, n\},\    J=\{ v_{1}\},\] \[  I'=\{v_1,v_2, \ldots, v_r, n\}  \text{ and }  J'=\{v_{r+1}\}.\]

\textbf{Case 2.} Assume that there exists $1\leq k\leq r-1$ such that $v_k<v_{r+1}<v_{k+1}$.  We choose
\[
I=\{ v_1, \ldots, v_{k}, v_{k+2},\ldots v_r, v_{r+1}, n\}, \  
J=\{v_{1}, \ldots,  v_{k}, v_{k+1}\},
\]
\[
I'=\{ v_1, \ldots, v_{r}, n\} \text{ and } 
J'=\{ v_1, \ldots, v_{k}, v_{r+1} \}.
\]
In all the above cases, we have that $P_IP_J-P_{I'}P_{J'}$ is nonzero in $F_n$. Since $v_1<v_2<\cdots <v_r$, we have $v\le I, J$ and $P_IP_J$ does not vanish in $F_n|^v$.    
Also observe that $v\not \le J'$ and so $P_{J'}$ vanishes in $F_n|^v$. Hence $P_IP_J$ is a  monomial in $F_n|^v$, a contradiction.
Therefore we conclude that $v$ has the ascending property.
\end{proof}

\begin{lemma}\label{lem:a1b_op_schu}
$F_n|^v$ is a principal toric ideal for each $v \in A_{1B}^{\text{op}}$. In particular, $A_{1B}^{\text{op}} \subset T_{n}^{\text{op}}$.
\end{lemma}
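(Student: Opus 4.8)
The plan is to analyze the structure of the permutations $v \in A_{1B}^{\text{op}}$ very explicitly and then directly describe the ideal $F_n|^v$. By Theorem~\ref{thm:toric_op_schu_diag}(i) and the definition of $Z_{n-1}$, the condition $F_{n-1}|^{\underline v} = 0$ together with $v_3 = n$ and the ascending property forces $\underline v$ to be a specific permutation; tracing through $w_0 \underline v \in Z_{n-1}$ shows $\underline v$ must be (up to the constraints) $w_0$ times a product of far-apart adjacent transpositions that, after the ascending condition on $v$, pins $v$ down to essentially one shape, namely $v = (n-2, n-1, n, v_4, \dots, v_n)$ with the tail $(v_4, \dots, v_n)$ forced to be decreasing (or some equally rigid description). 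First I would carry out this bookkeeping carefully to get the exact list of $v \in A_{1B}^{\text{op}}$.

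Next I would determine exactly which Plücker variables $P_I$ vanish in $F_n|^v$, i.e. compute $S^v$ for such $v$. Because $v$ has the ascending property with $v_1 = n-2$, $v_2 = n-1$, $v_3 = n$, for each cardinality $m$ the condition $I \ge v$ is very restrictive: for $m \ge 3$ one needs $i_3 \ge n$, hence $n \in I$ and in fact $I$ must contain $\{n-2, n-1, n\}$ once $m\ge 3$, while for $m = 1, 2$ the surviving sets are also a short explicit list. So the nonvanishing variables $P_I$ (those with $I \in T^v$) form a small, explicitly enumerable set. I would then invoke Lemma~\ref{lem:elim_ideal_gen_set} (the explicit quadratic generating set for $F_n|_S$) to write down generators of $F_n|^v$: each generator is the image of a quadratic binomial of $F_n$ with both terms supported on $T^v$, or it is $0$. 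The claim to establish is that exactly one such binomial survives with both monomials nonzero, and it is a genuine (prime, i.e. toric) binomial, while all other quadratic generators vanish.

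Concretely, I expect the single surviving relation to come from the Plücker-type exchange among the few two-element and three-element surviving subsets — something like $P_{\{n-2,n\}}P_{\{n-1, a\}} - P_{\{n-2, n-1\}} P_{\{n, a\}}$ type relation in diagonal-tableau form — and I would check (a) both monomials lie in $\mathbb K[P_I : I \in T^v]$, and (b) the binomial is irreducible so the principal ideal it generates is prime, hence toric. Since a principal binomial ideal generated by a difference of two distinct squarefree monomials with no common variable is automatically prime, this is immediate once the two monomials are seen to be coprime. Finally, principality gives $F_n|^v \neq 0$ and monomial-free, so $v \in T_n^{\text{op}}$, which yields $A_{1B}^{\text{op}} \subseteq T_n^{\text{op}}$.

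The main obstacle will be the first step: pinning down $A_{1B}^{\text{op}}$ precisely from the three defining conditions and then showing that \emph{every} other quadratic generator of $F_n|^v$ (from the list in Lemma~\ref{lem:elim_ideal_gen_set}) is zero — this requires a careful, if routine, case check that any quadratic binomial $P_IP_J - P_{I'}P_{J'}$ of $F_n$ with $I, J \in T^v$ forces one of $I', J'$ to lie outside $T^v$ unless it is the distinguished relation. The comparison of tableaux (diagonal order, so $I, J, I', J'$ are just sorted sets) makes this tractable: the surviving sets are so few that the exchange relations among them can be listed exhaustively.
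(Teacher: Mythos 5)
Your overall strategy (pin down the shape of $v$, determine which Pl\"ucker variables survive, show that exactly one quadratic generator of $F_n$ restricts to a genuine binomial, then argue primality of a principal coprime squarefree binomial) matches the paper's, and the primality observation at the end is fine. But there are three concrete problems with the proposal as written.

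First, your prediction of the surviving relation is wrong. You expect an exchange relation ``among the few two-element and three-element surviving subsets'' of the form $P_{\{n-2,n\}}P_{\{n-1,a\}} - P_{\{n-2,n-1\}}P_{\{n,a\}}$; but there is no valid choice of $a$ making both terms lie in $\mathbb K[P_I: I \in T^v]$, and the two- and three-element surviving subsets are all contained in $\{n-2,n-1,n\}$, so they admit no nontrivial exchange among themselves. The actual generator is the cross-grade relation $P_{n-1}\,P_{\{n-2,n\}} - P_{n-2}\,P_{\{n-1,n\}}$, which is special to the flag setting (it mixes a degree-$1$ and a degree-$2$ Pl\"ucker variable). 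Without noticing this you would conclude $F_n|^v = 0$, which contradicts the hypothesis $F_n|^v\neq 0$.

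Second, the stated key claim is logically inverted. You say you would check that any binomial $P_IP_J - P_{I'}P_{J'}$ of $F_n$ with $I,J\in T^v$ ``forces one of $I',J'$ to lie \emph{outside} $T^v$ unless it is the distinguished relation.'' If $I,J\in T^v$ but $I'$ or $J'\notin T^v$, the image generator is the \emph{monomial} $P_IP_J$, which would make $F_n|^v$ non-toric. What you actually need (and what the paper proves) is the opposite: if $P_IP_J$ survives, then the whole binomial is the distinguished one, so $I',J'$ survive too. This is not a typo you can wave away — it is precisely the monomial-freeness check.

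Third, your plan to pin down $v$ completely and then enumerate $T^v$ exhaustively is more fragile than it looks. The tail of $v$ is \emph{not} forced to be decreasing (e.g.\ $w_0\ul v = s_1 s_3$ gives $v = (n-2,n-1,n,n-4,n-3,n-5,\dots)$), and the family of surviving sets of size $m\ge 4$ genuinely depends on the tail. The paper sidesteps all of this by one clean reduction: for a binomial with both $|I|,|J|\ge 3$, the condition $I,J\ge v$ forces $\{n-2,n-1,n\}\subseteq I\cap J$, and deleting $n$ produces a nonvanishing binomial in $F_{n-1}|^{\ul v}$, contradicting $F_{n-1}|^{\ul v}=0$. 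Combined with a direct check that a term of mixed sizes $(\le 2,\ge 3)$ cannot appear in any Pl\"ucker relation with $I,J\ge v$, this leaves only the $(1,2)$ case and the distinguished relation. I would recommend adopting this reduction rather than attempting to characterize $T^v$ for all grades; it is both shorter and uniform in $v$.
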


\begin{proof}
Let $v \in A_{1B}^{\text{op}}$ then we have $v = (n-2, n-1, n, \dots)$. First we note that $F_n|^v$ is nonzero because it contains the binomial $P_{n-1} P_{n-2, n} - P_{n-2} P_{n-1, n}$.

Now we show that the above binomial generates $F_n|^v$. Let $P_I P_J - P_{I'} P_{J'}$ be a binomial in $F_n$ such that $P_I P_J$ does not vanish in $F_n|^v$. Assume $|I| \le |J|$, we show by contradiction that $|I| = 1$ and $|J| = 2$. Since $|I| = |J| = 1$ is impossible there are three cases, either $|I|, |J| \ge 3$, $|I| \le 2$ and $|J| \ge 3$ or $|I| = |J| = 2$.

\textbf{Case 1.} Let $|I|, |J| \ge 3$. Since $v_3 = n$ and $I, J \ge v$ therefore $n \in I$ and $n \in J$. It follows that
\[
P_{I \backslash \{ n\} } P_{J \backslash \{ n\} }  - P_{I' \backslash \{ n\} } P_{J' \backslash \{ n\} } 
\]
is a non-vanishing binomial $F_{n-1}|^{\underline{v}}$, a contradiction since we assumed $\underline{v} \in Z_{n-1}^{\text{op}}$.

\textbf{Case 2.} Let $|I| \le 2$ and $|J| \ge 3$. Since $J \ge v$, we have that $J = \{\dots, n-2, n-1, n \}$. Let us write $I = \{i_1, i_2 \}$ where $i_1 < i_2$ if $|I| = 2$ and write $I = \{i_1 \}$ if $|I| = 1$. Since $I \ge v$ we have $i_1 \ge n-2$ and $i_2 \ge n-1$. Observe that swapping any value in $I$ with some value in $J$, either results in no change to $I$ and $J$, or results in a repeated entry in $J$. Therefore $P_I P_J$ is not a term of any binomial in $F_n$, a contradiction. 

\textbf{Case 3.} Let $|I| = |J| = 2$. Since $v_3 = n$, it follows that $I, J \ge \underline{v}$ hence $P_IP_J$ does not vanish in $F_{n-1}|^{\underline v}$. So $F_{n-1}|^{\underline v}$ is nonzero, a contradiction.

So we have shown that $|I| = 1$ and $|J| =  2$. Let us write $I = \{ i_1\}$ and $J = \{j_1, j_2 \}$ where $j_1 < j_2$. Since $P_IP_J \neq P_{I'}P_{J'}$ it follows that $i_1, j_1$ and $j_2$ are distinct and $I' = \{j_1\}$ and $J' = \{i_1, j_2 \}$. Since $I, J \ge v$ we must have $\{i_1, j_1, j_2 \} = \{n-2, n-1, n \}$. So the binomial is $P_{n-1}P_{n-2, n} - P_{n-2} P_{n-1, n}$. In particular $F_n|^v$ is a principal toric ideal.
\end{proof}

\begin{lemma}\label{lem:a1a_op_schu}
$A_{1A}^{\text{op}} \subset T_{n}^{\text{op}}$.
\end{lemma}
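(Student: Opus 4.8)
```latex
\textbf{Proof proposal for Lemma~\ref{lem:a1a_op_schu}.}

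The plan is to mirror the strategy of Lemma~\ref{lem:a1b_op_schu}, but now $v$ has the form $v = (n-2, n, n-1, \dots)$, so $v_2 = n$ rather than $v_3 = n$. As before, let $P_IP_J - P_{I'}P_{J'}$ be a binomial in $F_n$ for which $P_IP_J$ does not vanish in $F_n|^v$, and write $|I| \le |J|$. The goal is to show that every such non-vanishing monomial $P_IP_J$ forces $P_{I'}$ and $P_{J'}$ to also not vanish, so that $F_n|^v$ is monomial-free; this gives $v \in T_n^{\text{op}}$. First I would dispose of the large cases: if $|I|, |J| \ge 2$ then, since $n \in I$ and $n \in J$ (because $v_2 = n$ and $I, J \ge v$), deleting $n$ from all four sets produces a binomial $P_{I\setminus n}P_{J\setminus n} - P_{I'\setminus n}P_{J'\setminus n}$ in $F_{n-1}$, and the hypothesis $\underline v \in Z_{n-1}^{\text{op}}$ (so $F_{n-1}|^{\underline v} = 0$) means this binomial is trivial, i.e. $\{I', J'\} = \{I, J\}$ as multisets, so $P_{I'}P_{J'}$ does not vanish — contradiction with the assumption that $P_IP_J$ is a monomial in $F_n|^v$, or rather forcing the monomial case not to occur here.

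The remaining case is $|I| = 1$, say $I = \{i_1\}$ with $i_1 \ge v_1 = n-2$, and $|J| = m \ge 1$ arbitrary. Here the key point is that the single index $i_1$ of $I$ must be swapped with some entry of $J$ to produce the binomial partner; since $|I| = 1$, the Plücker relation in $F_n$ relating $P_IP_J$ to $P_{I'}P_{J'}$ is the straightening-type relation in which $i_1$ trades places with some $j_a \in J$, giving $I' = \{j_a\}$ and $J' = (J \setminus j_a) \cup \{i_1\}$, subject to the tableau (diagonal matching field) ordering constraints. I would then check, using $i_1 \ge n-2$ and $J \ge v = (n-2, n, n-1, \dots)$, that whichever entry $j_a$ can legitimately be swapped with $i_1$ in the diagonal matching field, both $I' \ge v$ and $J' \ge v$ still hold — essentially because the entries of $J$ that are $\ge i_1$ are among $\{n-2, n-1, n\}$ (or larger positions are already forced by the ascending-below-$n$ shape of $v$), and replacing one of them by $i_1 \ge n-2$ cannot push any coordinate below the corresponding coordinate of $v$. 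When $|J| = 1$ the relation is trivial; when $|J| = 2$ one recovers a situation analogous to the $A_{1B}^{\text{op}}$ computation but now with the transposed middle block, and the single generator turns out to be $P_{n-1}P_{n-2,n} - P_{n-2}P_{n-1,n}$ or its matching-field variant.

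I expect the main obstacle to be the careful bookkeeping in the case $|I| = 1$, $|J| \ge 2$: one must track precisely which column-swaps are permitted by the diagonal matching field tableaux (the relations are not the full Plücker relations but their initial terms), and verify in each sub-case that the partner monomial $P_{I'}P_{J'}$ genuinely survives in $F_n|^v$, i.e. that neither $I'$ nor $J'$ drops below $v$. This requires using the explicit shape $v = (n-2, n, n-1, \dots)$ together with the ascending property of the tail and Lemma~\ref{lem:asc_prop} to reduce comparisons with $v$ to comparisons with $\underline v$ when $n$ is present. Once these verifications are complete, no monomial can appear, so $v \in T_n^{\text{op}}$, and since $v \in A_{1A}^{\text{op}}$ was arbitrary we conclude $A_{1A}^{\text{op}} \subset T_n^{\text{op}}$.
```
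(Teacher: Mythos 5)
Your proposal is correct and takes essentially the same approach as the paper: reduce the $|I|, |J| \ge 2$ cases to $F_{n-1}|^{\underline v}=0$ by deleting $n$ (using that $v_2 = n$ forces $n$ into all four sets), then handle $|I|=1$ by direct inspection of the admissible column-swaps under the diagonal matching field, concluding with the same principal generator $P_{n-1}P_{n-2,n} - P_{n-2}P_{n-1,n}$. The only organizational difference is that the paper isolates $|I|=|J|=2$ as its own case and argues triviality directly, whereas you fold it into the uniform $F_{n-1}$ reduction.
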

\begin{proof}
Let $v \in A_{1A}^{\text{op}}$ then we have $v = (n-2, n, n-1, \dots)$. First we show that $F_n|^v$ is nonzero. Consider the binomial
\[
P_{n-1} P_{n-2, n} - P_{n-2} P_{n-1, n}.
\]
The above binomial does not vanish in $F_n|^v$ hence the ideal is nonzero. We now show that $F_n|^v$ is principal. Let $P_I P_J - P_{I'} P_{J'}$ be a binomial in $F_n$ such that $P_I P_J$ does not vanish in $F_n|^v$. Without loss of generality we assume $|I| \le |J|$. We will show by contradiction that $|I| = 1$ and $|J| = 2$.
Since $|I| = |J| = 1$ is impossible there are three cases, either $|I|, |J| \ge 3$, $|I| \le 2$ and $|J| \ge 3$ or $|I| = |J| = 2$.

\textbf{Case 1.} Let $|I|, |J| \ge 3$. Since $v_2 = n$ and $I, J \ge v$ therefore $n \in I$ and $n \in J$. It follows that
\[
P_{I \backslash \{ n\} } P_{J \backslash \{ n\} }  - P_{I' \backslash \{ n\} } P_{J' \backslash \{ n\} } 
\]
is a binomial which does not vanish in
$F_{n-1}|^{\underline{v}}$, a contradiction since we assumed $\underline{v} \in Z_{n-1}^{\text{op}}$.

\textbf{Case 2.} Let $|I| \le 2$ and $|J| \ge 3$. Since $J \ge v$, we have that $J = \{\dots, n-2, n-1, n \}$. Let us write $I = \{i_1, i_2 \}$ where $i_1 < i_2$ if $|I| = 2$ and write $I = \{i_1 \}$ if $|I| = 1$. Since $I \ge v$ we have $i_1 \ge n-2$ and $i_2 = n$. Observe that swapping any value in $I$ with any value in $J$ either results in no change to $I$ and $J$ or results in a repeated entry in $J$. Therefore $P_I P_J$ is not a term of any binomial in $F_n$, a contradiction. 

\textbf{Case 3.} Let $|I| = |J| = 2$. Let us write $I = \{ i_1, i_2 \}$ and $J = \{j_1, j_2 \}$ where $i_1 < i_2$ and $j_1 < j_2$. Since $I, J \ge v$ it follows that $i_2 =  j_2 = n$. So we must have $n \in I'$ and $n \in J'$ and the polynomial $P_I P_J - P_{I'} P_{J'} = 0$ is zero, a contradiction.

We have shown that $|I| = 1$ and $|J| =  2$. Let us write $I = \{ i_1\}$ and $J = \{j_1, j_2 \}$ where $j_1 < j_2$. We must have no repeated values among $i_1, j_1, j_2$ otherwise the binomial is trivial. Since $I, J \ge v$ we must have $\{i_1, j_1, j_2 \} = \{n-2, n-1, n \}$. It is now easy to see that $P_I P_J - P_{I'} P_{J'}$ is exactly of the form described above. Since we are in the diagonal case, we have that both left and right hand side of the binomial do not vanish in $F_n|^v$. So $F_n|^v$ is generated by $P_I P_J - P_{I'}P_{J'}$ and is a principal toric ideal. 
\end{proof}

\begin{lemma}\label{lem:n_op_schu_diag}
If $\underline{v} \in N_{n-1}^{\text{op}}$ then $v \in N_{n}^{\text{op}}$.
\end{lemma}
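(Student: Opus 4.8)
The plan is to lift a monomial that witnesses $\underline{v} \in N_{n-1}^{\text{op}}$ to a monomial that witnesses $v \in N_n^{\text{op}}$, by inserting the largest value $n$ into the appropriate Pl\"ucker subscripts. Concretely, since $F_{n-1}|^{\underline{v}}$ contains a monomial there is a binomial $P_IP_J - P_{I'}P_{J'}$ in $F_{n-1}$ for which $P_IP_J$ does not vanish and $P_{I'}P_{J'}$ vanishes in $F_{n-1}|^{\underline{v}}$; by bi-homogeneity we may assume $|I| = |I'| \le |J| = |J'|$, and all four cardinalities are at most $n-2$. Let $t \in [n]$ be the index with $v_t = n$, so $\underline{v} = (v_1, \dots, v_{t-1}, v_{t+1}, \dots, v_n) \in S_{n-1}$. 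For each $L \in \{I, J, I', J'\}$ I would set
\[
\tilde L := \begin{cases} L & \text{if } |L| < t,\\ L \cup \{n\} & \text{if } |L| \ge t,\end{cases}
\]
and claim that $P_{\tilde I}P_{\tilde J}$ is a monomial of $F_n|^v$.

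First I would verify that $P_{\tilde I}P_{\tilde J} - P_{\tilde I'}P_{\tilde J'} \in F_n$. We are in the diagonal case, so if $J = \{j_1 < \cdots < j_m\}$ then $\phi_0(P_J) = x_{1,j_1}\cdots x_{m,j_m}$ and, since $n$ is always the largest inserted element, $\phi_0(P_{L \cup \{n\}}) = \phi_0(P_L)\,x_{|L|+1,\,n}$, with $|L|+1 \le n-1$ so this row exists in the $(n-1) \times n$ matrix. Taking the three cases $t > |J|$, $|I| < t \le |J|$, and $t \le |I|$, the extra factors of the form $x_{\bullet,n}$ that appear on the two sides of the relation are the same (they depend only on $|I|$ and $|J|$, which are preserved), so they cancel and $\phi_0(P_IP_J) = \phi_0(P_{I'}P_{J'})$ upgrades to $\phi_0(P_{\tilde I}P_{\tilde J}) = \phi_0(P_{\tilde I'}P_{\tilde J'})$; since $|I| = |I'|$ and $|J| = |J'|$ the insertion respects the bi-grading, so this is a genuine binomial of $F_n$. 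Next I would prove the comparison identity: for every $L \subseteq [n-1]$ with $|L| \le n-2$ one has $\tilde L \ge v$ if and only if $L \ge \underline{v}$ (comparison with the sorted initial segments of $v$, resp. $\underline{v}$, of the appropriate length). If $|L| < t$ the sorted first $|L|$ entries of $v$ and of $\underline{v}$ coincide, so nothing changes. If $|L| \ge t$ then position $t$ sits among the first $|L|+1$ positions of $v$ and $v_t = n$ is their maximum, so deleting it from the sorted first $|L|+1$ entries of $v$ produces exactly the sorted first $|L|$ entries of $\underline{v}$, which mirrors deleting $n$ from $\tilde L$; hence the componentwise comparisons agree again. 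This is precisely why the threshold in the definition of $\tilde L$ has to be $t$.

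Combining the two checks: $I, J \ge \underline{v}$ gives $\tilde I, \tilde J \ge v$, so $P_{\tilde I}$ and $P_{\tilde J}$ do not vanish in $F_n|^v$; and $I' \not\ge \underline{v}$ or $J' \not\ge \underline{v}$ gives $\tilde I' \not\ge v$ or $\tilde J' \not\ge v$, so $P_{\tilde I'}P_{\tilde J'}$ vanishes in $F_n|^v$. Therefore $P_{\tilde I}P_{\tilde J} = (P_{\tilde I}P_{\tilde J} - P_{\tilde I'}P_{\tilde J'}) + P_{\tilde I'}P_{\tilde J'}$ lies in $F_n + \langle P_L : L \in S^v\rangle$ and at the same time is a monomial in $\mathbb K[P_L : L \notin S^v]$, so it is a nonzero monomial of $F_n|^v$, proving $v \in N_n^{\text{op}}$. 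I do not expect a genuine obstacle here: the only point needing care is the bookkeeping in the first step --- keeping the cardinalities balanced and checking that the inserted columns $x_{\bullet,n}$ land in the same rows on both sides of the relation --- which is a routine case analysis of the same flavour as those in Lemmas~\ref{non toric} and \ref{nontoric2}, requiring nothing beyond the lifting construction itself.
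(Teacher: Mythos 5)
Your proof is correct and follows exactly the same lifting strategy as the paper: fix $t$ (the paper calls it $r$) with $v_t = n$, define $\tilde L = L$ if $|L|<t$ and $\tilde L = L\cup\{n\}$ if $|L|\ge t$, and then check that both the binomial relation and the order comparison lift. You fill in the details that the paper asserts ``by construction'' (the row-wise equality of the lifted tableaux and the equivalence $\tilde L \ge v \iff L\ge\underline v$), but the approach is the same.
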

\begin{proof}
{Let $r$ be such that $v_r = n$.}
Let $P_I P_J - P_{I'} P_{J'}$ be in $F_{n-1}$
such that $P_I P_J$ does not vanish and $P_{I'} P_{J'}$ does vanish in $F_{n-1}|^{\underline{v}}$. We may assume, without loss of generality, $|I| = |I'|$ and $|J| = |J'|$. Define
\[
\tilde{I} = \left\{
\begin{tabular}{lr}
    $I$ & if $|I| < r$, \\
    $I \cup \{ n \}$ & if $|I| \ge r$. 
\end{tabular}
\right.
\]
Similarly, we define $\tilde{J}, \tilde{I'}$ and $ \tilde{J'}$. By construction, for each $X \in \{I, J, I', J' \}$ we have $X \ge \underline{v}$ if and only if $\tilde{X} \ge v$. In particular $P_{\tilde{I}} P_{\tilde{J}}$ does not vanish in $F_n|^v$ and $P_{\tilde{I'}} P_{\tilde{J'}}$ does vanish. 
Also by construction we have that $P_{\tilde{I}} P_{\tilde{J}} - P_{\tilde{I'}} P_{\tilde{J'}}$ is a binomial in $F_n$. So we have shown $F_n|^v$ contains the monomial $P_{\tilde{I}} P_{\tilde{J}}$.
\end{proof}

\begin{lemma}\label{lem:a2_op_schu_diag}
$A_2^{\text{op}} \subset T_{n}^{\text{op}}$.
\end{lemma}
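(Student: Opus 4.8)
The plan is to argue by contradiction and reduce a hypothetical monomial of $F_n|^v$ to a monomial of $F_{n-1}|^{\underline v}$, running the argument of Lemma~\ref{lem:n_op_schu_diag} in reverse. Fix $v \in A_2^{\text{op}}$, so $v$ has the ascending property, say $v_1 < v_2 < \dots < v_t = n$, and $F_{n-1}|^{\underline v}$ is monomial-free; we may assume $F_n|^v \neq 0$. Suppose for contradiction that $P_I P_J$ is a monomial of $F_n|^v$ arising from a binomial $P_I P_J - P_{I'} P_{J'} \in F_n$ with $\{I,J\} \neq \{I',J'\}$, $|I| = |I'|$, $|J| = |J'|$, where $I, J \ge v$ while $P_{I'} P_{J'}$ vanishes, i.e.\ $I' \not\ge v$ or $J' \not\ge v$.

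First I would record the reduction step. By Theorem~\ref{thm:Pure}, $F_n = \ker \phi_0$, and by Proposition~\ref{prop:unique} we have $\phi_0(P_X) = \prod_{r=1}^{|X|} x_{r, x_r}$ for $X = \{x_1 < \dots < x_{|X|}\}$. For $X \subseteq [n]$ write $\tilde X = X \setminus \{n\}$; since $n$ is the largest letter, if $n \in X$ then $n = x_{|X|}$ and $\phi_0(P_X)$ differs from $\phi_0(P_{\tilde X})$ exactly by the factor $x_{|X|,n}$, while if $n \notin X$ then $\phi_0(P_X) = \phi_0(P_{\tilde X})$. Hence, deleting every variable $x_{\cdot, n}$ from the identity $\phi_0(P_I)\phi_0(P_J) = \phi_0(P_{I'})\phi_0(P_{J'})$ yields $\phi_0(P_{\tilde I})\phi_0(P_{\tilde J}) = \phi_0(P_{\tilde I'})\phi_0(P_{\tilde J'})$ in $\mathbb K[x_{r,c} : c \le n-1]$, so $P_{\tilde I}P_{\tilde J} - P_{\tilde I'}P_{\tilde J'} \in F_{n-1}$. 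The only degenerate possibility, $X = \{n\}$ for one of the four sets, forces $x_{1,n}$ to be a common factor of both sides; since $F_n$ is prime and $P_{\{n\}} \notin F_n$ the binomial would then be trivial, contradicting $\{I,J\} \neq \{I',J'\}$, so all of $\tilde I,\tilde J,\tilde I',\tilde J'$ are genuine Pl\"ucker variables.

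Next I would transfer the Bruhat relations to $\underline v$ using the ascending property and Lemma~\ref{lem:asc_prop}. I record two facts: (a) if $X \ge v$ then $\tilde X \ge \underline v$ --- if $n \in X$ this is Lemma~\ref{lem:asc_prop}, and if $n \notin X$ then necessarily $|X| \le t-1$ (otherwise $n$ is forced into $X$), whereupon the sorted first $|X|$ entries of $v$ and of $\underline v$ both equal $v_1 < \dots < v_{|X|}$, so $X \ge v \iff X \ge \underline v$; and (b) if $Y \not\ge v$ and either $n \in Y$ or $|Y| \le t-1$, then $\tilde Y \not\ge \underline v$, by the same two arguments. Applying (a) to $I$ and $J$ shows $P_{\tilde I}P_{\tilde J}$ does not vanish in $F_{n-1}|^{\underline v}$. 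For the vanishing of $P_{\tilde I'}P_{\tilde J'}$ I would use that, $n$ being the global maximum, it occupies the last row of every column of the diagonal tableau that contains it; comparing the positions and multiplicity of the letter $n$ on the two sides of the binomial together with $|I|=|I'|$, $|J|=|J'|$ pins down which of $I',J'$ contain $n$ and forces, for each of $I'$ and $J'$, that it either contains $n$ or has size $\le t-1$ (this rules out the single bad configuration $n \notin Y$ with $|Y| \ge t$). Hence (b) applies to whichever of $I',J'$ fails to dominate $v$, so $P_{\tilde I'}P_{\tilde J'} \in \langle P_X : X \not\ge \underline v\rangle$, and therefore $P_{\tilde I}P_{\tilde J} = P_{\tilde I'}P_{\tilde J'} + (P_{\tilde I}P_{\tilde J} - P_{\tilde I'}P_{\tilde J'}) \in F_{n-1} + \langle P_X : X \not\ge \underline v\rangle$.

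Since $\tilde I,\tilde J \ge \underline v$, the monomial $P_{\tilde I}P_{\tilde J}$ involves no killed variable and does not vanish, hence it is a monomial of $F_{n-1}|^{\underline v}$, contradicting that $F_{n-1}|^{\underline v}$ is monomial-free; this proves $v \in T_n^{\text{op}}$. I expect the main obstacle to be the bookkeeping in the vanishing step: verifying, by a short case analysis on which of $I, J, I', J'$ contain $n$, that the failure of $I'$ or $J'$ to dominate $v$ survives the deletion of the letter $n$ --- equivalently, that after the reduction one never has to compare a set of size $\ge t$ not containing $n$ against $v$. The monomial-map reduction and the comparisons of sorted initial segments are routine once the ascending property is available.
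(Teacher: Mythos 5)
Your argument is essentially the same as the paper's: both remove the letter $n$ from all four index sets to reduce a relation in $F_n$ to one in $F_{n-1}$, then use the ascending property and Lemma~\ref{lem:asc_prop} to transfer the Bruhat comparisons between $v$ and $\ul v$. The paper phrases it directly (for every binomial $P_IP_J - P_{I'}P_{J'}\in F_n$ with $P_IP_J$ non-vanishing, show $P_{I'}P_{J'}$ is also non-vanishing), whereas you argue by contradiction, but these are contrapositives; the bookkeeping on which of $I',J'$ contain $n$ is also the same in substance, your version being somewhat more explicit than the paper's "the other cases follow similarly."

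One small gap: you write "we may assume $F_n|^v\neq 0$", but in the paper $T_n^{\text{op}}$ and $Z_n^{\text{op}}$ are treated as disjoint (see Table~\ref{table:flag_calculation}), so membership in $T_n^{\text{op}}$ requires $F_n|^v$ to be nonzero, and this must be proved rather than assumed. The paper closes this by taking any binomial $P_IP_J - P_{I'}P_{J'}$ of $F_{n-1}|^{\underline v}$ (which is nonzero since $\underline v\in T_{n-1}^{\text{op}}$) and lifting it to $P_{I\cup\{n\}}P_{J\cup\{n\}} - P_{I'\cup\{n\}}P_{J'\cup\{n\}}$, which by Lemma~\ref{lem:asc_prop} is a non-vanishing relation in $F_n|^v$. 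You should add this step; it is the exact inverse of your deletion move.
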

\begin{proof}
Suppose that $v \in A_2^{\text{op}}$ so we have $\underline{v} \in T_{n-1}^{\text{op}}$. By assumption $v$ has the ascending property so we write $v_1 < v_2 < \dots < v_r = n$ for some $r \in [n]$. Let $P_I P_J - P_{I'} P_{J'}$ be in $F_n$ where $P_I P_J$ does not vanish in $F_n|^v$. We will show that $P_{I'} P_{J'}$ does not vanish in $F_n|^v$. Without loss of generality we assume $|I| = |I'|$, $|J| = |J'|$.

{Suppose that $n \in I, J$. We form the sets $\tilde{I}, \tilde{J}, \tilde{I'}, \tilde{J'}$ by removing $n$, that is $\tilde{I} = I \backslash \{ n \}$ and similarly for the other sets.
It follows that $P_{\tilde{I}} P_{\tilde{J}} - P_{\tilde{I'}} P_{\tilde{J'}}$ is either zero or a binomial
in $F_{n-1}$. By Lemma~\ref{lem:asc_prop},  $P_{\tilde{I}} P_{\tilde{J}}$ does not vanish in $F_{n-1}|^{\underline{v}}$. Since $F_{n-1}|^{\underline{v}}$ does not contain a monomial, $P_{\tilde{I'}} P_{\tilde{J'}}$ does not vanish in $F_{n-1}|^{\underline{v}}$. So by Lemma~\ref{lem:asc_prop}, $P_{I'} P_{J'}$ does not vanish in $F_n|^v$.
}

{
Suppose that $n \notin I$ and $n \in J$. By assumption $I \ge v$ so we must have $|I| < r$. Since $P_IP_J - P_{I'}P_{J'} \in F_n$, it is straightforward to show that $n \in J'$. Since $v$ and $\ul v$ agree on the first $|I|$ entries, we have $I \ge \ul v$. As above, we define $\tilde J = J \backslash \{n \}$ and $\tilde J' = J \backslash \{n\}$. By Lemma~\ref{lem:asc_prop} we have $\tilde J' \ge \ul v$. We also have that $P_IP_{\tilde J} - P_{I'}P_{\tilde J'}$ is either zero or a binomial relation in $F_{n-1}$. Since $\ul v \in T_{n-1}^{op}$, we have that $I', \tilde J' \ge \ul v$. Since $\ul v$ and $v$ agree on the first $|I'|$ entries we have $I' \ge v$. By Lemma~\ref{lem:asc_prop} we have $J' \ge v$. And so $P_{I'}P_{J'}$ does not vanish in $F_n|^v$. The other cases follow similarly.
}

Finally, we must show that $F_n|^v$ is nonzero. Since $F_{n-1}|^{\underline{v}}$ is nonzero, it contains a binomial $P_I P_J - P_{I'} P_{J'}$. By Lemma~\ref{lem:asc_prop} we have that $P_{I \cup \{ n\}} P_{J \cup \{ n\}} - P_{I' \cup \{ n\}}  P_{J' \cup \{ n\}} $ is a non-vanishing relation in $F_n|^v$. Hence $F_n|^v$ is nonzero.
\end{proof}


\section{Standard monomial theory}\label{sec:standard_monomial}

In this section we will study monomial bases of the ideals $G_{k,n,\ell}|_w^v$, $F_n|_w^v$ and $\init_{\bf w_\ell}(I(X_w^v))$ for Richardson varieties inside the Grassmannian and flag variety. We will show that if $G_{k,n,\ell}|_w^v$ is monomial-free then $G_{k,n,\ell}|_w^v = \init_{\bf w_\ell}(I(X_w^v))$ and $G_{k,n,\ell}|_w^v$ is a toric (prime binomial) ideal, assuming that $\init_{\bf w_\ell}(I(X_w^v))$ is quadratically generated. We will see that if $G_{k,n,\ell}|_w^v$ is monomial-free then $\init_{\bf w_\ell}(I(X_w^v))$ is, in fact, quadratically generated.
We prove this by showing that if $G_{k,n,\ell}|_w^v$ is monomial-free then it is the kernel of a monomial map. Similarly for the flag variety.

It will be important to consider generating sets of the ideals $G_{k,n,\ell}|_w^v$ and $F_n|_w^v$. We construct quadratic generating sets for these ideals as follows.

\begin{definition}
Let $G \subset \mathbb{K}[x_1, \dots, x_n]$ be a collection of homogeneous quadratic polynomials and $S \subseteq \{x_1, \dots, x_n \}$ be a collection of variables. We identify $S$ with its characteristic vector, i.e. $S_i = 1$ if $x_i \in S$ otherwise $S_i = 0$. For each $g \in G$ we write $g = \sum_{\alpha} c_\alpha x^\alpha$ and define
\[
\hat g = \sum_{S \cdot\alpha = 0} c_\alpha x^\alpha.
\]
We define $G_S = \{\hat g : g \in G \}$ to be the collection of all such polynomials.
\end{definition}

\begin{lemma}\label{lem:elim_ideal_gen_set}
Let $G \subseteq \mathbb{K}[x_1, \dots, x_n]$ be a set of homogeneous quadrics and $S \subseteq \{x_1, \dots, x_n \}$ a subset of variables. Then $\langle G_S \rangle = \langle G \cup S \rangle \cap \mathbb{K}[\{x_1, \dots, x_n\} \backslash S]$.
\end{lemma}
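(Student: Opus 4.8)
The plan is to prove the two inclusions separately, using the fact that every element of $G$ (and hence of $G_S$) is homogeneous of degree two, which makes the combinatorics of ``which monomials survive'' completely transparent. For the inclusion $\langle G_S \rangle \subseteq \langle G \cup S \rangle \cap \mathbb{K}[\{x_1, \dots, x_n\} \backslash S]$, I would first observe that for each $g = \sum_\alpha c_\alpha x^\alpha \in G$ we can split the sum as $g = \hat g + (g - \hat g)$, where $\hat g$ collects exactly the monomials $x^\alpha$ with $S \cdot \alpha = 0$, i.e. those supported away from $S$, and $g - \hat g$ collects the monomials $x^\alpha$ with $S \cdot \alpha \ge 1$, i.e. those divisible by at least one variable in $S$. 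Hence $g - \hat g \in \langle S \rangle$, so $\hat g = g - (g - \hat g) \in \langle G \cup S\rangle$; and $\hat g$ is by construction a polynomial in $\mathbb{K}[\{x_1,\dots,x_n\}\backslash S]$. Since $\langle G\cup S\rangle \cap \mathbb{K}[\{x_1,\dots,x_n\}\backslash S]$ is an ideal of the subring $\mathbb{K}[\{x_1,\dots,x_n\}\backslash S]$ containing every $\hat g$, it contains $\langle G_S\rangle$.

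For the reverse inclusion I would take an arbitrary $f \in \langle G \cup S \rangle \cap \mathbb{K}[\{x_1,\dots,x_n\}\backslash S]$ and write $f = \sum_i h_i g_i + \sum_{x_j \in S} p_j x_j$ for some polynomials $h_i, p_j$ and $g_i \in G$. Now apply the $\mathbb{K}$-linear ``specialization'' operator $\pi$ that sends each monomial $x^\alpha$ to itself if $S\cdot\alpha = 0$ and to $0$ otherwise; equivalently, $\pi$ is the ring homomorphism setting every variable in $S$ to zero, restricted to its image. Applying $\pi$ to the equation: the left side is fixed since $f \in \mathbb{K}[\{x_1,\dots,x_n\}\backslash S]$, the terms $p_j x_j$ all die, and $\pi(h_i g_i) = \pi(h_i)\pi(g_i)$. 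The key point, using that $g_i$ is quadratic, is that $\pi(g_i) = \hat{g_i}$: a degree-two monomial $x^\alpha$ either has $S\cdot\alpha = 0$ (survives in both) or has $S\cdot\alpha\ge 1$, in which case it is divisible by a variable of $S$ and is killed by both $\pi$ and the definition of $\hat g$. Therefore $f = \sum_i \pi(h_i)\,\hat{g_i} \in \langle G_S \rangle$, as $\langle G_S\rangle$ is an ideal of $\mathbb{K}[\{x_1,\dots,x_n\}\backslash S]$ and each $\pi(h_i)$ lies in that ring.

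The only place where degree two is genuinely used is the identity $\pi(g_i) = \hat{g_i}$ in the second half, so I would want to state that step carefully: for a general polynomial $g$, $\pi(g)$ consists of the monomials of $g$ supported off $S$, which is exactly $\hat g$ as defined, regardless of degree; the homogeneity of degree two is in fact not needed for this lemma at all, only for the downstream applications where one wants the resulting ideals to remain quadratically generated. I expect no real obstacle here: the statement is essentially the observation that ``eliminating'' a set of variables from an ideal generated by polynomials linear-in-those-variables (here, degree-two monomials are either independent of $S$ or divisible by a single variable of $S$) can be carried out term by term. The mild subtlety worth spelling out is that $\pi$ applied to a \emph{product} $h_i g_i$ factors as $\pi(h_i)\pi(g_i)$ because $\pi$ is (the restriction of) a ring homomorphism, so that the cross terms do not cause trouble.
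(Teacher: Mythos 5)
Your proof is correct and follows essentially the same strategy as the paper's: the first inclusion comes from the splitting $g = \hat g + (g-\hat g)$ with $g-\hat g \in \langle S\rangle$, and for the reverse inclusion both you and the paper kill the $S$-divisible monomials in an expression $f = \sum h_g g + \sum p_j x_j$ --- you by applying the specialization homomorphism $\pi$ (setting $S$ to zero), the paper by replacing $h_g, g$ with $\hat h_g, \hat g$ and arguing directly that the remainder is $S$-divisible, but these are the same computation since $\hat h_g = \pi(h_g)$ and $\hat g = \pi(g)$. Your side observation that the degree-two (indeed, the homogeneity) hypothesis plays no role in this lemma is also accurate, since $\pi(g)=\hat g$ for an arbitrary polynomial $g$; the paper states it for quadrics only because that is the case in which it is later applied.
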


\begin{proof}
To show that $G_S \cup S$ and $G \cup S$ generate the same ideal, for each $g \in G$ we write $g = \sum_\alpha c_\alpha x^\alpha$, for some $c_\alpha \in \mathbb{K}$. We have that
\[
g - \hat g = \sum_{S\cdot \alpha \ge 1} c_\alpha x^\alpha.
\]
Each term appearing in the above sum is divisible by some variable in $S$, hence $\hat g \in \langle G \cup S \rangle$ and $g \in \langle G_S \cup S \rangle$.

For any polynomial $ f \in \langle G \cup S \rangle \cap \mathbb{K}[\{x_1, \dots, x_n\} \backslash S]$ we have
$
f = \sum_{g \in g} h_g g + \sum_{x_i \in S} h_i x_i,
$
for some $h_g, h_i \in \mathbb{K}[x_1, \dots, x_n]$. For each $h_g$ we define $\hat h_g$ similarly to $\hat g$ and rewrite this polynomial as
\[
f = \sum_{g \in G} \hat h_g \hat g + \left(\sum_{g \in G} (h_g g - \hat h_g \hat g) + \sum_{x_i \in S} h_i x_i
\right).
\]
Each monomial appearing in $\sum_{g \in G} \hat h_g \hat g$ is not divisible by any monomial in $S$. However each monomial appearing in the expressions
$\sum_{g \in G} (h_g g - \hat h_g \hat g)$ and $\sum_{x_i \in S} h_i x_i$ is divisible by some $x_i \in S$. Since $f \in \mathbb{K}[\{x_1, \dots, x_n\} \backslash S]$ it follows that the bracketed expression above is zero and so $f = \sum_{g \in G} \hat h_g \hat g \in \langle G_S \rangle$.
\end{proof}

\subsection{Grassmannians.}

We begin by defining a new monomial map. The kernel of this monomial map will coincide with $\init_{\bf w_\ell}(I(X_w^v))$ when $G_{k,n,\ell}|_w^v$ is monomial-free.

\begin{definition}[Restricted monomial map]
Fix $k \le n$ and let $v \le w$ be permutations in $S_n$. Let $R|_w^v = \mathbb K[P_I : I \subseteq [n], |I| = k,\  v \le I \le w]$ and $S = \mathbb K[x_{i,j} : i \in [n-1], j \in [n] ]$ be polynomial rings. We define the map $\phi_\ell|_w^v : R|_w^v \rightarrow S$ to be the restriction of the monomial map $\phi_\ell$ to the ring $R|_w^v$.
\end{definition}

\noindent\textbf{Notation.} Fix $k \le n$, $\ell \in \{0, \dots, n-1 \}$ natural numbers and $v \le w$ permutations. We use the following shorthand notation for ideals.
\begin{itemize}
  \item $J_1 := G_{k,n,\ell}|_w^v$, the matching field ideal with $P_I = 0$ if $I \in S_w^v$.  
  \item $J_2 := {\rm in}_{{\bf w}_\ell}(I(X_w^v))$, the initial ideal of the ideal of the Richardson variety $X_w^v$.
  \item $J_3 := \ker(\phi_\ell|_w^v)$, the kernel of the restricted monomial map. 
\end{itemize}

The matching field ideal $G_{k,n,\ell}$ is quadratically generated and is the kernel of a monomial map by Theorem~\ref{thm:JAlebra}. We will show that a quadratic generating set of $G_{k,n,\ell}$ naturally gives rise to a quadratic generating set of $J_1 = G_{k,n,\ell}|_w^v$.

\begin{lemma}\label{lem:J_1=J_3_binomial}
The ideals $J_1$ and $J_3$ coincide if and only if $J_1$ is monomial-free.
\end{lemma}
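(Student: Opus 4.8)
\textbf{Proof proposal for Lemma~\ref{lem:J_1=J_3_binomial}.}

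The plan is to prove both implications directly, using the explicit quadratic generating set of $G_{k,n,\ell}$ guaranteed by Theorem~\ref{thm:JAlebra} and the elimination description of restricted matching field ideals from Lemma~\ref{lem:elim_ideal_gen_set}. The easy direction is the ``only if'': since $J_3 = \ker(\phi_\ell|_w^v)$ is the kernel of a monomial map, it is a toric (prime binomial) ideal, hence in particular it is monomial-free; therefore if $J_1 = J_3$ then $J_1$ is monomial-free.

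For the converse, I would first record the containment $J_1 \subseteq J_3$, which holds unconditionally: every generator $\hat g$ of $J_1$ (with $g$ a quadratic binomial in $G_{k,n,\ell}$) either equals $g$, in which case it lies in $\ker \phi_\ell \cap R|_w^v = J_3$ by definition of $\phi_\ell|_w^v$, or is a single monomial obtained by deleting a vanishing term of $g$ — but if $J_1$ is monomial-free this second case cannot contribute a nonzero generator, so every nonzero generator of $J_1$ is an honest binomial of $G_{k,n,\ell}$ supported on $\{P_I : v \le I \le w\}$, hence lies in $J_3$. The substance is the reverse inclusion $J_3 \subseteq J_1$. Here I would take a binomial $P^{\alpha} - P^{\beta} \in J_3$ (a toric ideal is generated by such binomials), so $\phi_\ell(P^{\alpha}) = \phi_\ell(P^{\beta})$ with both monomials supported on Plücker variables $P_I$ with $v \le I \le w$. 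Viewing this same relation inside the ambient toric ideal $G_{k,n,\ell} = \ker \phi_\ell$, it lies in $G_{k,n,\ell}$, hence can be written as a combination of the quadratic generators $\init_{\wb_\ell}(g_1), \dots, \init_{\wb_\ell}(g_s)$. The key point is to promote this ambient expression to one taking place entirely among variables $P_I$ with $v \le I \le w$: one produces a chain of quadratic binomial moves $P^{\alpha} = P^{\gamma_0} \to P^{\gamma_1} \to \cdots \to P^{\gamma_m} = P^{\beta}$ connecting $\alpha$ to $\beta$, each step being multiplication by a quadratic generator of $G_{k,n,\ell}$. The monomial-freeness of $J_1$ is exactly what guarantees that such a chain can be chosen to stay inside the subset $\{P_I : v \le I \le w\}$: if some intermediate monomial $P^{\gamma_i}$ had a factor $P_I$ with $I \in S_w^v$, then tracing back the binomial move that first introduced it would exhibit a quadratic binomial of $G_{k,n,\ell}$ one of whose terms survives in $G_{k,n,\ell}|_w^v$ while the other vanishes, i.e.\ a monomial in $J_1$, a contradiction. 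Once the chain stays inside the allowed variables, each step is a generator of $G_{k,n,\ell}|_w^v$ in the canonical generating set from Lemma~\ref{lem:elim_ideal_gen_set}, so $P^{\alpha} - P^{\beta} \in J_1$, giving $J_3 \subseteq J_1$ and hence equality.

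The main obstacle I anticipate is making the ``stay inside the allowed variables'' argument rigorous: one needs that connectivity of fibers of the monomial map $\phi_\ell$ via the \emph{quadratic} generators (a form of the statement that $G_{k,n,\ell}$ has a quadratic Gröbner basis, Theorem~\ref{thm:JAlebra}(ii)) can be refined, under the monomial-free hypothesis, to connectivity of fibers \emph{within} the coordinate subspace cut out by $\{P_I : v \le I \le w\}$. This is the analogue, for the toric/monomial map, of the ``tableaux manipulation'' lemmas used earlier (cf. the proof of Proposition~\ref{prop:zero_rich_equiv_diag} and Theorem~\ref{thm:Rich}(i)), and I expect it can be carried out by the same row-wise tableau bookkeeping: a quadratic binomial of $G_{k,n,\ell}$ is a pair of $k \times 2$ matching field tableaux that are row-wise equal, and a single Gröbner move permutes two entries within one row; one checks that whenever the two endpoint monomials $P^\alpha, P^\beta$ are supported on $\{P_I : v \le I \le w\}$ the intermediate moves can be scheduled so that all tableaux columns remain between $v$ and $w$, precisely because any forced excursion outside this range would be witnessed by a monomial of $J_1$. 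I would isolate this as the core combinatorial claim and prove it first, after which both inclusions are immediate.
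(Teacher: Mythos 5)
Your proposal misidentifies which direction of the inclusion is the easy one and which is the hard one, and in consequence spends nearly all of its effort on a direction that is in fact immediate. The paper's proof of $J_3\subseteq J_1$ is one sentence: by definition $\phi_\ell|_w^v$ is the restriction of $\phi_\ell$ to the subring $R|_w^v$, so $J_3=\ker(\phi_\ell|_w^v)=G_{k,n,\ell}\cap R|_w^v$, and this is manifestly contained in $J_1 = (G_{k,n,\ell}+\langle P_I:I\in S_{w}^v\rangle)\cap R|_w^v$. No quadratic generators, no Gr\"obner chain, and no monomial-freeness are used for this containment; it holds for every $(v,w)$. Your ``chain of quadratic binomial moves staying inside the allowed variables'' argument is therefore solving a problem that isn't there, and it also loads a nontrivial normal-form claim (that the quadratic Gr\"obner basis of $G_{k,n,\ell}$ can be scheduled to avoid $S_w^v$) onto a step that should be definitional.

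You also mislabel the other direction: you write that $J_1\subseteq J_3$ ``holds unconditionally,'' but your own proof of it invokes the monomial-free hypothesis, and the only-if direction of the lemma shows that $J_1\subseteq J_3$ must fail whenever $J_1$ contains a monomial, since $J_3$ is a prime binomial ideal and hence monomial-free. So $J_1\subseteq J_3$ is precisely where the hypothesis is needed, and your argument for it (via Lemma~\ref{lem:elim_ideal_gen_set}: the elimination generating set $G_S$ of $J_1$ consists, under the monomial-free hypothesis, only of honest binomials of $G_{k,n,\ell}$ supported in $R|_w^v$, which therefore lie in $J_3$) is in fact correct and matches the paper. Net: one inclusion is right but mislabeled, the other is right but vacuous to work for, and the proposal as organized does not reflect where the actual content of the lemma lives.
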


\begin{proof}
Note that $J_3$ is the kernel of a monomial map and therefore does not contain any monomials. So, if $J_1$ contains a monomial then it is not equal to $J_3$.

Suppose $J_1$ does not contain any monomials. Let $G$ be a quadratic generating set for $G_{k,n,\ell}$ and let $S = \{ P_I : I \in S_w^v\}$ be the collection of vanishing Pl\"ucker variables. By definition $J_1 = \langle G \cup S \rangle \cap \mathbb K[P_I : I \notin S_w^v]$. So by Lemma~\ref{lem:elim_ideal_gen_set} we have $J_1$ is generated by $G_S$. Since $J_1$ is monomial-free, we have that $G_S$ does not contain any monomials. By Theorem~\ref{thm:JAlebra}, the ideal $G_{k,n,\ell}$ is the kernel of a monomial map $\phi_\ell$ and by definition $J_3$ is the kernel of the restriction $\phi_\ell|_w^v$. Since all binomials $m_1 - m_2 \in G_S$ lie in $G_{k,n,\ell}$ and contain only the non-vanishing Pl\"ucker variables $P_J$ for $J \notin S_w^v$, therefore $m_1 - m_2 \in J_3$. And so we have $J_1 \subseteq J_3$. Also, for any polynomial $f \in J_3$ we have that $f \in G_{k,n,\ell}$. Since $f$ contains only the non-vanishing Pl\"ucker variables, therefore $f \in J_1$.
\end{proof}

\begin{lemma}\label{lem:J_1_subset_J_2}
$J_1 \subseteq J_2$.
\end{lemma}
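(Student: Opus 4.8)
The plan is to show that the ideal $J_1 = G_{k,n,\ell}|_w^v$ is contained in $J_2 = \init_{\bf w_\ell}(I(X_w^v))$ by identifying an explicit generating set of $J_1$ and checking each generator lies in $J_2$. First I would recall that the ideal of the Richardson variety is $I(X_w^v) = (G_{k,n} + \langle P_I : I \in S_{w,k}^v \rangle) \cap \mathbb K[P_I : I \in T_{w,k}^v]$. Since $\wb_\ell$ is a weight vector, taking initial ideals commutes appropriately, and since the Pl\"ucker variables $P_I$ are themselves monomials, the initial ideal $J_2$ contains $\init_{\wb_\ell}(G_{k,n})|_{\text{restricted}} = G_{k,n,\ell}$ restricted to the non-vanishing variables, together with the vanishing variables $P_I$, $I \in S_{w,k}^v$; intersecting with $\mathbb K[P_I : I \in T_{w,k}^v]$ recovers exactly the elimination-ideal description of $J_1$ from Definition~\ref{def:matching_field_ideal_grassmannian}.

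More concretely, I would argue as follows. By Theorem~\ref{thm:JAlebra}(ii), $G_{k,n}$ has a quadratic Gr\"obner basis $g_1, \dots, g_s$ with respect to $\wb_\ell$, so that $G_{k,n,\ell} = \langle \init_{\wb_\ell}(g_1), \dots, \init_{\wb_\ell}(g_s)\rangle$, and $G = \{\init_{\wb_\ell}(g_i)\}$ is a quadratic generating set for $G_{k,n,\ell}$. By Lemma~\ref{lem:elim_ideal_gen_set}, with $S = \{P_I : I \in S_{w,k}^v \}$, the ideal $J_1$ is generated by $G_S = \{\hat g : g \in G\}$. It therefore suffices to show each $\hat g \in J_2$. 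Now each $g = \init_{\wb_\ell}(g_i)$ is the initial form (with respect to $\wb_\ell$) of the quadric $g_i \in G_{k,n} \subseteq I(X_w^v) + \langle P_I : I \in S_{w,k}^v\rangle$; applying the operation $(-)\widehat{\phantom{g}}$ (deleting all monomials divisible by a vanishing variable) to $g_i$ itself yields a polynomial $\hat g_i$ lying in $I(X_w^v)$ after intersecting with the smaller ring, and one checks $\init_{\wb_\ell}(\hat g_i) = \widehat{\init_{\wb_\ell}(g_i)} = \hat g$, because the vanishing variables $P_I$ are variables (degree-one monomials) and hence deleting the monomials they divide is compatible with passing to $\wb_\ell$-initial forms. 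Thus $\hat g = \init_{\wb_\ell}(\hat g_i) \in \init_{\wb_\ell}(I(X_w^v)) = J_2$, giving $J_1 = \langle G_S \rangle \subseteq J_2$.

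The main obstacle I anticipate is making precise the interchange ``$\init_{\wb_\ell} \circ (-)\widehat{\phantom{g}} = (-)\widehat{\phantom{g}} \circ \init_{\wb_\ell}$'' and, relatedly, verifying that $\hat g_i$ actually belongs to $I(X_w^v)$ rather than merely to the sum $G_{k,n} + \langle P_I : I \in S_{w,k}^v\rangle$ before the intersection is taken --- i.e. that $\hat g_i \in \mathbb K[P_I : I \in T_{w,k}^v]$, which is immediate since by construction $\hat g_i$ involves no vanishing variable, but one should phrase this cleanly. A convenient way around the weight-interchange subtlety is to observe that both $(-)\widehat{\phantom{g}}$ and $\init_{\wb_\ell}(-)$ are ``select a subset of the monomials of $f$'' operations: $(-)\widehat{\phantom{g}}$ keeps monomials supported away from $S$, and $\init_{\wb_\ell}$ keeps monomials of minimal $\wb_\ell$-weight; these two selections commute because they act on disjoint coordinates of the exponent vector in the relevant sense, or more simply because for any $f$, $\widehat{\init_{\wb_\ell}(f)}$ and $\init_{\wb_\ell}(\hat f)$ are each obtained from $f$ by first discarding all monomials divisible by a vanishing variable and then keeping those of minimal weight among what remains --- the order does not matter. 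Once this bookkeeping lemma is in place the inclusion $J_1 \subseteq J_2$ follows directly.
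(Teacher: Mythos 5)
Your proof follows essentially the same route as the paper: take a quadratic binomial generating set $G$ of $G_{k,n,\ell}$, apply Lemma~\ref{lem:elim_ideal_gen_set} to see $J_1 = \langle G_S\rangle$, lift each $f = \init_{\wb_\ell}(g_i) \in G$ to the Pl\"ucker relation $g_i \in G_{k,n}$, truncate $g_i$ to $\hat g_i \in I(X_w^v)$, and argue $\hat f = \init_{\wb_\ell}(\hat g_i) \in J_2$. The paper does exactly this, just more tersely, compressing the last step to the remark that ``the leading term of $g$ is not set to zero in $I(X_w^v)$.''

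One point of care: your justification of the interchange is stated too strongly. You claim that for any $f$, the polynomials $\widehat{\init_{\wb_\ell}(f)}$ and $\init_{\wb_\ell}(\hat f)$ are ``each obtained from $f$ by first discarding all monomials divisible by a vanishing variable and then keeping those of minimal weight among what remains --- the order does not matter.'' This is false in general: in $\widehat{\init_{\wb_\ell}(f)}$ the minimality threshold is the minimum weight over \emph{all} terms of $f$, while in $\init_{\wb_\ell}(\hat f)$ it is the (possibly larger) minimum over the surviving terms. For instance, if $f = P_1 + P_2$ with $\wb(P_1) < \wb(P_2)$ and $P_1$ vanishes, then $\widehat{\init_{\wb_\ell}(f)} = 0$ but $\init_{\wb_\ell}(\hat f) = P_2$. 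The correct observation, which is all that is actually needed, is that these two operations agree precisely when $\widehat{\init_{\wb_\ell}(f)} \neq 0$ (equivalently, some minimal-weight term of $f$ survives truncation). And when $\widehat{\init_{\wb_\ell}(g_i)} = \hat f = 0$, the containment $\hat f \in J_2$ is vacuous, so this case needs no argument. With that conditional formulation of the interchange (or simply by treating the case $\hat f = 0$ separately), your proof is complete and coincides with the paper's.
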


\begin{proof}
Let $G$ be a quadratic binomial generating set for $G_{k,n,\ell}$ and $S = \{P_I : I \in S_w^v \}$. Let $\hat f \in G_S \subset J_1$ be any polynomial. By the definition of $G_S$, there exists $f \in G$ such that $\hat f$ is obtained from $f$ by setting some variables to zero. Recall $G_{k,n,\ell} = \textrm{in}_{{\bf w}_\ell}(G_{k,n})$, so there exists a polynomial $g \in G_{k,n}$ such that $f = \textrm{in}_{{\bf w}_\ell}(g)$. Since the leading term of $g$ is not set to zero in $I(X_w^v)$, it follows that $\hat f \in \textrm{in}_{{\bf w}_\ell}(I(X_w^v))$.
\end{proof}

\begin{theorem}\label{lem:std_monomials_diag_ell}
If $J_1$ does not contain any monomials then the number of standard monomials in degree two of $J_3$ and $G_{k,n,0}|_w^v$ are equal.
\end{theorem}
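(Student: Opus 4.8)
The plan is to turn both sides into dimensions of a graded component and then match them through standard monomial theory for the two matching fields.

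\textbf{Reduction.} For any homogeneous ideal $\mathfrak a$ of $R|_w^v$ the number of degree-two standard monomials equals $\dim_{\mathbb K}(R|_w^v/\mathfrak a)_2$, and this is independent of the term order. Since $J_3$ and $G_{k,n,0}|_w^v$ are ideals of the \emph{same} ring $R|_w^v = \mathbb K[P_I : v\le I\le w,\ |I|=k]$, it suffices to prove $\dim_{\mathbb K}(R|_w^v/J_3)_2 = \dim_{\mathbb K}(R|_w^v/G_{k,n,0}|_w^v)_2$. The first thing to record is that $G_{k,n,0}|_w^v$ is monomial-free unconditionally: by Lemma~\ref{lem:G_knlw_classification}(i) the ideal $G_{k,n,0}|_w$ is monomial-free (its classification requires $\ell\neq 0$), by Lemma~\ref{lem:gr_zero_opposite} the ideal $G_{k,n,0}|^v$ is monomial-free, hence by Theorem~\ref{thm:Rich}(i) so is $G_{k,n,0}|_w^v$. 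Applying Lemma~\ref{lem:J_1=J_3_binomial} with $\ell$ replaced by $0$ gives $G_{k,n,0}|_w^v = \ker(\phi_0|_w^v)$, while $J_3 = \ker(\phi_\ell|_w^v)$ by definition.

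\textbf{Both quotients as semigroup algebras.} Since the quotient of $R|_w^v$ by the kernel of a monomial map is isomorphic to the subalgebra of $\mathbb K[x_{i,j}]$ generated by the images of the variables, $\dim_{\mathbb K}(R|_w^v/J_3)_2$ equals the number of distinct monomials $\ini_{M_\ell}(P_I)\ini_{M_\ell}(P_J)$ as $\{I,J\}$ ranges over size-two multisets from $T_{w,k}^v$, and $\dim_{\mathbb K}(R|_w^v/G_{k,n,0}|_w^v)_2$ counts the distinct products $\ini_{M_0}(P_I)\ini_{M_0}(P_J)$. So the theorem becomes a purely combinatorial statement: the number of distinct ``$B_\ell$-product tableaux'' supported on $T_{w,k}^v$ equals the number of distinct ``$B_0$-product tableaux'' supported on $T_{w,k}^v$.

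\textbf{Straightening and the bijection.} Encode each product $\ini_{M_\ell}(P_I)\ini_{M_\ell}(P_J)$ as a $k\times 2$ tableau recording, in row $r$, the multiset of column indices occurring there; two pairs yield the same monomial exactly when their tableaux agree row by row. By Theorem~\ref{thm:JAlebra}(ii) the Pl\"ucker ideal $G_{k,n}$ has a quadratic Gr\"obner basis with respect to $\wb_\ell$, which provides a confluent straightening of every such tableau to a unique normal form, the $B_\ell$-standard tableaux (for $\ell=0$ these are the ordinary two-column semistandard tableaux, i.e.\ pairs $L\le M$). This is exactly where the hypothesis is used: because $J_1 = G_{k,n,\ell}|_w^v$ is monomial-free, Lemma~\ref{lem:elim_ideal_gen_set} shows that the restriction of this Gr\"obner basis to $\mathbb K[P_I : I\in T_{w,k}^v]$ contains no monomials, so every straightening step applied to a product of two non-vanishing variables again involves only non-vanishing variables. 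Hence the degree-two standard monomials of $J_3$ biject with the $B_\ell$-standard tableaux whose columns all lie in $T_{w,k}^v$, and those of $G_{k,n,0}|_w^v$ biject with the pairs $L\le M$ with $L,M\in T_{w,k}^v$ (note that taking componentwise minimum and maximum preserves the bounds $v\le\cdot\le w$, so a $B_0$-standard pair arising from a product of two sets of $T_{w,k}^v$ automatically lies in $T_{w,k}^v$). It then remains to match the $B_\ell$-standard and the $B_0$-standard tableaux supported on $T_{w,k}^v$; for the full Grassmannian this matching exists because $G_{k,n,\ell}$ and $G_{k,n,0}$, being two initial ideals of $G_{k,n}$, have equal Hilbert functions, and one checks it descends to the interval $[v,w]$.

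\textbf{Main obstacle.} The hard part is precisely this last matching: one must verify that monomial-freeness of $J_1$ is equivalent to the statement ``straightening of a product of two non-vanishing Pl\"ucker variables never produces a vanishing variable,'' and that the bijection between $B_\ell$-standard and $B_0$-standard degree-two tableaux can be chosen so as to restrict to those supported on $T_{w,k}^v$. I expect this to be handled by using Theorem~\ref{thm:Rich}(i) to split the Richardson case into the Schubert and opposite Schubert cases, and then carrying out the explicit tableau exchanges in the style of Proposition~\ref{prop:zero_rich_equiv_diag}, controlled by the classifications in Lemma~\ref{lem:G_knlw_classification} and Theorem~\ref{main:zero Gr}, to confirm that no exchange ever leaves the interval $[v,w]$. (One sanity check: for $\ell=3$, $k=3$, $n=5$ with $v=(1,3,5,2,4)$, $w=(2,4,5,1,3)$ the two counts are $10$ and $9$, and indeed there $J_1$ contains a monomial, so the hypothesis is genuinely needed.)
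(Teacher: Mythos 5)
Your reduction — both quotients are semigroup algebras, so equality of degree-two standard monomial counts is equality of the number of distinct products $\ini_{M_\ell}(P_I)\,\ini_{M_\ell}(P_J)$ versus distinct products $\ini_{M_0}(P_I)\,\ini_{M_0}(P_J)$ over multisets $\{I,J\}\subseteq T_{w,k}^v$ — is correct and is the same starting point as the paper. You also correctly observe that $G_{k,n,0}|_w^v$ is unconditionally monomial-free and that the hypothesis is needed precisely so that the ``normal form'' of a non-vanishing degree-two product is again non-vanishing, and your sanity check with $(\ell,v,w)=(3,(1,3,5,2,4),(2,4,5,1,3))$ is accurate.

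The genuine gap is the step you flag yourself as the ``Main obstacle'': you assert a bijection between $B_\ell$-standard and $B_0$-standard two-column tableaux and claim it ``descends to the interval $[v,w]$'' under the monomial-freeness hypothesis, but you never construct the bijection or prove this restriction property. Equality of Hilbert functions of $G_{k,n,\ell}$ and $G_{k,n,0}$ gives only the global count, not a bijection compatible with the constraint $v\le I\le w$, and compatibility with that constraint is exactly what fails when $J_1$ has a monomial. This is where essentially all the work of the theorem lives: the paper introduces an explicit map $\Gamma_\ell$ on two-column tableaux (Definition~\ref{def:Gamma_ell_deg2}) that swaps at most one entry between the top two rows depending on the block structure, then proves (i) injectivity on row-equivalence classes (Lemma~\ref{lem:SSYT_Gamma_injective}), (ii) that a SSYT $T$ vanishes in $G_{k,n,0}|_w^v$ iff $\Gamma_\ell(T)$ vanishes in $G_{k,n,\ell}|_w^v$ — this uses the fact that $\Gamma_\ell$ preserves the multisets of row-minima (Lemma~\ref{lem:basis_rich_corresp}), and (iii) the contrapositive surjectivity statement: if some $B_\ell$-tableau supported on $T_{w,k}^v$ is row-equal to $\Gamma_\ell(T')$ for a vanishing $T'$, then $J_1$ already contains a monomial (Lemma~\ref{lem:SSYT_Gamma_surj_restrict}). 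The proof of (iii) is precisely the case analysis on $|\{i_1,i_2,j_1,j_2\}\cap\{1,\dots,\ell\}|$ keyed to the classifications in Theorem~\ref{thm:Rich}(i), Lemma~\ref{lem:G_knlw_classification} and Theorem~\ref{main:zero Gr} — so your guess about which tools are needed is right, but the argument is not carried out, and without the explicit map $\Gamma_\ell$ there is nothing concrete to run the case analysis on. A smaller imprecision: ``every straightening step applied to a product of two non-vanishing variables again involves only non-vanishing variables'' does not follow from Lemma~\ref{lem:elim_ideal_gen_set} alone (intermediate reductions could pass through vanishing variables); what monomial-freeness of $J_1$ actually gives is that the end normal form is non-vanishing, which is the statement you need, but it is an assertion about the unique degree-two standard representative, not about each reduction step.
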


To prove this result we will show that there is a bijection between the semi-standard Young tableax and a collection of standard monomials for $J_3$ of degree two. We define the following map.

\begin{definition}\label{def:Gamma_ell_deg2}
Let $T$ be a semi-standard Young tableau with two columns and $k$ rows whose entries lie in $[n]$. For each $\ell \in \{1, \dots, n-1\}$ we define the map $\Gamma_\ell : T \mapsto T'$ where $T'$ is a tableau whose columns are ordered according to the matching field $B_\ell$. Suppose that the entries of the columns of $T$ are $I = \{i_1 < i_2 < \dots < i_k \}$ and $J = \{j_1 < j_2 < \dots < j_k\}$. Since $T$ is in semi-standard form, we assume that $i_s \le j_s$ for each $s \in [k]$. We define $T'$ as the tableau whose columns are $I'$ and $J'$ as sets and are ordered by the matching field $B_\ell$. The sets $I'$ and $J'$ are defined as follows.
\begin{itemize}
    \item If $i_1, i_2, j_1 \in \{1, \dots, \ell \}$, $j_2 \in \{\ell+1, \dots, n \}$ and $i_1 < j_1 < i_2$ then we define $I' = \{j_1 < i_2 < i_3 < \dots < i_k \}$ and $J' = \{i_1 < j_2 < j_3 < \dots < j_k\}$.
    \item If $i_1 \in \{1, \dots, \ell \}$, $i_2, j_1, j_2 \in \{\ell+1, \dots, n \}$ and $j_1 < i_2 < j_2$ then we define $I' = \{j_1 < i_2 < i_3 < \dots < i_k \}$ and $J' = \{i_1 < j_2 < j_3 < \dots < j_k\}$
    \item Otherwise we define $I' = I$ and $J' = J$.
\end{itemize}
\end{definition}

\begin{lemma}\label{lem:SSYT_Gamma_injective}
Let $T_1$ and $T_2$ be semi-standard Young tableaux. If $\Gamma_\ell(T_1)$ and $\Gamma_\ell(T_2)$ are row-wise equal then $T_1$ and $T_2$ are equal.
\end{lemma}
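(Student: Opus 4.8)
The plan is to recover the semi-standard Young tableau $T_i$ from the matching-field tableau $\Gamma_\ell(T_i)$ by showing that $\Gamma_\ell$ is reversible row-by-row. First I would observe that, by construction, $\Gamma_\ell$ only ever alters the top two rows of a tableau: rows $3,\dots,k$ of $\Gamma_\ell(T)$ are literally rows $3,\dots,k$ of $T$, listed in the same (increasing) order in each column. Hence if $\Gamma_\ell(T_1)$ and $\Gamma_\ell(T_2)$ are row-wise equal, then $T_1$ and $T_2$ already agree on all rows below the second, and it remains only to prove that the unordered pair of top-two-row data $\{\{i_1,i_2\},\{j_1,j_2\}\}$ of a semi-standard tableau is determined by the pair of (ordered, matching-field) top two rows of its image.

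The key step is therefore a local analysis of the three cases in Definition~\ref{def:Gamma_ell_deg2}. In the "otherwise" case, $\Gamma_\ell$ leaves $I,J$ as sets but records them via the $B_\ell$-order; since $B_\ell(I)$ and $B_\ell(J)$ are determined by $I$ and $J$, the map $T\mapsto\Gamma_\ell(T)$ is clearly injective restricted to tableaux landing in this case, and moreover from the image one reads off $\{I,J\}$ as unordered sets, from which the semi-standard form (sort each row increasingly) is uniquely recovered. In the two exceptional cases the columns of $T'$ are the sets $I'=\{j_1,i_2,i_3,\dots\}$ and $J'=\{i_1,j_2,j_3,\dots\}$; here I would check that $T'$ has $B_\ell(I')=(12)$ (so the top two entries of that column are $i_2$ then $j_1$, i.e.\ "out of order" in the matching-field sense) while $B_\ell(J')=\id$, and that these two conditions, together with the numerical constraints defining each case ($i_1\le\ell$, location of $i_2,j_1,j_2$ relative to $\ell$), are mutually exclusive with the "otherwise" case and with each other. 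Concretely: given the image $T'$, one first asks whether either top-two-row column is in $B_\ell$-swapped order; if not, we are in the "otherwise" case and done; if yes, say the swapped column has entries $i_2,j_1$ (top to bottom after unswapping to $\{j_1<i_2\}$) and the other has entries $i_1,j_2$, and then the original tableau must have had columns with tops $\{i_1,i_2\}$ and $\{j_1,j_2\}$ — one recovers $i_1,i_2,j_1,j_2$ as the four values involved, re-sorts, and checks the inequality $i_1<j_1<i_2$ (resp.\ $j_1<i_2<j_2$) to see which exceptional branch produced it.

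The main obstacle I anticipate is ruling out \emph{collisions across the branches}: that is, showing no semi-standard $T_1$ handled by the "otherwise" rule and no $T_2$ handled by an exceptional rule can produce row-wise-equal images, and likewise that the two exceptional rules cannot overlap. This is where the $\ell$-threshold hypotheses do the work — in the exceptional cases the bottom entry of the $\id$-column, namely $j_2$, satisfies $j_2>\ell$ while the swapped column's top entry after correction, $j_1$, satisfies $j_1\le\ell$, and one must verify that no "otherwise" tableau realizes the same ordered rows because its $B_\ell$-orders would be forced the other way by Remark~\ref{def:block}. I would organize this as a short case table keyed on (a) whether column one or column two of the image is $B_\ell$-swapped and (b) the position of the four relevant values relative to $\ell$, and in each box either exhibit the unique preimage or derive a contradiction with semi-standardness ($i_s\le j_s$). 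Once injectivity is established branch-by-branch and across branches, the lemma follows immediately.
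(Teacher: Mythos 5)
Your strategy — reduce to the top two rows and then distinguish the branches of $\Gamma_\ell$ by looking at entries relative to the $\ell$-threshold — is in the right spirit, and the observation that $\Gamma_\ell$ fixes rows $3,\dots,k$ is correct. But the sketch contains a concrete error in the setup. In both exceptional branches of Definition~\ref{def:Gamma_ell_deg2} the new columns are $I' = \{j_1, i_2, i_3, \dots\}$ and $J' = \{i_1, j_2, j_3, \dots\}$; since $j_1$ and $i_2$ are either both $\le \ell$ (first branch) or both $> \ell$ (second branch), we get $|I' \cap \{1,\dots,\ell\}| \neq 1$, so $B_\ell(I') = \id$, whereas $J'$ has exactly one element $\le \ell$ (namely $i_1$), so $B_\ell(J') = (12)$. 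You have these backwards, and the mistake propagates: the $B_\ell$-swapped column of the image carries $\{i_1, j_2\}$, not $\{j_1, i_2\}$, so the "unswap and read off the tops" step does not recover the preimage pair you claim.

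More seriously, the crux of the lemma — ruling out collisions across branches under row-wise equality — is flagged as "the main obstacle" but not actually carried out, and the proposed detector is not well-defined. Your test "if no top-two column of $T'$ is $B_\ell$-swapped, we are in the otherwise case" has no usable converse: many tableaux handled by the "otherwise" rule also produce a $B_\ell$-swapped column (precisely when $|I \cap B_{\ell,1}| = 1$ or $|J \cap B_{\ell,1}| = 1$), so observing a swapped column does not place you in an exceptional branch; and since the hypothesis is only row-wise equality of images, any detector must be read off the rows as multi-sets rather than from a particular column assignment. The paper avoids explicit inversion entirely: supposing $T_1 \neq T_2$ have row-wise equal images, it uses that rows $\ge 3$ are fixed and that $\Gamma_\ell$ preserves the top $2\times 2$ multi-set to conclude that $T_1, T_2$ must be the two possible semi-standard fillings by the same four values $i_1 < j_1 < i_2 < j_2$, then splits into four cases on $s = |\{i_1, i_2, j_1, j_2\}\cap\{1,\dots,\ell\}|$, in each computing both images explicitly and contradicting row-wise equality by comparing the second rows. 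Fixing the $B_\ell(I')/B_\ell(J')$ mix-up and substituting this four-case comparison for your branch detector would close your argument.
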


\begin{proof}
We begin by noting that all rows except possibly the first two rows are of a tableau are fixed by $\Gamma_\ell$. So it remains to show that if the first two rows of $\Gamma_\ell(T_1)$ and $\Gamma_\ell(T_2)$ are row-wise equal then so are the first two rows of $T_1$ and $T_2$. We also note that $\Gamma_\ell$ preserves the entries of a tableau, thought of as a multi-set. Let us assume by contradiction that $T_1$ and $T_2$ are not row-wise equal. By the above facts we may assume without loss of generality that
\[
T_1 = 
\begin{tabular}{|c|c|}
\hline
    $i_1$ & $j_1$  \\
\hline
    $i_2$ & $j_2$ \\
\hline
    \vdots & \vdots \\
\hline
\end{tabular} \, ,
\quad
T_2 = 
\begin{tabular}{|c|c|}
\hline
    $i_1$ & $i_2$  \\
\hline
    $j_1$ & $j_2$ \\
\hline
    \vdots & \vdots \\
\hline
\end{tabular}
\]
and $j_1 < i_2$. We proceed by taking cases on $s = |\{i_1, i_2, j_1, j_2 \} \cap \{1, \dots, \ell \}|$.

\textbf{Case 1.} Assume $s = 0$ or $4$. It follows that $\Gamma_\ell$ fixes $T_1$ and $T_2$. By row-wise equality of the second row of $\Gamma_\ell(T_1)$ and $\Gamma_\ell(T_2)$ we have that $j_1 = i_2$, a contradiction.

\textbf{Case 2.} Assume $s = 1$. It follows that $i_1 \in \{1, \dots, \ell \}$ and $i_2, j_1, j_2 \in \{\ell+1, \dots, n \}$. Since $j_1 < i_2$ we have
\[
\Gamma_\ell(T_1) = 
\begin{tabular}{|c|c|}
\hline
    $j_1$ & $j_2$  \\
\hline
    $i_2$ & $i_1$ \\
\hline
    \vdots & \vdots \\
\hline
\end{tabular} \, ,
\quad
\Gamma_\ell(T_2) = 
\begin{tabular}{|c|c|}
\hline
    $j_1$ & $i_2$  \\
\hline
    $i_1$ & $j_2$ \\
\hline
    \vdots & \vdots \\
\hline
\end{tabular}\, .
\]
By row-wise equality of the second row, we have that $j_2 = i_2$. However in the tableau $T_2$ we have that $i_2 < j_2$, a contradiction.

\textbf{Case 3.} Assume $s = 2$. Since $j_1 < i_2$, it follows that $i_1, j_1 \in \{1, \dots, \ell \}$ and $i_2, j_2 \in \{\ell+1, \dots, n \}$. And so we have
\[
\Gamma_\ell(T_1) = 
\begin{tabular}{|c|c|}
\hline
    $i_2$ & $j_2$  \\
\hline
    $i_1$ & $j_1$ \\
\hline
    \vdots & \vdots \\
\hline
\end{tabular} \, ,
\quad
\Gamma_\ell(T_2) = 
\begin{tabular}{|c|c|}
\hline
    $i_1$ & $i_2$  \\
\hline
    $j_1$ & $j_2$ \\
\hline
    \vdots & \vdots \\
\hline
\end{tabular}\, .
\]
By row-wise equality of second row the tableau we have that $i_1 = j_2$, a contradiction since $i_1 \in \{1, \dots, \ell \}$ and $j_2 \in \{\ell+1, \dots, n \}$.

\textbf{Case 4.} Assume that $s = 3$. It follows that $i_1, i_2, j_1 \in \{ 1, \dots, \ell\}$ and $j_2 \in \{\ell+1, \dots, n \}$. And so we have
\[
\Gamma_\ell(T_1) = 
\begin{tabular}{|c|c|}
\hline
    $j_1$ & $j_2$  \\
\hline
    $i_2$ & $i_1$ \\
\hline
    \vdots & \vdots \\
\hline
\end{tabular} \, ,
\quad
\Gamma_\ell(T_2) = 
\begin{tabular}{|c|c|}
\hline
    $i_1$ & $j_2$  \\
\hline
    $j_1$ & $i_2$ \\
\hline
    \vdots & \vdots \\
\hline
\end{tabular}\, .
\]
By row-wise equality of the second row, we have that $i_1 = j_1$. However in $T_2$, we have that $i_1 < j_1$, a contradiction.
\end{proof}

\begin{lemma}\label{lem:SSYT_Gamma_surj_any_tableau}
Let $T$ be any tableau whose columns are valid for the block diagonal matching field $B_\ell$. Then there exists a semi-standard Young tableau $T'$ such that $\Gamma_\ell(T')$ and $T$ are row-wise equal.
\end{lemma}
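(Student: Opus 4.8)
The plan is to construct the tableau $T'$ explicitly by reversing the rule in Definition~\ref{def:Gamma_ell_deg2}, working row by row on the first two rows only (all other rows are fixed by $\Gamma_\ell$, so they are already in semi-standard form once $T$ is). Write the columns of $T$, read as sets, as $I = \{i_1 < i_2 < \dots < i_k\}$ and $J = \{j_1 < j_2 < \dots < j_k\}$. Since $T$ is a valid $B_\ell$-tableau, its columns are correctly ordered according to $B_\ell$, so the bottom $k-2$ entries of each column already form an increasing sequence; by swapping the two columns if necessary we may assume $i_3 \le j_3$, and then (since the tableaux arise from products of Pl\"ucker variables with a common tail after sorting) $i_s \le j_s$ for all $s \ge 3$. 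It remains to choose the first two rows of $T'$.

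First I would dispose of the easy case: if $|I \cap \{1,\dots,\ell\}| \neq 1$ and $|J \cap \{1,\dots,\ell\}| \neq 1$, then $B_\ell(I) = B_\ell(J) = \id$, so the entries of the first two rows of $T$ are already $i_1, i_2$ (in column $I$) and $j_1, j_2$ (in column $J$) in increasing order; I set $T' = T$ after possibly sorting the two columns so that the top-left entry is $\le$ the top-right entry, which is the identity rule in Definition~\ref{def:Gamma_ell_deg2}. The interesting case is when exactly one of $I, J$, say $J$, meets $\{1,\dots,\ell\}$ in a single element, so $B_\ell(J) = (12)$; then the first column of $T$ has top two entries $i_1, i_2$ (increasing) and the second column has top two entries $j_2, j_1$ in that order (the swapped order), where $\{j_1 < j_2\}$ are the two smallest elements of $J$. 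Here I define $T'$ to have first column with entries $\min(i_1,j_1), \dots$ and reconstruct $I', J'$ precisely so that applying $\Gamma_\ell$ returns $T$: this is forced, because the two bullet rules in Definition~\ref{def:Gamma_ell_deg2} each take a semi-standard pair $(I_0, J_0)$ with $i^0_1 < j^0_1 < i^0_2$ (resp. $j^0_1 < i^0_2 < j^0_2$) to a pair whose first two rows, read as a $B_\ell$-tableau, are exactly $(j^0_1, i^0_1$ swapped structure$)$; so I read off $I_0 = \{i_1, i_2\} \cup (\text{tail})$ wait — more carefully, I recover $I_0$ and $J_0$ by undoing the transposition, check that the resulting pair satisfies $i^0_s \le j^0_s$ for all $s$, hence is semi-standard, and verify directly that $\Gamma_\ell(T') = T$ row-wise.

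The main obstacle — and the step deserving the most care — is verifying that the reconstructed pair $(I', J')$ is genuinely semi-standard, i.e. that after undoing the column swap the entries satisfy $i'_s \le j'_s$ in every row, \emph{including} the first two rows, and that $I' \ne J'$ as sets precisely when $I \ne J$. One has to check that the two bullet cases of Definition~\ref{def:Gamma_ell_deg2} are exhaustive for the situation $B_\ell(I) \ne B_\ell(J)$ with the tail fixed: in the first bullet $i_1, i_2, j_1 \le \ell < j_2$ and in the second $i_1 \le \ell < i_2, j_1, j_2$, and one must confirm there is no valid $B_\ell$-tableau with $B_\ell$ of the two columns differing that falls outside both patterns — this uses that a valid column for $B_\ell$ has at most one entry $\le \ell$ in the swapped positions, forcing the size of $\{i_1,i_2,j_1,j_2\} \cap \{1,\dots,\ell\}$ into the range covered. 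Combined with Lemma~\ref{lem:SSYT_Gamma_injective}, this surjectivity statement then gives a bijection between semi-standard Young tableaux with two columns and valid $B_\ell$-tableaux, which is exactly what is needed for the counting argument of Theorem~\ref{lem:std_monomials_diag_ell}.
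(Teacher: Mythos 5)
There is a genuine gap: your case split on $B_\ell(I)$, $B_\ell(J)$ is not exhaustive. You handle (a) both $|I\cap\{1,\dots,\ell\}|\neq 1$ and $|J\cap\{1,\dots,\ell\}|\neq 1$, and (b) exactly one of the two intersections has size $1$; but you omit (c) \emph{both} intersections have size exactly $1$, i.e.\ both columns are in $(12)$-form. This third case cannot be reduced to the two bullet rules of Definition~\ref{def:Gamma_ell_deg2} (both bullets always output one $\id$-column and one $(12)$-column, as a short count of small entries shows), so the preimage has to come from the ``otherwise'' branch applied to a SSYT whose first two rows are obtained from $T$'s by a nontrivial rearrangement. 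The paper treats this as a separate subcase of $s=2$ and builds the preimage by swapping the first two rows inside each column. With $\ell=2$, $I=\{1,5,6\}$, $J=\{2,3,7\}$ is a concrete instance: neither $T$ nor its column-swap is semi-standard, yet the preimage $T'$ with columns $\{1,3,6\}$ and $\{2,5,7\}$ exists and has columns \emph{different} from $T$'s. Your sketch never produces such a $T'$.

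Two further steps are shakier than you acknowledge. First, in the ``easy'' case your prescription ``set $T'=T$ after possibly sorting the two columns'' (i.e.\ swapping them) does not yield a SSYT in general: with $\ell=2$, $I=\{3,4,5\}$, $J=\{1,2,7\}$ (both $\id$-columns), neither column order is semi-standard because the tail $7>5$ and the top $3>1$ point in opposite directions. The correct move, and the one the paper uses, is to sort each \emph{row} individually (take componentwise $\min$/$\max$); this always produces a SSYT because the min and max of two increasing sequences are again increasing, and it is row-wise equal to $T$ because $\Gamma_\ell$ fixes these columns. Second, your justification of $i_s\le j_s$ for $s\ge 3$ — ``by swapping the two columns so that $i_3\le j_3$, and then since the tableaux arise from products of Pl\"ucker variables with a common tail'' — is not a hypothesis of the lemma and is false as stated ($i_3\le j_3$ does not propagate up the tail). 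The paper's WLOG is legitimate but for a different reason: since we only need row-wise equality and $\Gamma_\ell$ preserves rows $\ge 3$, one may simply relabel the entries of each such row so the smaller one sits in the first column. In short, the reverse-engineering idea is the right intuition, but without the missing case and without the row-sorting mechanism your construction fails on simple examples.
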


\begin{proof}
Let $T$ be the tableau with entries $\{i_1, i_2 < i_3 < \dots < i_k \}$ and $\{j_1, j_2 < j_3 < \dots < j_k \}$,
\[
T = 
\begin{tabular}{|c|c|}
\hline
    $i_1$ & $j_1$  \\
\hline
    $i_2$ & $j_2$ \\
\hline
    \vdots & \vdots \\
\hline
    $i_k$ & $j_k$ \\
\hline
\end{tabular}\, .
\]
Without loss of generality we may assume that $i_s \le j_s$ for all $s \ge 3$. We proceed by taking cases on $s = |\{i_1, i_2, j_1, j_2 \} \cap \{ 1, \dots, \ell\}|$. 

\textbf{Case 1.} Assume $s = 0$ or $4$. We have that $i_1 < i_2$ and $j_1 < j_2$. So we may order the entries in row to obtain $T'$. Note that in this case $\Gamma_\ell$ fixes $T'$.

\textbf{Case 2.} Assume $s = 1$. Without loss of generality we assume $j_2 \in \{1, \dots, \ell \}$.
\begin{itemize}
    \item If $j_1 > i_2$ then
    \[
    \Gamma_\ell \left( \,
    \begin{tabular}{|c|c|}
    \hline
        $j_2$ & $i_1$  \\
    \hline
        $i_2$ & $j_1$ \\
    \hline
        \vdots & \vdots \\
    \hline
    \end{tabular}\,
    \right)
    =
    \begin{tabular}{|c|c|}
    \hline
        $i_1$ & $j_1$  \\
    \hline
        $i_2$ & $j_2$ \\
    \hline
        \vdots & \vdots \\
    \hline
    \end{tabular}\, .
    \]
    \item If $j_1 \le i_2$ then
    \[
    \Gamma_\ell \left( \,
    \begin{tabular}{|c|c|}
    \hline
        $j_2$ & $i_1$  \\
    \hline
        $j_1$ & $i_2$ \\
    \hline
        \vdots & \vdots \\
    \hline
    \end{tabular}\,
    \right)
    =
    \begin{tabular}{|c|c|}
    \hline
        $j_1$ & $i_1$  \\
    \hline
        $j_2$ & $i_2$ \\
    \hline
        \vdots & \vdots \\
    \hline
    \end{tabular}\, .
    \]
    The tableau on the right is row-wise equal to $T$.
\end{itemize}

\textbf{Case 3.} Assume $s = 2$. 
\begin{itemize}
    \item If $i_1, i_2 \in \{1, \dots, \ell \}$ then $\Gamma_\ell$ fixes each column of $T$, which is a semi-standard Young tableau.
    \item If $i_2, j_2 \in \{1, \dots, \ell \}$ then without loss of generality assume $i_2 \le j_2$ and $i_1 \le j_1$. We have
    \[
    \Gamma_\ell \left( \,
    \begin{tabular}{|c|c|}
    \hline
        $i_2$ & $j_2$  \\
    \hline
        $i_1$ & $j_1$ \\
    \hline
        \vdots & \vdots \\
    \hline
    \end{tabular}\,
    \right)
    =
    \begin{tabular}{|c|c|}
    \hline
        $i_1$ & $j_1$  \\
    \hline
        $i_2$ & $j_2$ \\
    \hline
        \vdots & \vdots \\
    \hline
    \end{tabular}\, .
    \]
\end{itemize}

\textbf{Case 4.} Assume $s = 3$. Without loss of generality we may assume $j_1 \in \{\ell+1, \dots, n \}$.
\begin{itemize}
    \item If $j_2 < i_1$ then
    \[
    \Gamma_\ell \left( \,
    \begin{tabular}{|c|c|}
    \hline
        $j_2$ & $i_1$  \\
    \hline
        $i_2$ & $j_1$ \\
    \hline
        \vdots & \vdots \\
    \hline
    \end{tabular}\,
    \right)
    =
    \begin{tabular}{|c|c|}
    \hline
        $i_1$ & $j_1$  \\
    \hline
        $i_2$ & $j_2$ \\
    \hline
        \vdots & \vdots \\
    \hline
    \end{tabular}\, .
    \]
    Note that in this case we have $j_2 < i_1 < i_2$ and so the tableau on the left is a semi-standard Young tableau.
    \item If $j_2 \ge i_1$ then 
    \[
    \Gamma_\ell \left( \,
    \begin{tabular}{|c|c|}
    \hline
        $i_1$ & $j_2$  \\
    \hline
        $i_2$ & $j_1$ \\
    \hline
        \vdots & \vdots \\
    \hline
    \end{tabular}\,
    \right)
    =
    \begin{tabular}{|c|c|}
    \hline
        $i_1$ & $j_1$  \\
    \hline
        $i_2$ & $j_2$ \\
    \hline
        \vdots & \vdots \\
    \hline
    \end{tabular}\, .
    \]
\end{itemize}
\end{proof}

\begin{lemma}\label{lem:basis_rich_corresp}
Let $v \le w$ be permutations. A semi-standard Young tableau $T$ vanishes in $G_{k,n,0}|_w^v$ if and only if $\Gamma_\ell(T)$ vanishes in $G_{k,n,\ell}|_w^v$.
\end{lemma}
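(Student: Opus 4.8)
The plan is to exploit the fact that $\Gamma_\ell$ alters only the first two rows of a two-column tableau and preserves its entries as a multiset; all rows from the third onward are untouched. Since $T$ vanishes in $G_{k,n,0}|_w^v$ precisely when one of its two columns, read as a $k$-subset $I$ or $J$ of $[n]$, fails to lie in the interval $v \le I, J \le w$ (equivalently, lies in $S_{w,k}^v$), and since $\Gamma_\ell(T)$ has the very same pair of underlying sets $\{I',J'\}$ as $T$ has $\{I,J\}$, it suffices to verify that $\{I,J\} = \{I',J'\}$ as unordered pairs of subsets. First I would observe that outside the two special cases in Definition~\ref{def:Gamma_ell_deg2} we have $I' = I$ and $J' = J$, so there is nothing to check; the membership condition for $T$ and $\Gamma_\ell(T)$ in their respective restricted ideals is literally the same condition on the same two sets.

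The core of the argument is therefore the two special cases. In the first special case we have $i_1, i_2, j_1 \le \ell$, $j_2 > \ell$, and $i_1 < j_1 < i_2$, with $I' = \{j_1 < i_2 < i_3 < \dots < i_k\}$ and $J' = \{i_1 < j_2 < j_3 < \dots < j_k\}$. I would simply note that, as sets, $I' = (I \setminus \{i_1\}) \cup \{j_1\}$ and $J' = (J \setminus \{j_1\}) \cup \{i_1\}$, so $\{I', J'\}$ and $\{I, J\}$ are the same pair of subsets of $[n]$ only if $i_1 = j_1$ — which they are not. So here one must argue more carefully: the point is that $T$ is semi-standard with $i_s \le j_s$ for all $s$, and one checks directly that $I', J'$ still satisfy $v \le I', J' \le w$ if and only if $v \le I, J \le w$. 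Concretely, since $i_1 < j_1$ and these are the two smallest relevant entries, swapping them between the two columns does not change whether each column dominates $v$ from below: the column containing the smaller of the two ($I'$ now contains $j_1$, the larger) still has all entries at least as large as before except in the first slot, where it has grown; conversely the column $J'$ now contains $i_1$ in the first slot, which is smaller, and one must check $J' \ge v$ still holds — this uses $i_1 \ge v_1$, which follows from $I \ge v$, i.e. $i_1 \ge v_1$. The same bookkeeping, with the roles of $\le w$ handled symmetrically using $J \le w$, gives the equivalence; the second special case ($i_1 \le \ell$, $i_2, j_1, j_2 > \ell$, $j_1 < i_2 < j_2$) is handled identically, as the set-level modification is exactly the same transposition $i_1 \leftrightarrow j_1$ between the columns.

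I expect the main obstacle to be precisely this: the two special cases do genuinely change the pair of underlying sets, so one cannot get away with the one-line "same sets, same condition" argument, and must instead verify the interval condition $v \le \cdot \le w$ is preserved under the specific transposition of the two smallest entries between columns — using the semi-standardness hypothesis $i_s \le j_s$ and the componentwise definition of $\le$ on sorted subsets. This is a short but slightly fiddly monotonicity check; I would organize it by noting that in all cases the multiset of column-minima, column-second-entries, etc., after sorting each column, is unchanged or shifted in a way that only makes a column larger where it was required to be $\ge v$ and smaller only where it was already safely below $w$. Once that is done, $T$ vanishes in $G_{k,n,0}|_w^v$ iff some column of $T$ lies outside $[v,w]$ iff some column of $\Gamma_\ell(T)$ lies outside $[v,w]$ iff $\Gamma_\ell(T)$ vanishes in $G_{k,n,\ell}|_w^v$, which is the claim.
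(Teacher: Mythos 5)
Your proposal is correct and follows essentially the same route as the paper. After a brief false start (the unordered pair of underlying column sets is \emph{not} preserved by $\Gamma_\ell$), you arrive at the paper's one-line observation: in its nontrivial cases $\Gamma_\ell$ merely transposes the two column-minima while fixing all higher sorted positions, so the multiset $\{i_s, j_s\}$ of $s$-th smallest column entries is invariant for every $s$, and the vanishing condition — both columns lying componentwise in $[v,w]$ — depends only on these multisets.
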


\begin{proof}
Let $I, J$ be the columns of $T$ and $I', J'$ be the columns of $\Gamma_\ell(T)$. The result follows from the fact that $\{\min(I), \min(J)\} = \{ \min(I'), \min(J')\}$ and similarly for the second smallest of elements of $I, J, I'$ and $J'$.
\end{proof}

By the results of Kreiman and Lakshmibai, see \cite{kreiman2002richardson}, the semi-standard Young tableau for $X_w^v$ are the tableaux such that each column $I$ satisfies $v \le I \le w$.

\begin{lemma}\label{lem:SSYT_Gamma_surj_restrict}
If $J_1$ is monomial-free then the set
\[
\textrm{Im}(\Gamma_\ell)|_w^v = \{\Gamma_\ell(T) : T \textrm{ a two column semi-standard Young tableau for } X_w^v \}
\]
is a monomial basis for $J_3$ in degree two. 
\end{lemma}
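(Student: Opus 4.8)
The plan is to show that $\textrm{Im}(\Gamma_\ell)|_w^v$ is a set of standard monomials for $J_3$ of degree two whose cardinality equals the dimension of the degree-two part of the quotient $R|_w^v / J_3$. First I would observe that, by Lemma~\ref{lem:basis_rich_corresp}, a two-column semi-standard Young tableau $T$ for $X_w^v$ (i.e. each of its two columns $I$ satisfies $v \le I \le w$) maps under $\Gamma_\ell$ to a tableau $\Gamma_\ell(T)$ all of whose columns still satisfy $v \le I' \le w$ (this uses that $\{\min I, \min J\} = \{\min I', \min J'\}$ and the analogous statement for second-smallest elements, together with the fact that the remaining rows are unchanged); hence $\Gamma_\ell(T)$ corresponds to an honest monomial $P_{I'} P_{J'}$ in $R|_w^v$, and moreover its columns are valid for the matching field $B_\ell$, so $P_{I'}P_{J'}$ is not further reducible by a matching-field binomial of $G_{k,n,\ell}$. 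Thus each element of $\textrm{Im}(\Gamma_\ell)|_w^v$ is a genuine degree-two monomial in $R|_w^v$ that is a standard monomial modulo $J_1 = G_{k,n,\ell}|_w^v$, and since $J_1 = J_3$ by Lemma~\ref{lem:J_1=J_3_binomial} (using the monomial-free hypothesis), it is standard modulo $J_3$.

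Next I would argue the spanning/basis property by a dimension count. Since $J_1$ is monomial-free, Theorem~\ref{lem:std_monomials_diag_ell} gives that the number of degree-two standard monomials of $J_3$ equals the number of degree-two standard monomials of $G_{k,n,0}|_w^v$; and for the diagonal case $\ell = 0$ the semi-standard Young tableaux for $X_w^v$ (two columns, each column $I$ with $v \le I \le w$) are precisely a monomial basis in degree two — this is the classical standard monomial theory for Richardson varieties of Kreiman–Lakshmibai, combined with the fact that in the diagonal case $\textrm{in}_{{\bf w}_0}$ sends each Plücker form to its diagonal term, so the degree-two standard monomials of $G_{k,n,0}|_w^v$ are exactly the two-column SSYT for $X_w^v$. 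Therefore the count of degree-two standard monomials of $J_3$ equals $|\{\text{two-column SSYT for }X_w^v\}|$. On the other hand, Lemma~\ref{lem:SSYT_Gamma_injective} shows $\Gamma_\ell$ is injective on SSYT up to row-equality, so $|\textrm{Im}(\Gamma_\ell)|_w^v|$ equals that same count. Since $\textrm{Im}(\Gamma_\ell)|_w^v$ is a set of standard monomials of the correct cardinality, it must be the full set of degree-two standard monomials, hence a monomial basis for $J_3$ in degree two.

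One subtlety I would address carefully: I must check that every element of $\textrm{Im}(\Gamma_\ell)|_w^v$ is distinct as a monomial in $R|_w^v$ (not merely distinct as tableaux up to row-equality), and that it is genuinely nonzero in the quotient — i.e., that no two distinct SSYT for $X_w^v$ give row-equal $\Gamma_\ell$-images; this is exactly Lemma~\ref{lem:SSYT_Gamma_injective}. I would also note that Lemma~\ref{lem:SSYT_Gamma_surj_any_tableau} is the companion surjectivity statement at the level of all $B_\ell$-valid tableaux, but here I only need its restriction to the $X_w^v$-constrained tableaux, which is controlled by Lemma~\ref{lem:basis_rich_corresp}. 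The main obstacle is the bookkeeping that matches the three worlds — two-column SSYT for $X_w^v$, degree-two standard monomials of $G_{k,n,0}|_w^v$, and degree-two standard monomials of $J_3$ — and in particular verifying that a degree-two monomial $P_{I'}P_{J'}$ with $B_\ell$-valid columns and $v \le I', J' \le w$ is genuinely standard modulo $J_1$ rather than accidentally reducible; this follows because $J_1$ is generated by binomials $m_1 - m_2$ both of whose terms are $B_\ell$-valid monomials, so a $B_\ell$-valid monomial is a leading term of such a binomial only if it equals the one "straightening target" attached to its underlying multiset of columns, and $\Gamma_\ell$ is precisely designed to output that target.
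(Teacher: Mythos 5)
Your proposal has a critical circularity: the decisive dimension count invokes Theorem~\ref{lem:std_monomials_diag_ell}, but that theorem is precisely what the paper deduces \emph{from} Lemmas~\ref{lem:SSYT_Gamma_injective} and \ref{lem:SSYT_Gamma_surj_restrict} --- so you would be using the lemma you are trying to prove as an ingredient in its own proof. The chain of inequalities you sketch, namely that $J_1=J_3\subseteq J_2$ gives $\dim(R/J_3)_2\ge \dim(R/J_2)_2 = |SSYT_2(v,w)|$ and that injectivity of $\Gamma_\ell$ gives $|\textrm{Im}(\Gamma_\ell)|_w^v| = |SSYT_2(v,w)|$ linearly independent classes in $(R/J_3)_2$, only ever produces lower bounds on $\dim(R/J_3)_2$. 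The missing upper bound $\dim(R/J_3)_2\le |SSYT_2(v,w)|$ is exactly the spanning statement, and you cannot import it from Theorem~\ref{lem:std_monomials_diag_ell} without arguing in a circle.

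There is also a gap in your proposed justification of standardness. You assert that a monomial with $B_\ell$-valid columns is ``not further reducible by a matching-field binomial,'' but $B_\ell$-validity of the columns is far from sufficient: the binomials of $G_{k,n,\ell}$ relate pairs of two-column $B_\ell$-tableaux that are row-wise equal, and generically a given product $P_{I'}P_{J'}$ with valid columns is one of several such pairs, hence is reducible. What actually holds (and what the paper verifies) is that $\Gamma_\ell$ selects the chosen representative of each row-wise equivalence class; linear independence of $\textrm{Im}(\Gamma_\ell)|_w^v$ then follows from Lemma~\ref{lem:SSYT_Gamma_injective} together with the fact that $J_3$, being the kernel of a monomial map, identifies two monomials iff their tableaux are row-wise equal.

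The substance you are missing is the surjectivity argument. The paper proves it by contrapositive: if some nonvanishing $B_\ell$-tableau $T$ for $X_w^v$ does not lie in $\textrm{Im}(\Gamma_\ell)|_w^v$, then by Lemma~\ref{lem:SSYT_Gamma_surj_any_tableau} it is row-wise equal to $\Gamma_\ell(T')$ for a (unique, by Lemma~\ref{lem:SSYT_Gamma_injective}) semi-standard $T'$, and that $T'$ must vanish; this produces a binomial in $G_{k,n,\ell}$ with one surviving term, hence a monomial in $J_1$. The delicate content, done by cases on $s = |\{i_1,i_2,j_1,j_2\}\cap\{1,\dots,\ell\}|$ and using Lemma~\ref{lem:basis_rich_corresp}, Theorems~\ref{thm:Rich} and~\ref{main:zero Gr}, and Lemma~\ref{lem:G_knlw_classification}, is to show that this forces the permutation pair $(v,w)$ into the monomial-containing regime. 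None of this is captured by your dimension count, and without it the proof does not close.
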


\begin{proof}
We prove the contrapositive, i.e. if $\textrm{Im}(\Gamma_\ell)|_w^v$ is not a monomial basis for $J_3$ then $J_1$ contains a monomial. Let $T$ be a matching field tableau for $B_\ell$ representing a monomial in $G_{k,n,\ell}|_w^v$ which does not lie in the span of $\textrm{Im}(\Gamma_\ell)|_w^v$. Since $\textrm{Im}(\Gamma_\ell)$ is a basis for $G_{k, n, \ell}$, it follows that $T$ is row-wise equal to $\Gamma_\ell(T')$ for some semi-standard Young tableau $T'$ which vanishes in $G_{k,n,\ell}|_w^v$. We write $I, J$ for the columns of $T$ and $I', J'$ for the columns of $\Gamma_\ell(T')$. Since
$T$ and $\Gamma_\ell(T')$ are row-wise equal we may assume that all their entries below the second row are in semi-standard form. So we write
\[
T = 
\begin{tabular}{|c|c|}
\multicolumn{1}{c}{$I$} & \multicolumn{1}{c}{$J$} \\
\hline
    $i_1$ & $j_1$  \\
\hline
    $i_2$ & $j_2$ \\
\hline
    \vdots & \vdots \\
\hline
\end{tabular}\, , \quad
\Gamma_\ell(T') = 
\begin{tabular}{|c|c|}
\multicolumn{1}{c}{$I'$} & \multicolumn{1}{c}{$J'$} \\
\hline
    $i_1$ & $j_1$  \\
\hline
    $j_2$ & $i_2$ \\
\hline
    \vdots & \vdots \\
\hline
\end{tabular}\, .
\]
Throughout the proof we write $v =\{v_1 < \dots < v_k \}$ and $w = \{w_1 < \dots < w_k \}$ for the Grassmannian permutations. We now take cases on $s = |\{i_1, i_2, j_1, j_2 \} \cap \{1, \dots, \ell \}|$.

\textbf{Case 1.} Assume $s = 0$ or $4$. It follows that $\Gamma_\ell(T')$ is a semi-standard Young tableaux and so $\Gamma_\ell(T')$ does not vanish, a contradiction.

\textbf{Case 2.} Assume $s = 1$. Without loss of generality assume that $j_2 \in \{1, \dots, \ell \}$ and note that in this case we may possibly have that $I'$ and $J'$ are swapped in $\Gamma_\ell(T')$. Since $T$ does not vanish we have $v \le I, J \le w$. So by ordering the entries of $I, J$ in increasing order and comparing them with $v$ and $w$, we have
\[
v_1 \le  \{i_1, j_2 \} \le w_1, \quad
v_2 \le \{ i_2, j_1 \} \le w_2.
\]
Since $\Gamma_\ell(T')$ vanishes we must have that either $I'$ or $J'$ vanishes. Let us take cases.

\textbf{Case 2.1} Assume $I' = \{ i_1, j_2, \dots\}$ vanishes. We have    
\[
v_1 \le j_2 \le w_1, \quad i_1 \le w_1 < w_2
\]
and so $I' \le w$. Since $I'$ vanishes, we must have $I' \not\ge v$ and so $i_1 < v_2$. We have the following
\begin{itemize}
    \item $v_1 \in \{1, \dots, \ell \}$ because $v_1 \le j_2$,
    \item $v_2 \in \{\ell+2, \dots, n \}$ because $v_2 > i_1 \in \{\ell+1, \dots, n \}$.
\end{itemize}
By Theorems~\ref{thm:Rich} and \ref{main:zero Gr} we have that $G_{k,n,\ell}|_w^v$ contains a monomial.

\textbf{Case 2.2} Assume $J' = \{ j_1, i_2, \dots \}$ vanishes. We have
\[
v_1 < v_2 \le j_1, \quad
v_2 \le i_2 \le w_2.
\]
Therefore $J' \ge v$. Since $J'$ vanishes we have $J \not\le w$ and so must have $j_1 > w_1$. We have the following
\begin{itemize}
    \item $w_i \in \{\ell+1, \dots, n \}$ for all $i \ge 2$ because $w_2 \ge i_2 \in \{\ell+1, \dots, n \}$,
    \item $w_2 \neq w_1 + 1$ because $w_1 < j_1 < i_2 \le w_2$,
    \item $w_1 \le n-k$ because $w_1 < j_1 = \min(J') \ge n-k+1$,
    \item $w_1 \ge \ell+1$ because $w_1 \ge i_1 \in \{ \ell+1, \dots, n\}$.
\end{itemize}
And so by Theorem~\ref{thm:Rich}
and Lemma~\ref{lem:G_knlw_classification} we have that $G_{k,n,\ell}|_w^v$ contains a monomial.

\textbf{Case 3.} Assume $s = 2$.
If $i_1, i_2 \in \{1, \dots, \ell \}$ then $\Gamma_\ell(T')$ is not a valid tableau with respect to the matching field $B_\ell$. It follows that $i_2, j_2, \in \{1, \dots, \ell\}$. However it easily follows that $\Gamma_\ell(T')$ does not vanish in $G_{k, n, \ell}|_w^v$, a contradiction.

\textbf{Case 4.} Assume $s = 3$. Without loss of generality assume that $j_1 \in \{\ell+1, \dots, n \}$. Note that in this case we may possibly have that $I'$ and $J'$ are swapped in $\Gamma_\ell(T')$. Since $T$ does not vanish we have $v \le I, J \le w$. So by ordering the entries of $I, J$ in increasing order and comparing them with $v$ and $w$, we have
\[
v_1 \le \{i_1, j_2 \} \le w_1, \quad
v_2 \le \{i_2, j_1\} \le w_2.
\]
Since $\Gamma_\ell(T')$ vanishes we must have that either $I'$ or $J'$ vanishes. We proceed by taking cases.

\textbf{Case 4.1} Assume that $I' = \{i_1, j_2, \dots \}$ vanishes. We have
\[
v_1 \le i_1 \le w_1, \quad 
j_2 \le w_1 < w_2
\]
and so $I' \le w$. Since $I'$ vanishes we must have $I' \not\ge v$ and we deduce that $j_2 < v_2$. We have the following
\begin{itemize}
    \item $v_1 \in \{1, \dots, \ell \}$ because $v_1 \le i_1 \in \{1, \dots, \ell \}$,
    \item $v_2 > v_1 + 1$ because $v_1 \le i_1 < j_2 < v_2$,
    \item $v_2 \neq \ell+1$ because $v_2 \le i_2 \in \{ 1, \dots, \ell\}$.
\end{itemize}
By Theorems~\ref{thm:Rich} and \ref{main:zero Gr} we have that $G_{k,n,\ell}|_w^v$ contains a monomial.

\textbf{Case 4.2} Assume that $J' = \{j_1, i_2, \dots \}$ vanishes. We have 
\[
v_1 < v_2 \le i_2, \quad 
v_2 \le j_1 \le w_2
\]
and so $J' \ge v$. Since $J'$ vanishes we must have $J' \not\le w$ and we deduce that $i_2 > w_1$. We have the following
\begin{itemize}
    \item $w_i \in \{\ell+1, \dots, n \}$ for all $i \ge 2$ because $w_2 \ge j_1 \in \{\ell+1, \dots, n \}$,
    \item $w_2 \neq w_1 + 1$ because $w_1 < i_2 < j_1 \le w_2$,
    \item $w_1 \le n-k$ because $w_1 < i_2 = \min(J') \ge n-k+1$
    \item $w_1 \neq \ell$ because $w_1 < i_2 \in \{1, \dots, \ell \}$,
    \item $w_1 \ge 2$ because, by column $I'$, we have $i_1 < j_2 \le w_1$.
\end{itemize}
And so by Theorem~\ref{thm:Rich}
and Lemma~\ref{lem:G_knlw_classification} we have that $G_{k,n,\ell}|_w^v$ contains a monomial.
\end{proof}

\begin{proof}[Proof of Theorem~\ref{lem:std_monomials_diag_ell}]
A collection of standard monomials for $G_{k,n,0}|_w^v$ in degree two is given by semi-standard Young tableaux $T$ with two columns such that each column $I$ satisfies $v \le I \le w$. By Lemma~\ref{lem:basis_rich_corresp} we have that $\Gamma_\ell$ is a map from such semi-standard Young tableaux to matching field tableau for $B_\ell$ which do not vanish in $G_{k,n,\ell}|_w^v$. By Lemmas~\ref{lem:SSYT_Gamma_injective} and \ref{lem:SSYT_Gamma_surj_restrict} we have that this map is a bijection.
\end{proof}

Recall from Theorem~\ref{thm:JAlebra} that the initial ideals $\init_{\wb_\ell}(G_{k,n})$ are quadratically generated.

\begin{conjecture}\label{conj:J_2_quad_gen}
If $J_1$ is monomial-free then $J_2$ is quadratically generated.
\end{conjecture}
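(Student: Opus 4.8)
The plan is to prove the stronger statement that, under the hypothesis, $J_1 = J_2$. Since $J_1 = G_{k,n,\ell}|_w^v$ is obtained from the toric ideal $G_{k,n,\ell}$ by setting a collection of Pl\"ucker variables to zero, Theorem~\ref{thm:JAlebra} and Lemma~\ref{lem:elim_ideal_gen_set} already give that $J_1$ is quadratically generated; hence $J_1 = J_2$ would immediately deliver the conjecture. By Lemma~\ref{lem:J_1_subset_J_2} we have $J_1 \subseteq J_2$, so it suffices to show that $R/J_1$ and $R/J_2$ have the same Hilbert function. As $\wb_\ell$ is a weight vector it induces a flat degeneration, so $\dim_{\mathbb K}(R/J_2)_d = \dim_{\mathbb K}(R/I(X_w^v))_d$ for every $d$, and the whole problem reduces to the identity $\dim_{\mathbb K}(R/J_1)_d = \dim_{\mathbb K}(R/I(X_w^v))_d$, that is, to the inequality $\dim_{\mathbb K}(R/J_1)_d \le \dim_{\mathbb K}(R/I(X_w^v))_d$ (the reverse inequality being automatic from $J_1 \subseteq J_2$).

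The second step is to compute $\dim_{\mathbb K}(R/J_1)_d$ combinatorially. Since $J_1$ is monomial-free it equals $J_3 = \ker(\phi_\ell|_w^v)$ (Lemma~\ref{lem:J_1=J_3_binomial}), a prime binomial ideal. I would fix the term order $\prec$ from the matching field theory of \cite{OllieFatemeh} whose $\prec$-standard monomials of $G_{k,n,\ell}$ in degree $d$ are the matching field tableaux for $B_\ell$, together with the resulting bijection $\Gamma_\ell$ between these and the semistandard Young tableaux with $d$ columns (the multi-column version of the map in Definition~\ref{def:Gamma_ell_deg2}), and a Gr\"obner basis $\mathcal G$ of $G_{k,n,\ell}$ with respect to $\prec$, whose elements are binomials $P_IP_J - P_{I'}P_{J'}$. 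Taking $G = \mathcal G$ in Lemma~\ref{lem:elim_ideal_gen_set} gives $J_1 = \langle G_S\rangle$ with $S = \{P_I : I \in S_w^v\}$. The key elementary observation is that \emph{monomial-freeness of $J_1$ forces, for every $g = P_IP_J - P_{I'}P_{J'} \in \mathcal G$, that either $\hat g = g$ or $\hat g = 0$, and that whenever $\init_\prec(g)$ divides a monomial of $R|_w^v$ then all four sets $I,J,I',J'$ are non-vanishing}: if exactly one term of $g$ were killed by the substitution, then $\hat g$ would be the surviving monomial, lying in $G_S \subseteq J_1$, a contradiction. It follows that $G_S$ is a Gr\"obner basis of $J_1$ with respect to $\prec$, that $\init_\prec(J_1) = \langle \init_\prec(g) : g \in G_S\rangle$, and hence that the $\prec$-standard monomials of $J_1$ in degree $d$ are exactly the $\prec$-standard monomials of $G_{k,n,\ell}$ in degree $d$ involving only non-vanishing Pl\"ucker variables. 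Thus $\dim_{\mathbb K}(R/J_1)_d = \#\{\,T \text{ SSYT with } d \text{ columns} : \Gamma_\ell(T) \text{ uses only } P_L \text{ with } v \le L \le w\,\}$.

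The third step identifies this count. For any tableau with columns $C_1,\dots,C_d$ and any row index $r$, the condition that every column lies in the Bruhat interval $[v,w]$ depends only on the pooled multiset of the $r$-th smallest entries of the columns, through $v_r \le \min_a (C_a)_r$ and $\max_a (C_a)_r \le w_r$. The straightening map $\Gamma_\ell$ fixes every column below its second row and otherwise only permutes, among the columns, entries of equal rank, so it preserves each of these pooled multisets; hence $\Gamma_\ell(T)$ has all columns in $[v,w]$ if and only if $T$ does (this is the all-degrees form of Lemma~\ref{lem:basis_rich_corresp}). By the standard monomial theory of Richardson varieties \cite{kreiman2002richardson}, the number of SSYT with $d$ columns all of which lie in $[v,w]$ equals $\dim_{\mathbb K}(R/I(X_w^v))_d$. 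Therefore $\dim_{\mathbb K}(R/J_1)_d = \dim_{\mathbb K}(R/I(X_w^v))_d$ for all $d$, whence $J_1 = J_2$, and $J_2 = \langle G_S\rangle$ is quadratically generated.

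The main obstacle will be the two structural inputs invoked in the second and third steps: the existence of a term order $\prec$ and a multi-column straightening map $\Gamma_\ell$ realising the $\prec$-standard monomials of $G_{k,n,\ell}$ and permuting only entries of equal rank between columns, and — closely related — the passage from the degree-two dictionary of Lemma~\ref{lem:SSYT_Gamma_surj_restrict} to all degrees. The degree-two instance of the latter is already a substantial case analysis resting on Theorems~\ref{thm:Rich} and \ref{main:zero Gr} and Lemma~\ref{lem:G_knlw_classification}; propagating it should be possible by reducing to the quadratic case, using that by the observation above every defining binomial of $J_1$ is an honest binomial of the matching field ideal and that the straightening relations are quadratic. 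In short, the heart of the argument is to show that monomial-freeness of $J_1$ makes $J_1$ inherit a genuine Gr\"obner basis, not merely a generating set, from $G_{k,n,\ell}$; this is precisely what upgrades the degree-two bijection of Theorem~\ref{lem:std_monomials_diag_ell} to the full Hilbert-function identity, and hence to $J_1 = J_2$.
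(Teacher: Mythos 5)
This statement is Conjecture~\ref{conj:J_2_quad_gen} and the paper provides no proof of it; it is left open, with only computational evidence for small $(k,n)$ and with Corollary~\ref{cor:toric_degen_std_monomial_gr} explicitly conditional on it. So there is no ``paper proof'' to compare against: you are proposing a proof of an open problem, and the question is whether your argument closes the gap the authors could not. It does not.

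Your reduction is sound and parallels what the paper does unconditionally in the $\ell=0$ case (Theorem~\ref{thm:toric_degen_std_monomial_ell_0_gr}): by Lemma~\ref{lem:J_1_subset_J_2} one has $J_1 \subseteq J_2$, and since $J_2 = \init_{\wb_\ell}(I(X_w^v))$ preserves Hilbert function, it suffices to show $\dim_{\K}(R/J_1)_d = \dim_{\K}(R/I(X_w^v))_d$ for all $d$. Your ``key elementary observation'' is also correct: if $J_1$ is monomial-free then every $\hat g$ for $g$ in a binomial generating set of $G_{k,n,\ell}$ is either $g$ itself or zero. The gap is the very next sentence: ``It follows that $G_S$ is a Gr\"obner basis of $J_1$ with respect to $\prec$.'' It does not follow. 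Restricting a Gr\"obner basis to a coordinate subspace (i.e., passing to $(\langle \mathcal G\rangle + \langle S\rangle)\cap \K[P_I : I\notin S_w^v]$) does not in general preserve the Gr\"obner property, even when every restricted generator remains an honest binomial: the $S$-polynomials of $G_S$-elements may reduce to nonzero remainders in the subring that were previously reduced to zero only by way of binomials involving the discarded variables. Concretely, you would need $\init_\prec(J_1) = \langle \init_\prec(\hat g) : \hat g \in G_S, \hat g \neq 0\rangle$, i.e., that $\init_\prec$ commutes with the coordinate section, and you give no argument for this. This is precisely where the paper itself stops: Theorem~\ref{lem:std_monomials_diag_ell} establishes the degree-two bijection via the explicit map $\Gamma_\ell$ of Definition~\ref{def:Gamma_ell_deg2}, but the multi-column analogue of $\Gamma_\ell$ — which is what you invoke in your third step, and which would supply canonical representatives of the fibers of $\phi_\ell|_w^v$ in every degree while preserving the pooled row multisets — is never constructed, and constructing it is the genuine obstruction. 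Equivalently, you would need: for every monomial $m$ of $R|_w^v$, the unique $\prec$-standard monomial of $G_{k,n,\ell}$ with the same $\phi_\ell$-image also lies in $R|_w^v$. Nothing in your argument, and nothing in the paper beyond degree two, establishes this. Your closing paragraph acknowledges that propagating the quadratic case ``should be possible,'' but that is the whole content of the conjecture; the outline is correct, but the heart of it is still unproved.
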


\begin{remark}
We have calculated the initial ideals $J_2 = \init_{\wb_\ell}(I(X_w^v))$ for all Richardson varieties of Grassmannians $G_{k,n}$ where $n \in \{ 4,5,6,7\}$ and $k \in \{2, \dots n-2 \}$ using the software \texttt{Macaulay2}. We have observed that all such initial ideals are quadratically generated if $J_1 = G_{k,n,\ell}|_w^v$ is monomial-free. 
\end{remark}

\begin{theorem}\label{thm:toric_degen_std_monomial_ell_0_gr}
Let $\ell = 0$. If $J_1$ is monomial-free then $J_1$, $J_2$ and $J_3$ coincide. In particular $\textrm{in}_{{\bf w}_\ell}(I(X_w^v))$ is a toric ideal.
\end{theorem}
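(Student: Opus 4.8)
The plan is to prove $J_1=J_2$ and read off the rest. Since $\ell=0$, the ideal $J_1=G_{k,n,0}|_w^v$ is the matching field ideal of the diagonal matching field restricted to $R|_w^v$, so Lemma~\ref{lem:J_1=J_3_binomial} already yields $J_1=J_3$ from the monomial-freeness hypothesis, and Lemma~\ref{lem:J_1_subset_J_2} gives $J_1\subseteq J_2$. As $J_1$ and $J_2$ are homogeneous ideals of $R|_w^v$ with $J_1\subseteq J_2$, it suffices to show that the graded quotients $R|_w^v/J_1$ and $R|_w^v/J_2$ have the same Hilbert function; this forces $J_1=J_2$, and then $J_1=J_2=J_3$. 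Since $J_3=\ker(\phi_0|_w^v)$ is the kernel of a monomial map, it is a prime binomial ideal, so $J_2=\textrm{in}_{{\bf w}_0}(I(X_w^v))$ is toric.

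First I would compute the Hilbert function of $R|_w^v/J_2$. As $J_2$ is the initial ideal of a homogeneous ideal with respect to the weight vector ${\bf w}_0$, the Gr\"obner degeneration is flat, so $R|_w^v/J_2$ and $R|_w^v/I(X_w^v)$ have the same Hilbert function. By the standard monomial theory for Richardson varieties of Kreiman and Lakshmibai \cite{kreiman2002richardson}, the semi-standard Young tableaux $T$ with $m$ columns, each column $I$ satisfying $v\le I\le w$, form a $\mathbb K$-basis of $(R|_w^v/I(X_w^v))_m$. Hence $\dim_{\mathbb K}(R|_w^v/J_2)_m$ equals the number of such tableaux.

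Next I would show that, when $J_1$ is monomial-free, the monomials $\mathbf{P}^T$ for the same tableaux $T$ (columns between $v$ and $w$) descend to a basis of $R|_w^v/J_1$, giving it the same Hilbert function. Linear independence: since $J_1=J_3=\ker(\phi_0|_w^v)$ and $\phi_0$ takes $P_I$ to the diagonal term $x_{1,i_1}\cdots x_{k,i_k}$, the monomial $\phi_0(\mathbf{P}^T)$ records how often each value occurs in each row of $T$; two semi-standard tableaux of a fixed rectangular shape with equal row contents coincide, so the images $\phi_0(\mathbf{P}^T)$ are pairwise distinct monomials and the $\mathbf{P}^T$ are linearly independent modulo the kernel. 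Spanning: apply the straightening law of the toric ideal $G_{k,n,0}$, whose standard monomials are the semi-standard tableaux and which is quadratically generated by Theorem~\ref{thm:JAlebra}. Starting from any monomial $M$ of $R|_w^v$, each reduction step replaces $M$ by a monomial $M'$ with $M-M'\in G_{k,n,0}$; if $M'$ is again supported on $T_w^v$ the process continues, whereas if $M'$ has a factor $P_I$ with $I\in S_w^v$ then $M\in G_{k,n,0}+\langle P_I : I\in S_w^v\rangle$, so $M$ is a monomial lying in $(G_{k,n,0}+\langle P_I : I\in S_w^v\rangle)\cap R|_w^v=J_1$, contradicting monomial-freeness. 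Thus the reduction never leaves $R|_w^v$ and terminates at a $\mathbb K$-linear combination of semi-standard tableaux with columns between $v$ and $w$, which therefore span $R|_w^v/J_1$. Combining the two counts, $J_1$ and $J_2$ have the same Hilbert function, hence $J_1=J_2$, completing the argument.

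I expect the spanning step to be the main obstacle: it requires phrasing the straightening procedure for $G_{k,n,0}$ as a reduction driven by a binomial Gr\"obner basis of the toric ideal, so that monomials reduce to monomials, and making the dichotomy ``stay inside $R|_w^v$ versus produce a monomial in $J_1$'' watertight. A secondary technical point is the clean invocation of Kreiman--Lakshmibai, namely that the ideal $I(X_w^v)$ as defined here by the elimination $\cap\, R|_w^v$ is the homogeneous ideal of the Richardson variety in its Pl\"ucker embedding (projective normality), so that its Hilbert function really is the tableau count.
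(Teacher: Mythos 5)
Your proof is correct and follows essentially the same route as the paper: deduce $J_1=J_3\subseteq J_2$ from Lemmas~\ref{lem:J_1=J_3_binomial} and \ref{lem:J_1_subset_J_2}, identify $\dim(R/J_2)_d$ with the number of semi-standard Young tableaux with columns between $v$ and $w$ via flatness of the Gr\"obner degeneration and Kreiman--Lakshmibai, and show these tableaux are also a monomial basis of $R/J_3$. The only small difference is in the spanning step: you reduce a monomial of $R|_w^v$ against a binomial Gr\"obner basis and invoke monomial-freeness to rule out leaving $R|_w^v$, while the paper's terser argument rests on the (unstated but easily checked) fact that, for the diagonal matching field, the unique semi-standard tableau with the same row content as a monomial of $R|_w^v$ automatically has all columns between $v$ and $w$, so monomial-freeness is needed only for $J_1=J_3$; both justifications are valid.
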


\begin{proof}
Let $R= \mathbb K[P_I : I \subseteq [n], |I| = k, v \le I \le w]$ be the polynomial ring containing $J_1, J_2$ and $J_3$. Suppose $J_1$ is monomial-free. By Lemma~\ref{lem:J_1=J_3_binomial} we have that $J_1 = J_3$. 
By Lemma~\ref{lem:J_1_subset_J_2}, we have $J_1 \subseteq J_2$. Let $M$ be a collection of linearly independent monomials in $R / J_2$, if $M$ is linearly dependent in $R / J_1$ then we have $\sum_{m \in M} c_m m \in J_1 \subseteq J_2$ for some $c_m \in \mathbb K$. And so $M$ is linearly dependent in $J_2$, a contradiction. Hence for all $d \ge 1$, any collection of standard monomials for $J_2$ of degree $d$ is linearly independent in $R / J_1 = R / J_3$. Since $J_2 = \textrm{in}_{{\bf w}_\ell}(I(X_w^v))$ is an initial ideal of a homogeneous ideal, the number of standard monomials of degree $d$ coincides with the number of standard monomials of degree $d$ of $I(X_w^v)$. Recall that the semi-standard Young tableaux such that each column $I$ satisfies $v \le I \le w$ with $d$-columns are in bijection with a collection of standard monomials of $I(X_w^v)$ of degree $d$.

Consider the case $\ell = 0$. Two monomials are equal in $R/J_3$ if and only if their corresponding tableaux are row-wise equal. Therefore, the semi-standard Young tableaux are in bijection with standard monomials for $J_3$. And so we have $J_1 = J_2 = J_3$.
\end{proof}

\begin{corollary}[Corollary of Conjecture~\ref{conj:J_2_quad_gen}]\label{cor:toric_degen_std_monomial_gr}
Suppose $\ell \in \{1, \dots, n-1 \}$. If $J_1$ is monomial-free then $J_1, J_2$ and $J_3$ coincide. In particular $\init_{\bf w_\ell}(I(X_w^v))$ is a toric ideal.
\end{corollary}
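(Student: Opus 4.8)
The plan is to run the same argument as in the proof of Theorem~\ref{thm:toric_degen_std_monomial_ell_0_gr}, but since for general $\ell$ we do not have a degree-$d$ analogue of the bijection $\Gamma_\ell$, I would reduce everything to degree two, which is precisely where Conjecture~\ref{conj:J_2_quad_gen} enters. Assuming $J_1$ is monomial-free, Lemma~\ref{lem:J_1=J_3_binomial} gives $J_1 = J_3$; in particular $J_1$ is the kernel of the monomial map $\phi_\ell|_w^v$, hence a prime binomial (toric) ideal. By Lemma~\ref{lem:J_1_subset_J_2} we have $J_1 \subseteq J_2$. Moreover $J_1$ is generated in degree two: a quadratic generating set $G$ of $G_{k,n,\ell}$ exists by Theorem~\ref{thm:JAlebra}, and by Lemma~\ref{lem:elim_ideal_gen_set} we have $J_1 = \langle G_S \rangle$ with $S = \{P_I : I \in S_{w,k}^v\}$, whose members are quadrics (none is a monomial because $J_1$ is monomial-free). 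Combined with Conjecture~\ref{conj:J_2_quad_gen}, both $J_1$ and $J_2$ are then generated in degree two, so it suffices to prove $(J_1)_2 = (J_2)_2$, equivalently $\dim_{\mathbb K}(R|_w^v/J_1)_2 = \dim_{\mathbb K}(R|_w^v/J_2)_2$.

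Next I would compute both of these Hilbert numbers and show each equals $N$, the number of two-column semistandard Young tableaux each of whose columns $I$ satisfies $v \le I \le w$. For $J_1 = J_3$: since $J_1$ is monomial-free, Lemma~\ref{lem:SSYT_Gamma_surj_restrict} shows that $\mathrm{Im}(\Gamma_\ell)|_w^v$ is a monomial basis of $(R|_w^v/J_3)_2$, and Lemma~\ref{lem:SSYT_Gamma_injective} shows that $\Gamma_\ell$ is injective on two-column semistandard Young tableaux; hence $\dim_{\mathbb K}(R|_w^v/J_3)_2 = N$. For $J_2 = \init_{\wb_\ell}(I(X_w^v))$: by the standard monomial theory of Kreiman and Lakshmibai for Richardson varieties (\cite{kreiman2002richardson}), a basis of $(R|_w^v/I(X_w^v))_d$ is indexed by the $d$-column semistandard Young tableaux whose columns $I$ satisfy $v \le I \le w$; since passing to the initial ideal $\init_{\wb_\ell}$ of the homogeneous ideal $I(X_w^v)$ preserves the Hilbert function, $\dim_{\mathbb K}(R|_w^v/J_2)_2 = N$ as well. (Alternatively one can route this second count through Theorem~\ref{lem:std_monomials_diag_ell} together with Theorem~\ref{thm:toric_degen_std_monomial_ell_0_gr}, using that $G_{k,n,0}|_w^v$ is monomial-free by Lemma~\ref{lem:G_knlw_classification}, Lemma~\ref{lem:gr_zero_opposite} and Theorem~\ref{thm:Rich}(i), so that $\init_{\wb_0}(I(X_w^v)) = G_{k,n,0}|_w^v$ has the same degree-two Hilbert number as $J_2$.)

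Putting these together, $\dim_{\mathbb K}(R|_w^v/J_1)_2 = N = \dim_{\mathbb K}(R|_w^v/J_2)_2$, and since $J_1 \subseteq J_2$ this forces $(J_1)_2 = (J_2)_2$; as both ideals are generated in degree two, $J_1 = J_2$. Hence $J_1 = J_2 = J_3$ and $\init_{\wb_\ell}(I(X_w^v)) = \ker(\phi_\ell|_w^v)$ is toric, proving the corollary.

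The step I expect to be the crux is the degree-two bridge between the matching-field side and the Richardson side, i.e.\ making the two Hilbert-number computations line up with the same combinatorial object $N$: the first relies on Lemma~\ref{lem:SSYT_Gamma_surj_restrict}, whose proof is the genuine technical engine (already in place), and the second on the Kreiman--Lakshmibai basis. The real limitation, and the reason this is only a corollary of Conjecture~\ref{conj:J_2_quad_gen}, is the ascent from degree two to all degrees: without the conjecture one obtains unconditionally only that the quadratic parts of $J_1$ and $J_2$ agree, and the full equality $J_1 = J_2$ requires $J_2$ to have no minimal generators in degree $\ge 3$ — the asymmetry being entirely on the $J_2$ side, since $J_1$ is quadratically generated unconditionally by Theorem~\ref{thm:JAlebra} and Lemma~\ref{lem:elim_ideal_gen_set}.
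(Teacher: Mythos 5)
Your proof is correct and follows essentially the same route as the paper's: establish $J_1 = J_3 \subseteq J_2$, observe that both sides are quadratically generated (using Conjecture~\ref{conj:J_2_quad_gen} for $J_2$), and then match the degree-two Hilbert numbers via $\Gamma_\ell$ and the standard monomial basis for Richardson varieties. You spell out the two degree-two counts more explicitly than the paper does (and helpfully note the alternative route through Theorem~\ref{lem:std_monomials_diag_ell} and Theorem~\ref{thm:toric_degen_std_monomial_ell_0_gr}), but the underlying argument is the same.
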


\begin{proof}
Suppose that $J_1$ is monomial-free. By Lemmas~\ref{lem:J_1=J_3_binomial} and \ref{lem:J_1_subset_J_2}, we have $J_1 = J_3 \subseteq J_2$. And so any collection of standard monomials of degree $d$ for $J_2$ is linearly independent in $R/J_3$. By Theorem~\ref{lem:basis_rich_corresp} we have that $J_2$ and $J_3$ have the same number of standard monomials in degree $2$. Since $J_3 \subseteq J_2$ it follows that $\textrm{Im}(\Gamma_\ell)$ is a collection of standard monomials for $J_2$ and $J_3$. Suppose that Conjecture~\ref{conj:J_2_quad_gen} holds, then we have that $J_2$ is generated in degree two. Since $J_1 = J_3$ is generated in degree two, it follows that $J_1$, $J_2$ and $J_3$ coincide.
\end{proof}

\subsection{Flag varieties.}
Fix $n$ a natural number and $v \le w$ permutations. Let $R = \mathbb K[P_I : I \subseteq [n], |I| \in \{1, \dots, n-1\}, I \notin S_w^v ]$ and $S = \mathbb K[x_{i,j} : i \in \{1, \dots, n-1 \}, j \in \{1, \dots, n \}]$ be polynomial rings. We define the restricted monomial map $\phi_n|_w^v : R \rightarrow S$ for the flag variety analogously to the Grassmannian case. We use the following shorthand notation for ideals of $R$.
\begin{itemize}
  \item $J_1 := F_n|_w^v$, the restricted matching field ideal.  
  \item $J_2 := {\rm in}_{{\bf w}_0}(I(X_w^v))$, the initial ideal of the ideal of the Richardson variety.
  \item $J_3 := \ker(\phi|_w^v)$, the kernel of the restricted monomial map. 
\end{itemize}

\begin{remark}\label{rmk:ssyt_mon_basis_rich}
By \cite[Theorem V.14]{kim2015richardson}, there is collection of semi-standard Young tableaux $T$ that form a monomial basis for the Richardson variety $X_w^v$ which are called \textit{standard} tableaux. The conditions for a semi-standard Young tableau to be standard implies that each column $I$ of $T$ satisfies $v \le I \le w$. Working directly from the combinatorial conditions, one can show that if $(v, w) \in T_n^R \cup Z_n^R$ then a tableau $T$ is standard if and only if $v \le I \le w$ for each column $I$ of $T$.
\end{remark}

By Theorem~\ref{thm:Pure}, the matching field ideal $F_n$ is quadratically generated and is the kernel of the monomial map $\phi_0$ in \eqref{eqn:monomialmapflags}. Hence, by a similar argument to the Grassmannian case we have the following results.

\begin{lemma}\label{lem:J_1=J_3_binomial_flag}
The ideals $J_1$ and $J_3$ coincide if and only if $J_1$ is monomial-free.
\end{lemma}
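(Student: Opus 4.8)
The plan is to mirror the proof of Lemma~\ref{lem:J_1=J_3_binomial} in the Grassmannian setting, since the flag variety case has all the needed ingredients in place. First I would observe that $J_3 = \ker(\phi_n|_w^v)$ is the kernel of a monomial map, hence is a binomial prime ideal and in particular contains no monomials; therefore if $J_1$ contains a monomial it cannot equal $J_3$. This gives one direction of the equivalence immediately.

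For the converse, suppose $J_1 = F_n|_w^v$ is monomial-free. By Theorem~\ref{thm:Pure}, $F_n$ is generated by quadratic binomials and equals $\ker(\phi_0)$; let $G$ be such a quadratic binomial generating set and $S = \{P_I : I \in S_w^v\}$ the collection of vanishing Pl\"ucker variables. By Lemma~\ref{lem:elim_ideal_gen_set} (applied with the ambient ring $\mathbb K[P_I : |I| \in \{1,\dots,n-1\}]$), we have $J_1 = F_n|_w^v = \langle G \cup S\rangle \cap \mathbb K[P_I : I \notin S_w^v] = \langle G_S \rangle$. Since $J_1$ is monomial-free, no element of $G_S$ is a monomial, so each $\hat f \in G_S$ is a binomial $m_1 - m_2$ (or is zero) lying in $F_n = \ker(\phi_0)$ and involving only the non-vanishing variables $P_I$ with $I \notin S_w^v$; hence $m_1 - m_2 \in \ker(\phi_n|_w^v) = J_3$. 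This shows $J_1 = \langle G_S\rangle \subseteq J_3$. Conversely, any $f \in J_3$ lies in $\ker(\phi_0) = F_n$ (the map $\phi_n|_w^v$ being the restriction of $\phi_0$) and involves only the variables $P_I$ with $I \notin S_w^v$; combined with the fact that $F_n$ restricted to this subring of variables is exactly $J_1$ (this is again Lemma~\ref{lem:elim_ideal_gen_set}, or follows since $f \in \langle G\rangle$ and the terms of $f$ avoid $S$), we conclude $f \in J_1$. Therefore $J_1 = J_3$.

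I do not anticipate a genuine obstacle here: the proof is a direct transcription of Lemma~\ref{lem:J_1=J_3_binomial}, with Theorem~\ref{thm:JAlebra} replaced by Theorem~\ref{thm:Pure} and the Grassmannian ring replaced by the flag ring $R$. The only point requiring a small amount of care is the bookkeeping of which polynomial ring each ideal lives in and verifying that Lemma~\ref{lem:elim_ideal_gen_set} applies verbatim to the flag-variety variable set $\{P_I : \varnothing \neq I \subsetneq [n]\}$ graded by subsets of varying cardinality; this is purely formal since Lemma~\ref{lem:elim_ideal_gen_set} is stated for an arbitrary finite set of variables and a set of homogeneous quadrics, and $F_n$ is generated by such quadrics by Theorem~\ref{thm:Pure}.
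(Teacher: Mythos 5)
Your proposal is correct and mirrors the paper's own proof exactly: the paper establishes Lemma~\ref{lem:J_1=J_3_binomial_flag} by the identical transcription of the Grassmannian argument, substituting Theorem~\ref{thm:Pure} for Theorem~\ref{thm:JAlebra} and invoking Lemma~\ref{lem:elim_ideal_gen_set} to produce the quadratic generating set $G_S$. The only small wording to tighten is that what you need (and use) is the inclusion $F_n \cap \mathbb K[P_I : I \notin S_w^v] \subseteq J_1$, not the equality; the inclusion follows immediately from $F_n \subseteq F_n + \langle P_I : I \in S_w^v\rangle$ and suffices for the last step.
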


\begin{proof}
Note that $J_3$ is the kernel of a monomial map and therefore does not contain any monomials. So, if $J_1$ contains a monomial then it is not equal to $J_3$.

Suppose $J_1$ does not contain any monomials. Let $G$ be a quadratic generating set for $F_n$ and let $S = \{ P_I : I \in S_w^v\}$ be the collection of vanishing Pl\"ucker variables. By definition $J_1 = \langle G \cup S \rangle \cap \mathbb K[P_I : I \notin S_w^v]$. So by Lemma~\ref{lem:elim_ideal_gen_set} we have $J_1$ is generated by $G_S$. Since $J_1$ is monomial-free, we have that $G_S$ does not contain any monomials. By Theorem~\ref{thm:Pure}, the ideal $F_n$ is the kernel of a monomial map $\phi_0$ and by definition $J_3$ is the kernel of the restriction $\phi_n|_w^v$. Since all binomials $m_1 - m_2 \in G_S$ lie in $F_n$ and contain only the non-vanishing Pl\"ucker variables $P_J$ for $J \notin S_w^v$, therefore $m_1 - m_2 \in J_3$. And so we have $J_1 \subseteq J_3$. Also, for any polynomial $f \in J_3$ we have that $f \in F_n$. Since $f$ contains only the non-vanishing Pl\"ucker variables, therefore $f \in J_1$.
\end{proof}

\begin{lemma}\label{lem:J_1_subset_J_2_flag}
$J_1 \subseteq J_2$.
\end{lemma}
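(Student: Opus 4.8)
The plan is to mimic, essentially verbatim, the proof of Lemma~\ref{lem:J_1_subset_J_2} from the Grassmannian case, replacing the role of $G_{k,n,\ell}$ by $F_n$ and $G_{k,n}$ by $I_n$, and using the flag-variety versions of the structural results we already have available. First I would recall that by Theorem~\ref{thm:Pure} the ideal $F_n$ is quadratically generated, so I may fix a quadratic (binomial) generating set $G$ of $F_n$, and set $S = \{ P_I : I \in S_w^v \}$ to be the collection of vanishing Pl\"ucker variables. By Lemma~\ref{lem:elim_ideal_gen_set}, the ideal $J_1 = F_n|_w^v = \langle G \cup S \rangle \cap \mathbb{K}[P_I : I \notin S_w^v]$ is generated by the set $G_S$ obtained by deleting, from each $g \in G$, all monomials divisible by a variable in $S$.

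Next I would take an arbitrary element $\hat f \in G_S$ and show $\hat f \in J_2 = \init_{\wb_0}(I(X_w^v))$. By construction there is some $f \in G \subseteq F_n$ with $\hat f$ obtained from $f$ by setting the variables in $S$ to zero, and since $F_n = \init_{\wb_0}(I_n)$ there is a polynomial $g \in I_n$ with $f = \init_{\wb_0}(g)$. Write $g = \sum_i c_{\alpha_i} \mathbf{P}^{\alpha_i}$; then $f$ is the sum of those terms of $g$ of lowest $\wb_0$-weight. Now consider the polynomial $\bar g \in I(X_w^v)$ obtained by reducing $g$ modulo the variables in $S$ (equivalently, the image of $g$ under setting $P_I = 0$ for $I \in S_w^v$); this lies in $I(X_w^v) = (I_n + \langle P_I : I \in S_w^v\rangle) \cap \mathbb{K}[P_I : I \in T_w^v]$. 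The key point is that $\init_{\wb_0}(\bar g)$ equals $\hat f$: the terms of $g$ of lowest weight are exactly those constituting $f$, and after deletion of the $S$-divisible terms from both $g$ and $f$ the surviving lowest-weight terms of $\bar g$ are precisely those of $\hat f$ — here I use that deleting terms cannot raise the minimum weight among the surviving terms, and that $f$ already collected \emph{all} lowest-weight terms of $g$. Hence $\hat f = \init_{\wb_0}(\bar g) \in \init_{\wb_0}(I(X_w^v)) = J_2$. Since $J_1$ is generated by $G_S$ and every generator lies in $J_2$, we conclude $J_1 \subseteq J_2$.

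The one subtlety to handle carefully is the case where $\hat f = 0$ (all terms of $f$ become divisible by $S$), which is trivial, and the case where $f$ itself is a single monomial that survives — but then $\hat f = f$ is a monomial in $F_n$, so $f$ could not have been part of a reduced binomial generating set unless $f = 0$; in any event the argument above still applies since a surviving monomial of lowest weight is still a legitimate initial form. So the main (and really only mild) obstacle is making the weight bookkeeping precise: confirming that $\init_{\wb_0}$ commutes appropriately with the operation of setting variables to zero, namely that the leading term of the reduced polynomial $\bar g$ is obtained by reducing the leading form $f$ of $g$. This is exactly the same bookkeeping that underlies Lemma~\ref{lem:J_1_subset_J_2}, and the proof there transfers without change once $G_{k,n}$ is replaced by $I_n$ and $G_{k,n,\ell}$ by $F_n$.
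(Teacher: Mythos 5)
Your proposal is correct and follows essentially the same route as the paper's proof: use Lemma~\ref{lem:elim_ideal_gen_set} to write $J_1 = \langle G_S \rangle$ for a quadratic generating set $G$ of $F_n$, lift each $f = \init_{\wb_0}(g)$ back to $g \in I_n$, and observe that reducing $g$ modulo $\{P_I : I \in S_w^v\}$ lands in $I(X_w^v)$ with initial form $\hat f$ (provided $\hat f \neq 0$). The only minor slip is your aside about ``$f$ itself is a single monomial'' — the elements of $G$ are binomials, so what can become a monomial is $\hat f$, not $f$; but your parenthetical conclusion that the argument applies regardless is right, so this does not affect validity.
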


\begin{proof}
Let $G$ be a quadratic binomial generating set for $F_n$ and $S = \{P_I : I \in S_w^v \}$. Let $\hat f \in G_S \subset J_1$ be any polynomial. By the definition of $G_S$, there exists $f \in G$ such that $\hat f$ is obtained from $f$ by setting some variables to zero. Recall $F_n = \textrm{in}_{{\bf w}_0}(I_n)$, so there exists a polynomial $g \in I_n$ such that $f = \textrm{in}_{{\bf w}_0}(g)$. Since the leading term of $g$ is not set to zero in $I(X_w^v)$, it follows that $\hat f \in \textrm{in}_{{\bf w}_0}(I(X_w^v))$.
\end{proof}

We write $SSYT_d(v,w)$ for the collection of semi-standard Young tableau $T$ with columns $I_1, \dots, I_d$ satisfying $v \le I_j \le w$ for each $j \in [d]$. We note that, in contrast to the Grassmannian, the monomials associated to the tableaux $SSYT_d(v,w)$ do not constitute a monomial basis for the Richardson variety $X_w^v$ inside the flag variety.

\begin{theorem}\label{thm:toric_degen}
If $J_1$ is monomial-free then $J_1$, $J_2$ and $J_3$ coincide. In particular $\textrm{in}_{{\bf w}_0}(I(X_w^v))$ is a toric ideal. 
\end{theorem}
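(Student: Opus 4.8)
The plan is to mirror the Grassmannian argument in Theorem~\ref{thm:toric_degen_std_monomial_ell_0_gr}, adapting it to the flag variety. First I would record the two inclusions that hold with no hypothesis: by Lemma~\ref{lem:J_1_subset_J_2_flag} we have $J_1 \subseteq J_2$, and since $J_2 = \init_{\bf w_0}(I(X_w^v))$ is an initial ideal of a homogeneous ideal, $R/J_2$ and $R/I(X_w^v)$ have the same Hilbert function. Next, invoking the monomial-free hypothesis, Lemma~\ref{lem:J_1=J_3_binomial_flag} gives $J_1 = J_3$, so in particular $J_1$ is a prime binomial (toric) ideal; combined with $J_1 \subseteq J_2$ this yields the chain $J_1 = J_3 \subseteq J_2$. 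It then suffices to prove the reverse inclusion $J_2 \subseteq J_1$, which I would do by a dimension count in each graded degree: show that $R/J_1$ and $R/J_2$ have the same Hilbert function, so that the surjection $R/J_1 \twoheadrightarrow R/J_2$ is an isomorphism.

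The Hilbert function of $R/J_2$ in degree $d$ equals the number of standard monomials of $I(X_w^v)$ of degree $d$, which by the Kreiman--Lakshmibai standard monomial theory (Remark~\ref{rmk:ssyt_mon_basis_rich}) is the number of \emph{standard} semi-standard Young tableaux with $d$ columns for $X_w^v$. The key point is Remark~\ref{rmk:ssyt_mon_basis_rich}: when $(v,w) \in T_n^R \cup Z_n^R$ — which holds here because $J_1$ is monomial-free, hence by definition $(v,w)\in T_n^R$ — a tableau is standard precisely when each of its columns $I$ satisfies $v \le I \le w$, i.e. the standard tableaux of degree $d$ are exactly $SSYT_d(v,w)$. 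So the Hilbert function of $R/J_2$ in degree $d$ is $\#SSYT_d(v,w)$. On the other side, $J_3 = \ker(\phi_n|_w^v)$ is the kernel of a monomial map whose image in degree $d$ is spanned by the monomials $\phi_0$ assigns to the tableaux over all column-sets $I \notin S_w^v$; since $F_n$ is the diagonal matching field ideal (Theorem~\ref{thm:Pure}), two monomials of $R$ have the same image under $\phi_0$ exactly when their tableaux are row-wise equal, and every tableau with columns among the allowed sets is row-wise equivalent to a unique element of $SSYT_d(v,w)$. Hence the Hilbert function of $R/J_3$ in degree $d$ is also $\#SSYT_d(v,w)$. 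Since $J_1 = J_3$, we get $\dim_{\mathbb K}(R/J_1)_d = \dim_{\mathbb K}(R/J_2)_d$ for all $d$, forcing $J_1 = J_2$, and therefore $J_1 = J_2 = J_3$; being a toric ideal is inherited from $J_3$.

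The main obstacle is the combinatorial bookkeeping in the middle step: one must check carefully that, when $(v,w)\in T_n^R$, the \emph{standard} tableaux in the sense of \cite{kim2015richardson}/Kreiman--Lakshmibai coincide with $SSYT_d(v,w)$ (this is the content quoted in Remark~\ref{rmk:ssyt_mon_basis_rich}, but the equivalence ``standard $\iff$ all columns lie between $v$ and $w$'' is exactly where the monomial-free hypothesis is used, since in the presence of monomials the two notions genuinely differ), and that the diagonal matching field identifies these tableaux bijectively with a $\mathbb K$-basis of $(R/J_3)_d$ via row-wise equality. The inclusion $J_1 \subseteq J_2$ together with equality of Hilbert functions then closes the argument exactly as in the $\ell = 0$ Grassmannian case, with no need for a quadratic generation hypothesis of the kind required in Corollary~\ref{cor:toric_degen_std_monomial_gr}, because here $\ell = 0$ and the diagonal tableaux are already in bijection with the standard monomials of $J_3$ in every degree, not merely in degree two.
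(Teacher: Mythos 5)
Your proof is correct and takes essentially the same route as the paper: both establish $J_1 = J_3 \subseteq J_2$ from Lemmas~\ref{lem:J_1=J_3_binomial_flag} and \ref{lem:J_1_subset_J_2_flag}, then close the gap by a degree-by-degree dimension count identifying the standard monomials of $J_3$ (via row-wise equivalence under the diagonal matching field) and of $J_2$ (via Remark~\ref{rmk:ssyt_mon_basis_rich} and the monomial-free hypothesis forcing $(v,w)\in T_n^R$) with $SSYT_d(v,w)$. The paper phrases the counting step in terms of linearly independent monomial collections rather than Hilbert functions, but that is the same argument, and your explicit remark that the monomial-free hypothesis enters a second time through Remark~\ref{rmk:ssyt_mon_basis_rich} is a correct reading of what the paper's proof also implicitly uses.
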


\begin{proof}
Since $J_1$ is monomial-free, we have $J_1 = J_3$ by Lemma~\ref{lem:J_1=J_3_binomial_flag}. By Lemma~\ref{lem:J_1_subset_J_2_flag} we have $J_1 \subseteq J_2$. Let $M$ be a collection of linearly independent monomials in $R / J_2$, if $M$ is linearly dependent in $R / J_1$ then we have $\sum_{m \in M} c_m m \in J_1 \subseteq J_2$ for some $c_m \in \mathbb K$. And so $M$ is linearly dependent in $J_2$, a contradiction. Hence for all $d \ge 1$, any collection of standard monomials for $J_2$ of degree $d$ is linearly independent in $R / J_1 = R / J_3$. Since we are working with the diagonal matching field, the monomials associated to $SSYT_d(v,w)$ form a monomial basis for $J_1$ in degree $d$. By Remark~\ref{rmk:ssyt_mon_basis_rich}, the monomials associated to $SSYT_d(v,w)$ form a monomial basis for the Richardson variety $X_w^v$. Since $J_2 = \textrm{in}_{{\bf w}_0}(I(X_w^v))$ is an initial ideal, any monomial basis for $J_2$ in degree $d$ has size $|SSYT_d(v,w)|$. Therefore $J_1$ and $J_2$ have monomial bases of the same size in each degree. So $J_1 = J_2$.
\end{proof}


\subsection{Applications and further computations.} \label{sec:kim}

In this section we provide some further applications of our results and compare them to the results in \cite{kim2015richardson}. In particular we give explicit calculations for $\Flag_3$ and $\Flag_4$.

Let us begin by calculating the pairs of permutations $(v,w)$ for which $F_n|_w^v$ is toric, non-toric and zero using a different weight matrix. For the weight matrix we use the same convention as Kim, so for $n = 3$ and $n = 4$ we use the weight matrices
\[
M_4 = 
\begin{bmatrix}
6   &3  &1  &0 \\
3   &1  &0  &0 \\
1   &0  &0  &0 \\
\end{bmatrix} \ \text{and} \ 
M_3 = 
\begin{bmatrix}
3   &1  &0  \\
1   &0  &0  \\
\end{bmatrix}.
\]
The leading term of any determinant $\phi_n(P_I)$ is the antidiagonal term. In \cite{kim2015richardson}, the pairs of permutations $(v,w)$ for which $F_n|_w^v$ is toric is parametrized by so-called \emph{pipe dreams}. More precisely, each pair of reduced pipe dreams associated to $(v,w)$ gives rise to a face of the Gelfand-Tsetlin polytope. The toric varieties associated to these faces give rise to the (possibly irreducible) components of $X^v_w$ under the degeneration. Note that description in terms of pipe dreams cannot differentiate between toric and zero ideals and in some cases cannot determine whether the ideal is toric, non-toric or zero.

We calculate the ideals $F_n|_w^v$ for $\Flag_3$ and obtain the following results for pairs of permutation $(v,w)$ where $v < w$. 
\begin{center}
    \resizebox{0.6\textwidth}{!}{
    \begin{tabular}{|c|c|c|}
        \hline
        Toric               & Non-toric         &Zero \\
        \hline
       ((1,2,3),(3,1,2))    &((1,2,3),(2,3,1))  &((1,2,3),(1,3,2)) \\
       ((1,2,3),(3,2,1))    &((1,3,2),(2,3,1))  &((1,2,3),(2,1,3)) \\ 
       ((2,1,3),(3,1,2))    &((1,3,2),(3,1,2))  &((2,3,1),(3,2,1)) \\
       ((2,1,3),(3,2,1))    &((1,3,2),(3,2,1))  &((3,1,2),(3,2,1)) \\
                            &((2,1,3),(2,3,1))  &                  \\
        \hline
    \end{tabular}
    }
\end{center}
In addition to the results of Kim, we note that we are also able to characterize those pairs of permutations for which the ideal of the corresponding Richardson variety is zero under the degeneration. In these calculations, similar to the diagonal case in Theorem~\ref{thm:toric_fl_rich_diag}, the pairs $(v,w)$ for which $F_n|_w^v$ is zero satisfy: $w \in W_v$. 

\medskip

We perform the same calculation for $\Flag_4$ and obtain the above complete list of toric and zero pairs of permutations. The symbol $*$ appears in the table beside pairs of permutations for which the description by pipe dreams does not determine whether the corresponding ideal is toric, non-toric or zero. 

\begin{remark}
We note that our results determine pairs of permutations $(v,w)$ for which the corresponding Richardson variety degenerates to a toric variety. As seen in the above examples, it is not always possible to use pipe dreams, see \cite[Corollary V.26]{kim2015richardson}, to determine whether the Gelfand-Tsetlin degeneration gives rise to a toric degeneration of Richardson varieties. And so our results may be viewed as a strengthening of the previous results. 
\end{remark}
\begin{center}
    \resizebox{0.8\textwidth}{!}{
    \begin{tabular}{|c|c|c|}
        \hline
        \multicolumn{2}{|c|}{Toric}                           &Zero \\
        \hline
        ((1, 2, 3, 4), (1, 4, 2, 3)) \textcolor{white}{$*$}   &((2, 3, 1, 4), (4, 3, 1, 2)) \textcolor{white}{$*$}   &((1, 2, 3, 4), (1, 2, 4, 3)) \textcolor{white}{$*$} \\
        ((1, 2, 3, 4), (1, 4, 3, 2)) \textcolor{white}{$*$}   &((2, 3, 1, 4), (4, 3, 2, 1)) \textcolor{white}{$*$}   &((1, 2, 3, 4), (1, 3, 2, 4)) $*$ \\
        ((1, 2, 3, 4), (3, 1, 2, 4)) \textcolor{white}{$*$}   &((2, 3, 4, 1), (4, 2, 3, 1)) $*$                      &((1, 2, 3, 4), (2, 1, 3, 4)) \textcolor{white}{$*$} \\
        ((1, 2, 3, 4), (3, 2, 1, 4)) \textcolor{white}{$*$}   &((2, 3, 4, 1), (4, 3, 2, 1)) \textcolor{white}{$*$}   &((1, 2, 3, 4), (2, 1, 4, 3)) \textcolor{white}{$*$} \\
        ((1, 2, 3, 4), (4, 1, 2, 3)) \textcolor{white}{$*$}   &((3, 1, 2, 4), (4, 1, 2, 3)) \textcolor{white}{$*$}   &((1, 2, 4, 3), (2, 1, 4, 3)) $*$ \\
        ((1, 2, 3, 4), (4, 1, 3, 2)) \textcolor{white}{$*$}   &((3, 1, 2, 4), (4, 1, 3, 2)) \textcolor{white}{$*$}   &((1, 3, 4, 2), (1, 4, 3, 2)) $*$ \\
        ((1, 2, 3, 4), (4, 2, 1, 3)) \textcolor{white}{$*$}   &((3, 1, 2, 4), (4, 2, 1, 3)) \textcolor{white}{$*$}   &((1, 4, 2, 3), (1, 4, 3, 2)) $*$ \\
        ((1, 2, 3, 4), (4, 3, 1, 2)) \textcolor{white}{$*$}   &((3, 1, 2, 4), (4, 3, 1, 2)) \textcolor{white}{$*$}   &((2, 1, 3, 4), (2, 1, 4, 3)) \textcolor{white}{$*$} \\
        ((1, 2, 3, 4), (4, 3, 2, 1)) \textcolor{white}{$*$}   &((3, 1, 2, 4), (4, 3, 2, 1)) \textcolor{white}{$*$}   &((2, 3, 1, 4), (3, 2, 1, 4)) \textcolor{white}{$*$} \\
        ((1, 3, 2, 4), (1, 4, 2, 3)) $*$                      &((3, 1, 4, 2), (4, 1, 3, 2)) \textcolor{white}{$*$}   &((2, 3, 4, 1), (2, 4, 3, 1)) $*$\\
        ((1, 3, 2, 4), (1, 4, 3, 2)) $*$                      &((3, 2, 1, 4), (4, 2, 1, 3)) \textcolor{white}{$*$}   &((2, 3, 4, 1), (3, 2, 4, 1)) $*$\\
        ((2, 1, 3, 4), (3, 1, 2, 4)) \textcolor{white}{$*$}   &((3, 2, 1, 4), (4, 3, 1, 2)) \textcolor{white}{$*$}   &((3, 1, 2, 4), (3, 2, 1, 4)) \textcolor{white}{$*$} \\
        ((2, 1, 3, 4), (3, 2, 1, 4)) \textcolor{white}{$*$}   &((3, 2, 1, 4), (4, 3, 2, 1)) \textcolor{white}{$*$}   &((3, 4, 1, 2), (3, 4, 2, 1)) $*$ \\
        ((2, 1, 3, 4), (4, 1, 2, 3)) \textcolor{white}{$*$}   &((3, 2, 4, 1), (4, 2, 3, 1)) $*$                      &((3, 4, 1, 2), (4, 3, 1, 2)) \textcolor{white}{$*$} \\
        ((2, 1, 3, 4), (4, 1, 3, 2)) \textcolor{white}{$*$}   &((3, 2, 4, 1), (4, 3, 2, 1)) \textcolor{white}{$*$}   &((3, 4, 2, 1), (4, 3, 2, 1)) \textcolor{white}{$*$} \\
        ((2, 1, 3, 4), (4, 2, 1, 3)) \textcolor{white}{$*$}   &((4, 1, 2, 3), (4, 3, 1, 2)) \textcolor{white}{$*$}   &((4, 1, 2, 3), (4, 1, 3, 2)) \textcolor{white}{$*$} \\
        ((2, 1, 3, 4), (4, 3, 1, 2)) \textcolor{white}{$*$}   &((4, 1, 2, 3), (4, 3, 2, 1)) \textcolor{white}{$*$}   &((4, 1, 2, 3), (4, 2, 1, 3)) \textcolor{white}{$*$} \\
        ((2, 1, 3, 4), (4, 3, 2, 1)) \textcolor{white}{$*$}   &((4, 2, 1, 3), (4, 3, 1, 2)) \textcolor{white}{$*$}   &((4, 2, 3, 1), (4, 3, 2, 1)) $*$ \\
        ((2, 3, 1, 4), (2, 4, 1, 3)) \textcolor{white}{$*$}   &((4, 2, 1, 3), (4, 3, 2, 1)) \textcolor{white}{$*$}   &((4, 3, 1, 2), (4, 3, 2, 1)) \textcolor{white}{$*$} \\
        ((2, 3, 1, 4), (4, 2, 1, 3)) \textcolor{white}{$*$}   & &\\
        \hline
    \end{tabular}
    }
\end{center}

\subsection{Further connections.} 
\label{subsec:Connections}

We proceed by comparing our results to previous results in the literature. We highlight possible connections to other areas and future research opportunities.

\begin{remark}
In \cite{bossinger2018following}, the authors study the degeneration of Schubert varieties inside the full flag variety. Their construction is built upon on the flat degeneration of the flag variety given by Feigin \cite{Feigin2012} by restriction to its Schubert varieties. They give a number of sufficient conditions on the permutation $w \in S_n$ such that restriction of the degeneration to the Schubert variety $X(w)$ is reducible. Similarly to our methods, this is done by showing that the corresponding initial ideals contain monomials arising from certain Pl\"ucker relations.
\end{remark}

\begin{remark}
Here we mention possible connections of our results to representation theory. The Kazhdan-Lusztig polynomials play a key role in the representation theory of algebraic groups. The coefficients of these polynomials are closely related to the rank of intersection cohomology groups for Richardson varieties, see \cite[Theorem~4.3]{proudfoot2018algebraic}. In the case of projective toric varieties, the rank of the intersection cohomology groups can computed from the combinatorial data of the associated polytope $P$, namely in terms of the so-called $h$-functions of $P$, see \cite[Theorem~3.1]{stanley1987generalized} and \cite{fieseler1991rational}. And so we leave the study of the polytopes associated to toric degenerations of Richardson varieties to future work however we will note some particularly relevant results regarding matching field polytopes. 
In \cite{olliefatemehakihiro2020}, the authors show that block diagonal matching field polytopes, which coincide with the toric polytope for toric degenerations of the Grassmannian via matching fields, are related to each other by sequences of combinatorial mutations. Therefore one may naturally try to prove that the toric polytopes of toric degenerations of Richardson varieties are combinatorial mutations of faces of the Gelfand-Tsetlin polytope.
In such a case, by the semi-continuity arguments, we obtain information about Kazhdan-Lusztig polynomials from the toric degenerations of Richardson varieties.
\end{remark}

\begin{remark}
Here we mention some application of our results and methods to further families of varieties. In our paper we use standard monomial theory for Schubert and Richardson varieties inside Grassmannians and flag varieties, i.e. the monomial basis given by semi-standard Young tableaux which is compatible with these subvarieties. The desingularizations of Schubert varieties are known as Bott-Samelson varieties and a standard monomial theory for these varieties is developed in \cite{lakshmibai2002bottsamelson}. In \cite{balan2013standard}, the author develops standard monomial theory for desingularized Richardson varieties indexed by combinatorial objects called $w_0$-standard tableaux. Similarly one can try to develop analogues for matching fields for these desingularized varieties and study their standard monomials with maps analogous to $\Gamma_\ell$, see Definition~\ref{def:Gamma_ell_deg2}.
\end{remark}

\medskip

\noindent{\large\bf References}

\bibliographystyle{alpha} 
\bibliography{JACO-Trop1.bib}

\bigskip
\noindent
\footnotesize {\bf Authors' addresses:}

\medskip

\noindent Narasimha Chary Bonala\\ Ruhr-Universit\"at Bochum,
Fakult\"at f\"ur Mathematik, D-44780 Bochum, Germany
\\
\noindent  E-mail address: {\tt  narasimha.bonala@rub.de}
\medskip

\noindent Oliver Clarke\\ University of Bristol, School of Mathematics,
BS8 1TW, Bristol, UK
\\
\noindent  E-mail address: {\tt oliver.clarke@bristol.ac.uk}

\medskip

\noindent Fatemeh Mohammadi \\
Department of Mathematics: Algebra and Geometry, Ghent University, 9000 Ghent, Belgium \\
Department of Mathematics and Statistics, 
UiT – The Arctic University of Norway, 9037 Troms\o, Norway
\\ E-mail address: {\tt fatemeh.mohammadi@ugent.be}

\end{document}